\DeclarePairedDelimiter\floor{\lfloor}{\rfloor}
\newcolumntype{L}[1]{>{\raggedright\let\newline\\\arraybackslash\hspace{0pt}}m{#1}}
\newcolumntype{C}[1]{>{\centering\let\newline\\\arraybackslash\hspace{0pt}}m{#1}}
\newcolumntype{R}[1]{>{\raggedleft\let\newline\\\arraybackslash\hspace{0pt}}m{#1}}
\newtheorem{Assumption}{Assumption}
\newtheorem{Theorem}{Theorem}[section]
\newtheorem{Proposition}[Theorem]{Proposition}
\newtheorem{Remark}[Theorem]{Remark}
\newtheorem{Lemma}[Theorem]{Lemma}
\newtheorem{Corollary}[Theorem]{Corollary}
\newtheorem{Definition}[Theorem]{Definition}
\newtheorem{Example}[Theorem]{Example}
\let\expandafter\oldproof\csname\string\proof\endcsname
\let\oldendproof\endproof
\renewenvironment{proof}[1][\proofname]{
\oldproof[\ttfamily\scshape \bf #1.]
}{\oldendproof}
\def\tilde{\widetilde}
\def\emp{\emptyset}
\def\dom{{\rm dom}\,}
\def\epi{{\rm epi\,}}
\def\rge{{\rm rge\,}}
\def\min{\mbox{\rm minimize}}
\def\B{\mathbb B}
\def\ox{\overline{x}}
\def\oy{\overline{y}}
\def\oz{\overline{z}}
\def\disp{\displaystyle}
\def\tto{\rightrightarrows}
\def\Hat{\widehat}
\def\Tilde{\widetilde}
\def\Bar{\overline}
\def\ra{\rangle}
\def\la{\langle}
\def\epsilon{\varepsilon}
\def\ox{\bar{x}}
\def\oy{\bar{y}}
\def\oz{\bar{z}}
\def\ov{\bar{v}}
\def\gph{\mbox{\rm gph}\,}
\def\epi{\mbox{\rm epi}\,}
\def\dom{\mbox{\rm dom}\,}
\def\ker{\mbox{\rm ker}\,}
\def\dn{\downarrow}
\def\O{\Omega}
\def\ph{\varphi}
\def\emp{\emptyset}
\def\st{\stackrel}
\def\oR{\Bar{\R}}
\def\lm{\lambda}
\def\al{\alpha}
\def \N{{\rm I\!N}}
\def \R{{\rm I\!R}}
\def\Limsup{\mathop{{\rm Lim}\,{\rm sup}}}
\def\Limsup{\mathop{{\rm Lim}\,{\rm sup}}}
\numberwithin{equation}{section}
\title{\bf Coderivative-Based Newton Methods in Structured Nonconvex and Nonsmooth Optimization}
\author{Pham Duy Khanh\footnote{Group of Analysis and Applied Mathematics, Department of Mathematics, Ho Chi Minh City University of Education, Ho Chi Minh City, Vietnam. E-mails: pdkhanh182@gmail.com, khanhpd@hcmue.edu.vn.}\quad Boris S. Mordukhovich\footnote{Department of Mathematics, Wayne State University, Detroit, Michigan, USA. E-mail: aa1086@wayne.edu. }\quad Vo Thanh Phat\footnote{Department of Mathematics and Statistics,  University of North Dakota, Grand Forks, North Dakota, USA. E-mail: thanh.vo.1@und.edu.}}
\begin{document}
\maketitle
\vspace*{-0.2in}
\noindent
{\small{\bf Abstract}. This paper proposes and {develops} new Newton-type methods to solve structured nonconvex and nonsmooth optimization problems with justifying their fast  local and global convergence by means of advanced tools of variational analysis and generalized differentiation. The objective functions belong to a broad class of prox-regular functions with specification to constrained optimization of nonconvex structured sums. We also develop a novel line search method, which is an extension of the proximal gradient algorithm while allowing us to globalize the proposed coderivative-based Newton methods by incorporating the machinery of forward-backward envelopes.  {Applications and numerical experiments, which are provided for nonconvex least squares regression models, Student’s t-regression with an $\ell^0$-penalty, and  image restoration problems, demonstrate the efficiency of the proposed algorithms.}\\[0.5ex]
{\bf Key words}. Nonsmooth optimization, variational analysis, generalized Newton methods, local convergence, global convergence, nonconvex  structured optimization\\[0.5ex]
{\bf Mathematics Subject Classification (2010)} 90C26, 49J52, 49J53}\vspace*{-0.15in}

\section{Introduction}\label{intro}\vspace*{-0.05in}

For  a function $\varphi: \R^n \rightarrow \R$, which is twice continuously differentiable (${\cal C}^2$-smooth) around a certain point $\ox$, the {\em classical Newton method} to solve the unconstrained optimization problem
\begin{equation}\label{unconpb}
\min \quad \varphi(x) \quad \text{subject to }\; x\in \R^n
\end{equation}
starts with a given initial point $x^0\in\R^n$ and constructs the iterative procedure
\begin{equation}\label{clas-newton} 
x^{k+1}:=x^k+d^k\;\mbox{ for all }\;k\in\N:=\big\{1,2,\ldots\big\}, 
\end{equation} 
where $d^k$ is a solution to the linear system 
\begin{equation}\label{newton-iter}	
-\nabla\varphi(x^k)=\nabla^2\varphi(x^k)d^k,\quad k\in \N,
\end{equation} 
expressed via the Hessian matrix $\nabla^2\varphi(x^k)$ of $\varphi$ at $x^k$. In classical optimization, Newton's algorithm in \eqref{clas-newton} and \eqref{newton-iter} is well-defined, meaning that the equations in \eqref{newton-iter} are solvable for $d^k$ and the sequence of its iterations ${x^k}$ quadratically converges to the solution $\ox$ of \eqref{unconpb} if $x^0$ is chosen sufficiently close to $\ox$ and if the Hessian matrix $\nabla^2\varphi(\ox)$ is {\em positive-definite}. However, this method necessitates information on Hessian matrices, which are unavailable in many practical optimization problems where the cost function may not be $\mathcal{C}^2$-smooth. Therefore, the development of generalized Newton methods is required with replacing the Hessian matrix by a certain generalized second-order derivative.

Over the years, various generalized Newton methods have been developed in nonsmooth optimization; we refer the reader to the books \cite{JPang,Solo14,kummer} and the bibliographies therein for more details. 

Our approach in this paper is different from the majority  of developments on generalized Newton methods in nonsmooth optimization and related areas. It is based on the machinery of variational analysis and generalized differentiation that is revolved around the {\em second-order subdifferentials/generalized Hessians} introduced by Mordukhovich \cite{m92} in the dual ``derivative-of-derivative" scheme by taking a coderivative of a subgradient mapping; see Section~\ref{sec:pre}. The importance and benefits of such constructions have been well recognized in variational analysis, optimization, and numerous applications (see, e.g., the books \cite{Mordukhovich06,Mor24} with the vast bibliographies therein), while their usage in the design and justification of numerical algorithms, as well as in practical applications to machine learning and other disciplines, has been revealed quite recently in 
\cite{amp23,Helmut,BorisKhanhPhat,kmptjogo,kmptmp,Mor24,BorisEbrahim}.\vspace*{0.01in} 

The {\em main goals} of this paper include developing novel coderivative-based generalized Newton methods to solve broad classes of optimization-related problems that are different from the known achievements in this direction by the methodology, design, imposed assumptions, obtained results, and applications. Let us discuss some major issues and their placements in the paper .\vspace*{0.01in}

$\bullet$ In Section~\ref{sec:general} (after the basic definitions and preliminary results given in Section~\ref{sec:pre}), we address solving {\em subgradient inclusions} of the type $0\in\partial\ph(x)$,  where $\partial\ph$ stands for the {\em limiting subdifferential} of an extended-real-valued, lower semicontinuous (l.s.c.), and prox-bounded function $\ph\colon\R^n\to\oR:=(-\infty,\infty]$. Using the aforementioned second-order subdifferential $\partial^2\ph$, we design a generalized Newton algorithm to solve such problems that doesn't require, in contrast to \cite{BorisKhanhPhat}, the nonemptiness of the limiting subdifferential $\partial\ph(x^k)$ along the iterative sequence. The well-posedness and {\em local superlinear convergence} of the new algorithm to find stationary points are justified under natural assumptions.\vspace{0.01in}

$\bullet$ Section~\ref{sec:localNMcompo} develops a novel Newton-type algorithm to solve {\em structured optimization problems} 
\begin{equation}\label{ncvoptintro}
\min \quad \varphi(x):=f(x)+g(x) \quad \text{subject to }\;x \in \R^n,
\end{equation}
where $f:\R^n \to \R$ is a $\mathcal{C}^2$-smooth function while the {\em regularizer} $g:\R^n\to\overline{\R}$ is l.s.c.\ and prox-bounded. Note that both $f$ and $g$ are generally {\em nonconvex}. This makes \eqref{ncvoptintro} appropriate for applications to signal and image processing, machine learning, statistics, control, system identification, etc. The newly proposed algorithm to solve \eqref{ncvoptintro} uses, in contrast to the previous developments in \cite{kmptjogo,kmptmp}, the proximal mapping only for the regularizer $g$, while not for the cost function $\ph$. This is often easy to compute and enables us to address broader classes in constrained optimization and applications.  {To proceed efficiently, we use the notion of {\em normal maps} for \eqref{ncvoptintro} and establish the local superlinear convergence of iterates in the proposed algorithm.}\vspace*{0.01in}

$\bullet$ Sections~\ref{sec:newlinesearch} and \ref{sec:globalNMcompo} are devoted to the {\em globalization} of developed algorithms to solve nonconvex structured optimization problems \eqref{ncvoptintro}. In Section~\ref{sec:newlinesearch}, we propose and justify a novel {\em line search method}, which can be viewed as a {\em generalized proximal gradient} line search. To proceed, we exploit the machinery of {\em forward-backward envelopes} (FBE) introduced in \cite{pb}. The newly proposed line search algorithm {doesn't involve  any} generalized second-order constructions and ensures the {\em global linear convergence} of iterates. Involving the {\em second-order information} in Section~\ref{sec:globalNMcompo} allows us to design the {\em globalized coderivative-based Newton method} (GCNM), which exhibits the {\em   global convergence with local superlinear rate}  under the assumptions that are less restrictive compered to the previous developments in \cite{kmptjogo,kmptmp}. In particular, we don't impose the positive-definiteness of the generalized Hessian on the entire space $\R^n$ (while only at the solution point in some settings) and don't restrict ourselves to quadratic convex functions $f$ in \eqref{ncvoptintro}. This  {dramatically extends} the scope of applications.\vspace*{0.01in}

$\bullet$  {From an application perspective, Section~\ref{sec:appl0l2} demonstrates the use of GCNM in solving nonconvex problems such as the {\em $\ell_0$-$\ell_2$ regularized least squares regression} and Student’s $t$-regression with an $\ell_0$-penalty, both of which frequently arise in machine learning and statistics. For these types of nonsmooth and nonconvex problems, all algorithmic components and parameters are computed via the problem data. Numerical experiments are conducted on randomly generated instances, and the results are compared with other second-order approaches, including the SCD semismooth$^*$ Newton method~\cite{G25} and the ZeroFPR method with BFGS update~\cite{stp2}. GCNM is applied to nonconvex image restoration problems to further demonstrate its efficiency.}\vspace*{-0.15in}

\section{Preliminaries from Variational Analysis}\label{sec:pre}\vspace*{-0.05in}

This section overviews some notions and results of first-order and second-order variational analysis and generalized differentiation broadly used in the paper. We refer the reader to the monographs \cite{Mordukhovich06,Mor24,Rockafellar98} and the more recent papers mentioned below. 

Given a set-valued mapping/multifunction $F\colon\R^n\tto\R^m$ between finite-dimensional spaces, its (Painlev\'e-Kuratowski) {\em outer limit} at $\ox$ is defined by
\begin{equation*}
\Limsup_{x\to\ox} F(x):
=\big\{y\in\R^n\;\big|\;\exists\,\mbox{ sequences }\;x_k\to\bar x,\;y_k\rightarrow y\;\mbox{ with }\;y_k\in F(x_k),\;\;k\in\N:=\{1,2,\ldots\}\big\}.
\end{equation*}
Using the notation $z\st{\O}{\to}\oz$ meaning that $z\to\oz$ with $z\in\O$ for a given nonempty set $\O\subset\R^s$, the (Fr\'echet) {\em regular normal cone} to $\Omega$ at $\bar{z}\in\Omega$ is
\begin{equation}\label{regnm}
\widehat{N}_\Omega(\bar{z}):=\Big\{v\in\R^s\;\Big|\;\limsup_{z\overset{\Omega}{\rightarrow}\bar{z}}\frac{\langle v,z-\bar{z}\rangle}{\|z-\bar{z}\|}\le 0\Big\},
\end{equation}
while the (Mordukhovich) {\em limiting normal cone} to $\Omega$ at $\bar{z}\in\Omega$ is defined by
\begin{equation}\label{lnc}
N_\Omega(\bar{z}):=  \underset{z\overset{\Omega}{\to} \bar{z}} {\Limsup}\;\widehat{N}_\Omega (z) =\big\{v\in\R^s\;\big|\;\exists\,z_k\st{\O}{\to}\bar{z},\;v_k\to v\;\text{ as }\;k\to\infty\;\text{ with }\;v_k\in\widehat{N}_\Omega(z_k)\big\}.
\end{equation} 
The	(Bouligand-Severi) {\em tangent/contingent cone} to $\Omega$ at $\oz$ is	
\begin{equation}\label{tan}
T_\Omega(\oz):=\big\{w\in\R^s\;\big|\;\exists\,t_k\dn 0,\;w_k\to
w\;\mbox{ as }\;k\to\infty\;\mbox{ with }\;\oz+t_k w_k\in\Omega\big\}.
\end{equation} 
Note the duality relation between the regular normal and tangent cones
\begin{equation}\label{dua}
\Hat
N_\Omega(\oz)=\left[T_\Omega(\oz)\right]^*=\big\{v\in\R^s\;\big|\;\la v,w\ra\le
0\;\mbox{ for all }\;w\in T_\Omega(\oz)\big\} 
\end{equation} 
and observe that the limiting normal cone \eqref{lnc} cannot be dual,  {in view of its} {\em intrinsic nonconvexity}, to any tangential approximation of the set in question.

For a set-valued mapping $F\colon\R^n\tto\R^m$, we define its {\em graph}, \textit{domain, kernel}, and \textit{range} by 
\begin{equation*}
\gph F:=\big\{(x,y)\in\R^n\times\R^m\;\big|\;y\in F(x)\big\},\quad\dom F:=\big\{x\in \R^n\;\big|\;F(x) \ne \emp\big\},
\end{equation*}
\begin{equation*}
\ker F:=\big\{x\in \R^n\;\big|\; 0 \in F(x)\big\},\;\mbox{ and }\;
\rge F:=\big\{y \in \R^m\;\big|\;\exists\ x \in
\R^n\quad \text{with }\; y\in F(x)\big\}, 
\end{equation*}
respectively. The {\em inverse mapping} of $F$ is the set-valued mapping $F^{-1}:\R^m \rightrightarrows \R^n$ given by
$$
F^{-1}(y):=\big\{x \in \R^n\;\big|\; y \in F(x)\big\}, \quad y \in \R^m. 
$$

Based on the normal cones \eqref{regnm} and \eqref{lnc} to the graphs, we define the {\em regular coderivative} and the {\em limiting
coderivative} of $F$ at $(\ox,\oy)\in\gph F$ by, respectively,
\begin{equation}\label{reg-cod} 
\Hat D^*F(\ox,\oy)(v):=\big\{u\in\R^n\;\big|\;(u,-v)\in\Hat N_{{\rm
gph}\,F}(\ox,\oy)\big\},\quad v\in\R^m, 
\end{equation}
\begin{equation}\label{lim-cod}
D^*F(\ox,\oy)(v):=\big\{u\in\R^n\;\big|\;(u,-v)\in N_{{\rm
gph}\,F}(\ox,\oy)\big\},\quad v\in\R^m. 
\end{equation} 
The {\em graphical derivative} of $F$ at
$(\ox,\oy)\in\gph F$ is defined via the tangent cone \eqref{tan} by
\begin{equation}\label{gra-der}
DF(\ox,\oy)(u):=\big\{v\in\R^m\;\big|\;(u,v)\in T_{{\rm
gph}\,F}(\ox,\oy)\big\},\quad u\in\R^n. 
\end{equation}
Due to the regular normal-tangent cone duality \eqref{dua}, we have the duality correspondence between \eqref{reg-cod} and \eqref{gra-der}, while the nonconvex-valued limiting coderivative \eqref{lim-cod} is not tangentially generated. In the case
where $F(\bar{x})$ is the singleton $\{\bar{y}\}$, we omit $\oy$ in the notation of \eqref{reg-cod}--\eqref{gra-der}. Note that if
$F\colon\R^n\to\R^m$ is ${\cal C}^1$-smooth around $\ox$ (in fact, just strictly differentiable at this point), then
\begin{equation*} 
DF(\bar{x})=\nabla F(\bar{x})\;\mbox{ and
}\;\widehat{D}^*F(\bar{x})=D^*F(\bar{x})=\nabla F(\bar{x})^*,
\end{equation*} 
where $\nabla F(\bar{x})^*$ is the adjoint/transpose
matrix of the Jacobian $\nabla F(\bar{x})$.\vspace*{0.03in}

To proceed further, recall that the {\em distance function} associated with
a set $\Omega\subset\R^s$ is \begin{equation*} {\rm
dist}(x;\Omega):=\inf\big\{\|w-x\|\;\big|\;w\in\Omega\big\},\quad
x\in\R^s,
\end{equation*} 
and define the {\em Euclidean projector} of $x \in \R^s$ to $\Omega$ by
$$
\Pi_\Omega(x):=\big\{w \in \Omega\;\big|\; \|x-w\|={\rm
dist}(x;\Omega)\big\}.
$$
A mapping $\widehat{F}\colon\R^n\tto\R^m$ is a {\em localization} of
$F\colon\R^n\tto\R^m$ at $\bar{x}$ for $\bar{y}\in
F(\bar{x})$ if there exist neighborhoods $U$ of $\bar{x}$ and $V$ of
$\bar{y}$ such that we have $\gph\widehat{F}=\gph F\cap(U\times V)$. Here are the important well-posedness properties of set-valued mappings used in what follows.\vspace*{-0.1in}
	
\begin{Definition}[\bf metric regularity and subregularity of mappings]\label{met-reg} \rm  Let $F\colon\R^n\tto\R^m$ be a set-valued mapping, and let $(\bar{x},\bar{y})\in\gph F$. We say that:  

{\bf(i)} $F$ is {\em metrically regular} around $(\bar{x},\bar{y})$	with modulus $\mu>0$ if there exist neighborhoods $U$ of $\bar{x}$	and $V$ of $\bar{y}$ providing the estimate 
\begin{equation*} {\rm dist}\big(x;F^{-1}(y)\big)\le\mu\,{\rm dist}\big(y;F(x)\big)\;\text{ for all }\;(x,y)\in U\times V.
\end{equation*}
If in addition $F^{-1}$ has a single-valued localization around $(\bar{y},\bar{x})$, then $F$ is {\sc  strongly metrically regular} around $(\bar{x},\bar{y})$ with modulus $\mu>0$.

{\bf(ii)} $F$ is {\em metrically subregular} at $(\bar{x},\bar{y})$ with modulus $\mu>0$ if there is a neighborhood $U$ of $\bar{x}$ such that
\begin{equation*} {\rm dist}\big(x;F^{-1}(\bar{y})\big)\le\mu\,{\rm	dist}\big(\bar{y};F(x)\big)\;\text{ for all }\;x\in U.
\end{equation*} 
If $F^{-1}(\bar{y})\cap U=\{\bar{x}\}$, then $F$ is {\sc strongly metrically subregular at} $(\bar{x},\bar{y})$
with modulus $\mu>0$. We say that $F$ is  {\sc strongly metrically subregular around}  $(\bar{x},\bar{y})$ if there exist neighborhoods $U$ of $\bar{x}$
and $V$ of $\bar{y}$ such that $F$ is  strongly metrically subregular at every point $(x,y) \in \gph F \cap (U\times V)$. 
\end{Definition} \vspace*{-0.03in}
\noindent
Observe that the metric regularity properties in Definition~\ref{met-reg}{(i) are preserved in a neighborhood of the point $(\ox,\oy)$, i.e., they are \textit{robust} with respect to small perturbations of the reference point, which is not always the case for metric subregularity from Definition~\ref{met-reg}{(ii). Recall that $F\colon\R^n\tto\R^m$, which graph is closed around $(\ox,\oy)\in\gph F$, is {\em metrically regular} around this point {\em if and only if} we have the coderivative condition
\begin{equation}\label{cod-cr} 
\ker D^*F(\ox,\oy):=\big\{v\in\R^m\;\big|\;0\in
D^*F(\ox,\oy)(v)\big\}=\{0\} \end{equation} 
established by Mordukhovich \cite[Theorem~3.6]{Mordu93} via his limiting	coderivative \eqref{lim-cod} and then labeled as the {\em Mordukhovich criterion} in Rockafellar and Wets \cite[Theorems~7.40 and 7.43]{Rockafellar98}. Broad applications of this result are based on the robustness and {\em full calculus} available for \eqref{lim-cod}; see \cite{Mordukhovich06,Mor18,Rockafellar98} for more details and references. A parallel characterization of the (nonrobust) {\em strong metric subregularity} property of a closed-graph mapping $F\colon\R^n\tto\R^m$ at $(\ox,\oy)\in\gph F$ is given by $\ker DF(\ox,\oy)=\{0\}$ via the (nonrobust) graphical derivative \eqref{gra-der} of $F$ at $(\ox,\oy)$ and is known as the {\em Levy-Rockafellar criterion}; see \cite[Theorem~4E.1]{Donchev09} for more discussions.\vspace*{0.03in} 
 
Next we consider an extended-real-valued function $\ph\colon\R^n\to\oR$ with the domain and epigraph
\begin{equation*}
\dom\ph:=\big\{x\in\R^n\;\big|\;\ph(x)<\infty\big\},\quad\epi\ph:=\big\{(x,\al)\in\R^{n+1}\;\big|\;\al\ge\ph(x)\big\}.
\end{equation*}
The {\em regular subdifferential} and {\em limiting subdifferential} of $\ph$ at $\ox\in\dom\ph$ are defined geometrically via the corresponding normal cones to the epigraph by, respectively,
$$ 
\widehat{\partial}\varphi(\ox):=\big\{v\in\R^n\;\big|\;(v,-1)\in \widehat{N}_{{\rm\text{\rm epi}}\,\varphi}\big(\bar{x},\varphi(\bar{x})\big)\big\},
$$ 
\begin{equation}\label{lim-sub}
\partial\varphi(\ox):=\big\{v\in\R^n\;\big|\;(v,-1)\in N_{{\rm \text{\rm epi}}\,\varphi}\big(\bar{x},\varphi(\bar{x})\big)\big\}
\end{equation}
Note that the limiting subdifferential \eqref{lim-sub} admits various analytic representations and satisfies comprehensive calculus rules that can be found in \cite{Mordukhovich06,Mor18,Rockafellar98}. Observe the useful {\em scalarization formula}
\begin{equation}\label{scal}
D^*F(\bar{x})(v)=\partial\langle v,F\rangle(\bar{x})\;\mbox{ for all }\;v\in\R^m 
\end{equation}
connecting the coderivative \eqref{lim-cod} of a locally Lipschitzian mapping $F\colon\R^n\to\R^m$ and the subdifferential \eqref{lim-sub} of the scalarized function $x\mapsto\langle v,F\rangle(x)$ whenever $v\in\R^m$.

Following \cite{m92}, we define the {\em second-order subdifferential}, or {\em generalized Hessian}, $\partial^2\ph(\ox,\ov)\colon\R^n\tto\R^n$ of $\ph\colon\R^n\to\oR$ at $\ox\in\dom\ph$ for $\ov\in\partial\ph(\ox)$ as the coderivative of the subgradient mapping
\begin{equation}\label{2nd}
\partial^2\ph(\ox,\ov)(u):=\big(D^*\partial\ph\big)(\ox,\oy)(u)\;\mbox{ for all }\;u\in\R^n.
\end{equation}
If $\ph$ is ${\cal C}^2$-smooth around $\ox$, then we have (no Hessian transposition is needed) that
$$ 
\partial^2\ph(\ox)(u)=\big\{\nabla^2\ph(\ox)u\big\}\;\mbox{ for all }\;u\in\R^n.
$$
If $\ph$ is of {\em class $\mathcal{C}^{1,1}$} around $\ox$ (i.e., it is ${\cal C}^1$-smooth with a locally Lipschitzian gradient, then
$$ 
\partial^2\ph(\ox)(u)=\partial\big\la u,\nabla\ph\big\ra(\ox)\;\mbox{ for all }\;u\in\R^n
$$ 
by \eqref{scal}. In general, \eqref{2nd} is a positively homogeneous multifunction, which possesses well-developed {\em calculus rules} and variety of applications while being {\em explicitly computed} in terms of the initial data for large classes of extended-real-valued functions that  {frequently appear} in variational analysis, optimization, and numerous applications. We refer the reader to the recent book \cite{Mor24} for a rather comprehensive account on \eqref{2nd}; see also Section~\ref{sec:appl0l2} for new applications to models of machine learning and statistics.\vspace*{0.03in}

Let us now review some classes of extended-real-valued functions $\ph\colon\R^n\to\oR$ broadly used in the paper.  The notion of  {\em variational $s$-convexity}  for extended-real-valued functions is a major one we intend to investigate in this paper. For the general case of $s\in\R$, it was recently introduced in \cite{varprox}, while its earlier versions for $s\ge 0$ appeared in \cite{r19} and were later studied and applied in finite-dimensional spaces in \cite{KKMP23-3,kmp22convex,roc}, and in infinite-dimensional settings in \cite{kkmpbook,KKMP23-2,Mor24}. The case of $s\leq 0$ is related to \textit{prox-regularity}, a concept originally introduced in \cite{Poliquin} and subsequently developed and widely applied in variational analysis and optimization; see \cite{Rockafellar98}. Before introducing the notion of variational $s$-convexity, we recall that a function $\ph:\R^n \to \overline{\R}$ is {\em $s$-convex} for $s \in \R$ if the quadratically $s$-shifted function $\ph - \frac{s}{2}\|\cdot\|^2$ is convex. In particular, when $s > 0$, the function $\ph$ is referred to as \textit{$s$-strongly convex}, while for $s < 0$, it is labeled as \textit{$(-s)$-weakly convex}.\vspace*{-0.05in}

\begin{Definition}[\bf variationally convex functions]\label{vr} \rm Given $s\in\R$, a proper l.s.c.\ function $\varphi\colon\R^n \to \overline{\R}$ is {\em variationally $s$-convex} at $\bar{x}\in\dom\ph$ for $\bar{v} \in \partial \ph(\bar{x})$ if there exist an $s$-convex function $\widehat{\ph}$ as well as convex neighborhoods $U$ of $\bar{x}$ and  $V$ of $\bar{v}$ together with $\varepsilon > 0$ such that $\widehat{\ph} \le \ph$ on $U$ and 
$$
(U \times V) \cap \gph \partial \widehat{\ph} = (U_\varepsilon \times V) \cap \gph \partial \ph, \quad\widehat{\ph}(x) = \ph(x)\;\text{ at common elements }\;(x,v), 
$$
where $U_\varepsilon := \{x \in U \mid \ph(x) < \ph(\bar{x}) + \varepsilon\}$. {When $s \le 0$, the property is called {\em $(-s)$-level prox-regularity} of $\varphi$ at $\bar{x}$ for $\bar{v}$; we simply say that $\varphi$ is {\em prox-regular} at $\bar{x}$ for $\bar{v}$ if this holds for some $s \le 0$. 
The function $\varphi$ is {\em variationally convex} at $\bar{x}$ for $\bar{v}$ when it is $0$-level prox-regular. 
When $s>0$, $\varphi$ is {\em $s$-strongly variationally convex} at $\bar{x}$ for $\bar{v}$, and is said to be {\em strongly variationally convex} if this holds for some $s>0$.}
\end{Definition}\vspace*{-0.07in}

The next proposition taken from \cite[Theorem 1]{varprox} lists the characterizations of variationally convex functions via some properties of their limiting subdifferentials.\vspace*{-0.07in}

\begin{Proposition}[\bf subgradient characterizations of variational convexity]\label{subgradientvar} Let $\varphi:\R^n\to\overline{\R}$  be l.s.c.\ with $\bar{x}\in\dom\varphi$ and $\bar{v}\in \widehat{\partial}\varphi(\bar{x})$. Then for any $s \in \R$, the following are equivalent:

{\bf(i)} $\varphi$ is variationally $s$-convex at $\ox$ for $\ov$.

{\bf(ii)} There are neighborhoods $U$ of $\bar{x}$, $V$ of $\bar{v}$, and a number $\epsilon>0$ such that  
$$ 
\langle v_1 - v_2, u_1 - u_2\rangle \geq s\|u_1-u_2\|^2 \quad \text{for all }\; (u_1,v_1),(u_2,v_2) \in \gph\partial\varphi\cap (U_\epsilon \times V),
$$
where $U_\epsilon:=\{x\in U\;|\;\varphi(x)<\varphi(\ox)+\epsilon\}$. 

{\bf(iii)} There are neighborhoods $U$ of $\bar{x}$, $V$ of $\bar{v}$, and a number $\epsilon>0$ such that 
$$ 
\varphi(x)\ge\varphi(u)+\langle v, x-u\rangle +\frac{s}{2}\|x-u\|^2\quad\text{for all }\;x\in U,\;(u,v)\in\gph\partial\varphi\cap (U_\epsilon\times V). 
$$  
\end{Proposition}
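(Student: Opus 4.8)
The plan is to establish the full equivalence through the cycle (i) $\Rightarrow$ (iii) $\Rightarrow$ (ii) together with a separate route back to (i), and to begin by reducing to the case $s=0$. Setting $\psi:=\ph-\tfrac{s}{2}\|\cdot\|^2$ and invoking the exact sum rule $\partial\psi(x)=\partial\ph(x)-sx$ (valid since the quadratic shift is smooth), one sees that $\gph\partial\psi$ is the image of $\gph\partial\ph$ under $(x,v)\mapsto(x,v-sx)$, while $\widehat\ph$ is $s$-convex iff $\widehat\ph-\tfrac{s}{2}\|\cdot\|^2$ is convex. Under this correspondence, and after a routine adjustment of the neighborhoods and of $\ve$, variational $s$-convexity of $\ph$ becomes variational ($0$-)convexity of $\psi$, the inequality in (ii) becomes plain local monotonicity of $\partial\psi$, and the estimate in (iii) becomes the local convex subgradient inequality for $\psi$. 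Hence it suffices to treat $s=0$, which I do from now on, writing $\ph$ for $\psi$.

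The two forward implications are purely algebraic. For (i) $\Rightarrow$ (iii): given the convex minorant $\widehat\ph\le\ph$ on $U$ from Definition~\ref{vr}, the global convex subgradient inequality yields $\widehat\ph(x)\ge\widehat\ph(u)+\la v,x-u\ra$ for every $x$ and every $(u,v)\in\gph\partial\widehat\ph$; taking $(u,v)\in\gph\partial\ph\cap(U_\ve\times V)$, the graph-agreement clause of Definition~\ref{vr} places $(u,v)\in\gph\partial\widehat\ph$ with $\widehat\ph(u)=\ph(u)$, and since $\widehat\ph(x)\le\ph(x)$ on $U$ we get $\ph(x)\ge\ph(u)+\la v,x-u\ra$ for all $x\in U$, which is (iii). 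For (iii) $\Rightarrow$ (ii): apply the inequality in (iii) to the admissible pairs $(u_1,v_1),(u_2,v_2)\in\gph\partial\ph\cap(U_\ve\times V)$ with base point and test point interchanged, then add the two estimates; the function values cancel and one is left with $\la v_1-v_2,u_1-u_2\ra\ge0$, i.e. (ii).

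The substance lies in the closing implication (ii) $\Rightarrow$ (i), where the convex minorant must be manufactured. I would build it as the convex envelope generated by the localized graph $S:=\gph\partial\ph\cap(U_\ve\times V)$, namely
\[
\widehat\ph(x):=\sup_{(u,v)\in S}\big[\ph(u)+\la v,x-u\ra\big],
\]
a supremum of affine functions with slopes confined to the bounded set $V$, hence convex, l.s.c., and finite near $\ox$. Every $(u_0,v_0)\in S$ satisfies $\widehat\ph(u_0)\ge\ph(u_0)$ and $v_0\in\partial\widehat\ph(u_0)$, so $S\subseteq\gph\partial\widehat\ph$ with matching values \emph{as soon as} one knows $\widehat\ph\le\ph$ on $U$; but that minorant property is exactly the subgradient estimate (iii). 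Thus the first real task is to recover the subgradient inequality from the two-point monotonicity (ii) by \emph{integration along segments} of the localized monotone mapping. This is the classical but delicate step: one must guarantee that the connecting arcs, and the subgradients collected along them, remain inside the $\ve$-attentive localization $U_\ve\times V$, which is where the anchoring hypothesis $\ov\in\widehat\partial\ph(\ox)$ and the prox-regular structure are used to confine the construction to the prescribed neighborhoods.

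The genuinely hardest point, and the one I expect to be the main obstacle, is the \emph{reverse graph inclusion} $(U\times V)\cap\gph\partial\widehat\ph\subseteq(U_\ve\times V)\cap\gph\partial\ph$ together with value agreement $\widehat\ph=\ph$ at common pairs. Here one must exclude spurious convex subgradients of $\widehat\ph$ that do not come from $\partial\ph$, which forces a shrinking of $U$, $V$, and $\ve$ and a careful use of local monotonicity to show that on the relevant set $\widehat\ph$ touches $\ph$ precisely along $S$. This matching of the two localized graphs, rather than the envelope formula itself, is the crux, and it is exactly where the prox-regularity and attentive-convergence machinery of \cite{varprox,r19} is indispensable; once it is secured, $\widehat\ph$ (restored to $\widehat\ph+\tfrac{s}{2}\|\cdot\|^2$ in the general case) serves as the $s$-convex function required by Definition~\ref{vr}, yielding (i) and closing the equivalence.
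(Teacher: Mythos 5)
You should first be aware that the paper does not prove this proposition: it is imported verbatim from \cite[Theorem~1]{varprox}, so there is no in-paper argument to measure yours against. Judged on its own terms, your reduction to $s=0$ via the shear $(x,v)\mapsto(x,v-sx)$ is legitimate, and the two forward implications (i)$\Rightarrow$(iii)$\Rightarrow$(ii) are correct and complete; these are indeed the routine parts of the theorem.

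The implication (ii)$\Rightarrow$(i), however, is a plan rather than a proof, and the steps you defer are exactly where the theorem lives. (a) You propose to recover the subgradient inequality (iii) from the two-point monotonicity (ii) by ``integration along segments,'' but for a general l.s.c.\ $\ph$ the set $\gph\partial\ph$ over a segment joining two points of $U_\ve$ may be empty or far too sparse to integrate; the classical integration argument is available for convex or ${\cal C}^1$ functions, not here. Rockafellar's actual route bypasses this entirely, working through Moreau envelopes and proximal mappings of a quadratically shifted localization together with a conjugacy argument. (b) You cannot invoke ``the prox-regular structure'' in this direction: for $s\le0$, prox-regularity is not a hypothesis of (ii) but precisely what is being established, so that appeal is circular. (c) Even granting (iii), your candidate $\widehat\ph=\sup_{(u,v)\in S}\big[\ph(u)+\la v,\cdot-u\ra\big]$ gives $S\subset\gph\partial\widehat\ph$ with matching values, but the reverse inclusion $(U\times V)\cap\gph\partial\widehat\ph\subset(U_\ve\times V)\cap\gph\partial\ph$ demanded by Definition~\ref{vr} genuinely fails for this envelope: at points $x\in U$ where the supremum is attained at a boundary pair of $S$, the subgradients of $\widehat\ph$ are (convex combinations of) slopes drawn from $V$ that need not belong to $\partial\ph(x)$ --- this already happens when $\ph$ is smooth and convex. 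You name this as the crux but offer no mechanism to repair it beyond citing \cite{varprox}. So the hard half of the equivalence is not established by your argument.
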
\vspace*{-0.07in}
 
We say that $\varphi$ is {\em subdifferentially continuous} at $\bar{x}$ for $\ov\in \partial\varphi(\bar{x})$ if for any $\epsilon>0$, there is $\delta>0$ with $|\varphi(x)-\varphi(\bar{x})|<\epsilon$ whenever $(x,v)\in\gph\partial\varphi\cap (\mathbb{B}_\delta(\ox)\times\mathbb{B}_\delta(\ov))$. {Functions that are both prox-regular and subdifferentially continuous are called {\em continuously prox-regular}.  When the level of prox-regularity is specified, we refer to it as {\em continuous $r$-level prox-regularity} for some $r \ge 0$.}
{Note that the continuous prox-regularity is a robust property, i.e., if $\varphi$ is continuously prox-regular at $\ox$ for $\ov$, then it is also continuously prox-regular at any $(x,v)$ sufficiently close to $(\ox,\ov)$; see \cite[Proposition 3.6.]{hs24}}.  This is a major class of extended-real-valued functions in second-order variational analysis, being a common roof for particular collections of functions important for applications as, e.g., convex and $\mathcal{C}^{1,1}$ functions, amenable functions, etc.; see \cite[Chapter~13]{Rockafellar98}.
  
Given a function $\varphi:\R^n\to\overline{\R}$ and a parameter value $\lambda>0$, we always assume that $\ph$ is {\em proper} (i.e., $\dom\ph\ne\emp$) and define the {\em Moreau envelope} $e_\lambda\varphi$ and {\em proximal mapping} $\textit{\rm Prox}_{\lambda\varphi}$ by, respectively,
\begin{equation}\label{Moreau}
e_\lambda\varphi(x):=\inf\Big\{\varphi(y)+\frac{1}{2\lambda}\|y-x\|^2\;\Big|\;y\in \R^n\Big\},\quad x\in\R^n,
\end{equation}	
\begin{equation}\label{ProxMapping} 
\operatorname{Prox}_{\lambda\varphi}(x):= \operatorname{argmin}\Big\{\varphi(y)+ \frac{1}{2\lambda}\|y-x\|^2\;\Big|\;y\in\R^n\Big\},\quad x\in\R^n.
\end{equation}	
A function $\varphi$ is said to be {\em prox-bounded} if there exists $\lambda>0$ such that $e_\lambda\varphi(x)>-\infty$ for some $x\in\R^n$. The supremum over all such $\lambda$ is the \textit{threshold} $\lambda_\varphi$ of prox-boundedness for $\varphi$. Prox-boundedness admits various equivalent descriptions and ensures a favorable behavior of Moreau envelopes and proximal mappings. We recall the following results from \cite[Theorem~1.25 and Example~10.32]{Rockafellar98}.\vspace*{-0.05in}

\begin{Proposition}[\bf Moreau envelopes and proximal mappings for prox-bounded functions]\label{pbimplycontinuity} Let $\varphi:\R^n\to\overline{\R}$ be l.s.c.\ and prox-bounded with threshold $\lambda_\varphi>0$. Then for all $\lambda\in(0,\lambda_\varphi)$, we have:

{\bf(i)} $e_\lambda\varphi$ is locally Lipschitzian on $\R^n$.

{\bf(ii)} $\operatorname{Prox}_{\lambda\varphi}(x)\ne\emp$ for any $x\in \R^n$.

{\bf(iii)} If $w^k\in\operatorname{Prox}_{\lambda\varphi}(x^k)$ for $k\in \N$ such that $x^k \rightarrow \bar{x}$ as $k\to\infty$, then the sequence $\left\{w^k\right\}_{k \in \N}$ is bounded and all its accumulation points lie in $\operatorname{Prox}_{\lambda\varphi}(\ox)$.
\end{Proposition}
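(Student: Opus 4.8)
The plan is to extract from prox-boundedness a single quadratic minorant of $\ph$ whose curvature is strictly smaller than $\tfrac1\lambda$, and to use it to make the proximal objective uniformly coercive. Fix $\lambda\in(0,\lambda_\ph)$ and choose $\tilde\lambda\in(\lambda,\lambda_\ph)$. Since $e_{\tilde\lambda}\ph$ is finite at some point, there exist $\tilde x\in\R^n$ and $c\in\R$ with $\ph(y)\ge c-\tfrac{1}{2\tilde\lambda}\|y-\tilde x\|^2$ for all $y\in\R^n$. Writing $h_x(y):=\ph(y)+\tfrac{1}{2\lambda}\|y-x\|^2$, this minorant yields $h_x(y)\ge c-\tfrac{1}{2\tilde\lambda}\|y-\tilde x\|^2+\tfrac{1}{2\lambda}\|y-x\|^2$, a quadratic in $y$ with strictly positive leading coefficient $\tfrac12(\tfrac1\lambda-\tfrac1{\tilde\lambda})$. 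Hence, for every bounded set $B$ of values of $x$ and every $M\in\R$, the sublevel sets $\{y\mid h_x(y)\le M\}$ are contained in a single ball $\B_R(0)$ uniformly over $x\in B$. This uniform level-boundedness is the engine for all three assertions.

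For (ii), the function $h_x$ is proper (as $\ph$ is proper) and l.s.c.\ (a sum of the l.s.c.\ $\ph$ and a continuous quadratic), and by the above it is level-bounded for each fixed $x$. The classical Weierstrass existence theorem for l.s.c.\ level-bounded functions on $\R^n$ then produces a minimizer, so $\operatorname{Prox}_{\lambda\ph}(x)\ne\emp$. Moreover, testing the infimum against any fixed $z\in\dom\ph$ gives $e_\lambda\ph(x)\le\ph(z)+\tfrac{1}{2\lambda}\|z-x\|^2$, which is bounded on bounded $x$-sets; combined with the uniform coercivity this forces every $w\in\operatorname{Prox}_{\lambda\ph}(x)$ to lie in a fixed ball $\B_R(0)$ whenever $x$ ranges over a bounded set.

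For (i), I would fix a ball $B=\B_\rho(\ox)$ and use the previous step to restrict the minimization to $y\in\B_R(0)$, so that $e_\lambda\ph(x)=\inf_{y\in\B_R(0)}\big[\ph(y)+\tfrac{1}{2\lambda}\|y-x\|^2\big]$ for all $x\in B$ (the restricted and unrestricted infima agree since a minimizer already lies in $\B_R(0)$). For each fixed $y\in\B_R(0)$, the map $x\mapsto\ph(y)+\tfrac{1}{2\lambda}\|y-x\|^2$ is differentiable in $x$ with gradient $\tfrac1\lambda(x-y)$, whose norm is bounded by a constant $L$ independent of $y$ on $B\times\B_R(0)$; hence each such map is $L$-Lipschitz on $B$. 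Since the pointwise infimum of a family of $L$-Lipschitz functions is again $L$-Lipschitz once it is finite-valued (which holds here), $e_\lambda\ph$ is $L$-Lipschitz on $B$, giving local Lipschitz continuity on $\R^n$.

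Finally, for (iii), let $w^k\in\operatorname{Prox}_{\lambda\ph}(x^k)$ with $x^k\to\ox$. As $\{x^k\}$ eventually lies in a bounded set, the uniform containment from (ii) shows $\{w^k\}\subset\B_R(0)$, so it is bounded. If $w^{k_j}\to\ow$, then lower semicontinuity of $\ph$ together with convergence of the quadratic term gives $\ph(\ow)+\tfrac{1}{2\lambda}\|\ow-\ox\|^2\le\liminf_j\big[\ph(w^{k_j})+\tfrac{1}{2\lambda}\|w^{k_j}-x^{k_j}\|^2\big]=\liminf_j e_\lambda\ph(x^{k_j})=e_\lambda\ph(\ox)$, where the last equality uses the continuity of $e_\lambda\ph$ from (i). Since the reverse inequality $e_\lambda\ph(\ox)\le\ph(\ow)+\tfrac{1}{2\lambda}\|\ow-\ox\|^2$ holds by the very definition of the Moreau envelope, equality follows and $\ow\in\operatorname{Prox}_{\lambda\ph}(\ox)$. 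The main obstacle is (i): reducing the envelope to an infimum over a fixed bounded set and then exploiting the uniform Lipschitz bound of the quadratic terms; the uniform coercivity established at the outset is precisely what makes this reduction legitimate.
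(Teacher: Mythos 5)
Your proof is correct. Note that the paper does not prove this proposition at all — it simply cites Rockafellar--Wets (Theorem~1.25 and Example~10.32) — and your argument is essentially the standard one from that source: extract a quadratic minorant of curvature $\tfrac{1}{2\tilde\lambda}<\tfrac{1}{2\lambda}$ from prox-boundedness, deduce level-boundedness of $y\mapsto\ph(y)+\tfrac{1}{2\lambda}\|y-x\|^2$ locally uniformly in $x$, and then get (ii) from Weierstrass, (i) from the envelope being an infimum of an equi-Lipschitz family over a fixed ball, and (iii) from lower semicontinuity plus continuity of $e_\lambda\ph$. All steps check out, including the uniform containment of proximal points over bounded $x$-sets that drives the reduction in (i).
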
\vspace*{-0.05in}

{The next proposition presents some properties of Moreau envelopes and proximal mappings that provides, in particular, the constructive choice of the parameter $\lm$ of the Moreau envelope \eqref{Moreau} in comparison with \cite[Theorem~4.4]{Poliquin}, which is important for the design and justification of algorithms.}}\vspace*{-0.05in} 
		
\begin{Proposition}[\bf Moreau envelopes and proximal mappings for prox-regular functions]\label{C11}  {Let $r\geq 0$ and $\varphi: \R^n \to \overline{\R}$ be  prox-bounded,  $r$-level prox-regular at $\bar{x}$ for $\bar{v} \in \partial \ph(\bar{x})$. Then we have
\begin{equation}\label{thresholdlm}
P:= \left\{\lambda \in \left(0,\frac{1}{r}\right)  \Big|\; \varphi(x) \geq  \varphi(\ox) + \langle \ov, x-\ox\rangle - \frac{1}{2\lambda}\|x-\ox\|^2 \quad \text{for all }\; x \in \R^n \right\} \ne \emptyset
\end{equation} 
with the convention that $1/0: = \infty.$ 
Let $\alpha \in P$ and $\bar{\lambda} \in (0,\alpha)$. Moreover, for all  $\lambda \in \left(0, \bar{\lambda} \right)$, there exists a neighborhood $U_\lambda$ of $\bar{x} + \lambda\bar{v}$ on which the following hold:}  

{\bf(i)} The Moreau envelope $e_\lambda\varphi$ from \eqref{Moreau} is of class ${\cal C}^{1,1}$ on the set $U_\lambda $.

{\bf(ii)} The proximal mapping $\text{\rm Prox}_{\lm\ph}$ from \eqref{ProxMapping}  is single-valued  and Lipschitz continuous on $U_\lm$ while satisfying the condition ${\rm Prox}_{\lm\ph}(\ox+\lm\ov)=\ox$.

{\bf(iii)} The gradient of $e_\lm\ph$ is calculated by:
\begin{equation}\label{GradEnvelope} 
\nabla e_\lambda\varphi(x)=\frac{1}{\lambda}\Big(x-\text{\rm Prox}_{\lambda\varphi}(x)\Big)\;\mbox{ for all }\;x\in U_\lambda.
\end{equation}
If in addition $\varphi$ is subdifferentially continuous at $\ox$ for $\ov$, then we also have
\begin{equation}\label{gradenvsub}
\nabla e_\lambda\varphi(x) = \left(\lambda I + (\partial \varphi)^{-1}\right)^{-1}(x) \quad \text{for all} \quad x \in U_\lambda.
\end{equation}
\end{Proposition}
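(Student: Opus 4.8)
The plan is to first show $P\ne\emp$ by combining the local quadratic minorization coming from $r$-level prox-regularity with the global control furnished by prox-boundedness, and then to extract (i)--(iii) from a single monotonicity estimate for the proximal points. Since $r$-level prox-regularity of $\varphi$ at $\ox$ for $\ov$ is precisely variational $(-r)$-convexity, Proposition~\ref{subgradientvar}(iii) applied with $s=-r$ at the graph point $(\ox,\ov)$ produces a radius $\rho>0$ with
\[
\varphi(x)\ge\varphi(\ox)+\langle\ov,x-\ox\rangle-\tfrac{r}{2}\|x-\ox\|^2\quad\text{for all }x\in\B_\rho(\ox).
\]
For any $\lambda\le 1/r$ one has $\tfrac1{2\lambda}\ge\tfrac r2$, so this already yields the minorization in \eqref{thresholdlm} on $\B_\rho(\ox)$. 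On the complement I would use prox-boundedness: fixing $\lambda_0\in(0,\lambda_\varphi)$ there is $\beta_0\in\R$ with $\varphi(x)\ge-\tfrac1{2\lambda_0}\|x\|^2+\beta_0$, and a direct comparison of parabolas in $t=\|x-\ox\|\ge\rho$ shows that $\tfrac1{2\lambda}\|x-\ox\|^2-\tfrac1{2\lambda_0}\|x\|^2$ dominates the affine term $\langle\ov,x-\ox\rangle+\varphi(\ox)-\beta_0$ once $\lambda$ is small enough; taking $\lambda$ below both $1/r$ and this threshold places $\lambda$ in $P$, so $P\ne\emp$. Fixing $\alpha\in P$ and $\lambda\in(0,\alpha)$, the inequality $\tfrac1{2\lambda}\ge\tfrac1{2\alpha}$ forces $\lambda\in P$ too, so throughout I may use $\varphi(y)+\tfrac1{2\lambda}\|y-\ox\|^2-\langle\ov,y-\ox\rangle\ge\varphi(\ox)$ for all $y\in\R^n$.

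\textbf{Single-valuedness and Lipschitz continuity of the proximal map.} Substituting $x=\ox+\lambda\ov$ into \eqref{ProxMapping} and expanding the square, the global minorization above shows $y=\ox$ is a global minimizer, so $\ox\in\operatorname{Prox}_{\lambda\varphi}(\ox+\lambda\ov)$; the strict gap $\tfrac1{2\lambda}-\tfrac1{2\alpha}>0$ gives quadratic growth around $\ox$, hence the singleton $\operatorname{Prox}_{\lambda\varphi}(\ox+\lambda\ov)=\{\ox\}$. By Proposition~\ref{pbimplycontinuity}(iii), for $x$ in a small enough neighborhood $U_\lambda$ of $\ox+\lambda\ov$ every $w\in\operatorname{Prox}_{\lambda\varphi}(x)$ lies near $\ox$; plugging $y=\ox$ into the envelope together with lower semicontinuity forces $\varphi(w)\to\varphi(\ox)$, so $w$ enters the level set $U_\epsilon$ of Proposition~\ref{subgradientvar}. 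For such $w$ the generalized Fermat rule gives $\tfrac1\lambda(x-w)\in\partial\varphi(w)$, a subgradient tending to $\ov$. Applying Proposition~\ref{subgradientvar}(ii) with $s=-r$ to two proximal pairs $(w,\tfrac1\lambda(x-w))$ and $(w',\tfrac1\lambda(x'-w'))$ and simplifying yields
\[
\langle x-x',w-w'\rangle\ge(1-\lambda r)\|w-w'\|^2,
\]
and since $\lambda r<1$, Cauchy--Schwarz gives $\|w-w'\|\le(1-\lambda r)^{-1}\|x-x'\|$. Taking $x=x'$ proves single-valuedness and the general case proves the Lipschitz continuity in (ii), with $\operatorname{Prox}_{\lambda\varphi}(\ox+\lambda\ov)=\ox$.

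\textbf{Smoothness of the envelope and the gradient formula.} Writing $w(x)=\operatorname{Prox}_{\lambda\varphi}(x)$ and $g(x)=\tfrac1\lambda\big(x-w(x)\big)$, I would use $w(x)$ as a feasible competitor at a nearby point to obtain $e_\lambda\varphi(x')-e_\lambda\varphi(x)\le\langle g(x),x'-x\rangle+\tfrac1{2\lambda}\|x'-x\|^2$ together with the symmetric lower bound built from $g(x')$. Since $g$ is Lipschitz by the previous step, both bounds agree to first order, which gives differentiability with $\nabla e_\lambda\varphi(x)=g(x)$, that is \eqref{GradEnvelope}, and the Lipschitz continuity of $g$ upgrades this to the ${\cal C}^{1,1}$ claim in (i). For \eqref{gradenvsub} I would set $v=\nabla e_\lambda\varphi(x)=\tfrac1\lambda(x-w)$; then $v\in\partial\varphi(w)$ rewrites as $x\in(\lambda I+(\partial\varphi)^{-1})(v)$, so $v$ belongs to the localized inverse, and the same monotonicity estimate forces that localized inverse to be single-valued, equal to $\nabla e_\lambda\varphi$.

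\textbf{Main obstacle.} The delicate point is the localization: I must guarantee that, for all $x$ near $\ox+\lambda\ov$, every associated proximal point $w$ with its subgradient $\tfrac1\lambda(x-w)$ falls inside the region $(U_\epsilon\times V)$ on which variational $(-r)$-convexity is available. This is exactly where Proposition~\ref{pbimplycontinuity}(iii) and the lower semicontinuity argument pinning $\varphi(w)\to\varphi(\ox)$ are essential; without subdifferential continuity the level-set restriction $U_\epsilon$ cannot be removed for arbitrary graph points, which is precisely why subdifferential continuity reappears as a hypothesis for \eqref{gradenvsub}.
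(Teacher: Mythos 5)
Your proof is correct, but it takes a genuinely different route from the paper's. The paper normalizes the reference pair to the origin via the tilted function $\phi(x)=\varphi(x+\ox)-\varphi(\ox)-\langle\ov,x\rangle$, checks that $\phi$ satisfies the hypotheses of the cited result \cite[Theorem~4.4]{Poliquin} (using $\alpha\in P$ to get the strict global quadratic minorization $\phi(x)>-\tfrac{1}{2\bar\lambda}\|x\|^2$ for $x\ne 0$), imports assertions (i)--(iii) for $\phi$ from that theorem as a black box, and then translates everything back through $L_\lambda(x)=x+\ox+\lambda\ov$; only the single-valuedness of $(\lambda I+(\partial\varphi)^{-1})^{-1}(x)$ is argued by hand, via the same hypomonotonicity estimate $(1-\lambda r)\|v_1-v_2\|^2\le 0$ that you use. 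You instead re-derive the Poliquin--Rockafellar conclusions from scratch: global quadratic growth of $y\mapsto\varphi(y)+\tfrac{1}{2\lambda}\|y-\ox-\lambda\ov\|^2$ gives $\operatorname{Prox}_{\lambda\varphi}(\ox+\lambda\ov)=\{\ox\}$, Proposition~\ref{pbimplycontinuity}(iii) plus the l.s.c.\ argument pins proximal points inside the $\varphi$-attentive localization $U_\epsilon\times V$, and Proposition~\ref{subgradientvar}(ii) with $s=-r$ yields $\langle x-x',w-w'\rangle\ge(1-\lambda r)\|w-w'\|^2$, from which single-valuedness, the Lipschitz bound, and then the ${\cal C}^{1,1}$ property of $e_\lambda\varphi$ via two-sided quadratic estimates all follow. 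Both proofs share the first step ($P\ne\emp$ from local prox-regularity plus prox-boundedness; the paper defers the parabola comparison to the proof of \cite[Proposition~8.46(f)]{Rockafellar98}, you spell it out). What the paper's reduction buys is brevity and a clean separation from the technical localization issues, which are absorbed into the cited theorem; what your direct argument buys is self-containedness and an explicit view of exactly where the level-set restriction $U_\epsilon$ and the subdifferential continuity enter --- your closing remark correctly identifies this as the delicate point, and your handling of it (forcing $\varphi(w)\to\varphi(\ox)$ before invoking the monotonicity characterization) is precisely what makes the direct route work. One small point to tighten: the quadratic minorant coming from prox-boundedness has the form $-\tfrac{1}{2\lambda_0}\|x-x_0\|^2+\beta$ rather than $-\tfrac{1}{2\lambda_0}\|x\|^2+\beta_0$, so you should absorb the cross term by slightly decreasing $\lambda_0$ before running the parabola comparison; this is routine and does not affect the argument.
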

\begin{proof} First we show that $P \ne \emptyset$ for $P$ from \eqref{thresholdlm}. Indeed, 
the $r$-level prox-regularity of $\varphi$ at $\ox$ for $\ov$ yields 
$$
\varphi(x) \geq \varphi(\ox) + \langle \ov, x -\ox\rangle - \frac{r}{2}\|x -\ox\|^2 \quad \text{for all }\; x \; \text{sufficiently near }\; \ox.
$$
Combining the latter with the prox-boundedness of $\varphi$, we deduce from the proof of \cite[Proposition~8.46(f)]{Rockafellar98} that there exists $\tilde{\lambda} \in (0,1/r)$ such that 
$$ 
\varphi(x) \geq \varphi(\ox) + \langle \ov, x -\ox\rangle - \frac{1}{2\tilde{\lambda}}\|x -\ox\|^2 \quad \text{for all }\; x \in \R^n,
$$
which ensures that $P \ne \emptyset$ and thus justifies \eqref{thresholdlm}. Next we show that assertions (i)--(iii) hold for all $\lambda \in \left(0, \bar{\lambda} \right)$. To this ens, fix such $\lambda$ and consider the function $\phi:\R^n \to \oR$ defined by
\begin{equation}\label{fshift}
\phi(x) := \varphi(x+\ox) -\varphi(\ox) - \langle \ov, x\rangle \quad \text{for all }\; x \in \R^n.
\end{equation}
It is clear that $\phi(0) =0$,  $0 \in \partial \phi(0)$, and
\begin{equation}\label{proxg}
\text{\rm Prox}_{\lambda \phi} (x) = \text{\rm Prox}_{\lambda \varphi}(x+\ox+\lambda \ov) -\ox  
\quad \text{and }\; 
e_\lambda \phi(x)= e_\lambda \varphi(x+\ox+\lambda \ov)  - \langle \ov,x\rangle - \varphi(\ox) - \frac{\lambda}{2}\|\ov\|^2,
\end{equation}
for all $x\in \R^n.$
Since $\alpha \in P$, and $\bar{\lambda} <\alpha$, we deduce from  \eqref{fshift} that  
\begin{equation}\label{proxbm}
\phi(x) > -\frac{1}{2\bar{\lambda}}\|x\|^2  \quad \text{for all }\;x \ne 0.
\end{equation}
Moreover, the $r$-level prox-regularity of $\varphi$ at $\ox$ for $\ov$ ensures the $r$-level prox-regularity of $\phi$ at $0$ for $0$, which also tells us that $\phi$ is $1/{\bar{\lambda}}$-level prox-regular at $0$ for $0$. Combining this with \eqref{proxbm} and applying \cite[Theorem~4.4]{Poliquin} to the function $\phi$, we can find a neighborhood $X_\lambda$ of $0$ such that $\text{\rm Prox}_{\lambda \phi}$ is single-valued and Lipschitz continuous on $X_\lambda$ with $\text{\rm Prox}_{\lambda \phi}(0) =0$, while the Moreau envelope $e_\lambda \phi$ is of class $\mathcal{C}^{1,1}$ on $X_\lambda$ with 
\begin{equation}\label{gradofenvg}
\nabla e_\lambda \phi(x)= \frac{1}{\lambda}\left(x - \text{\rm Prox}_{\lambda \phi} (x) \right)  \quad \text{for all }\; x \in X_\lambda.
\end{equation}
Moreover, the following equation holds if $\phi$ is subdifferentially continuous at $0$ for $0$: 
\begin{equation}\label{gradofenvg2}
\nabla e_\lambda \phi(x)= (\lambda I+(\partial \phi)^{-1})^{-1}(x) \quad \text{for all }\; x \in X_\lambda.
\end{equation}
Defining $L_\lambda(x):= x+\ox +\lambda \ov$ for all $x \in \R^n$, we deduce that $U_\lambda:=L_\lambda(X_\lambda)$ is a neighborhood of $\ox + \lambda \ov$. Fix an arbitrary $x \in U_\lambda$ and find $x^\prime \in X_\lambda$ such that $x = x^\prime + \ox +\lambda \ov$. By \eqref{proxg}, it follows that 
$$
\text{\rm Prox}_{\lambda \varphi}(x)= \text{\rm Prox}_{\lambda \phi}(x^\prime) + \ox \quad \text{and }\;  \nabla e_\lambda \varphi(x) = \nabla e_\lambda \phi(x^\prime) + \ov.
$$
Combining the latter with \eqref{gradofenvg}, we have
\begin{equation}\label{proof(ii)}
\nabla e_\lambda \varphi(x) = \nabla e_\lambda \phi(x')+\ov =  \frac{1}{\lambda}\big(x'- \text{\rm Prox}_{\lambda \phi}(x')\big)  + \ov  = \frac{1}{\lambda}\big(x- \text{\rm Prox}_{\lambda \varphi}(x)\big),
\end{equation}
which verifies \eqref{GradEnvelope}. Supposing further that $\varphi$ is subdifferentially continuous at $\ox$ for $\ov$, we show that \eqref{gradenvsub} holds. Indeed, due to \eqref{fshift}, $\phi$ is also subdifferentially continuous at $0$ for $0$, and thus we deduce from \eqref{gradofenvg2} that $\nabla e_\lambda \phi(x')= (\lambda I+ (\partial \phi)^{-1})^{-1}(x') $. This implies that
$$
\nabla e_\lambda \varphi(x) = \nabla e_\lambda \phi(x') +\ov \in   \partial \phi (x' - \lambda \nabla e_\lambda \phi(x')) +\ov = \partial \varphi(\ox+x'-\lambda \nabla e_\lambda \phi(x')) = \partial\varphi(x-\lambda \nabla e_\lambda \varphi(x)),
$$
which tells us in turn that
\begin{equation}\label{gradften}
\nabla e_\lambda \varphi(x) \in (\lambda I + (\partial\varphi)^{-1})^{-1}(x) \ne \emptyset.
\end{equation}
Let us next verifies that $(\lambda I + (\partial\varphi)^{-1})^{-1}(x)$ is singleton. Suppose that $v_1, v_2 \in (\lambda I + (\partial\varphi)^{-1})^{-1}(x)$ and get  that $v_1 \in \partial\varphi(x- \lambda v_1)$ and $v_2 \in \partial\varphi(x-\lambda v_2)$. The continuous $r$-level prox-regularity of $\varphi$ at $\ox$ for $\ov$ yields  
$$
\langle x - \lambda v_1 - x + \lambda v_2 , v_1 -v_2 \rangle \geq -r\| x -\lambda v_1 - x+\lambda v_2\|^2,
$$
or $(1- \lambda r) \|v_1-v_2\|^2 \leq 0$, which tells us that $v_1 = v_2$ since $\lambda \in (0,1/r)$. Therefore, $ (\lambda I + (\partial\varphi)^{-1})^{-1}(x)$ is singleton. Combining the latter with \eqref{gradften}, we obtain \eqref{gradenvsub} and  thus complete the proof.
\end{proof}

\begin{Remark}[\bf on the choice of  the  Moreau envelope parameter] \label{choiceMr}\rm As in Proposition~\ref{C11}, the parameter $\lambda$ ensuring the fulfillment of the assertions in this result depends on the value of $\bar{\lambda}$. The determination of $\lambda$ can be simplified in the following situations:

\medskip
{\bf(i)} When $\ox \in \text{\rm argmin}\,\varphi$, we have $\varphi(x) \geq \varphi(\ox)$ for all $x \in \R^n$. In this case, it follows that $P= (0,1/r)$ and thus $\lambda$ can be chosen arbitrarily in $(0,1/r)$, where $r$ is the level of prox-regularity of $\varphi$ at $\ox$ for $\ov$.

\medskip
{\bf(ii)} When $\varphi$ is $r$-weakly convex, we always have
$$
\varphi(y) \geq \varphi(x) + \langle v, y - x\rangle - \frac{r}{2}\|y - x\|^2
\quad \text{for all } x, y \in \R^n,
$$
and hence $\lambda$ can be chosen in $(0,1/r)$. In the special case where $\varphi$ is convex, we have $r=0$, which implies that $\lambda$ can be chosen arbitrarily in $(0,\infty)$.

\medskip 
{\bf(iii)} When $\ox \in \text{\rm Prox}_{\alpha \varphi} (\ox+\alpha\ov)$ for some $\alpha >0$, we have $\alpha \in P$. Indeed, it follows from \eqref{ProxMapping} that
$$
\varphi(x) + \frac{1}{2\alpha}\|x- \ox -\alpha\ov\|^2 \geq \varphi(\ox) + \frac{1}{2\alpha}\|\ox-\ox-\alpha \ov \|^2 \quad \text{for all }\; x \in \R^n,
$$
which is equivalent to
$$
\varphi(x) \geq \varphi(\ox) + \langle \ov, x-\ox\rangle - \frac{1}{2\alpha}\|x-\ox\|^2 \quad x\in\R^n,
$$
and thus $\alpha \in P$. In this case, we can choose $\lambda \in (0,\alpha)$ ensuring the fulfillment of Proposition~\ref{C11}.
\end{Remark}

The fundamental notion of {\em tilt stability} for local minimizers \cite{Poli} is formulated as follows.\vspace*{-0.05in}

\begin{Definition}[\bf tilt-stable local minimizers]\label{def:tilt} \rm Given $\varphi\colon\R^n\to\oR$, a point $\ox\in\dom\varphi$ is a {\em tilt-stable local minimizer} of $\varphi$ if there exists a number $\gamma>0$ such that the mapping
\begin{equation*}
M_\gamma\colon v\mapsto{\rm argmin}\big\{\varphi(x)-\langle v,x\rangle\;\big|\;x\in\B_\gamma(\ox)\big\}
\end{equation*}
is single-valued and Lipschitz continuous on some neighborhood of $0\in\R^n$ with $M_\gamma(0)=\{\ox\}$. By a {\sc modulus} of tilt stability of $\ph$ at $\ox$, we understand a Lipschitz constant of $M_\gamma$ around the origin.
\end{Definition}\vspace*{-0.05in}

The next proposition, taken from \cite[Proposition~2.9]{kmp22convex} and \cite[Proposition~3.1]{DL}, provides a precise relationship between strong variational convexity, tilt stability, and strong metric regularity of subgradient mappings associated with continuously prox-regular functions.\vspace*{-0.06in}

\begin{Proposition}[\bf strong variational convexity and tilt stability]\label{equitiltstr} Let $\sigma>0$ and  $\varphi:\R^n\to\overline{\R}$ be continuously prox-regular at $\bar{x}\in\dom\varphi$ for $0\in\partial\varphi(\bar{x})$. The following assertions are equivalent:
		
{\bf(i)} $\varphi$ is $\sigma$-strongly variationally convex at $\ox$ for $\ov=0$. 
		
{\bf(ii)} $\ox$ is a tilt-stable local minimizer of $\varphi$ with modulus $\sigma^{-1}$.\\[0.05ex]
In this case, $\partial \varphi$ is strongly metrically regular around $(\ox,0).$ 
\end{Proposition}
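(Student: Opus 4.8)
The plan is to establish the two implications separately, taking the subgradient characterizations of Proposition~\ref{subgradientvar} as the bridge to variational $\sigma$-convexity and reading the concluding strong metric regularity off the single-valued Lipschitzian localization that tilt stability furnishes. I would settle the transparent implication (i)$\Rightarrow$(ii) first and treat the quantitative modulus bookkeeping in (ii)$\Rightarrow$(i) as the delicate part.

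For (i)$\Rightarrow$(ii), $\sigma$-strong variational convexity hands me a $\sigma$-strongly convex $\widehat\ph$ with $\widehat\ph\le\ph$ on a convex neighborhood $U$ of $\ox$, coincidence of the localized subgradient graphs, and $\widehat\ph=\ph$ at common subgradient points; as $0\in\partial\ph(\ox)$, also $0\in\partial\widehat\ph(\ox)$. For the globally $\sigma$-strongly convex $\widehat\ph$ the tilted minimizer map $\widetilde M(v):=\operatorname{argmin}\{\widehat\ph(x)-\langle v,x\rangle\mid x\in\R^n\}$ is single-valued with $\widetilde M(0)=\ox$ and $\sigma^{-1}$-Lipschitz (uniqueness from strong convexity, the modulus from strong monotonicity of $\partial\widehat\ph$ and Cauchy--Schwarz). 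Taking $\gamma>0$ with $\B_\gamma(\ox)\subseteq U$ and $v$ small, I would show $M_\gamma(v)=\{\widetilde M(v)\}$: for $x\in\B_\gamma(\ox)$ the chain $\ph(x)-\langle v,x\rangle\ge\widehat\ph(x)-\langle v,x\rangle\ge\widehat\ph(\widetilde M(v))-\langle v,\widetilde M(v)\rangle=\ph(\widetilde M(v))-\langle v,\widetilde M(v)\rangle$ shows that $\widetilde M(v)$ minimizes $\ph-\langle v,\cdot\rangle$ over $\B_\gamma(\ox)$, and the quadratic growth of the strongly convex $\widehat\ph$ at $\widetilde M(v)$ forces any minimizer to coincide with it. Hence $M_\gamma$ inherits single-valuedness and the modulus $\sigma^{-1}$, which is (ii).

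For (ii)$\Rightarrow$(i), tilt stability makes $M_\gamma$ single-valued, $\sigma^{-1}$-Lipschitz, with $M_\gamma(0)=\ox$; since $M_\gamma(v)$ is interior to $\B_\gamma(\ox)$ for small $v$, Fermat's rule yields $v\in\partial\ph(M_\gamma(v))$, so $\gph M_\gamma\subseteq\gph(\partial\ph)^{-1}$ near $(0,\ox)$. The substantial step is the reverse inclusion, namely that $M_\gamma(v)$ is the only nearby solution of $v\in\partial\ph(u)$; this is where I would invoke continuous prox-regularity, comparing the hypomonotone lower bound $\ph(x)\ge\ph(u)+\langle v,x-u\rangle-\tfrac r2\|x-u\|^2$ valid near a subgradient pair with the quadratic growth of the tilted function at $M_\gamma(v)$ to exclude spurious nearby stationary points. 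With the reverse inclusion the localized pairs of $\gph\partial\ph$ are exactly $(M_\gamma(v),v)$, and the uniform quadratic growth $\ph(x)\ge\ph(M_\gamma(v))+\langle v,x-M_\gamma(v)\rangle+\tfrac\sigma2\|x-M_\gamma(v)\|^2$ is then Proposition~\ref{subgradientvar}(iii), giving (i).

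The step I expect to be hardest is producing that uniform quadratic growth with the exact constant $\sigma$ out of the bare $\sigma^{-1}$-Lipschitz bound on $M_\gamma$, since a Lipschitz solution map combined only with the hypomonotonicity of prox-regularity need not force the matching modulus. To recover it I would pass to the $\mathcal C^{1,1}$ Moreau envelope $e_\lambda\ph$ of Proposition~\ref{C11}, whose gradient equals $\tfrac1\lambda(\mathrm{Id}-\operatorname{Prox}_{\lambda\ph})$, relate tilt stability of $\ph$ to local strong convexity of $e_\lambda\ph$ through the proximal map, and carry the sharp modulus back to the $\sigma$-strong monotonicity of $\partial\ph$; this second-order transfer is the analytic heart of the argument. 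Finally, irrespective of direction, the single-valued Lipschitzian localization of $(\partial\ph)^{-1}$ around $(0,\ox)$ is, by the standard equivalence between strong metric regularity and single-valued Lipschitz inverse localizations, the same as strong metric regularity of $\partial\ph$ around $(\ox,0)$ from Definition~\ref{met-reg}, with robustness of continuous prox-regularity propagating it over a full neighborhood.
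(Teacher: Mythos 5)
The paper itself gives no proof of this proposition: it is imported from \cite[Proposition~2.9]{kmp22convex} and \cite[Proposition~3.1]{DL}, so your attempt can only be judged on its own merits. Your implication (i)$\Rightarrow$(ii) is essentially complete and correct: the comparison chain $\ph(x)-\la v,x\ra\ge\widehat{\ph}(x)-\la v,x\ra\ge\widehat{\ph}(\widetilde M(v))-\la v,\widetilde M(v)\ra=\ph(\widetilde M(v))-\la v,\widetilde M(v)\ra$ is legitimate because $(\widetilde M(v),v)$ is a common element of the two localized subgradient graphs, and the global strong convexity of $\widehat{\ph}$ forces any other minimizer of $\ph-\la v,\cdot\ra$ over $\B_\gamma(\ox)$ to coincide with $\widetilde M(v)$; the modulus $\sigma^{-1}$ then comes out of strong monotonicity of $\partial\widehat{\ph}$ exactly as you say.

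The gap is in (ii)$\Rightarrow$(i), precisely where you flag it, and it is a genuine one rather than a routine detail. Two steps are left unproved. First, the ``reverse inclusion'' (that every $(u,v)\in\gph\partial\ph$ with $\ph(u)$ near $\ph(\ox)$ and $(u,v)$ near $(\ox,0)$ satisfies $u=M_\gamma(v)$) is asserted with a generic appeal to prox-regularity; making it work requires combining the hypomonotonicity estimate with a \emph{uniform} quadratic growth of the tilted functions at $M_\gamma(v)$, which is itself part of what you are trying to establish — this circularity is resolved in \cite{DL} but not in your sketch. Second, and more seriously, passing from ``$M_\gamma$ is $\sigma^{-1}$-Lipschitz'' to the $\sigma$-strong monotonicity of the localized $\partial\ph$ (equivalently, quadratic growth with constant $\sigma/2$, which is what Proposition~\ref{subgradientvar} needs) is not automatic: a monotone Lipschitz map need not be cocoercive with the matching constant, and your proposed detour through $e_\lambda\ph$ is a hope rather than an argument. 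The standard device that closes this is to note that $M_\gamma$ is (locally) the gradient of the \emph{convex} value function $v\mapsto\big(\ph+\delta_{\B_\gamma(\ox)}\big)^*(v)$; for gradients of convex functions, Lipschitz continuity with constant $\sigma^{-1}$ upgrades to $\sigma$-cocoercivity (Baillon--Haddad), i.e.\ $\la v_1-v_2,M_\gamma(v_1)-M_\gamma(v_2)\ra\ge\sigma\|M_\gamma(v_1)-M_\gamma(v_2)\|^2$, which is exactly condition (ii) of Proposition~\ref{subgradientvar} with $s=\sigma$ once the reverse inclusion identifies the localized graph with $\{(M_\gamma(v),v)\}$. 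Without some such mechanism the sharp modulus $\sigma$ in (i) is not recovered. Your treatment of the final assertion (single-valued Lipschitz localization of $(\partial\ph)^{-1}$ equals strong metric regularity in the sense of Definition~\ref{met-reg}) is fine.
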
\vspace*{-0.06in}
 
Now we recall some other notions of second-order generalized differentiation in variational analysis. Given a function $\ph\colon\R^n\to\oR$ with $\ox\in\dom\ph$, consider the family of second-order finite differences
\begin{equation*}
\Delta^2_\tau\varphi(\bar{x},v)(u):=\frac{\varphi(\bar{x}+\tau u)-\varphi(\bar{x})-\tau\langle v,u\rangle}{\frac{1}{2}\tau^2}
\end{equation*}
and define the {\em second subderivative} of $\varphi$ at $\ox$ for $v\in\R^n$ and $w\in\R^n$ by
\begin{equation*}
d^2\varphi(\ox,v)(w):=\liminf_{\genfrac{}{}{0pt}{}{\tau\downarrow 0}{u\to w}}\Delta^2_\tau\varphi(\ox,v)(u).
\end{equation*}
{Then $\ph$ is {\em twice epi-differentiable} at $\ox$ for $v$ if for every $w\in\R^n$ and $\tau_k\downarrow 0$ there is a sequence $w^k\to w$ with $
\Delta^2_{\tau_k}\varphi(\bar{x},v)(w^k) \to d^2\varphi(\ox,v)(w)\;\mbox{ as }\;k\to\infty.$}
Twice epi-differentiability has been recognized as an important concept of second-order variational analysis with numerous applications to optimization; see, e.g., \cite{Rockafellar98}. To formulate the related notion of \textit{generalized twice differentiability} of functions, {we proceed following \cite{roc}}. Given a quadratic matrix $A \in \R^{n \times n}$, denote the {\em quadratic form} associated with the matrix $A$ by
$$
q_A(w) := \la w, Aw \ra\;\mbox{ for all }\; w \in \R^n.
$$ 
A function $q: \R^n \to \overline{\R}$ is called a \textit{generalized quadratic form} if it is expressible as $q = q_A + \delta_L$, where $L$ is a linear subspace of $\R^n$, and where $A \in \R^{n \times n}$ is a symmetric matrix. It is said that $\varphi: \R^n \to \overline{\R}$ is \textit{generalized twice differentiable} at $\bar{x}$ for $\bar{v} \in \partial \varphi(\bar{x})$ if it is twice epi-differentiable at $\bar{x}$ for $\bar{v}$ and if the second-order subderivative $d^2 \varphi(\bar{x},\bar{v})$ is a generalized quadratic form. The next notion was first proposed in \cite{roc} for convex functions and then extended in \cite{gtdquad,quadcharvar,roc24} for a general class of extended-real-valued functions $\ph\colon\R^n\to\oR$. The \textit{quadratic bundle} $\mathrm{quad}\,\varphi (\bar{x}|\bar{v})$ of $\varphi$ at $\bar{x} \in \dom \varphi$ for $\bar{v} \in \partial \varphi(\bar{x})$ is defined as the collection of generalized quadratic forms $q$ for which there exists $(x_k,v_k) \xrightarrow{\Omega_\varphi} (\bar{x},\bar{v})$ such that $\varphi(x_k) \to \varphi(\bar{x})$ and the sequence of generalized quadratic forms $q_k = \frac{1}{2} d^2 \varphi(x_k,v_k)$ converges epigraphically to $q$ as $k\to\infty$, where
$$
\Omega_\varphi :=\big\{(x,v) \in \gph \partial \varphi\;\big|\;\varphi\;\text{ is generalized twice differentiable at }x \;\text{for } v \big\}.
$$
For any $q \in \mathrm{quad}\,\varphi (\bar{x}|\bar{v})$, by combining \cite[Lemma~3.7 and Proposition~3.33]{GfOu22}, we obtain the following relationship between the second-order subdifferential in \eqref{2nd} and the quadratic bundle as follows.\vspace*{-0.07in}

\begin{Proposition}[\bf second-order subdifferentials and quadratic bundles]\label{2ndandquad} Let $\varphi:\R^n\to\oR$ be continuously prox-regular at $\ox \in \dom \varphi$ for $\ov \in \partial\varphi(\ox)$. Then we have the inclusion
 $$
\gph\partial q \subset \gph\partial^2 \varphi(\ox,\ov) \quad \text{for any }\; q \in \mathrm{quad}\,\varphi (\bar{x}|\bar{v}).
$$
\end{Proposition}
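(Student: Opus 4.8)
The plan is to recast the desired inclusion entirely in terms of the limiting normal cone to $\gph\partial\varphi$ and then to realize each element of $\gph\partial q$ as a limit of \emph{regular} normals taken along a sequence of graph points approaching $(\ox,\ov)$. By the definition \eqref{2nd} of the second-order subdifferential together with the coderivative formula \eqref{lim-cod}, the target inclusion $\gph\partial q\subset\gph\partial^2\varphi(\ox,\ov)$ is equivalent to the statement that, for every $(u,w)\in\gph\partial q$, one has $(w,-u)\in N_{\gph\partial\varphi}(\ox,\ov)$. Fix $q\in\mathrm{quad}\,\varphi(\ox|\ov)$, and let $(x_k,v_k)\to(\ox,\ov)$ with $(x_k,v_k)\in\Omega_\varphi$, $\varphi(x_k)\to\varphi(\ox)$, and $q_k:=\tfrac{1}{2}d^2\varphi(x_k,v_k)$ epi-converging to $q$, as furnished by the definition of the quadratic bundle.

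First I would pin down the local geometry of $\gph\partial\varphi$ at each approximating point $(x_k,v_k)$. Since $(x_k,v_k)\in\Omega_\varphi$, the function $\varphi$ is twice epi-differentiable there, which yields that the tangent cone satisfies $T_{\gph\partial\varphi}(x_k,v_k)=\gph D(\partial\varphi)(x_k,v_k)=\gph\partial q_k$ (equivalently, $\partial\varphi$ is proto-differentiable at $(x_k,v_k)$ with graphical derivative $\partial q_k$, via the standard correspondence between twice epi-differentiability and proto-differentiability). Because $q_k$ is a generalized quadratic form, $\gph\partial q_k$ is a linear subspace of $\R^n\times\R^n$ (this is the content of \cite[Lemma~3.7]{GfOu22}), and the self-adjointness of the underlying symmetric matrix gives the symplectic duality $J(\gph\partial q_k)=(\gph\partial q_k)^{\perp}$, where $J(a,b):=(b,-a)$. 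Combining this with the always-valid polarity \eqref{dua}, namely $\widehat N_{\gph\partial\varphi}(x_k,v_k)=\big[T_{\gph\partial\varphi}(x_k,v_k)\big]^{*}=(\gph\partial q_k)^{\perp}$, I obtain the crucial pointwise implication: if $(u_k,w_k)\in\gph\partial q_k$, then $(w_k,-u_k)\in\widehat N_{\gph\partial\varphi}(x_k,v_k)$.

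Next I would pass to the limit. Take any $(u,w)\in\gph\partial q$. The heart of the argument is to produce $(u_k,w_k)\in\gph\partial q_k$ with $(u_k,w_k)\to(u,w)$, which amounts to the graphical convergence $\gph\partial q_k\to\gph\partial q$ extracted from the epi-convergence $q_k\to q$; this is exactly \cite[Proposition~3.33]{GfOu22}. With such a sequence in hand, the pointwise implication above gives $(w_k,-u_k)\in\widehat N_{\gph\partial\varphi}(x_k,v_k)$ for every $k$. Since $(x_k,v_k)\to(\ox,\ov)$ inside $\gph\partial\varphi$ and $(w_k,-u_k)\to(w,-u)$, the definition of the limiting normal cone \eqref{lnc} yields $(w,-u)\in N_{\gph\partial\varphi}(\ox,\ov)$, i.e.\ $w\in\partial^2\varphi(\ox,\ov)(u)$. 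As $(u,w)\in\gph\partial q$ was arbitrary, this establishes the claimed inclusion.

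The main obstacle is the graphical convergence of the subgradient mappings in the third step, since the forms $q_k$ and their limit $q$ need not be convex, so Attouch's theorem does not apply directly. Here I would invoke the continuous prox-regularity of $\varphi$ at $(\ox,\ov)$ together with its robustness: it supplies a uniform level $r\ge 0$ with $d^2\varphi(x_k,v_k)(\cdot)\ge -r\|\cdot\|^2$ for all large $k$, whence each convexified form $q_k+\tfrac{r}{2}\|\cdot\|^2$ is a \emph{convex} generalized quadratic form. Because adding the continuous quadratic $\tfrac{r}{2}\|\cdot\|^2$ preserves epi-convergence, the convex Attouch-type theorem applies to the shifted forms and then transfers back to $\partial q_k$; this uniform-convexification reduction is precisely what underlies \cite[Proposition~3.33]{GfOu22}. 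A secondary point requiring care is checking that the twice-epi-differentiability hypothesis encoded in $\Omega_\varphi$ delivers both the genuine equality $T_{\gph\partial\varphi}(x_k,v_k)=\gph\partial q_k$ (geometric derivability, not merely an inclusion) and the subspace structure used in the symplectic duality.
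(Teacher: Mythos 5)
Your argument is correct. Note that the paper itself gives no proof of this proposition: it is stated as a direct consequence of \cite[Lemma~3.7 and Proposition~3.33]{GfOu22}, so there is no in-paper argument to compare against. What you have written is a faithful unpacking of exactly what those two citations deliver -- the self-dual subspace structure of $\gph\partial q_k$ under $J(a,b)=(b,-a)$ combined with the polarity $\widehat N_{\gph\partial\varphi}(x_k,v_k)=[T_{\gph\partial\varphi}(x_k,v_k)]^{*}=(\gph\partial q_k)^{\perp}$, followed by the graphical convergence $\gph\partial q_k\to\gph\partial q$ and the definition \eqref{lnc} of the limiting normal cone. The two points you flag as delicate are handled correctly: the identity $D(\partial\varphi)(x_k,v_k)=\partial q_k$ at the approximating points requires continuous prox-regularity \emph{at} $(x_k,v_k)$, which follows from the robustness of that property recorded in Section~\ref{sec:pre}, and the same robustness supplies the uniform level $r$ needed for your convexification reduction to the Attouch-type equivalence between epi-convergence of the forms and graphical convergence of their subdifferentials.
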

\vspace*{-0.25in}

\section{Coderivative-Based Newton Method for Subgradient Systems}\label{sec:general}\vspace*{-0.05in}

In this section, we design and justify a novel generalized Newton method to solve {\em subgradient inclusions}
\begin{equation}\label{sub-incl}
0 \in \partial\varphi(x),\quad x\in\R^n,
\end{equation}
where $\ph\colon\R^n\to\overline{\R}$ is an l.s.c.\ and \textit{prox-bounded} function, and where $\partial$ stands for the limiting subdifferential \eqref{lim-sub}. Recall that in the case of a $\mathcal{C}^2$-smooth function $\varphi$, the classical Newton method constructs the iterative procedure given in \eqref{clas-newton} and \eqref{newton-iter}. A natural idea, which we explored in our previous publications, is to replace $\nabla\varphi(x^k)$ and $\nabla^2\varphi(x^k)$ with a vector $v^k \in \partial\varphi(x^k)$ and $\partial^2\varphi(x^k,v^k)$, respectively. In this way, the iterative sequence $\{x^k\} \subset \R^n$ is constructed by
\begin{equation}\label{developnm}
{x^{k+1}=x^k+d^k} \quad \text{with } {- v^k \in \partial^2\varphi(x^k,v^k)(d^k), \;  v^k \in \partial\varphi(x^k),\quad k \in \N}.
\end{equation}
However, to proceed as in \eqref{developnm} is not possible if the subgradient sets $\partial\varphi(x^k)$ are empty for some $k\in\N$, which is not generally excluded. We now propose to replace $x^k$ with another \textit{approximate vector} $\Hat{x}^k$ that ensures that the limiting subdifferential $\partial\varphi(\Hat{x}^k)$ is nonempty and that $\Hat{x}^k$ is not far from $x^k$. This leads us to the iterative sequence $\{x^k\} \subset \R^n$ constructed by
\begin{equation}\label{newton-iternonsmooth}
{x^{k+1}=\Hat{x}^k+d^k} \quad \text{with } {- \Hat{v}^k \in \partial^2\varphi(\Hat{x}^k,\Hat{v}^k)(d^k), \quad k \in\N},
\end{equation}
where $(\Hat{x}^k,\Hat{v}^k)\in \gph\partial\varphi$. This general framework plays a crucial role in designing and justifying our new coderivative-based Newton-type algorithms for solving nonsmooth and nonconvex optimization problems in Sections~\ref{sec:localNMcompo} and  \ref{sec:globalNMcompo}. 
 
Based on \eqref{newton-iternonsmooth}, a key aspect in the development and validation of our Newtonian algorithms is to establish the existence of and explicitly determine a direction $d \in \R^n$ that satisfies the \textit{generalized Newton system}  (or the \textit{second-order subdifferential inclusion}) 
\begin{equation}\label{newton-inc}  
-v \in \partial^2\varphi(x,v)(d)
\end{equation}  
for a given $(x,v) \in \gph \partial\varphi$.  Efficient conditions for {\em solvability} of \eqref{newton-inc} presented in \cite[Theorems~3.1 and 3.2]{BorisKhanhPhat} ensure that the proposed algorithm is {\em well-defined}. Observe that the local closedness assumption for the limiting subdifferential graph imposed below holds in very general settings, especially when $\ph\colon\R^n\to\oR$ is subdifferentially continuous; see Section~\ref{sec:pre}. The following results are {taken from} \cite[Theorems~3.1 and 3.2]{BorisKhanhPhat}.\vspace*{-0.05in}

\begin{Proposition}[\bf solvability of generalized Newton systems]\label{solvabilityMOR}  
Let $\ph\colon\R^n\to\oR$ be such that $\gph \partial \varphi$ is locally closed around $(\ox,\ov) \in \gph \partial\varphi$. The following assertions hold:

{\bf(i)} If the subgradient mapping $\partial\ph\colon\R^n\tto\R^n$ is strongly metrically subregular at $(\ox,\ov)$, then there exists $d\in\R^n$ satisfying the generalized Newton system \eqref{newton-inc} for $(x,v):=(\ox,\ov)$.

{\bf(ii)} If the subgradient mapping $\partial\ph$ is strongly metrically regular around $(\ox,\ov)$, then there exists a neighborhood $U\times V$ of $(\ox,\ov)$ such that for each $(x,v)\in\gph\partial\ph\cap(U\times V)$, we find a direction $d\in\R^n$ satisfying the generalized Newton system \eqref{newton-inc}.  
\end{Proposition}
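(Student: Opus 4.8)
The plan is to treat both assertions through a single reduction and then split on the available regularity. Write $F:=\partial\varphi$, so that, by the coderivative definition \eqref{lim-cod} and the second-order construction, solving the generalized Newton system \eqref{newton-inc} at a graph point $(x,v)$ means finding $d$ with $(-v,-d)\in N_{\gph F}(x,v)$. Since $\gph F^{-1}$ is the image of $\gph F$ under the orthogonal involution $(x,v)\mapsto(v,x)$, the limiting normal cones are swapped accordingly, and $(-v,-d)\in N_{\gph F}(x,v)$ is equivalent to $-d\in D^*F^{-1}(v,x)(v)$. Hence \eqref{newton-inc} is solvable \emph{if and only if} the coderivative value $D^*F^{-1}(v,x)(v)$ is nonempty, any of its elements producing an admissible $d$. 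Thus both parts reduce to a nonemptiness statement for $D^*F^{-1}$ in the fixed direction $v$.

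For assertion (ii) this is quick and robust. Strong metric regularity of $F$ around $(\ox,\ov)$ persists at all nearby graph points, so for each $(x,v)\in\gph F$ close to $(\ox,\ov)$ the inverse $F^{-1}$ admits a single-valued Lipschitzian localization $\vartheta$ around $(v,x)$ by Definition~\ref{met-reg}(i). Because the coderivative is a local object, $D^*F^{-1}(v,x)=D^*\vartheta(v)$, and the scalarization formula \eqref{scal} gives $D^*\vartheta(v)(v)=\partial\langle v,\vartheta\rangle(v)$, which is the limiting subdifferential of a locally Lipschitzian function and is therefore nonempty. By the reduction above, each such $(x,v)$ yields a direction $d$, and the persistence of strong metric regularity supplies the required neighborhood $U\times V$.

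For assertion (i) only strong metric subregularity at the single point $(\ox,\ov)$ is available, and the argument is necessarily more delicate since no Lipschitzian localization of $F^{-1}$ is at hand. By the Levy--Rockafellar criterion recalled in Section~\ref{sec:pre}, this property is equivalent to $\ker DF(\ox,\ov)=\{0\}$; dually, $F^{-1}$ is isolatedly calm at $(\ov,\ox)$, so there is $\kappa>0$ with $\|x-\ox\|\le\kappa\|v-\ov\|$ for all $(x,v)\in\gph F$ near $(\ox,\ov)$. I must show $D^*F^{-1}(\ov,\ox)(\ov)\ne\emp$, i.e.\ $(-\ov,-d)\in N_{\gph F}(\ox,\ov)$ for some $d$. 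By the outer-limit definition \eqref{lnc}, the plan is to manufacture points $(x_k,v_k)\to(\ox,\ov)$ in $\gph F$ together with regular (proximal) normals $(a_k,b_k)\in\widehat N_{\gph F}(x_k,v_k)$, obtained as $w_k-\Pi_{\gph F}(w_k)$ for suitably chosen external points $w_k$, such that $a_k\to-\ov$ and $\{b_k\}$ converges along a subsequence. Here the calmness estimate is the essential lever: it forces an a~priori bound of the form $\|b_k\|\le\kappa'\|a_k\|$ on the normalized normals, keeping them bounded and allowing passage to a genuine limiting normal. Equivalently, the injectivity $\ker DF(\ox,\ov)=\{0\}$ makes $(\ker DF(\ox,\ov))^{*}=\R^n$, so the closure of the range of the regular coderivative at nearby points is all of $\R^n$; the subregularity bound is what upgrades closure-membership to actual membership of $-\ov$.

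The step I expect to be the main obstacle is exactly this last alignment: arranging the \emph{first} coordinate of the limiting normal to equal $-\ov$ \emph{at} the reference point, rather than some nonzero multiple or ``bisector'' of it. A naive projection of a point perturbed in the direction $(-\ov,0)$ produces regular normals whose first component, after rescaling, converges only to an averaged direction (as one already sees for the smooth model $F(x)=x$), so it does not by itself deliver first coordinate $-\ov$. Overcoming this requires using the strong metric subregularity bound quantitatively to steer and renormalize the approximating sequence of regular normals before taking the limit, which is where the full force of the hypothesis in (i) is consumed; the degenerate case $\ov=0$ is trivial since then $d=0$ solves \eqref{newton-inc} because $N_{\gph F}(\ox,\ov)$ is a cone.
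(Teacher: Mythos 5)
Your reduction of \eqref{newton-inc} to the nonemptiness of $D^*(\partial\ph)^{-1}(v,x)(v)$ is correct, and your treatment of assertion (ii) — robustness of strong metric regularity, passage to the single-valued Lipschitzian localization of $(\partial\ph)^{-1}$, and the scalarization formula \eqref{scal} applied to $\langle v,\vartheta\rangle$ — is sound; this is essentially the argument of \cite{BorisKhanhPhat}, from which the present paper simply imports the proposition without reproving it.

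Assertion (i), however, is not proved. You correctly isolate the crux, namely producing a limiting normal to $\gph\partial\ph$ at $(\ox,\ov)$ whose \emph{first} component equals $-\ov$ exactly, and you correctly diagnose why metric projection of a point perturbed in the direction $(-\ov,0)$ fails: the resulting proximal normals have first component $-\ov+q_1$ with an uncontrolled tangential defect $q_1$. But the sentence about using the subregularity bound to ``steer and renormalize'' is precisely where the proof has to live, and nothing is supplied there. The missing device is penalization via Ekeland rather than projection: from $\ker D\partial\ph(\ox,\ov)=\{0\}$ extract the isolated-calmness estimate $\|x-\ox\|\le c\,\|v-\ov\|$ valid on $\gph\partial\ph$ near $(\ox,\ov)$, and for $k\in\N$ minimize $g_k(x,v):=\langle\ov,x-\ox\rangle+k\|v-\ov\|^2$ over $\gph\partial\ph\cap\B_\rho(\ox,\ov)$. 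The calmness estimate gives $g_k\ge-c^2\|\ov\|^2/(4k)$ there, so $(\ox,\ov)$ is an $O(1/k)$-minimizer; Ekeland's principle and the Fermat rule yield graph points $(x_k,v_k)\to(\ox,\ov)$ and regular normals $(a_k,b_k)\in\widehat N_{\gph\partial\ph}(x_k,v_k)$ with $a_k\to-\ov$ exactly (the first component of $-\nabla g_k$ is $-\ov$, untouched by the penalty), while the inequality $g_k(x_k,v_k)\le 0$ combined with the same calmness estimate bounds the multiplier, $k\|v_k-\ov\|\le c\|\ov\|$, keeping $b_k$ bounded; a subsequential limit then gives $(-\ov,-d)\in N_{\gph\partial\ph}(\ox,\ov)$. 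Without some such mechanism — and note also that your polarity remark only controls the convexified tangent cone, hence only the \emph{closure} of the projection of the \emph{regular} normal cone, which is strictly weaker than what is needed — part (i) remains a correctly framed claim rather than a proof.
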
 \vspace*{-0.05in} 

Proposition~\ref{solvabilityMOR} only provides a condition that guarantees the existence of a direction $d \in \R^n$ satisfying \eqref{newton-inc} without offering explicit methods to find this direction. In practice, an explicit calculation of such a direction is crucial for the implementation of our algorithms. Let us now obtain new sufficient conditions that not only guarantee the existence of the required direction but also furnish a constructive procedure to calculate it for some important classes of functions. Before establishing the theorem, we present a preliminary result from \cite[Proposition~4.2]{GfOu22} needed in the proof.\vspace*{-0.05in}

\begin{Lemma}[\bf subspace property]\label{linearsubpacelemma} Let $L$ be a subspace of $\R^{2n}$ satisfying the condition 
$(\R^n \times \{0\}) \cap L = \{(0,0)\}$. Then there exists a unique $n \times n$ matrix $C_L$ such that $(C_Le_i,e_i)\in L$ for $i = 1,\ldots, n$, where $e_i$ stands for the $i$th unit vector of $\R^n$. 
\end{Lemma}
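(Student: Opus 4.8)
The plan is to recast the hypothesis as an injectivity statement for the coordinate projection $\pi\colon\R^{2n}\to\R^n$ given by $\pi(a,b):=b$ for $a,b\in\R^n$, and then to build $C_L$ by inverting $\pi$ on $L$. I would first note that the condition $(\R^n\times\{0\})\cap L=\{(0,0)\}$ is exactly the statement $\ker(\pi|_L)=\{(0,0)\}$: if $(a,b)\in L$ satisfies $\pi(a,b)=b=0$, then $(a,0)\in(\R^n\times\{0\})\cap L$, forcing $(a,0)=(0,0)$. Hence $\pi|_L\colon L\to\R^n$ is injective.

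For existence, I would upgrade injectivity to bijectivity. Since $\dim L=n$ and $\pi|_L$ is an injective linear map into the $n$-dimensional space $\R^n$, it is a linear isomorphism of $L$ onto $\R^n$. Consequently, for each $i\in\{1,\dots,n\}$ there is a unique vector, which I denote $a_i\in\R^n$, with $(a_i,e_i)\in L$. Defining $C_L$ to be the matrix whose $i$th column is $a_i$, so that $C_Le_i=a_i$, gives $(C_Le_i,e_i)\in L$ for every $i$, which is the asserted existence.

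For uniqueness, I would argue directly from the subspace structure: if matrices $C$ and $C'$ both satisfy $(Ce_i,e_i)\in L$ and $(C'e_i,e_i)\in L$, then their difference $(Ce_i,e_i)-(C'e_i,e_i)=(Ce_i-C'e_i,0)$ lies in $L$ (as $L$ is a subspace) and simultaneously in $\R^n\times\{0\}$. The hypothesis then forces $Ce_i-C'e_i=0$ for each $i$, so $C$ and $C'$ agree on the standard basis and hence $C=C'$. I note that this half uses only injectivity of $\pi|_L$, whereas existence is where the equality $\dim L=n$ is genuinely needed.

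The step I expect to be the crux, though it is light for a lemma of this type, is securing surjectivity of $\pi|_L$, i.e.\ guaranteeing that each $e_i$ actually appears as the second component of some element of $L$. Without the dimension count $\dim L=n$ the transversality condition alone yields only injectivity (and thus uniqueness), and existence can fail, e.g.\ for $L=\{(0,0)\}$; so I would make explicit that $L$ is $n$-dimensional and combine this with $(\R^n\times\{0\})\cap L=\{(0,0)\}$ to conclude that $\pi|_L$ is bijective. Everything else is the routine column-by-column construction and the subspace subtraction argument above.
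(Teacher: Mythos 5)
Your proof is correct, and it is essentially the canonical argument; note that the paper does not actually prove this lemma but imports it from \cite[Proposition~4.2]{GfOu22}, so there is no in-paper proof to compare against. Your observation about the missing hypothesis is the substantive point here: as literally stated the lemma fails (e.g.\ for $L=\{(0,0)\}$, the transversality condition holds but no matrix $C_L$ can exist), and existence genuinely requires $\dim L=n$, which you correctly supply. In the cited source the subspaces under consideration are precisely the $n$-dimensional ones, which is why the hypothesis is not restated; and in the paper's only application of the lemma, namely $L=\gph\partial q$ for a generalized quadratic form $q=q_A+\delta_{L'}$, one has $\gph\partial q=\{(w,\,2Aw+u)\mid w\in L',\,u\in (L')^{\perp}\}$, whose dimension is $\dim L'+\dim (L')^{\perp}=n$, so the omission is harmless in context. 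With that hypothesis in place, your reduction of the transversality condition to injectivity of the projection $(a,b)\mapsto b$ restricted to $L$, the dimension count upgrading injectivity to bijectivity, the columnwise construction of $C_L$, and the subspace-subtraction argument for uniqueness are all exactly what is needed.
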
\vspace*{-0.15in}

\begin{Theorem}[\bf generalized Newton directions via solving linear systems]\label{newsolNM} If $\varphi:\R^n~\to~\oR$ is continuously prox-regular at $\ox$ for $\ov \in \partial\varphi(\ox)$, then the quadratic bundle  $\mathrm{quad}\,\varphi (\bar{x}|\bar{v})$ is nonempty. Assuming that the mapping $\partial\ph\colon\R^n\tto\R^n$ is  metrically  regular around $(\ox,\ov)$, we have for each $q \in \mathrm{quad}\,\varphi (\bar{x}|\bar{v})$ that $\partial q(w) \ne \emptyset$ whenever $w \in \R^n$. Taking further $u_i \in \partial q(e_i)$ for $i = 1,\ldots, n$, define the $n\times n$ matrix $A(\ox,\ov)$ such that its $i$th column is $u_i$. Then we have the assertions:

{\bf(i)} The matrix $A(\ox,\ov)$ is nonsingular. 

{\bf(ii)} If $d \in \R^n$ solves the linear system $A(\ox,\ov) d = -\ov$, 
then the direction $d$ satisfies the generalized Newton system \eqref{newton-inc} for $(x,v):=(\ox,\ov)$. 
\end{Theorem}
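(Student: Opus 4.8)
My plan is to treat all three assertions through the single structural fact that, for a generalized quadratic form $q=q_A+\delta_L$ with $A$ symmetric, the graph $\gph\partial q$ is a linear subspace of $\R^{2n}$ which, by Proposition~\ref{2ndandquad}, is contained in $\gph\partial^2\varphi(\ox,\ov)$. The nonemptiness of $\mathrm{quad}\,\varphi(\ox|\ov)$ I would not reprove from scratch: continuous prox-regularity guarantees generalized twice differentiability of $\varphi$ along a subset of $\gph\partial\varphi$ accumulating at $(\ox,\ov)$ with matching function values, and the epigraphical compactness of the associated forms yields a limit in the bundle; I would cite the results behind Proposition~\ref{2ndandquad} (from \cite{GfOu22,roc24}) for this.

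For the claim $\partial q(w)\neq\emptyset$ for every $w$, I first record that $\partial q(w)=2Aw+L^{\perp}$ for $w\in L$ and $\partial q(w)=\emptyset$ otherwise, so the assertion is equivalent to $L=\R^n$, in which case $\partial q$ is the single-valued linear map $w\mapsto 2Aw$ and each $u_i$ is forced to be $2Ae_i$, giving $A(\ox,\ov)=2A$. Metric regularity of $\partial\varphi$ around $(\ox,\ov)$ is equivalent, by the Mordukhovich criterion \eqref{cod-cr}, to $\ker\partial^2\varphi(\ox,\ov)=\{0\}$, which says precisely that $(\R^n\times\{0\})\cap\gph\partial^2\varphi(\ox,\ov)=\{(0,0)\}$. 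Since $\gph\partial q\subset\gph\partial^2\varphi(\ox,\ov)$, the subspace $\gph\partial q$ inherits the transversality hypothesis $(\R^n\times\{0\})\cap\gph\partial q=\{(0,0)\}$ of Lemma~\ref{linearsubpacelemma}, and I would invoke that lemma to represent $\gph\partial q$ as the graph of a linear map and to pin down the matrix $A(\ox,\ov)$.

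Granting that $A(\ox,\ov)$ is well-defined with $(e_i,u_i)\in\gph\partial q$, assertions (i) and (ii) are then immediate from the subspace property. Because $\gph\partial q$ is a linear subspace containing each $(e_i,u_i)$, it contains $(w,A(\ox,\ov)w)$ for every $w=\sum_i c_ie_i$, so $A(\ox,\ov)w\in\partial q(w)\subset\partial^2\varphi(\ox,\ov)(w)$. For (i), $A(\ox,\ov)w=0$ then gives $0\in\partial^2\varphi(\ox,\ov)(w)$, i.e. $w\in\ker\partial^2\varphi(\ox,\ov)=\{0\}$, so $A(\ox,\ov)$ is nonsingular. For (ii), if $A(\ox,\ov)d=-\ov$ then $-\ov=A(\ox,\ov)d\in\partial^2\varphi(\ox,\ov)(d)$, which is exactly the generalized Newton system \eqref{newton-inc} at $(x,v)=(\ox,\ov)$.

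The step I expect to be the real obstacle is upgrading the transversality delivered by the Mordukhovich criterion to the full-domain conclusion $L=\R^n$ (equivalently, to the mere existence of the vectors $u_i\in\partial q(e_i)$). The condition $(\R^n\times\{0\})\cap\gph\partial q=\{(0,0)\}$ only asserts that $\gph\partial q$ is a graph over its range variable, which controls the inverse relation $(\partial q)^{-1}$ rather than $\partial q$ itself; by itself it does not exclude a nontrivial $L^{\perp}$. Closing this gap is where I would have to exploit the self-adjointness of $\gph\partial q$ coming from the symmetry of $A$ together with the robustness of metric regularity along the sequence $(x_k,v_k)$ generating $q$ — propagating the domain information from the approximating forms $\tfrac12 d^2\varphi(x_k,v_k)$ to the epi-limit $q$ — and this propagation is the delicate part of the argument.
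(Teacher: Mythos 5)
Your overall architecture is the paper's: nonemptiness of the quadratic bundle by citation to the literature, the inclusion $\gph\partial q\subset\gph\partial^2\varphi(\ox,\ov)$ from Proposition~\ref{2ndandquad}, the Mordukhovich criterion \eqref{cod-cr} to obtain $(\R^n\times\{0\})\cap\gph\partial q=\{(0,0)\}$, and then pure linearity of the subspace $\gph\partial q$ to deliver (i) and (ii). Granted the existence of the $u_i$, your proofs of (i) and (ii) coincide with the paper's line for line.

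The genuine gap is the one you flag yourself: you never establish $\partial q(w)\ne\emp$ for all $w$ (equivalently $L=\R^n$ in $q=q_A+\delta_L$, equivalently the existence of the $u_i$ and hence of $A(\ox,\ov)$), and the route you sketch for closing it --- propagating domain information from the approximating forms $\tfrac12 d^2\varphi(x_k,v_k)$ along the generating sequence via robustness of metric regularity --- is not what the paper does and is left unexecuted. The paper closes this step in one line: since $\gph\partial q$ is a linear subspace of $\R^{2n}$ satisfying the transversality condition, Lemma~\ref{linearsubpacelemma} produces a unique matrix $C$ with $Ce_i\in\partial q(e_i)$ for each $i$, and linearity of the graph then gives $Cw\in\partial q(w)\ne\emp$ for every $w\in\R^n$; no appeal to the sequence $(x_k,v_k)$, to self-adjointness, or to robustness of metric regularity is made. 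So the missing ingredient, from the paper's point of view, is simply the direct application of Lemma~\ref{linearsubpacelemma} to the $n$-dimensional subspace $\gph\partial q$ (note $\dim\gph\partial q=\dim L+\dim L^{\perp}=n$, which is what makes the lemma produce the matrix) rather than any analysis of the epi-limit. Your observation that the condition $(\R^n\times\{0\})\cap\gph\partial q=\{(0,0)\}$, read literally, represents the subspace over its second (range) coordinate --- the lemma's conclusion is $(C_Le_i,e_i)\in L$, i.e.\ $e_i\in\partial q(Ce_i)$ rather than $Ce_i\in\partial q(e_i)$ --- is a fair criticism of how Lemma~\ref{linearsubpacelemma} is stated and invoked, and the paper does not address it; but the paper's proof proceeds by reading the lemma as yielding $Ce_i\in\partial q(e_i)$ and concluding $\dom\partial q=\R^n$ immediately. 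In short, your proposal stops exactly where the paper's proof does its (short) work, and the repair you propose is both different from and substantially heavier than the paper's argument.
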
\vspace*{-0.15in}
\begin{proof} The nonemptiness of the quadratic bundle $\mathrm{quad}\,\varphi (\bar{x}|\bar{v})$ is proved in \cite[Theorem 6.11]{gtdquad}, and thus we can choose an element $q \in \mathrm{quad}\,\varphi (\bar{x}|\bar{v})$. It follows from Proposition~\ref{2ndandquad} that
\begin{equation}\label{relaquad}
\gph\partial q \subset \gph\partial^2 \varphi(\ox,\ov) = \gph D^* \partial\varphi (\ox,\ov).
\end{equation}
Utilizing the metric regularity of $\partial\varphi$ around $(\ox,\ov)$ and applying the Mordukhovich criterion \eqref{cod-cr} together with  \eqref{relaquad} bring us to the relationships 
$$
(\R^n \times \{0\}) \cap \gph\partial q \subset (\R^n \times \{0\}) \cap \gph D^*\partial\varphi(\ox,\ov)=  \{(0,0)\}. 
$$
Note that $\gph\partial q$ is a linear subspace of $\R^{2n}$ since $q$ is a generalized quadratic form.  By Lemma~\ref{linearsubpacelemma}, we find a unique $n \times n$ matrix $C$ such that $C  e_i \in \partial q(e_i)$ for all $i=1,\ldots,n$. 
This implies that $\partial q(w) \ne \emptyset$ whenever $w \in \R^n$ since $\gph\partial q$ is a linear subspace of $\R^{2n}$.
Pick further any $u_i \in \partial q(e_i)$ for $i = 1,\ldots, n$ and define the $n\times n$ matrix $A(\ox,\ov)$ such that its $i$th column is $u_i$. To verify (i), observe that $w \in \R^n$ satisfies the system of linear equations $A(\ox,\ov)w =0$ if and only if $\sum_{i=1}^n w_i u_i  =0$. It follows from \eqref{relaquad} that 
$$
0 = \sum_{i=1}^n w_i u_i  \in \partial q\left(\sum_{i=1}^n w_ie_i \right) = \partial q (w) \subset \partial^2\varphi(\ox,\ov)(w),
$$
which implies by using the Mordukhovich criterion \eqref{cod-cr} that $w=0$ and thus ensures the nonsingularity of $A(\ox,\ov)$. Taking a unique solution $d$ to $A(\ox,\ov) d = -\ov$ tells us by Proposition \ref{2ndandquad} that
$$
A(\ox,\ov)d \in \partial q(d) \subset \partial^2\varphi(\ox,\ov)(d),
$$
which verifies (ii) and completes the proof of the theorem. 
\end{proof}\vspace*{-0.05in}

Theorem~\ref{newsolNM} provides a sufficient condition for the existence of a direction in the generalized Newton system \eqref{newton-inc}. Unlike our previous results in \cite{BorisKhanhPhat}, the new one is constructive and offers an explicit method to determine generalized Newton directions. Note that the new result applies to a broader class of functions than those considered in \cite{BorisKhanhPhat}. In particular, it drops the twice epi-differentiability assumption in \cite[Theorem~4.2]{BorisKhanhPhat}.\vspace*{0.03in}

Now we present an alternative approach to find a direction $d \in \R^n$ that satisfies the generalized Newton system \eqref{newton-inc} by solving an {\em optimization subproblem} involving second subderivatives. \vspace*{-0.05in}

\begin{Theorem}[\bf generalized Newton directions via solving optimization subproblems] \label{newsolvability2}  Let $\varphi:\R^n\to\overline{\R}$ be strongly variationally convex, subdifferentially continuous, and twice epi-differentiable at $\bar{x}\in\dom\varphi$ for $\bar{v}\in\partial\varphi(\bar{x})$. Then the unconstrained optimization problem 
\begin{equation}\label{subproblem2nd}
\min \quad \frac{1}{2}d^2 \varphi(\ox,\ov)(w) + \langle \ov, w\rangle\;\text{ subject to }\; w \in \R^n
\end{equation}    
admits a unique optimal solution $d \in \R^n$, which satisfies the generalized Newton system \eqref{newton-inc} for $(x,v):=(\ox,\ov)$.   
\end{Theorem}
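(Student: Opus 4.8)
The plan is to recast the subproblem \eqref{subproblem2nd} as a strongly convex program, read off its unique minimizer from a first-order condition, and then turn that condition into the coderivative inclusion \eqref{newton-inc}. Write $h:=\tfrac12 d^2\varphi(\ox,\ov)$, so that the objective in \eqref{subproblem2nd} is $\theta:=h+\langle\ov,\cdot\rangle$. Since $\langle\ov,\cdot\rangle$ is smooth, the sum rule gives $\partial\theta(w)=\partial h(w)+\ov$, and hence a point $d$ minimizing $\theta$ satisfies $0\in\partial\theta(d)$ if and only if $-\ov\in\partial h(d)$. Everything therefore rests on two facts: that $h$ is strongly convex (forcing existence and uniqueness of the minimizer), and that $\partial h(d)$ coincides with $\partial^2\varphi(\ox,\ov)(d)=D^*\partial\varphi(\ox,\ov)(d)$.

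First I would show that $h$ is $\sigma$-strongly convex, with $\sigma>0$ the modulus of strong variational convexity. Twice epi-differentiability identifies the proto-derivative of the subgradient mapping with $\partial h$, that is, $D\partial\varphi(\ox,\ov)=\partial h$. By Proposition~\ref{subgradientvar}(ii), $\partial\varphi$ is $\sigma$-strongly monotone on a neighborhood of $(\ox,\ov)$, so its proto-derivative $\partial h$ is a $\sigma$-strongly monotone maximal monotone operator; being the subgradient of the l.s.c.\ function $h$, Rockafellar's characterization of convexity via maximal monotone subgradients forces $h$ to be convex, and the strong monotonicity upgrades this to $\sigma$-strong convexity. (Equivalently, using Definition~\ref{vr} and subdifferential continuity one may replace $\varphi$ near $(\ox,\ov)$ by a genuinely $\sigma$-strongly convex $\widehat\varphi$ with the same subgradient graph, whence $d^2\varphi(\ox,\ov)=d^2\widehat\varphi(\ox,\ov)$ inherits $\sigma$-strong convexity through the quadratic-shift rule for second subderivatives.) Being proper, l.s.c., and $\sigma$-strongly convex, $\theta$ is coercive and attains its infimum at a unique point $d\in\R^n$, characterized by $-\ov\in\partial h(d)$.

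The crucial step is to identify $\partial h(d)$ with the coderivative $\partial^2\varphi(\ox,\ov)(d)$. Here I would use that, locally around $(\ox,\ov)$, the graph $\gph\partial\varphi$ agrees — by strong variational convexity together with subdifferential continuity, which absorbs the level set $U_\varepsilon$ of Definition~\ref{vr} into a full neighborhood — with the graph of the maximal monotone operator $\partial\widehat\varphi$ of a convex function. For such a graph the tangent and normal cones at $(\ox,\ov)$ are interchanged by the linear rotation $(a,b)\mapsto(b,-a)$, so the coderivative and the graphical derivative of the subgradient mapping coincide; combined with the graphical-derivative formula this yields $D^*\partial\varphi(\ox,\ov)=D\partial\varphi(\ox,\ov)=\partial h$. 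Consequently $\partial^2\varphi(\ox,\ov)(d)=\partial h(d)$, and the optimality condition $-\ov\in\partial h(d)$ becomes exactly $-\ov\in\partial^2\varphi(\ox,\ov)(d)$, i.e.\ the unique minimizer $d$ solves the generalized Newton system \eqref{newton-inc} at $(\ox,\ov)$.

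I expect the identity $D^*\partial\varphi(\ox,\ov)=D\partial\varphi(\ox,\ov)=\partial h$ to be the main obstacle: for a general set-valued map the coderivative and graphical derivative differ, and the required coincidence depends entirely on the self-adjoint (symmetric) structure of subgradient graphs of convex — here, locally variationally convex — functions, rather than on generic calculus. By contrast, the strong-convexity step and the reduction to the first-order condition $-\ov\in\partial h(d)$ are routine once the quadratic-shift behaviour of $d^2\varphi(\ox,\ov)$ and the graphical-derivative formula $D\partial\varphi(\ox,\ov)=\partial\big(\tfrac12 d^2\varphi(\ox,\ov)\big)$ are in hand.
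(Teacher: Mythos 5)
Your overall route coincides with the paper's: strong monotonicity of $\partial\varphi$ near $(\ox,\ov)$ gives strong convexity of $d^2\varphi(\ox,\ov)$ and hence a unique minimizer $d$; Fermat's rule yields $-\ov\in\partial\bigl(\tfrac12 d^2\varphi(\ox,\ov)\bigr)(d)$; and the identification $\partial\bigl(\tfrac12 d^2\varphi(\ox,\ov)\bigr)=D\partial\varphi(\ox,\ov)$ is exactly Rockafellar--Wets Theorem~13.40. But your final step contains a genuine error. The asserted identity $D^*\partial\varphi(\ox,\ov)=D\partial\varphi(\ox,\ov)$ is false in general, even for convex functions, and the rotation argument you give for it does not hold: the graph of a maximal monotone operator is only a Lipschitz manifold, and its limiting normal cone is typically much larger than the image of its tangent cone under $(a,b)\mapsto(b,-a)$. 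Concretely, take $\varphi(x)=|x|$ on $\R$ and the graph point $(0,1)$. There $T_{\gph\partial\varphi}(0,1)=(\{0\}\times\R_-)\cup(\R_+\times\{0\})$, so $D\partial\varphi(0,1)(-1)=\emptyset$, whereas $N_{\gph\partial\varphi}(0,1)$ contains $\R\times\{0\}$ and the quadrant $\R_-\times\R_+$, giving $D^*\partial\varphi(0,1)(-1)=\R_-\ne\emptyset$. So the two derivatives do not coincide, and your justification of the crucial step fails.

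The gap is repairable, and the repair is what the paper actually does: one only needs the \emph{one-sided inclusion} $D\partial\varphi(\ox,\ov)(d)\subset D^*\partial\varphi(\ox,\ov)(d)=\partial^2\varphi(\ox,\ov)(d)$, which is valid for prox-regular, subdifferentially continuous functions by Rockafellar--Wets Theorem~13.57 (and holds in the example above). Since the conclusion is only that $-\ov\in\partial^2\varphi(\ox,\ov)(d)$, the inclusion suffices; replacing your claimed equality by this inclusion, with the correct citation, closes the proof. You correctly identified this identification as the crux, but the self-adjointness heuristic you relied on does not deliver it.
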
\vspace*{-0.15in}
\begin{proof} {It follows from Proposition \ref{subgradientvar} that $\partial \varphi$ is locally strongly monotone around $(\ox,\ov)$ in the sense that there are $s>0$ and a neighborhood $W$ of $(\ox,\ov)$ such that
$$
\langle v_1 -v_2, u_1 -u_2\rangle \geq s\|u_1-u_2\|^2 \quad \text{for all }\; (u_1,v_1),(u_2,v_2) \in \gph \partial\varphi \cap W. 
$$}
Using \cite[Corollary~6.3]{Poliquin} and \cite[Proposition~13.5]{Rockafellar98}, we deduce that $d^2\varphi(\ox,\ov)$ is an l.s.c.\ strongly convex function, and thus \eqref{subproblem2nd} admits a unique optimal solution $d \in \R^n$. By 
\cite[Proposition~1.30]{Mor18}, we have the inclusion
\begin{equation}\label{fermatrule2ndsub} 
0 \in \partial \left(\frac{1}{2}d^2 \varphi(\ox,\ov) + \langle \ov, \cdot\rangle \right) (d) = \partial\left(\frac{1}{2}d^2 \varphi(\ox,\ov)   \right) (d) + \ov. 
\end{equation}
Employing the fundamental result of \cite[Theorem~13.40]{Rockafellar98} gives us the relationship between second subderivatives and graphical derivatives as follows:  
\begin{equation}\label{2ndsubandgraphical}
 \partial\left(\frac{1}{2}d^2 \varphi(\ox,\ov)   \right) (d) = (D\partial\varphi)(\ox,\ov)(d). 
\end{equation}
Moreover, it follows from \cite[Theorem~13.57]{Rockafellar98}  that 
\begin{equation}\label{relagrapcoderivative}
(D\partial\varphi)(\ox,\ov)(d) \subset  (D^*\partial\varphi)(\ox,\ov)(d) =  \partial^2\varphi(\ox,\ov) (d)
\end{equation}
Combining \eqref{fermatrule2ndsub}, \eqref{2ndsubandgraphical}, and \eqref{relagrapcoderivative}, we arrive at the claimed result. 
\end{proof}\vspace*{-0.12in}

\begin{Remark}[\bf solving the generalized Newton system]\label{findNMremark} \rm  In general, solving the second-order subdifferential inclusion \eqref{newton-inc} may be a challenging issue. Nevertheless, our approach  offers computational advantages in some rather {broad   scenarios}.

{\bf(i)} Theorem~\ref{newsolNM} provides a method to find a solution to \eqref{newton-inc} by solving {\em linear systems generated by quadratic bundles}. Quadratic bundles are defined for nonsmooth and nonconvex functions and have been extensively studied regarding optimality conditions and variational sufficiency in \cite{dsw25,quadcharvar,roc,roc24}, with additional calculus rules developed in \cite{dsw25,gtdquad}.

{\bf(ii)} Theorem~\ref{newsolvability2} enables us to obtain a solution to \eqref{newton-inc} by solving a {\em strongly convex optimization subproblem} derived from second subderivatives. Calculus rules for second subderivatives have been thoroughly explored in \cite{mms,Rockafellar98} for practical and {broad classes of} functions overwhelmingly arising  in optimization and variational analysis. Moreover, when the cost function is generalized twice differentiable, the subproblem  in \eqref{subproblem2nd} reduces to a {\em quadratic program over a subspace}, for which many efficient numerical algorithms are widely available.

{\bf(iii)} When the cost function is the {\em sum of two nonconvex functions}, which is the primary focus of this paper, Theorem~\ref{newsolvability} offers an alternative approach to finding the Newton direction by computing the Bouligand Jacobian of the proximal mapping $\partial_B \text{\rm Prox}_{\lambda g}$. Explicit formulas for these computations are provided in, e.g., \cite[Section~5]{pb14} covering a wide range of important optimization problems.

\end{Remark}\vspace*{-0.05in}
Here is our novel coderivative-based Newton method for solving the subgradient system \eqref{sub-incl}. \vspace*{-0.1in}

\begin{algorithm}[H]
\caption{\bf(general framework for coderivative-based Newton-type methods)}\label{NMcod}
\begin{algorithmic}[1]
\Require {$x^0\in \R^n$, $\eta >0$}
\For{\texttt{$k=0,1,\ldots$}}
\State\text{If $0\in \partial\varphi(x^k)$, stop; otherwise  go to the next step}
\State  {\bf Approximate step:} \text{Find  $(\Hat{x}^k,\Hat{v}^k)\in\gph\partial\varphi$  satisfying} 
\begin{equation}\label{stopcri}
\|(\Hat{x}^k, \Hat{v}^k)- (\ox,0)\| \leq \eta \|x^k- \ox\|
\end{equation}
\State {\bf Newton step:} Choose $d^k\in\R^n$ satisfying
$$ 
-\Hat{v}^k \in\partial^2\varphi(\Hat{x}^k,\Hat{v}^k)(d^k)
$$
\State \text{Set $x^{k+1}:=\Hat{x}^k+  d^k$}
\EndFor
\end{algorithmic}
\end{algorithm}\vspace*{-0.2in}

\begin{Remark}[\bf implementation of the approximate step in Algorithm \ref{NMcod}]\label{apprst} \rm The idea in the approximate step of Algorithm~\ref{NMcod} comes from  the semismooth$^*$ Newton method suggested in \cite{Helmut}.  We will compare our approach with the latter method at the end of this section. Note that formally the approximate step in Algorithm~\ref{NMcod} is always well-defined since $(\Hat{x}^k,\Hat{v}^k)$  can be chosen as $(\ox,0)$. However, in practice the solution $\ox$ is unknown. Depending on the structure of $\varphi$, we will adopt below a specific approach to selecting $\Hat{x}^k$ and $\Hat{v}^k$ that facilitates the implementation of the algorithm.

It follows from \cite[Corollary~2.29]{Mordukhovich06} that the collection of points $(x,v)\in\gph\partial\ph$ with $\partial\ph(x)\ne\emp$ is {\em dense} in the subdifferential graph, and thus we can always select $(\Hat{x}^k,\Hat{v}^k)\in\gph\partial\varphi$ arbitrarily close to $(\ox,0)$. The issue is about the  {fulfillment of estimate} \eqref{stopcri}. Let us show how to do it in the following major settings without knowing the solution $\ox$. 

{\bf(i)} When $\varphi$ is of class $\mathcal{C}^{1,1}$ around $\ox$, there exist a neighborhood $U$ of $\ox$ and  $\ell>0$ such that 
$$
\|\nabla \varphi(x) - \nabla \varphi(\ox)\| \leq \ell \|x-\ox\| \quad \text{for all }\; x \in U.  
$$
Supposing that $x^k\in U$ at the $k^{\text{th}}$ iteration, take
$\Hat{x}^k:= x^k$ and $\Hat{v}^k :=\nabla \varphi(x^k)$, which gives us
$$
\|(\Hat{x}^k,\Hat{v}^k) - (\ox,0)\|  \leq \|\Hat{x}^k -\ox\| + \|\Hat{v}^k\| =\|x^k-\ox\|+ \|\nabla \varphi(x^k)\| \leq (1+\ell)\|x^k -\ox\|
$$
and thus justifies the required condition \eqref{stopcri}.

{\bf(ii)} If $\varphi\colon\R^n\to\oR$ is {continuously $r$-level prox-regular} at $\ox$ for $0$, it follows from Proposition~\ref{C11} that for sufficiently small $\lambda >0$, there is a neighborhood $U_\lambda$ of $\ox$ such that $ \text{\rm Prox}_{\lambda \varphi}$ is single-valued and Lipschitz continuous with modulus $L_\lambda>0$ on $U_\lambda$. Having $x^k \in U_{\lambda}$ at the $k^{\text{th}}$ iteration, choose $\Hat{x}^k$ and $\Hat{v}^k$ as
\begin{equation}\label{choicproxregular} 
\Hat{x}^k:= \text{\rm Prox}_{\lambda \varphi}(x^k) \quad \text{and }\; \Hat{v}^k:= \frac{1}{\lambda}\left(x^k -  \text{\rm Prox}_{\lambda \varphi}(x^k)\right).
\end{equation} 
Therefore, we get the estimates
\begin{align*}
\|(\Hat{x}^k,\Hat{v}^k) - (x^k,0)\|\le\left(1 + \lambda ^{-1}\right)\|x^k - \text{\rm Prox}_{\lambda \varphi}(x^k)\|\\
\leq \left(1 + \lambda ^{-1} \right) (\|x^k -\ox\| + \|\text{\rm Prox}_{\lambda \varphi}(x^k) -\ox\|) \\
=\left(1 + \lambda ^{-1} \right) (\|x^k -\ox\| + \|\text{\rm Prox}_{\lambda \varphi}(x^k) -\text{\rm Prox}_{\lambda \varphi}(\ox)\|) \\
\leq  \left(1 + \lambda ^{-1} \right)  \left(\|x^k-\ox\| +L_\lambda \|x^k-\ox\| \right)= M \|x^k-\ox\|
\end{align*}
with $M:= (1+\lambda^{-1})(1+L_\lambda)$, which yields \eqref{stopcri}.  
In Section~\ref{sec:localNMcompo}, we present a choice of $(\Hat{x}^k,\Hat{v}^k)$ in the approximate step of the version of Algorithm~\ref{NMcod} for minimizing sums of two nonconvex functions. 
\end{Remark}\vspace*{-0.05in}

To justify  the {\em superlinear} local convergence of our generalized Newton method, we need the concept of {\em semismoothness}$^*$ introduced in \cite{Helmut}. To formulate this notion, recall the construction of the {\em directional limiting normal cone} to a set $\Omega\subset\R^s$ at $\oz\in\O$ in the direction $d\in\R^s$ defined in \cite{gin-mor} by
\begin{equation}\label{dir-nc}
N_\Omega(\oz;d):=\big\{v\in\R^s\;\big|\;\exists\,t_k\dn 0,\;d_k\to d,\;v_k\to v\;\mbox{ with }\;v_k\in\widehat{N}_\Omega(\oz+t_k d_k)\big\}.
\end{equation}
It is obvious that \eqref{dir-nc} agrees with the limiting normal cone \eqref{lnc} for $d=0$. The {\em directional limiting coderivative} of $F\colon\R^n\tto\R^m$ at $(\ox,\oy)\in\gph F$ in the direction $(u,v)\in\R^n\times\R^m$ is defined in \cite{g} by
\begin{equation*}
D^*F\big((\ox,\oy);(u,v)\big)(v^*):=\big\{u^*\in\R^n\;\big|\;(u^*,-v^*)\in N_{\text{gph}\,F}\big((\ox,\oy);(u,v)\big)\big\}\;\mbox{ for all }\;v^*\in\R^m,
\end{equation*}
while being instrumental to describe the aforementioned property from \cite{Helmut}.\vspace*{-0.05in}

\begin{Definition}[\bf semismooth$^*$ property of set-valued mappings]\label{semi*} \rm  A mapping $F\colon\R^n\tto\R^m$ is {\sc  semismooth$^*$} at $(\bar{x},\bar{y})\in\gph F$ if whenever $(u,v)\in\R^n\times\R^m$ we have
\begin{equation*}
\langle u^*,u\rangle=\langle v^*,v\rangle\;\mbox{ for all }\;(v^*,u^*)\in\gph D^*F\big((\ox,\oy);(u,v)\big).
\end{equation*}
\end{Definition}

Recall from \cite[Proposition~2.4]{BorisEbrahim}  that for $F: = \nabla \varphi$, where $\varphi:\R^n\to\R$ is of class $\mathcal{C}^{1,1}$ around $\ox$, the semismoothness$^*$ of $F$ at $(\ox,\nabla\varphi(\ox))$ is equivalent to 
\begin{equation}\label{Gsemismoothstar}
\|\nabla \varphi(x) -\nabla \varphi(\ox) + v_x \| = o(\|x-\ox\|) 
\end{equation}
whenever $x \to \ox$ and $v_x \in \partial^2 \varphi(x)(\ox-x)$. This is essential for the next lemma and subsequent results.\vspace*{-0.05in}

\begin{Lemma}[\bf estimates for ${\cal C}^{1,1}$ functions]\label{estimateNMC11} Let $\varphi:\R^n\to\R$ be of class $\mathcal{C}^{1,1}$ around $\ox\in\R^n$. If $\nabla \varphi$ is metrically regular around $(\ox,\nabla \varphi(\ox))$, then there exist $c>0$ and a neighborhood $U$ of $\ox$ such that for any $x \in U$ and $d\in\R^n$ satisfying $-\nabla\varphi(x)\in\partial^2\varphi(x)(d)$, we  find $v_x \in \partial^2\varphi(x)(\ox-x)$ with
\begin{equation}\label{NMC11esti1} 
\|x+d-\ox\| \leq c\|\nabla \varphi(x) +v_x\|. 
\end{equation}
If $\nabla \varphi(\ox)=0$ and $\nabla\varphi$ is semismooth$^*$ at $(\ox,0)$, then for every $\epsilon>0$ there is a neighborhood $U$ of $\ox$  such that 
\begin{equation}\label{NMC11esti2}
\|x+d-\ox\| \leq \epsilon\|x-\ox\| \quad \text{whenever }\; x \in U\;\mbox{ and }\;-\nabla\varphi(x)\in\partial^2\varphi(x)(d). 
\end{equation}
\end{Lemma}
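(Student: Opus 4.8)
The plan is to obtain the second estimate \eqref{NMC11esti2} as a quick corollary of the first, so the real work lies in \eqref{NMC11esti1}. Throughout I would use that for a $\mathcal C^{1,1}$ function the generalized Hessian is the coderivative of the gradient, $\partial^2\varphi(x)=D^*\nabla\varphi(x)$, and that by the scalarization formula \eqref{scal} the set $\partial^2\varphi(x)(w)=\partial\langle w,\nabla\varphi\rangle(x)$ is nonempty for every $w\in\R^n$; in particular a vector $v_x\in\partial^2\varphi(x)(\ox-x)$ is always available to be selected.

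The first step is to derive a uniform coderivative bound. Since $\nabla\varphi$ is single-valued and continuous, metric regularity around $(\ox,\nabla\varphi(\ox))$ is a robust property (Definition~\ref{met-reg}), hence it persists at all $(x,\nabla\varphi(x))$ with $x$ in some neighborhood $U$ of $\ox$. Combining the Mordukhovich criterion \eqref{cod-cr} with the quantitative estimate of the regularity modulus, I would produce a single constant $\mu>0$ with
\[
\|w\|\le \mu\|z\|\quad\text{whenever }\ z\in\partial^2\varphi(x)(w)\ \text{and}\ x\in U. \qquad (\ast)
\]
This is the engine of the argument: applied to $w=x+d-\ox$ it reduces \eqref{NMC11esti1} to exhibiting a single coderivative element over $x+d-\ox$ whose norm is controlled by the residual $\|\nabla\varphi(x)+v_x\|$.

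The heart of \eqref{NMC11esti1} is then the following selection: given the Newton relation $-\nabla\varphi(x)\in\partial^2\varphi(x)(d)$, choose $v_x\in\partial^2\varphi(x)(\ox-x)$ so that $-(\nabla\varphi(x)+v_x)\in\partial^2\varphi(x)(x+d-\ox)$, after which $(\ast)$ with $z=-(\nabla\varphi(x)+v_x)$ and $w=x+d-\ox$ yields $\|x+d-\ox\|\le\mu\|\nabla\varphi(x)+v_x\|$, i.e. \eqref{NMC11esti1} with $c=\mu$. The natural way to arrange this is to realize the three evaluations of $\partial^2\varphi(x)$ — at $d$, at $\ox-x$, and at their difference $x+d-\ox=d-(\ox-x)$ — by one common matrix $A$ with $Ad=-\nabla\varphi(x)$ and $\gph A\subset\gph\partial^2\varphi(x)$; setting $v_x:=A(\ox-x)$ gives $\nabla\varphi(x)+v_x=-A(x+d-\ox)$, so that $-(\nabla\varphi(x)+v_x)=A(x+d-\ox)\in\partial^2\varphi(x)(x+d-\ox)$ as required. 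Such a matrix is precisely what Lemma~\ref{linearsubpacelemma} manufactures from a subspace meeting $\R^n\times\{0\}$ trivially, and the symmetry of the generalized Hessian of a $\mathcal C^{1,1}$ gradient makes this consistent with \eqref{scal}.

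I expect the linear-selection step to be the main obstacle. The limiting coderivative $\partial^2\varphi(x)$ is only positively homogeneous, and its graph is a cone that need not be a subspace, so additivity such as $\partial^2\varphi(x)(a)+\partial^2\varphi(x)(b)\subset\partial^2\varphi(x)(a+b)$ fails in general and a single matrix need not sit inside the coderivative graph at every direction. To overcome this I would exploit the $\mathcal C^{1,1}$ structure: at the generic points where $\nabla\varphi$ is Fr\'echet differentiable one has $\partial^2\varphi(x)=\nabla^2\varphi(x)$ with subspace graph and $A=\nabla^2\varphi(x)$, and I would then pass to an arbitrary $x\in U$ by density of such points together with the robustness of \eqref{cod-cr} and a limiting argument, retaining the uniform constant $\mu$ from $(\ast)$. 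Finally, \eqref{NMC11esti2} follows immediately: under $\nabla\varphi(\ox)=0$ and semismoothness$^*$ at $(\ox,0)$, the characterization \eqref{Gsemismoothstar} gives $\|\nabla\varphi(x)+v_x\|=o(\|x-\ox\|)$ for the selected $v_x\in\partial^2\varphi(x)(\ox-x)$, whence $\|x+d-\ox\|\le c\,\|\nabla\varphi(x)+v_x\|=o(\|x-\ox\|)$ falls below $\epsilon\|x-\ox\|$ once $x$ is sufficiently close to $\ox$.
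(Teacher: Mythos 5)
Your skeleton matches the paper's: a uniform coderivative bound from metric regularity, a decomposition of $-\nabla\varphi(x)$ into an element over $x+d-\ox$ plus an element $v_x$ over $\ox-x$, and then \eqref{NMC11esti2} from \eqref{NMC11esti1} via \eqref{Gsemismoothstar}. The gap is in how you produce the decomposition. The inclusion you need is not the one you correctly dismiss, namely $\partial^2\varphi(x)(a)+\partial^2\varphi(x)(b)\subset\partial^2\varphi(x)(a+b)$, but the \emph{opposite} subadditivity
$$
\partial^2\varphi(x)(a+b)\subset\partial^2\varphi(x)(a)+\partial^2\varphi(x)(b),
$$
which \emph{does} hold for $\mathcal C^{1,1}$ functions: by the scalarization formula \eqref{scal}, $\partial^2\varphi(x)(a+b)=\partial\langle a+b,\nabla\varphi\rangle(x)$, and the sum rule for limiting subdifferentials of locally Lipschitzian functions splits this into $\partial\langle a,\nabla\varphi\rangle(x)+\partial\langle b,\nabla\varphi\rangle(x)$. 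Applying it with $a=x+d-\ox$ and $b=\ox-x$ to the element $-\nabla\varphi(x)\in\partial^2\varphi(x)(d)$ immediately gives $-\nabla\varphi(x)=z+v_x$ with $z\in\partial^2\varphi(x)(x+d-\ox)$ and $v_x\in\partial^2\varphi(x)(\ox-x)$, which is exactly the selection required; this is how the paper proceeds (citing the subadditivity lemma from its predecessor).

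Your substitute for this step does not work as described. You want a single matrix $A$ with $\gph A\subset\gph\partial^2\varphi(x)$ and $Ad=-\nabla\varphi(x)$, obtained from points of differentiability by density and a limiting argument. But at a point where $\nabla\varphi$ is merely Fr\'echet differentiable, the limiting coderivative $\partial^2\varphi(x)$ need not reduce to $\nabla^2\varphi(x)$ (it only contains its action), so the graph need not be a subspace there; more importantly, the Newton relation $-\nabla\varphi(x)\in\partial^2\varphi(x)(d)$ at a general $x$ is not a limit of Newton relations at nearby differentiable points with the same direction $d$, so there is nothing to pass to the limit. Lemma~\ref{linearsubpacelemma} manufactures a matrix from a subspace already known to lie in the graph; it does not supply a subspace through the particular pair $(d,-\nabla\varphi(x))$. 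Replace this whole construction by the one-line subadditivity argument above and the proof closes; the uniform bound $(\ast)$ and the final semismooth$^*$ step are correct as you state them.
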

\begin{proof} By the metric regularity of $\nabla \varphi$ around $(\ox,\nabla\varphi(\ox))$ and
its characterization from \cite[Lemma 5.1]{BorisKhanhPhat}, we find $c > 0$ and a neighborhood $U$ of $\ox$ satisfying
\begin{equation}\label{anothercharmetric}
\|v\| \geq c^{-1}\|u\| \quad \text{for all }\; v \in \partial^2 \varphi(x)(u),\; x \in U, \; u \in \R^n.  
\end{equation} 
Using the subadditivity of coderivatives from \cite[Lemma 5.3]{BorisKhanhPhat} tells us that
$$
\partial^2 \varphi(x)(d)= \partial^2 \varphi(x)(x+d-\ox+\ox - x) \subset \partial^2\varphi(x)(x+d-\ox)+\partial^2\varphi(x)(\ox-x). 
$$
Hence for any $x \in U$ and $d\in \R^n$ satisfying $-\nabla \varphi(x) \in \partial^2\varphi(x)(d)$, we find $v_x \in \partial^2 \varphi(x)(\ox-x)$ with
$$
-\nabla \varphi(x) -v_x \in \partial^2 \varphi(x)(x+d-\ox). 
$$
Furthermore, it follows from \eqref{anothercharmetric} that 
$$
\|x+d-\ox\| \leq c\|\nabla \varphi(x) +v_x\|
$$
verifying \eqref{NMC11esti1}. Under the additional assumptions of the lemma, we deduce from \eqref{Gsemismoothstar} that 
$$
\|\nabla \varphi(x) +v_x\|=\|\nabla \varphi(x) -\nabla\varphi(\ox) +v_x\| = o(\|x-\ox\|) \quad \text{as }\;x \to \ox,
$$
which justifies \eqref{NMC11esti2} and thus completes the proof. 
\end{proof}

Estimate \eqref{NMC11esti1} was proved in \cite{BorisEbrahim} under the assumption that $\ox$ is a tilt-stable local minimizer of $\varphi$, which is stronger than the metric regularity of $\partial\varphi$ around $(\ox,0)$ assumed in Lemma~\ref{estimateNMC11}. Moreover, our proof of \eqref{NMC11esti1} is fully independent from the proof of \cite{BorisEbrahim}.\vspace*{0.05in} 

The following technical lemma is borrowed from \cite[Lemma 6.1]{BorisKhanhPhat}.\vspace*{-0.07in} 
 
\begin{Lemma}[\bf second-order subdifferentials of Moreau envelopes]\label{2ndofMoreau} In the setting of Proposition~{\rm\ref{C11}}, {take any 
$\lambda\in(0,\bar{\lambda})$}, $x\in U_\lambda$ with $v = \nabla e_\lambda \varphi(x)$. Then we have the equivalence 
$$
x^* \in \partial^2 e_\lambda \varphi(x,v)(v^*) \iff x^* \in \partial^2 \varphi(x-\lambda v, v)(v^*-\lambda x^*).
$$
\end{Lemma}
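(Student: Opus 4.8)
The plan is to reduce the asserted equivalence to a single invertible linear change of variables on the graphs of the two mappings involved. Since $e_\lambda\varphi$ is of class $\mathcal{C}^{1,1}$ on $U_\lambda$ by Proposition~\ref{C11}(i), its limiting subdifferential reduces to the gradient, so by the definition \eqref{2nd} we have $\partial^2 e_\lambda\varphi(x,v)(v^*)=D^*(\nabla e_\lambda\varphi)(x)(v^*)$, and the coderivative definition \eqref{lim-cod} turns the membership $x^*\in\partial^2 e_\lambda\varphi(x,v)(v^*)$ into $(x^*,-v^*)\in N_{\gph\nabla e_\lambda\varphi}(x,v)$. Likewise $x^*\in\partial^2\varphi(x-\lambda v,v)(w^*)$ reads $(x^*,-w^*)\in N_{\gph\partial\varphi}(x-\lambda v,v)$. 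Thus the whole statement hinges on comparing the limiting normal cones \eqref{lnc} of the two graphs at the corresponding base points.

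First I would establish the local graph identity. The gradient formula \eqref{GradEnvelope} gives $\text{\rm Prox}_{\lambda\varphi}(x)=x-\lambda v$, while the resolvent representation \eqref{gradenvsub} yields $\nabla e_\lambda\varphi=(\lambda I+(\partial\varphi)^{-1})^{-1}$ on $U_\lambda$. Rewriting the latter shows that, for $(x',v')$ near $(x,v)$, one has $(x',v')\in\gph\nabla e_\lambda\varphi\iff v'\in\partial\varphi(x'-\lambda v')$, i.e.\ $(x'-\lambda v',v')\in\gph\partial\varphi$. Introducing the invertible linear map $T(x,v):=(x-\lambda v,v)$, this means $\gph\partial\varphi=T(\gph\nabla e_\lambda\varphi)$ in a neighborhood of $(x-\lambda v,v)$, with $T$ carrying $(x,v)$ to $(x-\lambda v,v)$; in particular $v\in\partial\varphi(x-\lambda v)$, so $(x-\lambda v,v)$ is a legitimate base point in $\gph\partial\varphi$.

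Second I would transport the normal cones through $T$. For an invertible linear map $T$ and a set $\Omega$, the definitions \eqref{regnm} and \eqref{lnc} give $N_{T(\Omega)}(Tz)=(T^*)^{-1}N_\Omega(z)$, since $T^*$ intertwines the inner products in the defining quotients and the relation passes to the limit. Writing $T$ in block form $\bigl(\begin{smallmatrix} I & -\lambda I\\ 0 & I\end{smallmatrix}\bigr)$, we obtain $T^*=\bigl(\begin{smallmatrix} I & 0\\ -\lambda I & I\end{smallmatrix}\bigr)$ and $(T^*)^{-1}=\bigl(\begin{smallmatrix} I & 0\\ \lambda I & I\end{smallmatrix}\bigr)$. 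Hence $N_{\gph\nabla e_\lambda\varphi}(x,v)=T^*N_{\gph\partial\varphi}(x-\lambda v,v)$, and $(x^*,-v^*)\in N_{\gph\nabla e_\lambda\varphi}(x,v)$ is equivalent to $(T^*)^{-1}(x^*,-v^*)=(x^*,\lambda x^*-v^*)\in N_{\gph\partial\varphi}(x-\lambda v,v)$. Reading the second block as $-(v^*-\lambda x^*)$ and matching the coderivative definition \eqref{lim-cod} gives exactly $x^*\in\partial^2\varphi(x-\lambda v,v)(v^*-\lambda x^*)$, which is the claim.

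The main obstacle is the local graph identity in the first step: because the limiting normal cone \eqref{lnc} is determined by the graph throughout an entire neighborhood, I must verify the equivalence $(x',v')\in\gph\nabla e_\lambda\varphi\iff(x'-\lambda v',v')\in\gph\partial\varphi$ for all nearby points, not merely at the base point. This is precisely what the resolvent identity \eqref{gradenvsub} supplies, but invoking it requires the subdifferential continuity that is built into the continuous prox-regularity hypothesis underlying Proposition~\ref{C11}; without it one recovers only the pointwise relation $v\in\partial\varphi(x-\lambda v)$, which does not suffice to equate the two normal cones.
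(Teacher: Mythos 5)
The paper itself contains no proof of this lemma: it is quoted verbatim from the authors' earlier work (``borrowed from [Lemma 6.1]'' of \cite{BorisKhanhPhat}), so there is no in-paper argument to compare against. Your proof is correct and is the natural one. The reduction of both second-order subdifferentials to limiting normal cones of graphs, the identification of $\gph\nabla e_\lambda\varphi$ near $(x,v)$ with the preimage of $\gph\partial\varphi$ under the invertible linear map $T(x',v')=(x'-\lambda v',v')$ via the resolvent formula \eqref{gradenvsub}, and the transformation rule $N_{T(\Omega)}(Tz)=(T^*)^{-1}N_\Omega(z)$ with the block computation $(T^*)^{-1}(x^*,-v^*)=(x^*,\lambda x^*-v^*)$ all check out and deliver exactly the stated equivalence. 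The one point deserving emphasis --- which you do flag yourself --- is that the limiting normal cone needs the \emph{two-sided} graph identity on a full neighborhood, not merely the pointwise relation $v\in\partial\varphi(x-\lambda v)$; the backward inclusion rests on the local single-valuedness of $(\lambda I+(\partial\varphi)^{-1})^{-1}$, i.e.\ on the subdifferential-continuity clause in Proposition~\ref{C11}(iii). Strictly read, that clause is an additional hypothesis beyond the bare ``setting of Proposition~\ref{C11},'' so the lemma as stated implicitly presumes continuous prox-regularity; this is harmless since every application of the lemma in the paper is made under that assumption.
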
\vspace*{-0.07in}

Our next theorem provides a crucial estimate, which is significant for the subsequent justification of the coderivative-based generalized Newton algorithm.\vspace*{-0.07in}

\begin{Theorem}[\bf estimates of Newton iterations via coderivatives] \label{estiNMitercod} Let $\ox$ solve \eqref{sub-incl}, where $\partial\varphi$ is metrically regular around $(\ox,0)$ and semismooth$^*$ at this point. Assume further that $\varphi$ is continuously prox-regular at $\ox$ for $0$. Then for any $\epsilon>0$, there exists a neighborhood $W$ of $(\ox,0)$ such that 
$$ 
\|x +d - \ox\| \leq \epsilon \|(x,v)-(\ox,0)\|
$$
{whenever $(x,v)\in \gph\partial\varphi \cap W$} and $d \in \R^n$ satisfy $-v\in\partial^2\varphi(x,v)(d)$. 
\end{Theorem}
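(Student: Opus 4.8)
The plan is to reduce the nonsmooth Newton iteration for $\varphi$ to a \emph{smooth} Newton iteration for its Moreau envelope $e_\lm\varphi$, which is $\mathcal{C}^{1,1}$, and then invoke estimate \eqref{NMC11esti2} of Lemma~\ref{estimateNMC11}. Since $\varphi$ is prox-bounded (a standing assumption of this section) and continuously prox-regular at $\ox$ for $0$, hence $r$-level prox-regular for some $r\ge 0$ and subdifferentially continuous, Proposition~\ref{C11} supplies $\lm>0$ and a neighborhood $U_\lm$ of $\ox+\lm\cdot 0=\ox$ on which $e_\lm\varphi$ is $\mathcal{C}^{1,1}$, $\operatorname{Prox}_{\lm\varphi}$ is single-valued and Lipschitz with $\operatorname{Prox}_{\lm\varphi}(\ox)=\ox$, and $\nabla e_\lm\varphi(x)=\lm^{-1}(x-\operatorname{Prox}_{\lm\varphi}(x))$; in particular $\nabla e_\lm\varphi(\ox)=0$. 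The bridge between the two iterations is the change of variables $z:=x+\lm v$: for $(x,v)\in\gph\partial\varphi$ near $(\ox,0)$ the local single-valuedness of the proximal map gives $\operatorname{Prox}_{\lm\varphi}(z)=x$ and $\nabla e_\lm\varphi(z)=v$, so that locally $\gph\nabla e_\lm\varphi=T(\gph\partial\varphi)$ under the invertible linear map $T(x,v):=(x+\lm v,v)$.

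Next I would verify that $\nabla e_\lm\varphi$ inherits both hypotheses of Lemma~\ref{estimateNMC11} at $(\ox,0)$. For \emph{metric regularity}, evaluating Lemma~\ref{2ndofMoreau} at the base point $(x,v)=(\ox,0)$ gives the equivalence $x^*\in\partial^2 e_\lm\varphi(\ox,0)(v^*)\Leftrightarrow x^*\in\partial^2\varphi(\ox,0)(v^*-\lm x^*)$; setting $x^*=0$ yields $\ker\partial^2 e_\lm\varphi(\ox,0)=\ker\partial^2\varphi(\ox,0)$. Since $\partial\varphi$ is metrically regular around $(\ox,0)$, the Mordukhovich criterion \eqref{cod-cr} forces $\ker\partial^2\varphi(\ox,0)=\{0\}$, whence $\ker\partial^2 e_\lm\varphi(\ox,0)=\{0\}$, and applying \eqref{cod-cr} once more, now to the $\mathcal{C}^{1,1}$ map $\nabla e_\lm\varphi$, delivers the metric regularity of $\nabla e_\lm\varphi$ around $(\ox,0)$.

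The main obstacle is the \emph{semismoothness}$^*$ of $\nabla e_\lm\varphi$ at $(\ox,0)$, which I would derive from that of $\partial\varphi$ through the local graph identity $\gph\nabla e_\lm\varphi=T(\gph\partial\varphi)$. Rewriting Definition~\ref{semi*} geometrically, $F$ is semismooth$^*$ at $\op$ exactly when $\la w^*,w\ra=0$ for every direction $w$ and every $w^*\in N_{\gph F}(\op;w)$. Because the directional limiting normal cone \eqref{dir-nc} transforms under the invertible linear $T$ as $N_{T\Omega}(T\op;Tw)=(T^*)^{-1}N_\Omega(\op;w)$, and because $((T^*)^{-1})^*=T^{-1}$, one computes $\la (T^*)^{-1}\zeta,Tw\ra=\la\zeta,w\ra$ for any $\zeta\in N_{\gph\partial\varphi}((\ox,0);w)$; the semismoothness$^*$ of $\partial\varphi$ makes this vanish, so every directional normal to $\gph\nabla e_\lm\varphi$ at $(\ox,0)=T(\ox,0)$ is orthogonal to its direction, i.e., $\nabla e_\lm\varphi$ is semismooth$^*$ there. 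The delicate points are justifying the transformation rule for the \emph{directional} normal cone and checking that the merely local graph identity suffices, the latter being immediate since both semismoothness$^*$ and \eqref{dir-nc} are local at $(\ox,0)$; alternatively this step could be replaced by a citation of a preservation result for proximal mappings.

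With both hypotheses secured, I would close the argument. Fix $\ve>0$ and set $\ve':=\ve/(1+\lm)$. Lemma~\ref{estimateNMC11} furnishes a neighborhood $U'$ of $\ox$ with $\|z+\tilde d-\ox\|\le\ve'\|z-\ox\|$ whenever $z\in U'$ and $-\nabla e_\lm\varphi(z)\in\partial^2 e_\lm\varphi(z)(\tilde d)$. Given $(x,v)\in\gph\partial\varphi$ close to $(\ox,0)$ with $-v\in\partial^2\varphi(x,v)(d)$, put $z:=x+\lm v$ and $\tilde d:=d-\lm v$. Lemma~\ref{2ndofMoreau} with $v^*=\tilde d$ and $x^*=-v$ turns $-v\in\partial^2\varphi(x,v)(\tilde d+\lm v)=\partial^2\varphi(x,v)(d)$ into $-\nabla e_\lm\varphi(z)=-v\in\partial^2 e_\lm\varphi(z)(\tilde d)$, so the envelope estimate applies. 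Since $z+\tilde d=x+d$ and $\|z-\ox\|\le\|x-\ox\|+\lm\|v\|\le(1+\lm)\|(x,v)-(\ox,0)\|$, I obtain $\|x+d-\ox\|\le\ve'(1+\lm)\|(x,v)-(\ox,0)\|=\ve\|(x,v)-(\ox,0)\|$ on the neighborhood $W:=\{(x,v)\mid x+\lm v\in U'\}$ intersected with the region where the local graph identity holds, which is the desired claim.
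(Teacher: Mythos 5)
Your proof is correct and follows essentially the same route as the paper: pass to the $\mathcal{C}^{1,1}$ Moreau envelope $e_\lambda\varphi$ via Proposition~\ref{C11}, transfer the hypotheses of Lemma~\ref{estimateNMC11} to $\nabla e_\lambda\varphi$, convert the inclusion $-v\in\partial^2\varphi(x,v)(d)$ into $-\nabla e_\lambda\varphi(x+\lambda v)\in\partial^2 e_\lambda\varphi(x+\lambda v)(d-\lambda v)$ through Lemma~\ref{2ndofMoreau}, and conclude with the triangle inequality (your constant $1+\lambda$ versus the paper's $\sqrt{2}$ is immaterial). The only difference is that you supply self-contained arguments for two ingredients the paper handles otherwise: the metric regularity of $\nabla e_\lambda\varphi$ via the kernel identity from Lemma~\ref{2ndofMoreau} and the Mordukhovich criterion (left implicit in the paper), and the preservation of semismoothness$^*$ under the invertible linear graph transformation $T(x,v)=(x+\lambda v,v)$ (the paper cites \cite[Theorem~6.2, Claim~2]{BorisEbrahim} for this); both of your arguments are sound.
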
\vspace*{-0.15in}
\begin{proof} Proposition~\ref{C11} tells us that the assumed  prox-boundedness and continuous prox-regularity properties of $\ph$ guarantee the {existence of a  sufficiently small $\lambda \in (0,1)$} such  that  there is a neighborhood $U_\lambda$ of  $\bar{x} $ on which $e_\lambda\varphi$ is of class $\mathcal{C}^{1,1}$, that the proximal mapping $\text{\rm Prox}_{\lambda\varphi}$ is single-valued and Lipschitz continuous on $U_\lambda$ with ${\rm Prox}_{\lm\ph}(\ox)=\ox$, and that the gradient expression for the Moreau envelope \eqref{GradEnvelope} holds. Since $\partial\varphi$ is semismooth$^*$ at $(\ox,0)$, it follows from  \cite[Theorem 6.2, Claim 2]{BorisEbrahim} that $\nabla e_\lambda \varphi$ is semismooth$^*$ at $(\ox,0)$. Given $\epsilon>0$, we find a neighborhood $U$ of $\ox$ on which we get by Lemma~\ref{estimateNMC11} that
\begin{equation}\label{nextiterenvelope}
\|x^\prime + d^\prime - \ox\| \leq \frac{\epsilon}{\sqrt{2}}\|x^\prime -\ox\|\;\text{ if }\; x^\prime \in U\;\mbox{ and }\;d^\prime\in \R^n \; \text{ satisfy }\;-\nabla e_\lambda\varphi(x^\prime) \in \partial^2e_\lambda\varphi(x^\prime)(d^\prime).  
\end{equation}
Choose further a neighborhood $W$ of $(\ox,0)$ such that
\begin{equation}\label{neighborNM}
x+\lambda v\in U\cap U_\lambda\;\mbox{ whenever }\; (x,v) \in W 
\end{equation}
and suppose that $(x,v)\in\gph\partial\varphi \cap W$ and that $d \in \R^n$ satisfies
\begin{equation}\label{2ndinclus}
-v\in\partial^2\varphi(x,v)(d)=\partial^2 \varphi((x+\lambda v) - \lambda v,v)(d).
\end{equation}
Then \eqref{GradEnvelope} and \eqref{neighborNM} yield 
$\nabla e_\lambda\varphi(x+\lambda v)=v$. Combining this with \eqref{neighborNM}, \eqref{2ndinclus}, and Lemma~\ref{2ndofMoreau} gives us 
$$
-\nabla e_\lambda \varphi(x+\lambda v)=-v \in \partial^2 e_\lambda\varphi(x+\lambda v, v)(d-\lambda v).
$$
{This ensures, with taking \eqref{nextiterenvelope} and \eqref{neighborNM} into account, that 
$$
\|x+d-\ox\|=\|(x +\lambda v) +(d-\lambda v) -\ox\| \leq \frac{\epsilon}{\sqrt{2}}\|x+\lambda v - \ox\|. 
$$
Therefore, we arrive at the estimates
$$
\|(x,v)- (\ox,0)\|^2 = \|x-\ox\|^2 + \|v\|^2 \geq \|x-\ox\|^2 + \|\lambda v\|^2  \geq \frac{1}{2}(\|x-\ox\| +  \|\lambda v\|)^2,
$$
which imply in turn that 
$$
\|x + d-\ox\| \leq \frac{\epsilon}{\sqrt{2}}\|x+\lambda v - \ox\| \leq \frac{\epsilon}{\sqrt{2}}\left(\|x-\ox\|+ \|\lambda v\| \right) \leq \epsilon \|(x,v)-(\ox,0)\|
$$}
and thus complete the proof of theorem. 
\end{proof} \vspace*{-0.05in}

Now we are ready to establish the main result of this section justifying the local superlinear convergence of the Newtonian coderivative-based Algorithm~\ref{NMcod}. \vspace*{-0.05in}

\begin{Theorem}[\bf local superlinear convergence of the generalized Newton algorithms] \label{localconvergeNMcod}  {Let $\ox$ solve \eqref{sub-incl}, where $\partial\varphi$ is metrically regular around $(\ox,0)$ and semismooth$^*$ at this point. Suppose that $\varphi$ is continuously prox-regular at $\ox$ for $0$.} Then there is a neighborhood $U$ of $\ox$ such that, whenever $x^0\in U$, Algorithm~{\rm\ref{NMcod}} either stops after finitely many iterations, or produces a sequence  $\{x^k\}$ that $Q$-superlinearly converges to $\ox$ as $k\to\infty$. 
\end{Theorem}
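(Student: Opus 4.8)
The plan is to reduce the whole statement to the one-step contraction furnished by Theorem~\ref{estiNMitercod}, since that result already absorbs the genuinely hard second-order analysis. Recall that at each iteration the algorithm sets $x^{k+1}=\Hat{x}^k+d^k$ with $(\Hat{x}^k,\Hat{v}^k)\in\gph\partial\varphi$ and $-\Hat{v}^k\in\partial^2\varphi(\Hat{x}^k,\Hat{v}^k)(d^k)$, while the approximate step enforces \eqref{stopcri}. Thus, for any prescribed $\epsilon>0$, once $(\Hat{x}^k,\Hat{v}^k)$ lies in the neighborhood $W$ of $(\ox,0)$ produced by Theorem~\ref{estiNMitercod}, I can chain the two bounds
$$
\|x^{k+1}-\ox\|=\|\Hat{x}^k+d^k-\ox\|\le\epsilon\,\|(\Hat{x}^k,\Hat{v}^k)-(\ox,0)\|\le\epsilon\eta\,\|x^k-\ox\|.
$$
Everything then comes down to (a) guaranteeing that the Newton step is solvable along the iterates, and (b) keeping the iterates inside the region where this estimate is valid.

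For (a), I would invoke the robustness of the two standing hypotheses: metric regularity of $\partial\varphi$ is preserved in a neighborhood of $(\ox,0)$, and continuous prox-regularity is likewise robust. Hence there is a neighborhood of $(\ox,0)$ at each of whose graph points the assumptions of Theorem~\ref{newsolNM} hold, so that a direction $d^k$ with $-\Hat{v}^k\in\partial^2\varphi(\Hat{x}^k,\Hat{v}^k)(d^k)$ exists; this makes the Newton step well-defined whenever $(\Hat{x}^k,\Hat{v}^k)$ is close enough to $(\ox,0)$.

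For (b), fix $\epsilon_0:=1/(2\eta)$ and let $W_0$ be the neighborhood of $(\ox,0)$ from Theorem~\ref{estiNMitercod} for this $\epsilon_0$, shrunk so that it also lies inside the solvability region of step (a). Pick $\rho>0$ with $\{(x,v)\mid\|(x,v)-(\ox,0)\|<\eta\rho\}\subset W_0$ and set $U:=\B_\rho(\ox)$. If $x^k\in U$ and the algorithm has not stopped, then \eqref{stopcri} gives $\|(\Hat{x}^k,\Hat{v}^k)-(\ox,0)\|\le\eta\|x^k-\ox\|<\eta\rho$, so $(\Hat{x}^k,\Hat{v}^k)\in W_0$; the displayed chain then yields $\|x^{k+1}-\ox\|\le\tfrac12\|x^k-\ox\|$, whence $x^{k+1}\in U$. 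By induction the iterates remain in $U$ and converge to $\ox$ at least linearly.

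Finally, to upgrade linear to $Q$-superlinear convergence, I would let $\epsilon>0$ be arbitrary and take the corresponding $W_\epsilon$ from Theorem~\ref{estiNMitercod}. Since $\|(\Hat{x}^k,\Hat{v}^k)-(\ox,0)\|\le\eta\|x^k-\ox\|\to0$, for all large $k$ the pair $(\Hat{x}^k,\Hat{v}^k)$ enters $W_\epsilon$, and the chain gives $\|x^{k+1}-\ox\|\le\epsilon\eta\|x^k-\ox\|$. As $x^k\ne\ox$ in the nonterminating case (otherwise $0\in\partial\varphi(x^k)$ and the algorithm stops), we obtain $\limsup_k\|x^{k+1}-\ox\|/\|x^k-\ox\|\le\epsilon\eta$, and letting $\epsilon\downarrow0$ forces this ratio to $0$, which is precisely $Q$-superlinear convergence. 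The only genuine obstacle is the well-definedness in step (a); the quantitative heart of the argument has already been carried out in Theorem~\ref{estiNMitercod}, so the remainder is neighborhood bookkeeping together with an $\epsilon$-limit.
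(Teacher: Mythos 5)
Your proposal is correct and follows essentially the same route as the paper's proof: both reduce the theorem to the one-step estimate of Theorem~\ref{estiNMitercod}, secure solvability of the Newton step via the robustness of metric regularity and continuous prox-regularity together with Theorem~\ref{newsolNM}, run an induction with the contraction factor $1/2$ to trap the iterates, and finish with the $\epsilon$-argument for the $Q$-superlinear rate. The only cosmetic difference is that you bound the quotient by $\epsilon\eta$ and let $\epsilon\downarrow 0$, whereas the paper applies Theorem~\ref{estiNMitercod} with $\epsilon/\eta$ to obtain the factor $\epsilon$ directly; the two are equivalent.
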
\vspace*{-0.15in}
\begin{proof}  {Due to the robustness of the continuous prox-regularity of $\varphi$ at $\ox$ for $0$ and the metric regularity of $\partial \varphi$ around $(\ox,0)$, Theorem \ref{newsolNM} implies that for each $(x,v)\in\gph\partial\ph$ near $(\ox,\ov)$, there exists a direction $d\in\R^n$ satisfying the generalized Newton system \eqref{newton-inc}. Combining this with Theorem~\ref{estiNMitercod} allows us to find such $\delta >0$ that for any $x \in \mathbb{B}_\delta(\ox)$ there exists $(\Hat{x},\Hat{v})\in \gph\partial\varphi$ satisfying the estimate}
\begin{equation}\label{etaapprox2}
\|(\Hat{x},\Hat{v})-(\ox,0)\| \leq \eta \|x-\ox\|,
\end{equation}
and  that for any $(x,v) \in \gph\partial\varphi\cap \mathbb{B}_\delta(\ox,0)$ we get $d \in \R^n$ with 
\begin{equation}\label{directionmetricrgl}
-v\in \partial^2\varphi(x,v)(d) \;\text{and } \|x +d - \ox\| \leq \frac{1}{2\eta} \|(x,v)-(\ox,0)\|.
\end{equation}
Fix $x^0 \in U:=\mathbb{B}_{\delta/\eta}(\ox)\cap \mathbb{B}_\delta(\ox)$ and deduce from \eqref{etaapprox2} the existence of $\left(\Hat{x}^0,\Hat{v}^0\right)\in\gph\partial\varphi$ such that 
$$
\left\|\left(\Hat{x}^0,\Hat{v}^0\right)-(\ox,0)\right\|\leq \eta\|x^0-\ox\|\leq \delta,
$$
which readily implies that $\left(\Hat{x}^0,\Hat{v}^0\right) \in \gph\partial\varphi\cap \mathbb{B}_\delta(\ox,0)$. Furthermore,
by \eqref{directionmetricrgl} we find $d^0\in \R^n$ satisfying $-\Hat{v}^0 \in \partial^2 \varphi(\Hat{x}^0,\Hat{v}^0)(d^0)$, and hence
$$
\|x^1-\ox\|=\|\Hat{x}^0 + d^0 -\ox\| \leq \frac{1}{2\eta}\|(\Hat{x}^0,\Hat{v}^0) -(\ox,0)\| \leq  \frac{1}{2}\|x^0 -\ox\|.
$$
The latter ensures, in particular, that $x^1 \in U$. Employing the standard induction arguments allows us to conclude that Algorithm~\ref{NMcod}  either stops after finitely many iterations, or produces  sequences  $\{x^k\}$, $\{\Hat{x}^k\}$, and $\{\Hat{v}^k\}$ so that the pairs $(\Hat{x}^k,\Hat{v}^k)\in \gph\partial \varphi\cap \mathbb{B}_\delta(\ox,0)$ satisfy the estimates
$$
\|(\Hat{x}^k,\Hat{v}^k)-(\ox,0)\| \leq \eta \|x^k - \ox\|,\quad
\|x^{k+1}-\ox\|\leq \frac{1}{2}\|x^k -\ox\|\; \text{ for all } \; k =0,1,\ldots. 
$$
This clearly implies that both sequences $\{x^k\}$ and $\{\Hat{x}^k\}$ converge to the solution  $\ox$  of \eqref{sub-incl} while the sequence $\{\Hat{v}^k\}$ converges to $0$ as $k \to \infty$. To verify finally the $Q$-superlinear convergence of the iterative sequence $\{x^k\}$, pick any $\epsilon >0$ and deduce from Theorem~\ref{estiNMitercod} that 
$$
\|x^{k+1}-\ox\|=\|\Hat{x}^k + d^k -\ox\| \leq\frac{\epsilon}{\eta}\|(\Hat{x}^k, \Hat{v}^k) -(\ox,0) \| \leq \epsilon\|x^k -\ox\| 
$$
when $k$ is sufficiently large. This means that the quotient $\|x^{k+1}-\ox\|/ \|x^k-\ox\|$ converges to $0$ as $k \to\infty$, which thus completes the proof of the theorem.
\end{proof}

We conclude this section with brief discussions on some recent developments of generalized Newton methods to
solve set-valued inclusions involving the subgradient systems \eqref{sub-incl}.\vspace*{-0.05in}

\begin{Remark}[\bf comparison with some local Newton-type methods to solve inclusions] \rm $\,$

{\bf(i)} The semismooth$^*$ Newton method of \cite{Helmut} addresses solving the inclusion $0 \in F(x)$ for $F:\R^n\rightrightarrows \R^n$, which reduces to \eqref{sub-incl} when $F:=\partial\varphi$ for some $\varphi:\R^n\to\overline{\R}$. The underlying difference between our approach and the method of \cite{Helmut} is in finding \textit{generalized Newton directions}. To be more specific, the directions {$d^k$ in the semismooth$^*$ Newton method are given by}
$$
d^k := -A^{-1}B \Hat{v}^k, \quad k =0,1,\ldots,
$$
where the pair $(\Hat{x}^k,\Hat{v}^k)\in\gph\partial\varphi$ is selected for each $k$ from \eqref{stopcri}, and where the matrices $A, B \in \R^{n\times n}$  are generated as follows. Choose $(v_i^*,u_i^*)\in\gph D^*\partial\varphi(\Hat{x}^k,\Hat{v}^k)$ such that the $i$th rows of $A$ and $B$ are $u_i^*$ and $v_i^*$, respectively, and that $A$ is nonsingular. However, explicit constructions for generating such matrices remain unclear in the case where $F$  is a set-valued mapping. In \cite{Helmut}, such constructions are provided only for the case where $F$ is a single-valued and locally Lipschitzian mapping. Meanwhile, our new method provides explicit ways to select generalized Newton directions when $\varphi$ belongs to a significantly large class of extended-real-valued functions as discussed in Remark~\ref{findNMremark}.  Moreover, the semismooth$^*$ Newton method in \cite{Helmut} is only guaranteed to converge under the assumption of strong metric regularity, while Theorem~\ref{localconvergeNMcod}  shows that the local superlinear convergence of Algorithm~\ref{NMcod} holds under merely metric regularity, which is a less restrictive condition. To overcome this disadvantage of the semismooth$^*$ Newton method, one of the possible ideas developed in the more recent paper \cite{GfOu22} is to construct generalized Newton directions by using SCD (subspace containing derivative) mappings instead of coderivatives. This adaption requires a certain SCD regularity, which is weaker than the metric regularity of the subgradient mapping $\partial\varphi$. However, calculus rules for SCD mappings are more limited in comparison with the extensive  calculus available for coderivatives. 

{\bf (ii)} Our previous paper \cite{BorisKhanhPhat} develops local coderivative-based Newton methods to solve inclusions of the form $0 \in \partial \varphi(x)$. The algorithms proposed therein heavily rely on the proximal mapping of $\varphi$, which is not always easy to compute. Moreover, in \cite{BorisKhanhPhat} we focus on the case where $\varphi = f + g$, with $f$ being quadratic and $g$ prox-regular. In contrast, our new Algorithm~\ref{NMcod} does not require such structural assumptions. 
\end{Remark}\vspace*{-0.25in}

\section{Local Convergence Analysis of Coderivative-Based Newton Method for Structured Nonconvex Optimization}\label{sec:localNMcompo}\vspace*{-0.05in}

As discussed in Section~\ref{sec:general}, Algorithm~\ref{NMcod} offers a variety of possibilities to implement the \textit{approximate step}. In this section, we present a way to choose the pair $(\Hat{x}^k, \Hat{v}^k)\in\gph\partial\varphi$ in the approximate step of Algorithm~\ref{NMcod} for solving the class of {\em nonsmooth and nonconvex optimization problems} of the type
\begin{equation}\label{ncvopt}
\min \quad \varphi(x):=f(x) +g(x)\quad \text{subject to }\; x \in \R^n, 
\end{equation}
where $f:\R^n \to \R$ is {\em $\mathcal{C}^2$-smooth}, and where $g:\R^n\to\overline{\R}$ is proper, l.s.c., and {\em prox-bounded} with threshold $\lambda_g\in (0,\infty]$. We start with the corresponding definitions needed in what follows.\vspace*{-0.03in}

\begin{Definition}[\bf prox-gradient and normal mappings]\label{normal map}\rm Let $\varphi\colon\R^n\to\oR$ {be given as in} \eqref{ncvopt}. For each $\lambda \in(0,\lambda_g)$, define the following multifunctions:

{\bf(i)} The \textit{prox-gradient map} $\mathcal{G}_\lambda:\R^n\rightrightarrows\R^n$ associated with \eqref{ncvopt} is
\begin{equation}\label{prox-grad-map}
\mathcal{G}_\lambda(x):= \text{\rm Prox}_{\lambda g}\big(x-\lambda \nabla f(x)\big), \quad x \in \R^n. 
\end{equation}

{\bf(ii)} The \textit{normal map} $\mathcal{F}_\lambda^{\text{\rm nor}}: \R^n\rightrightarrows \R^n$ associated with \eqref{ncvopt} is
\begin{equation}\label{normalmap}
\mathcal{F}_\lambda^{\text{\rm nor}}(x):= \Big\{\nabla f(\Hat{x})+ \frac{1}{\lambda}(x- \Hat{x})\;\Big|\; \Hat{x}\in \text{\rm Prox}_{\lambda g} (x) \Big\},\quad x \in \R^n. 
\end{equation}
\end{Definition} 

When the proximal mapping $\text{\rm Prox}_{\lambda g}$ is single-valued, the normal map \eqref{normalmap} can be written as 
\begin{equation}\label{normalmapsingle} 
\mathcal{F}_\lambda^{\text{\rm nor}}(x)= \nabla f\big(\text{\rm Prox}_{\lambda g}(x)\big)+ \frac{1}{\lambda}\big(x- \text{\rm Prox}_{\lambda g}(x)\big),
 \quad x \in \R^n. 
\end{equation} 
The terminology ``normal map" goes back to variational inequalities being useful there for characterizing solution sets; see, e.g., \cite{JPang}. In the setting of variational inequalities
\begin{equation}\label{VI}
\text{find } \ox \in \Omega\;\text{ such that }\;\langle F(\ox),x-\ox\rangle \geq 0\;\mbox{ for all }\;x\in \Omega, 
\end{equation}
where $F:\R^n\to\R^n$ is a single-valued operator, and where $\Omega$ is a closed and convex set, the normal map associated with \eqref{VI} is defined by 
\begin{equation}\label{normalmapVI}
\mathcal{F}^{\text{\rm nor}}(x):=  F\big(\Pi_\Omega(x)\big)+x-\Pi_\Omega(x),
\quad x \in \R^n. 
\end{equation}

Note that both normal maps in \eqref{normalmap} and \eqref{normalmapVI} agree when $F:=\nabla f$, $g:=\delta_\Omega$, and $\lambda=1$. Various error bounds for variational inequalities based on the normal map \eqref{normalmapVI} can be found in the book \cite{JPang} and the references therein, while new ones by using this normal map have been recently obtained in \cite{KhanhKha} for the class of strongly pseudomonotone variational inequalities.  {One of the first extensions of the normal map to composite problems was introduced by Pieper in his PhD thesis \cite{p15}. His approach combines semismooth Newton steps for solving the nonsmooth equation $\mathcal{F}^{\text{nor}}_\lambda (x) = 0$ with a trust-region-type globalization mechanism. The subsequent paper \cite{mr20} analyzed local properties of a quasi-Newton variant of Pieper’s normal map-based trust-region method. Pieper’s approach has also been successfully applied in various contexts, particularly in optimal control; see, e.g., \cite{bt17,kpr16,raks18}. A more comprehensive convergence analysis, covering both local and global aspects of the normal map trust-region method, was recently presented in \cite{om24}. Additionally, algorithmic extensions of normal map-based techniques to stochastic settings have been proposed in \cite{mq23,qlm23}. In \cite{om24}, the authors introduce the normal map as an intermediate step to find a solution to the problem of minimizing the sum of a smooth and a nonsmooth convex functions by applying the semismooth Newton method to the equation $\mathcal{F}_\lambda^{\text{\rm nor}}(z) = 0$, where the local Lipschitz continuity of this mapping is ensured by the convexity of the regularizer. In contrast, our approach bypasses this transformation and directly tackles the optimization problem, where the objective function is the sum of two nonconvex functions. In this framework, the normal map plays a critical role in implementing the approximate step of Algorithm~\ref{NMcod} and in supporting the development of our new generalized Newton and line-search methods introduced in this section and further detailed in Sections~\ref{sec:newlinesearch} and~\ref{sec:globalNMcompo}.}\vspace*{0.05in}

Recall that the stationarity condition $0\in\partial\varphi(\ox)$, where $\varphi$ is given in form \eqref{ncvopt}, is only a necessary condition for the optimality of $\ox$ being sufficient if $\varphi$ is convex. In what follows, we consider different concepts of stationary and critical points for the function $\varphi$ of type \eqref{ncvopt} along the lines of general optimization theory.\vspace*{-0.05in}

\begin{Definition}[\bf stationary and critical points]\label{stat} \rm  {Let $\varphi$ {be given as in} \eqref{ncvopt}, and let $\ox \in \dom \varphi$. Then $\ox$ is:}

{\bf(i)} \textit{F$($r\'echet$)$-stationary point} if $0\in\widehat{\partial}\varphi(\ox)$.

{\bf(ii)} \textit{M$($ordukhovich$)$-stationary point} if $0\in {\partial}\varphi(\ox)$.

{\bf(iii)} {\textit{$\lambda$-critical point}  if $\ox \in \mathcal{G}_\lambda(\ox)$ where $\lambda \in (0,\lambda_g)$.} 
\end{Definition}\vspace*{-0.1in}

\begin{Remark} \label{crit->Fsta}\rm 
Observe the following:

\medskip 
{\bf (i)} If $\ox$ is a $\lambda$-critical point of \eqref{ncvopt} for some $\lambda \in (0,\lambda_g)$, then $\ox$ is also a $\lambda^\prime$-critical point of \eqref{ncvopt} for any $\lambda^\prime \in (0,\lambda].$ Indeed, fix such $\lambda^\prime$, the $\lambda$-criticality of $\ox$ implies that $\ox \in \mathcal{G}_\lambda(\ox)$, i.e.,
$$
g(x) + \frac{1}{2\lambda} \|x - \ox +\lambda \nabla f(\ox)\|^2  \geq g(\ox) + \frac{1}{2\lambda}\|\ox -\ox+\lambda \nabla f(\ox)\|^2 \quad \text{for all }\; x \in \R^n,
$$
which ensures in turn that 
$$
g(x) \geq g(\ox) +\langle -\nabla f(\ox), x-\ox\rangle -  \frac{1}{2\lambda}\|x-\ox\|^2 \geq g(\ox) +\langle -\nabla f(\ox), x-\ox\rangle -  \frac{1}{2\lambda^\prime }\|x-\ox\|^2 
$$
for all $x \in \R^n,$ i.e., we have the estimate
$$
g(x) + \frac{1}{2\lambda^\prime} \|x - \ox +\lambda^\prime \nabla f(\ox)\|^2  \geq g(\ox) + \frac{1}{2\lambda^\prime}\|\ox -\ox+\lambda^\prime \nabla f(\ox)\|^2 \quad \text{for all }\; x \in \R^n. 
$$
This means that $\ox \in \mathcal{G}_{\lambda^\prime}(\ox)$, or $\ox$ is a $\lambda^\prime$-critical point of \eqref{ncvopt}. 

{\bf (ii)} If $\ox$ is a {$\lambda$-critical point of \eqref{ncvopt} for some $\lambda \in (0,\lambda_g)$}, then $\ox$ is a F-stationary (and hence M-stationary) point. Indeed, we have from the critical point definition that
$$
\ox \in \text{\rm argmin} \Big\{g(x) + \frac{1}{2\lambda}\|x - \ox +\lambda \nabla f(\ox)\|^2 \; \Big|\; x \in \R^n\Big\}.
$$
It then follows from \cite[Proposition~1.114]{Mordukhovich06} that
$$
0 \in \widehat{\partial} g(\ox) + \frac{1}{\lambda}\big(\ox-\ox +\lambda \nabla f(\ox)\big) = \nabla f(\ox) + \widehat{\partial} g(\ox) = \widehat{\partial}\varphi(\ox) \subset \partial\varphi(\ox).
$$
If $f$ and $g$ are convex and $\ox$ is a {$\lambda$-critical point of \eqref{ncvopt} for some $\lambda >0$}, then $\ox$ is an optimal solution to \eqref{ncvopt}; see \cite[Corollary~27.3]{Bauschke}. 
\end{Remark}\vspace*{-0.05in}

Next we show that the criticality of $\ox$ is a necessary optimality condition in \eqref{ncvopt}.\vspace*{-0.07in}

\begin{Proposition}[\bf necessary condition for nonconvex optimization problems] \label{necessncvopt} Let $\varphi$ be given as in \eqref{ncvopt}, and let $\ox$ be an optimal solution to problem \eqref{ncvopt}. Assume that $\nabla f$ is  {Lipschitz continuous on $\R^n$} with modulus $L_f>0$. Then $\ox$ is a {$\lambda$-critical point} of \eqref{ncvopt}, and $\mathcal{G}_\lambda$ is single-valued at $\ox$ for any $\lambda \in (0,\text{\rm min}\{ 1/L_f, \lambda_g\})$. 
\end{Proposition}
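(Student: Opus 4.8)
The plan is to show directly that $\ox$ is the \emph{unique} global minimizer of the proximal subproblem defining $\mathcal{G}_\lambda(\ox)$; this single fact yields simultaneously the $\lambda$-criticality $\ox\in\mathcal{G}_\lambda(\ox)$ and the single-valuedness of $\mathcal{G}_\lambda$ at $\ox$. Since $\mathcal{G}_\lambda(\ox)=\text{\rm Prox}_{\lambda g}(\ox-\lambda\nabla f(\ox))$, I would work with the objective
$$
P(y):=g(y)+\frac{1}{2\lambda}\big\|y-\ox+\lambda\nabla f(\ox)\big\|^2,\quad y\in\R^n,
$$
and aim to prove $P(y)>P(\ox)$ for every $y\neq\ox$. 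Expanding the square and cancelling the constant $\frac{\lambda}{2}\|\nabla f(\ox)\|^2$, this is equivalent to the strict estimate $g(y)>g(\ox)-\langle\nabla f(\ox),y-\ox\rangle-\frac{1}{2\lambda}\|y-\ox\|^2$ for all $y\neq\ox$.

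The two ingredients I would combine are the descent lemma and the optimality of $\ox$. First, since $\nabla f$ is $L_f$-Lipschitz on $\R^n$, the quadratic upper bound gives $f(y)\le f(\ox)+\langle\nabla f(\ox),y-\ox\rangle+\frac{L_f}{2}\|y-\ox\|^2$. Second, optimality of $\ox$ for $\varphi=f+g$ means $f(\ox)+g(\ox)\le f(y)+g(y)$, i.e.\ $g(\ox)-g(y)\le f(y)-f(\ox)$. Substituting the first bound into the second produces the key inequality
$$
g(y)\ge g(\ox)-\langle\nabla f(\ox),y-\ox\rangle-\frac{L_f}{2}\|y-\ox\|^2\quad\text{for all }y\in\R^n.
$$

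To finish, I would invoke the hypothesis $\lambda<1/L_f$, equivalently $\frac{L_f}{2}<\frac{1}{2\lambda}$. For $y\neq\ox$ this strictly enlarges the subtracted quadratic term, so the key inequality upgrades to the required strict estimate, whence $P(y)>P(\ox)$ for all $y\neq\ox$. Consequently $\text{\rm argmin}\,P=\{\ox\}$, that is $\mathcal{G}_\lambda(\ox)=\{\ox\}$, which is precisely $\lambda$-criticality together with single-valuedness at $\ox$; the restriction $\lambda<\lambda_g$ keeps us in the prox-bounded regime where $\text{\rm Prox}_{\lambda g}$ is nonempty-valued by Proposition~\ref{pbimplycontinuity}(ii), although the explicit minimizer $\ox$ already certifies nonemptiness.

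I do not anticipate a genuine obstacle, as the argument is careful bookkeeping of inequality directions. The only point demanding attention is that single-valuedness (as opposed to bare criticality) requires \emph{strictness}, and this strictness is supplied exactly by $\lambda<1/L_f$ rather than $\lambda\le 1/L_f$; losing track of this distinction would collapse the uniqueness claim into mere membership $\ox\in\mathcal{G}_\lambda(\ox)$.
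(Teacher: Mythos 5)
Your proof is correct and uses essentially the same ingredients as the paper's: the descent lemma, the global optimality of $\ox$, and the threshold $\lambda<1/L_f$ making $\frac{L_f}{2}-\frac{1}{2\lambda}$ strictly negative. The only (cosmetic) difference is organization — the paper takes an arbitrary $\Hat{x}\in\text{\rm Prox}_{\lambda g}(\ox-\lambda\nabla f(\ox))$ and forces $\|\Hat{x}-\ox\|^2\le 0$, whereas you show directly that $\ox$ is the strict global minimizer of the proximal objective — and the two arguments are the same algebra read in opposite directions.
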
\vspace*{-0.15in}
\begin{proof} Fix $\lambda \in (0,\lambda_g)$. Since $g$ is prox-bounded with threshold $\lambda_g$, we have by Proposition~\ref{pbimplycontinuity} that $\text{\rm Prox}_{\lambda g}(\ox- \lambda \nabla f(\ox))$ is a nonempty set. Picking any of $\Hat{x}$ from this set, let us to show that $\Hat{x} = \ox$. Indeed, the Lipschitz continuity of $\nabla f$ ensures by the descent lemma in \cite[Lemma~A.11]{Solo14} that
\begin{equation}\label{descentlemma} 
f(\Hat{x}) \leq f(\ox)+\langle \nabla f(\ox),\Hat{x}-\ox\rangle + \frac{L_f}{2}\|\Hat{x}-\ox\|^2.
\end{equation} 
Taking into account that $\Hat{x}\in \text{\rm Prox}_{\lambda g}(\ox- \lambda \nabla f(\ox))$, we have the estimate
\begin{equation}\label{proxgdef}
g(\Hat{x})+ \frac{1}{2\lambda}\|\Hat{x}- (\ox -\lambda \nabla f(\ox))\|^2 \leq g(\ox) + \frac{1}{2\lambda}\|\ox- (\ox -\lambda \nabla f(\ox))\|^2= g(\ox) +\frac{\lambda}{2}\|\nabla f(\ox)\|^2.
\end{equation}
Since $\ox$ is an optimal solution to \eqref{ncvopt} and $g$ is proper, it tells us that $g(\ox)$ is finite, and so is $g(\Hat x)$ by \eqref{proxgdef}.  
Combining \eqref{descentlemma} and \eqref{proxgdef} provides the conditions 
\begin{align*}
f(\Hat{x})+g(\Hat{x})& \leq f(\ox)+\langle \nabla f(\ox),\Hat{x}-\ox\rangle + \frac{L_f}{2}\|\Hat{x}-\ox\|^2 + g(\Hat{x})\\
& = f(\ox) + g(\Hat{x})+ \frac{1}{2\lambda}\|\Hat{x}- (\ox -\lambda \nabla f(\ox))\|^2 - \frac{\lambda}{2} \|\nabla f(\ox)\|^2 +\left(\frac{L_f}{2} - \frac{1}{2\lambda} \right)\|\Hat{x}-\ox\|^2\\
& \leq f(\ox) +g(\ox) + \left(\frac{L_f}{2} - \frac{1}{2\lambda} \right)\|\Hat{x}-\ox\|^2\leq f(\Hat{x}) +g(\Hat{x}) + \left(\frac{L_f}{2} - \frac{1}{2\lambda} \right)\|\Hat{x}-\ox\|^2,
\end{align*}
which ensure that $\|\Hat{x}-\ox\|^2 \leq 0$, and thus $\Hat{x}=\ox$. Since $\Hat{x}$ was chosen arbitrarily, the set $\text{\rm Prox}_{\lambda g}(\ox- \lambda \nabla f(\ox))$ is a singleton, and $\ox$ is its unique element as claimed. 
\end{proof}\vspace*{-0.05in}

Using Remark~\ref{crit->Fsta} and Proposition~\ref{necessncvopt} gives us the implications
$$
\text{optimal solution} \Longrightarrow \text{critical point} \Longrightarrow  \text{F-stationary point}  \Longrightarrow  \text{M-stationary point}. 
$$
To proceed further, we involve the normal maps \eqref{normalmap} to characterize critical points.\vspace*{-0.05in}

\begin{Proposition}[\bf characterization of critical points via normal maps] \label{characsol} Let $\varphi$ {be given as in} \eqref{ncvopt}. Then for each $\lambda \in (0,\lambda_g)$, the following assertions hold:

{\bf(i)} If $\ox$ is a {$\lambda$-critical point} of \eqref{ncvopt}, then $0 \in \mathcal{F}_\lambda^{\text{\rm nor}}(\ox-\lambda \nabla f(\ox))$.

{\bf(ii)} If $\nabla f$ is Lipschitz continuous around $\ox$ with modulus $L_f>0$ and if  $0 \in\mathcal{F}_\lambda^{\text{\rm nor}}(\ox-\lambda \nabla f(\ox))$ with $\lambda \in (0,1/L_f)$, then $\ox$ is a {$\lambda$-critical point} of \eqref{ncvopt}.
\end{Proposition}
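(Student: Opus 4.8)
The plan is to treat the two implications separately, reducing each to the defining relation $\ox\in\mathcal{G}_\lambda(\ox)=\text{\rm Prox}_{\lambda g}(\ox-\lambda\nabla f(\ox))$ by evaluating the normal map at the shifted point $z:=\ox-\lambda\nabla f(\ox)$. Throughout I keep $\lambda\in(0,\lambda_g)$ fixed so that $\text{\rm Prox}_{\lambda g}$ is well-defined, and I unwind the set-valued definition \eqref{normalmap} carefully.

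For part (i), I would start from the $\lambda$-criticality $\ox\in\text{\rm Prox}_{\lambda g}(z)$ and simply feed $\Hat{x}:=\ox$ into the definition \eqref{normalmap} of $\mathcal{F}_\lambda^{\text{\rm nor}}(z)$. The corresponding element is $\nabla f(\ox)+\tfrac1\lambda(z-\ox)$, and substituting $z-\ox=-\lambda\nabla f(\ox)$ makes it collapse to $0$. Hence $0\in\mathcal{F}_\lambda^{\text{\rm nor}}(z)=\mathcal{F}_\lambda^{\text{\rm nor}}(\ox-\lambda\nabla f(\ox))$. This direction needs no regularity of $\nabla f$ and is essentially a one-line computation.

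For part (ii), the hypothesis $0\in\mathcal{F}_\lambda^{\text{\rm nor}}(z)$ with $z=\ox-\lambda\nabla f(\ox)$ supplies, by \eqref{normalmap}, some $\Hat{x}\in\text{\rm Prox}_{\lambda g}(z)$ for which $\nabla f(\Hat{x})+\tfrac1\lambda(z-\Hat{x})=0$. The key step is to rewrite this as $\Hat{x}-\ox=\lambda\bigl(\nabla f(\Hat{x})-\nabla f(\ox)\bigr)$ after inserting $z=\ox-\lambda\nabla f(\ox)$; this recasts the normal-map equation as a fixed-point identity linking $\Hat{x}$ and $\ox$. Taking norms and invoking the Lipschitz estimate $\|\nabla f(\Hat{x})-\nabla f(\ox)\|\le L_f\|\Hat{x}-\ox\|$ yields $\|\Hat{x}-\ox\|\le\lambda L_f\|\Hat{x}-\ox\|$, and since $\lambda L_f<1$ the factor $1-\lambda L_f$ is strictly positive, forcing $\Hat{x}=\ox$. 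Consequently $\ox=\Hat{x}\in\text{\rm Prox}_{\lambda g}(z)=\mathcal{G}_\lambda(\ox)$, which is exactly the $\lambda$-criticality of $\ox$.

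The main point to be careful about is the contraction step in (ii): one must ensure that the Lipschitz inequality is legitimately applied to the pair $(\Hat{x},\ox)$, i.e., that the element $\Hat{x}$ extracted from the set-valued normal map lies in the region where $\nabla f$ obeys the modulus $L_f$. Everything else — unwinding the definitions and performing the algebraic cancellations — is routine. I do not expect the descent-lemma machinery of Proposition~\ref{necessncvopt} to be needed here, since the normal-map equation already delivers the sharper fixed-point relation $\Hat{x}-\ox=\lambda(\nabla f(\Hat{x})-\nabla f(\ox))$ directly.
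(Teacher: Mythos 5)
Your proposal is correct and follows essentially the same route as the paper: part (i) is the same one-line substitution $\Hat{x}=\ox$ into \eqref{normalmap}, and part (ii) derives the same identity $\nabla f(\Hat{x})-\nabla f(\ox)+\tfrac{1}{\lambda}(\ox-\Hat{x})=0$ and applies the Lipschitz bound with $\lambda L_f<1$ to force $\Hat{x}=\ox$. The caveat you raise about applying the local Lipschitz estimate to the pair $(\Hat{x},\ox)$ is a fair observation, but the paper's own proof treats it exactly as you do.
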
\vspace*{-0.15in}
\begin{proof} To verify (i), we get by the {$\lambda$-criticality} $\ox$ in \eqref{ncvopt} that $\ox \in \mathcal{G}_\lambda (\ox)$, which yields
$$
0=\nabla f(\ox) + \frac{1}{\lambda}\big(\ox -\lambda \nabla f(\ox) - \ox\big) \in \mathcal{F}_\lambda^{\text{\rm nor}}\big(\ox-\lambda \nabla f(\ox)\big).
$$
To prove (ii), deduce from $0 \in \mathcal{F}_\lambda^{\text{\rm nor}}(\ox-\lambda \nabla f(\ox))$ that there exists $\Hat{x}\in \mathcal{G}_\lambda(\ox)$ satisfying
\begin{equation}\label{hatxoxequal1}
0 = \nabla f(\Hat{x}) + \frac{1}{\lambda}\big(\ox-\lambda \nabla f(\ox) -\Hat{x}\big) = \nabla f(\Hat{x}) - \nabla f(\ox) + \frac{1}{\lambda}(\ox-\Hat{x}).
\end{equation}
Due to the Lipschitz continuity of $\nabla f$ with modulus $L_f>0$ around $\ox$, it follows from \eqref{hatxoxequal1} that
$$
\frac{1}{\lambda}\|\Hat{x}-\ox\|=\|\nabla f(\Hat{x})-\nabla f(\ox)\| \leq L_f \|\Hat{x}-\ox\|,
$$
which tells us by $0<\lambda < 1/L_f$ that $\|\Hat{x}-\ox\|=0$, i.e., $\Hat{x}=\ox$. Thus we get $\ox\in\mathcal{G}_\lambda(\ox)$.
\end{proof}\vspace*{-0.05in}

The next proposition allows us to generate a point in the graph of the subgradient mapping for $\ph$ in \eqref{ncvopt} directly from the prox-gradient \eqref{prox-grad-map} and normal \eqref{normalmap} maps. It plays a crucial role in designing and justifying our new algorithm below in this section.\vspace*{-0.05in}

\begin{Proposition}[\bf generating elements of subgradient graphs]\label{xvingraph} Let $\varphi$ {be given as in}  \eqref{ncvopt}. For each $\lambda \in (0,\lambda_g)$ and $x \in \R^n$, take $\Hat{x} \in \mathcal{G}_\lambda(x)$ and denote
$$
\Hat{v}:= \nabla f(\Hat{x})- \nabla f(x)+ \frac{1}{\lambda}(x-\Hat{x}).
$$
Then we have $\Hat{v} \in \mathcal{F}_\lambda^{\text{\rm nor}}(x-\lambda \nabla f(x))$ and $(\Hat{x},\Hat{v})\in \gph \widehat{\partial}\varphi$.
\end{Proposition}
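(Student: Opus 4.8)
The plan is to verify the two claimed memberships separately, both reducing to direct manipulations of the defining formulas \eqref{prox-grad-map} and \eqref{normalmap} together with the generalized Fermat rule for the regular subdifferential.

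First I would establish $\Hat{v} \in \mathcal{F}_\lambda^{\text{\rm nor}}(x - \lambda \nabla f(x))$, which I expect to be a purely algebraic check requiring no variational machinery. Setting $z := x - \lambda \nabla f(x)$, the hypothesis $\Hat{x} \in \mathcal{G}_\lambda(x) = \text{\rm Prox}_{\lambda g}(z)$ places $\Hat{x}$ among the proximal points of $z$, so by definition \eqref{normalmap} the vector $\nabla f(\Hat{x}) + \frac{1}{\lambda}(z - \Hat{x})$ belongs to $\mathcal{F}_\lambda^{\text{\rm nor}}(z)$. Substituting $z = x - \lambda \nabla f(x)$ and simplifying $\frac{1}{\lambda}(z - \Hat{x}) = \frac{1}{\lambda}(x - \Hat{x}) - \nabla f(x)$ shows that this vector equals precisely $\Hat{v}$, which yields the first assertion.

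For the inclusion $(\Hat{x}, \Hat{v}) \in \gph \widehat{\partial}\varphi$, I would exploit that $\Hat{x} \in \text{\rm Prox}_{\lambda g}(z)$ means $\Hat{x}$ minimizes the function $y \mapsto g(y) + \frac{1}{2\lambda}\|y - z\|^2$ over $\R^n$. Applying the generalized Fermat rule together with the sum rule for the regular subdifferential relative to the smooth quadratic term---exactly as in the proof of Remark~\ref{crit->Fsta}(ii) via \cite[Proposition~1.114]{Mordukhovich06}---gives $0 \in \widehat{\partial}g(\Hat{x}) + \frac{1}{\lambda}(\Hat{x} - z)$, that is, $\frac{1}{\lambda}(x - \Hat{x}) - \nabla f(x) \in \widehat{\partial}g(\Hat{x})$. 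Adding $\nabla f(\Hat{x})$ to both sides and invoking the same smooth sum rule in the form $\widehat{\partial}\varphi(\Hat{x}) = \nabla f(\Hat{x}) + \widehat{\partial}g(\Hat{x})$, valid because $f$ is $\mathcal{C}^1$, I arrive at $\Hat{v} = \nabla f(\Hat{x}) - \nabla f(x) + \frac{1}{\lambda}(x - \Hat{x}) \in \widehat{\partial}\varphi(\Hat{x})$, as required.

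The one point that warrants attention---and where I would be most careful---is the applicability of the regular-subdifferential sum rule $\widehat{\partial}(h + g) = \nabla h + \widehat{\partial}g$ for a $\mathcal{C}^1$ summand $h$: this is an exact equality rather than a mere inclusion, which is what lets the deduction run cleanly and delivers membership in the regular (Fr\'echet) subdifferential $\widehat{\partial}\varphi$ rather than only the limiting one. Since this rule is standard and already employed by the authors, I anticipate no genuine obstacle; once $z = x - \lambda \nabla f(x)$ is introduced, the entire argument is essentially bookkeeping.
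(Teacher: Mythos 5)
Your proposal is correct and follows essentially the same route as the paper: the first inclusion is a direct algebraic consequence of the definitions of $\mathcal{G}_\lambda$ and $\mathcal{F}_\lambda^{\text{\rm nor}}$, and the second is obtained exactly as in the paper by applying the generalized Fermat rule and the exact regular-subdifferential sum rule for a smooth summand (the paper cites \cite[Proposition~1.30]{Mor18} for this) to the proximal minimization defining $\Hat{x}$. No gaps.
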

\begin{proof} The first inclusion follows immediately from the definitions of the prox-gradient and normal maps. To verify the second one, deduce from $\Hat{x}\in\mathcal{G}_\lambda(x)$ that
$$
\Hat{x} \in \text{\rm argmin}\Big\{g(u) + \frac{1}{2\lambda}\|u - x +\lambda \nabla f(x)\|^2 \; \Big|\; u \in \R^n\Big\}.
$$
It follows from  the elementary calculus rules for regular subgradients \cite[Proposition~1.30]{Mor18} that  
$$
0 \in \widehat{\partial} g(\Hat{x}) + \frac{1}{\lambda}\big(\Hat{x}-x +\lambda \nabla f(x)\big) = \nabla f(x) + \widehat{\partial} g(\Hat{x}) + \frac{1}{\lambda}(\Hat{x}-x),
$$
which implies in turn that 
$$
\Hat{v} = \nabla f(\Hat{x})- \nabla f(x)+ \frac{1}{\lambda}(x-\Hat{x}) \in \nabla f(\Hat{x}) + \widehat{\partial} g(\Hat{x}) =\widehat{\partial}\varphi(\Hat{x}), 
$$
and therefore completes the proof of the proposition. 
\end{proof}\vspace*{-0.05in}

Now we involve the prox-regularity of the regularizer in \eqref{ncvopt}, which allows us to justify the single-valuedness of the prox-gradient and normal maps.\vspace*{-0.05in}

\begin{Proposition}[\bf single-valuedness of prox-gradient and normal maps]\label{singlevalueproxmap} Let $\ox$ be an {M-stationary point of \eqref{ncvopt}, and let $g$ be  $r$-level prox-regular at $\ox$ for $\ov:=-\nabla f(\ox)$ for some $r>0$}. {Suppose that $\ox$ is a $\bar{\lambda}$-critical point of \eqref{ncvopt} for some $\bar{\lambda} \in (0,1/r].$ Then for each $\lambda \in \left(0,  \bar{\lambda} \right)$,} there exists a neighborhood $U$ of $\ox$ on which the maps $x\mapsto \mathcal{G}_\lambda(x)$ and $x\mapsto \mathcal{F}_\lambda^{\text{\rm nor}}(x-\lambda \nabla f(x))$ are single-valued and Lipschitz continuous on $U$. Moreover, we find $\eta_1,\eta_2 >0$ such that
\begin{equation}\label{Fnorest}
\|\mathcal{G}_\lambda(x)- \ox\| \leq \eta_1 \|x-\ox\|\;\text{ and }\; 
\|\mathcal{F}_\lambda^{\text{\rm nor}}(x-\lambda \nabla f(x)) \|\leq     \eta_2\|x-\ox\|\;\text{ for all }\;x \in U.
\end{equation}
\end{Proposition}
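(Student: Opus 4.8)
The plan is to reduce the whole statement to Proposition~\ref{C11} applied to the regularizer $g$, and then transport the resulting single-valuedness and Lipschitz properties of $\text{\rm Prox}_{\lambda g}$ through the smooth perturbation $x\mapsto x-\lambda\nabla f(x)$. First I would observe that the standing prox-regularity hypothesis already forces $\ov=-\nabla f(\ox)\in\partial g(\ox)$, which is precisely the M-stationarity $0\in\partial\varphi(\ox)=\nabla f(\ox)+\partial g(\ox)$, so Proposition~\ref{C11} is applicable to $g$ with subgradient $\ov$. The crucial preliminary is to certify the admissible range of the Moreau parameter. From the $\bar{\lambda}$-criticality $\ox\in\mathcal{G}_{\bar{\lambda}}(\ox)=\text{\rm Prox}_{\bar{\lambda} g}(\ox+\bar{\lambda}\ov)$ and the $\text{\rm argmin}$ definition \eqref{ProxMapping}, I would derive the global inequality $g(x)\ge g(\ox)+\langle\ov,x-\ox\rangle-\frac{1}{2\bar{\lambda}}\|x-\ox\|^2$ for all $x$, exactly as in Remark~\ref{choiceMr}(iii). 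Since the statement permits the boundary value $\bar{\lambda}=1/r$, I would not use $\bar{\lambda}$ itself as an element of $P$; instead, fixing $\lambda\in(0,\bar{\lambda})$, I pick any $\alpha\in(\lambda,\bar{\lambda})$, note that $\alpha\in(0,1/r)$ and that $\frac{1}{2\bar{\lambda}}\le\frac{1}{2\alpha}$, so the displayed inequality shows $\alpha\in P$ for the set $P$ in \eqref{thresholdlm} written for $g$. Choosing then $\tilde\lambda\in(\lambda,\alpha)$ places us exactly in the hypotheses of Proposition~\ref{C11}, which yields a neighborhood $U_\lambda$ of $\ox+\lambda\ov=\ox-\lambda\nabla f(\ox)$ on which $\text{\rm Prox}_{\lambda g}$ is single-valued and Lipschitz continuous with $\text{\rm Prox}_{\lambda g}(\ox+\lambda\ov)=\ox$.

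Next I would transport these properties to $\mathcal{G}_\lambda$ and $\mathcal{F}_\lambda^{\text{\rm nor}}$. Because $f$ is $\mathcal{C}^2$, the map $x\mapsto x-\lambda\nabla f(x)$ is $\mathcal{C}^1$ and locally Lipschitz, and it sends $\ox$ to the center $\ox+\lambda\ov$ of $U_\lambda$; by continuity there is a neighborhood $U$ of $\ox$ with $x-\lambda\nabla f(x)\in U_\lambda$ for all $x\in U$. Composition then shows that $\mathcal{G}_\lambda(x)=\text{\rm Prox}_{\lambda g}(x-\lambda\nabla f(x))$ is single-valued and Lipschitz continuous on $U$ with $\mathcal{G}_\lambda(\ox)=\ox$. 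Using single-valuedness I rewrite the normal map in the form \eqref{normalmapsingle}, so that $\mathcal{F}_\lambda^{\text{\rm nor}}(x-\lambda\nabla f(x))=\nabla f(\mathcal{G}_\lambda(x))-\nabla f(x)+\frac{1}{\lambda}(x-\mathcal{G}_\lambda(x))$, which is exactly the vector $\Hat{v}$ of Proposition~\ref{xvingraph}; this is again single-valued and Lipschitz on $U$ as a combination of locally Lipschitz maps, and it vanishes at $\ox$ since $\nabla f(\ox)-\nabla f(\ox)+\frac{1}{\lambda}(\ox-\ox)=0$.

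Finally, the two estimates in \eqref{Fnorest} follow from Lipschitz continuity together with the computed values at $\ox$: taking $\eta_1$ to be the Lipschitz constant of $\mathcal{G}_\lambda$ on $U$ gives $\|\mathcal{G}_\lambda(x)-\ox\|=\|\mathcal{G}_\lambda(x)-\mathcal{G}_\lambda(\ox)\|\le\eta_1\|x-\ox\|$, while bounding $\|\nabla f(\mathcal{G}_\lambda(x))-\nabla f(x)\|$ by the local Lipschitz constant $L$ of $\nabla f$ times $\|\mathcal{G}_\lambda(x)-x\|\le(1+\eta_1)\|x-\ox\|$ produces a constant $\eta_2$ of the form $(L+1/\lambda)(1+\eta_1)$. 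I expect the only genuinely delicate point to be the parameter bookkeeping in the first paragraph, namely handling the admissible boundary case $\bar{\lambda}=1/r$ by passing to an intermediate $\alpha\in P$ so that Proposition~\ref{C11} applies at the prescribed $\lambda$; once the correct Moreau parameter is secured, the rest is a routine composition of single-valued Lipschitz maps anchored at the fixed point $\mathcal{G}_\lambda(\ox)=\ox$ and the zero $\mathcal{F}_\lambda^{\text{\rm nor}}(\ox-\lambda\nabla f(\ox))=0$.
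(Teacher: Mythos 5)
Your proposal is correct and follows essentially the same route as the paper's proof: invoke Proposition~\ref{C11} via Remark~\ref{choiceMr}(iii) to get single-valuedness and local Lipschitz continuity of $\text{\rm Prox}_{\lambda g}$ near $\ox+\lambda\ov$ with $\text{\rm Prox}_{\lambda g}(\ox+\lambda\ov)=\ox$, compose with the locally Lipschitz map $x\mapsto x-\lambda\nabla f(x)$ on a suitably shrunk neighborhood, and read off the estimates with the same constants $\eta_1=\ell$ and $\eta_2=(L+\lambda^{-1})(1+\ell)$. Your explicit handling of the boundary case $\bar{\lambda}=1/r$ by passing to an intermediate $\alpha\in(\lambda,\bar{\lambda})\cap P$ is in fact slightly more careful than the paper, which cites Proposition~\ref{C11} and Remark~\ref{choiceMr}(iii) without spelling out that bookkeeping.
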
 \vspace*{-0.15in}
\begin{proof} Since $f$ is $\mathcal{C}^2$-smooth, there is a neighborhood $\Tilde{U}$ of $\ox$ such that $\nabla f$ is Lipschitz continuous on $\Tilde{U}$. Since $\ox$ is a {$\bar{\lambda}$-critical point of \eqref{ncvopt}, then $\ox \in \mathcal{G}_{\bar{\lambda}} (\ox)=\text{\rm Prox}_{\bar{\lambda} g}\big(\ox+ \bar{\lambda}\ov\big)$. 
It follows from Proposition~\ref{C11} and Remark~\ref{choiceMr}(iii) that for any $\lambda \in (0,\bar{\lambda})$,  the proximal mapping $\text{\rm Prox}_{\lambda g}$ is single-valued  and locally Lipschitzian on a neighborhood $U_\lambda$ of $\ox +\lambda\ov $ and that $\text{\rm Prox}_{\lambda g}(\ox+\lambda\ov)=\ox$.} Choose a neighborhood $U\subset \Tilde{U}$ of $\ox$ so that 
\begin{equation} \label{h(U)} 
(I-\lambda \nabla f)(U) \subset  U_\lambda\; \text {and }\;\text{\rm Prox}_{\lambda g}\big((I-\lambda \nabla f)(U)\big) \subset \Tilde{U},
\end{equation}
which implies that the prox-gradient map $\mathcal{G}_\lambda$ is single-valued and Lipschitz continuous on $U$.  By \eqref{normalmapsingle} and \eqref{h(U)}, the normal map $x\mapsto\mathcal{F}_\lambda^{\text{\rm nor}}(x-\lambda \nabla f(x))$ is single-valued  and satisfies the equality 
$$
\mathcal{F}_\lambda^{\text{\rm nor}}\big(x-\lambda \nabla f(x)\big) = \nabla f\big(\mathcal{G}_\lambda(x)\big) - \nabla f(x) + \frac{1}{\lambda}\big(x-\mathcal{G}_\lambda(x)\big)\;\mbox{ for all }\;x\in U,
$$
which also implies that this map is Lipschitz continuous on $U$. 

To verify further \eqref{Fnorest}, let $\ell>0$ and $L>0$ be Lipschitz moduli of $\mathcal{G}_\lambda$ and $\nabla f$ on $U$ and $\Tilde{U}$, respectively.  Since $\mathcal{G}_\lambda(\ox)=\ox$, for each $x \in U$ we have 
$$ 
\|\mathcal{G}_\lambda(x)-\ox\| = \|\mathcal{G}_\lambda(x)- \mathcal{G}_\lambda (\ox)\| \leq \ell \|x-\ox\|,
$$ 
which yields the first estimate in \eqref{Fnorest} with $\eta_1:=\ell$. To verify the second one, for each $x \in U$ we deduce from \eqref{h(U)} that $\mathcal{G}_\lambda(x) \in \Tilde{U}$. The Lipschitz continuity of $\nabla f$ with modulus $L >0$ on $\Tilde{U}$ leads us to the relationships 
\begin{align*}
\big\|\mathcal{F}_\lambda^{\text{\rm nor}}\big(x-\lambda \nabla f(x)\big)\big\| & = \big\|\nabla f\big(\mathcal{G}_\lambda(x)\big) -\nabla f(x) + \frac{1}{\lambda}\big(x-\mathcal{G}_\lambda(x)\big)\big\| \\
& \leq\big\|\nabla f\big(\mathcal{G}_\lambda(x)\big) - \nabla f(x)\big\| + \frac{1}{\lambda}\big\|x-\mathcal{G}_\lambda(x)\big\| \leq\big(L + \frac{1}{\lambda}\big)\|\mathcal{G}_\lambda(x) - x\|\\
& \le\big(L + \frac{1}{\lambda}\big)\big(\|\mathcal{G}_\lambda(x) - \ox\| + \|x-\ox\|\big)\leq\big(L + \frac{1}{\lambda}\big)(1+\ell)\|x -\ox\|,
\end{align*}
which justify the second estimate in \eqref{Fnorest} with $\eta_2:= (L+\lambda^{-1})(1+\ell)$. 
\end{proof}\vspace*{-0.05in}

The obtained Proposition~\ref{singlevalueproxmap} immediately provides the justification of the approximate step in the version of Algorithm~\ref{NMcod} for the structured optimization model \eqref{ncvopt}.\vspace*{-0.07in} 

\begin{Corollary}[\bf approximate step in structured nonconvex optimization] \label{apstepncv} In the setting of Proposition~{\rm\ref{singlevalueproxmap}}, for each {$\lambda \in (0,\bar{\lambda})$} there exist $\eta>0$ and a neighborhood $U$ of $\ox$  such that whenever $x \in U$, we get a pair $(\Hat{x},\Hat{v}) \in \gph \partial\varphi$ satisfying the estimate
$$
\|(\Hat{x},\Hat{v})-(\ox,0)\| \leq \eta \|x-\ox\|,
$$
where the approximation vectors $\Hat{x}$ and $\Hat{v}$ are selected by 
$$
\Hat{x}:=\mathcal{G}_\lambda(x)\;\text{ and }\;\Hat{v}:= \mathcal{F}_\lambda^{\text{\rm nor}}(x-\lambda \nabla f(x)).
$$
\end{Corollary}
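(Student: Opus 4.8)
The plan is to stitch together the single-valuedness and the two linear norm estimates already furnished by Proposition~\ref{singlevalueproxmap} with the subgradient-graph membership established in Proposition~\ref{xvingraph}; no new analytic work is really needed beyond careful bookkeeping. First I would fix $\lambda\in(0,\bar\lambda)$ and invoke Proposition~\ref{singlevalueproxmap} to obtain a neighborhood $U$ of $\ox$ on which both $x\mapsto\mathcal{G}_\lambda(x)$ and $x\mapsto\mathcal{F}_\lambda^{\text{\rm nor}}(x-\lambda\nabla f(x))$ are single-valued (and Lipschitz continuous), together with constants $\eta_1,\eta_2>0$ delivering the two inequalities in \eqref{Fnorest}.

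Next, for a fixed $x\in U$ I would set $\Hat{x}:=\mathcal{G}_\lambda(x)=\text{\rm Prox}_{\lambda g}(x-\lambda\nabla f(x))$, so that $\Hat{x}$ is in particular an element of $\mathcal{G}_\lambda(x)$. This is precisely the hypothesis needed to apply Proposition~\ref{xvingraph}, which then guarantees that the vector
$$
\Hat{v}:=\nabla f(\Hat{x})-\nabla f(x)+\tfrac{1}{\lambda}(x-\Hat{x})
$$
lies in $\mathcal{F}_\lambda^{\text{\rm nor}}(x-\lambda\nabla f(x))$ and satisfies $(\Hat{x},\Hat{v})\in\gph\widehat{\partial}\varphi\subset\gph\partial\varphi$. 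Because the normal map is single-valued on $U$ by Proposition~\ref{singlevalueproxmap}, this $\Hat{v}$ must coincide with the value $\mathcal{F}_\lambda^{\text{\rm nor}}(x-\lambda\nabla f(x))$ named in the statement, so the prescribed selection $(\Hat{x},\Hat{v})$ indeed sits in $\gph\partial\varphi$.

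Finally I would verify the norm bound by splitting the Euclidean norm on $\R^{2n}$ and applying \eqref{Fnorest} directly:
$$
\|(\Hat{x},\Hat{v})-(\ox,0)\|^2=\|\mathcal{G}_\lambda(x)-\ox\|^2+\|\mathcal{F}_\lambda^{\text{\rm nor}}(x-\lambda\nabla f(x))\|^2\le(\eta_1^2+\eta_2^2)\|x-\ox\|^2,
$$
whence the choice $\eta:=\sqrt{\eta_1^2+\eta_2^2}$ completes the argument. As for the main obstacle, there is essentially no analytic difficulty, since all the heavy lifting—single-valuedness, Lipschitz continuity, and the two linear bounds—was carried out in Proposition~\ref{singlevalueproxmap}, and the graph membership in Proposition~\ref{xvingraph}. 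The only point that genuinely requires attention is confirming that the vector $\Hat{v}$ produced by Proposition~\ref{xvingraph} is the \emph{same} as the single-valued normal-map value appearing in the corollary; this is where the single-valuedness on $U$ is used rather than merely the nonemptiness of the proximal set.
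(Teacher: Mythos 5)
Your argument is correct and is precisely the reasoning the paper has in mind: the corollary is stated as an immediate consequence of Proposition~\ref{singlevalueproxmap} (combined with Proposition~\ref{xvingraph} for the graph membership), and your bookkeeping—including the observation that single-valuedness identifies the vector from Proposition~\ref{xvingraph} with the normal-map value, the inclusion $\gph\widehat{\partial}\varphi\subset\gph\partial\varphi$, and the choice $\eta=\sqrt{\eta_1^2+\eta_2^2}$—fills in exactly the intended details. No issues.
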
\vspace*{-0.05in}

Let us now discuss how to implement the approximate step in Algorithm~\ref{NMcod} to solve the structured optimization problem \eqref{ncvopt}. The simplest way of choosing $(\Hat{x}^k,\Hat{v}^k)\in\gph\partial\varphi$ in Algorithm~\ref{NMcod} is 
$$
\Hat{x}^k:= x^k\;\text{ and } \; \Hat{v}^k \in \partial\varphi(x^k).
$$
The difficulty in the above choice is that  we cannot guarantee that $\partial\varphi(x)$ is nonempty for an arbitrary $x \in \R^n$. The alternative way is choosing $\Hat{x}^k$ and $\Hat{v}^k$ as in \eqref{choicproxregular}. However, one of the difficulties here is to compute exactly the proximal mapping of $\varphi$, which is
the sum of two nonconvex functions. Thanks to Proposition~\ref{xvingraph} and Corollary~\ref{apstepncv}, we can choose $\Hat{x}^k$ and $\Hat{v}^k$ as follows:
\begin{equation}\label{xkvkcho} 
\Hat{x}^k \in  \text{\rm Prox}_{\lambda g}(x^k-\lambda \nabla f(x^k)),  \text{ and }  \Hat{v}^k:= \nabla f(\Hat{x}^k)- \nabla f(x^k)+ \frac{1}{\lambda}(x^k-\Hat{x}^k).
\end{equation} 
Note that $\Hat{x}^k$ and $\Hat{v}^k$ in \eqref{xkvkcho} are always well-defined since $\text{\rm Prox}_{\lambda g}(x)$ is nonempty for any $x\in \R^n$ due to the prox-boundedness of $g$. Moreover, \eqref{xkvkcho} requires to know the proximal mapping only for the regularizer $g$ (not for the whole function $\ph$ in \eqref{ncvopt}), which is often  easy to compute. The corresponding Newton algorithm using the choice \eqref{xkvkcho} is formulated as follows.\vspace*{-0.1in}

\begin{algorithm}[H]
\caption{\bf(coderivative-based pure Newton algorithm in structured optimization)}\label{NMcompositelocal}
\begin{algorithmic}[1]
\Require {$x^0\in\R^n$,  $\lambda \in (0,\lambda_g)$}
\For{\texttt{$k=0,1,\ldots$}}
\State\text{If $x^k \in \text{\rm Prox}_{\lambda g}(x^k -\lambda \nabla f(x^k))$, stop; otherwise go to the next step}
\State  \text{Define  $\Hat{x}^k$ and $\Hat{v}^k$ by} 
$$
\Hat{x}^k \in  \text{\rm Prox}_{\lambda g}(x^k -\lambda \nabla f(x^k)),\quad\Hat{v}^k:= \nabla f(\Hat{x}^k) -\nabla f(x^k) +\frac{1}{\lambda} (x^k  -\Hat{x}^k) 
$$
\State {\bf Newton step:} Choose $d^k\in\R^n$ satisfying
$$
-\Hat{v}^k - \nabla^2 f(\Hat{x}^k)d^k \in 
 \partial^2 g(\Hat{x}^k,\Hat{v}^k- \nabla f(\Hat{x}^k)(d^k)
$$
\State \text{Set $x^{k+1}:=\Hat{x}^k+ d^k$}
\EndFor
\end{algorithmic}
\end{algorithm}\vspace*{-0.2in}

\begin{Remark}[\bf iterative sequence generated by Algorithm \ref{NMcompositelocal}] \label{NMcodandNMcompo} \rm  Proposition~\ref{xvingraph} and Corollary~\ref{apstepncv}  allow us to show that Algorithm~\ref{NMcompositelocal} is a special case of the general scheme proposed in Algorithm~\ref{NMcod}. Indeed, Remark~\ref{crit->Fsta} tells us that Step~2 of Algorithm~\ref{NMcompositelocal} implies that $0 \in \partial \varphi(x^k)$. Moreover,  by Proposition~\ref{xvingraph} and the constructions of $\{\Hat{x}^k\}$ and $\{\Hat{v}^k\}$ in Algorithm \ref{NMcompositelocal}, we have that $(\Hat{x}^k,\Hat{v}^k)\in\gph\partial\varphi$. Then Corollary~\ref{apstepncv} ensures that 
$$
\|(\Hat{x}^k,\Hat{v}^k) -(\ox,0)\| \leq\eta \|x^k -\ox\|  
$$
whenever $x^k$ is near $\ox$. This means that the  choice of $(\Hat{x}^k,\Hat{v}^k)$ in Algorithm~\ref{NMcompositelocal} satisfies the approximate step of Algorithm~\ref{NMcod}. The Newton step of Algorithm~\ref{NMcompositelocal} can be written as 
$$
-\Hat{v}^k \in \nabla^2 f(\Hat{x}^k)d^k + \partial^2 g\big(\Hat{x}^k, \Hat{v}^k - \nabla f(\Hat{x}^k)\big)(d^k),
$$
which is equivalent to $-\Hat{v}^k \in \partial^2\varphi(\Hat{x}^k,\Hat{v}^k)(d^k)$ by the second-order sum rule from \cite[Proposition~1.121]{Mordukhovich06}.
\end{Remark}\vspace*{-0.05in}

The next theorem establishes the well-posedness and local superlinear convergence of iterates in Algorithm~\ref{NMcompositelocal} under fairly general assumptions.\vspace*{-0.05in}

\begin{Theorem}[\bf local superlinear convergence of the coderivative-based pure Newton algorithm]\label{localNMcomposite} Let $\ox$ be an {M-stationary point} of the problem \eqref{ncvopt} for which $\nabla f+\partial g$ is metrically regular around $(\ox,0)$ and $\partial g$ is semismooth$^*$   at $(\ox,-\nabla f(\ox))$.   Suppose further that   $g$ is {continuously $r$-level prox-regular at $\ox$ for $-\nabla f(\ox)$ for some $r>0$, and $\ox$ is a $\bar{\lambda}$-critical point of \eqref{ncvopt} for some $\bar{\lambda} \in (0,1/r]$}. Then for {$\lambda \in (0,\bar{\lambda})$}, there exists a neighborhood $U$ of $\ox$ such that for all $x^0\in U$ we have that Algorithm~{\rm\ref{NMcompositelocal}} either stops after finitely many iterations, or produces a sequence  $\{x^k\}$ that converges $Q$-superlinearly to $\ox$.
\end{Theorem}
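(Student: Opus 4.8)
The plan is to deduce this result from the already-established local superlinear convergence of the general framework in Theorem~\ref{localconvergeNMcod}, by showing two things: first, that the composite function $\varphi = f + g$ inherits all the hypotheses required there at $\ox$ for the subgradient $0$; and second, that Algorithm~\ref{NMcompositelocal} is a concrete realization of Algorithm~\ref{NMcod} applied to this $\varphi$, as already anticipated in Remark~\ref{NMcodandNMcompo}.

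First I would verify the standing assumptions of Theorem~\ref{localconvergeNMcod} for $\varphi$. Since $f$ is $\mathcal{C}^2$-smooth, the limiting subdifferential sum rule yields $\partial\varphi = \nabla f + \partial g$; hence the M-stationarity of $\ox$ gives $0 \in \partial\varphi(\ox)$ so that $\ox$ solves \eqref{sub-incl}, and the assumed metric regularity of $\nabla f + \partial g$ around $(\ox,0)$ is precisely the metric regularity of $\partial\varphi$ there. For the continuous prox-regularity of $\varphi$ at $\ox$ for $0$, I would invoke the standard fact that adding a $\mathcal{C}^2$-smooth function preserves both prox-regularity and subdifferential continuity (see, e.g., \cite{Rockafellar98}): since $g$ is continuously $r$-level prox-regular at $\ox$ for $-\nabla f(\ox)$, the sum $\varphi = f+g$ is continuously prox-regular at $\ox$ for $\nabla f(\ox) + \bigl(-\nabla f(\ox)\bigr) = 0$.

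The remaining hypothesis, semismoothness$^*$ of $\partial\varphi$ at $(\ox,0)$, is the delicate point, and I expect it to be the main obstacle. Here I would exploit that the graph shift $\Phi(x,w) := (x, w - \nabla f(x))$ is a $\mathcal{C}^1$-diffeomorphism of $\R^{2n}$ carrying $\gph(\nabla f + \partial g)$ onto $\gph\partial g$, and combine the assumed semismoothness$^*$ of $\partial g$ at $(\ox,-\nabla f(\ox))$ with the semismoothness$^*$ of the $\mathcal{C}^1$-mapping $\nabla f$ through the corresponding sum rule for the property. The smoothness of $\nabla f$ is exactly what makes the directional limiting normal cones to the two graphs transform compatibly; directional coderivatives, unlike ordinary ones, require this care under coordinate changes, which is why I flag this transfer as the crux. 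Once it is in place, Theorem~\ref{localconvergeNMcod} applies to $\varphi$ verbatim.

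Finally, I would identify Algorithm~\ref{NMcompositelocal} with Algorithm~\ref{NMcod} for this $\varphi$. By Proposition~\ref{xvingraph}, the selections in \eqref{xkvkcho} produce pairs $(\Hat{x}^k,\Hat{v}^k) \in \gph\partial\varphi$, while Corollary~\ref{apstepncv} — resting on the single-valuedness and local Lipschitz estimates of Proposition~\ref{singlevalueproxmap} — guarantees the approximate-step estimate \eqref{stopcri} with a finite modulus $\eta$ once $x^k$ is near $\ox$. The Newton step of Algorithm~\ref{NMcompositelocal} is, by the second-order sum rule \cite[Proposition~1.121]{Mordukhovich06}, equivalent to the inclusion $-\Hat{v}^k \in \partial^2\varphi(\Hat{x}^k,\Hat{v}^k)(d^k)$, i.e.\ the Newton step \eqref{newton-inc} of Algorithm~\ref{NMcod}, and its solvability near $(\ox,0)$ is ensured by Theorem~\ref{newsolNM}. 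Thus Algorithm~\ref{NMcompositelocal} is a special instance of Algorithm~\ref{NMcod}, and the conclusion — finite termination or $Q$-superlinear convergence of $\{x^k\}$ to $\ox$ on a neighborhood $U$ of $\ox$ — follows directly from Theorem~\ref{localconvergeNMcod}.
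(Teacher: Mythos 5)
Your proposal is correct and follows essentially the same route as the paper: apply the subdifferential sum rule to get $\partial\varphi=\nabla f+\partial g$, transfer metric regularity and semismoothness$^*$ to $\partial\varphi$ (the paper handles the latter by citing the sum rule of Gfrerer--Outrata rather than re-deriving it via your graph-shift argument), identify Algorithm~\ref{NMcompositelocal} as an instance of Algorithm~\ref{NMcod} via Remark~\ref{NMcodandNMcompo}, and invoke Theorem~\ref{localconvergeNMcod}. Your extra care with the continuous prox-regularity of the sum and with the semismooth$^*$ transfer only makes explicit what the paper leaves to citations.
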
\vspace*{-0.15in}
\begin{proof} The limiting subdifferential sum rule from \cite[Proposition~1.107]{Mordukhovich06} tells us that
$$
\partial\varphi(x) = \nabla f(x) + \partial g(x)\;\text{ for all }\;x\in \R^n,
$$
which ensures by the imposed assumptions that $\partial\varphi$ is metrically regular around $(\ox,0)$, and semismooth$^*$ at this point due to \cite[Proposition~3.6]{Helmut}. It follows from Remark~\ref{NMcodandNMcompo} that Algorithm~\ref{NMcompositelocal} reduces to Algorithm~\ref{NMcod} with $\varphi=f+g$. Thus the assumptions of Theorem~\ref{localconvergeNMcod} are satisfied, and we are done with the proof. 
\end{proof}\vspace*{-0.05in}

  As discussed in Section \ref{sec:general}, finding directions that satisfy the generalized Newton system is crucial for practical implementations of our algorithm. Next we present an explicit formula for computing the direction in the Newton step of Algorithm~\ref{NMcompositelocal}. To proceed, consider the {\em Bouligand’s Jacobian} of $f: \R^n \to \R^m$ at $\bar{x} \in \R^n$ defined by  
$$
\partial_Bf(\bar{x}):=\big\{ H\in\R^{m \times n}\;\big|\;\exists\ x_k \to \bar{x} \text{ such that } f \text{ is  differentiable at } x_k\;\text{ and }\;\nabla f(x_k) \to H \big\}.
$$ 
The classical Rademacher theorem tells us that if $f$ is locally Lipschitz around $\bar{x}$, then $\partial_B f(\bar{x})$ is nonempty. Moreover, if $\varphi$ is of class $\mathcal{C}^{1,1}$ around $\ox$, we get  

\begin{equation}\label{BandM}
\partial_B \nabla \varphi(\ox) w \subset \partial^2 \varphi(\ox)(w) \quad \text{for all }\; w \in \R^n. 
\end{equation}  

The following result offers a sufficient condition and a constructive method to find a generalized Newton direction when the objective function in given in form \eqref{ncvopt}.\vspace*{-0.05in} 
 
\begin{Theorem}[\bf finding generalized Newton directions]\label{newsolvability} Let $\varphi:\R^n \to \oR$ be defined in \eqref{ncvopt}. For $\ox \in \dom \varphi$ and $\ov \in \partial\varphi(\ox)$, {suppose that $g:\R^n\to\overline{\R}$ is $r$-weakly convex for some $r>0$.} If the subgradient mapping $\partial\ph$ is  metrically regular around $(\ox,\ov)$, then for any $\lambda \in (0,1/r)$ and $A \in \partial_B(\text{\rm Prox}_{\lambda g})(\ox +\lambda (\ov -\nabla f(\ox)))$, we have the assertions:

{\bf(i)} The matrix $I-A+\lambda A \nabla^2  f(\ox)$ is nonsingular.

{\bf (ii)} If $d\in \R^n$ satisfies the system  of linear equations 
\begin{equation}\label{linearNM}
-\lambda A \ov = (I-A +\lambda  A \nabla^2 f(\ox))d,
\end{equation}
then $d$ solves the generalized Newton system \eqref{newton-inc} for $(x,v):=(\ox,\ov)$.   
\end{Theorem}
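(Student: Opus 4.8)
The plan is to funnel everything through a single inclusion that ties the Bouligand Jacobian $A$ to the second-order subdifferential of $g$, after which both assertions reduce to short linear-algebra identities together with the Mordukhovich criterion. Write $\ow:=\ov-\nabla f(\ox)$ and $z:=\ox+\lambda\ow=\ox+\lambda(\ov-\nabla f(\ox))$, so that $A\in\partial_B(\text{\rm Prox}_{\lambda g})(z)$. Since $g$ is $r$-weakly convex, it is continuously $r$-level prox-regular and prox-bounded, and by Remark~\ref{choiceMr}(ii) the threshold set $P$ in \eqref{thresholdlm} equals $(0,1/r)$, so Proposition~\ref{C11} and Lemma~\ref{2ndofMoreau} are available for every $\lambda\in(0,1/r)$. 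Moreover $\ow\in\partial g(\ox)$ gives $\ox=\text{\rm Prox}_{\lambda g}(z)$, $\ow=\nabla e_\lambda g(z)$, and $z-\lambda\ow=\ox$.

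First I would establish the relation
\begin{equation}
\frac{1}{\lambda}(I-A)w\in\partial^2 g(\ox,\ow)(Aw)\quad\text{for all }w\in\R^n. \tag{$\star$}
\end{equation}
By Proposition~\ref{C11} the envelope $e_\lambda g$ is $\mathcal{C}^{1,1}$ near $z$ with $\nabla e_\lambda g(\cdot)=\frac{1}{\lambda}(\cdot-\text{\rm Prox}_{\lambda g}(\cdot))$, so at each differentiability point of $\text{\rm Prox}_{\lambda g}$ one has $\nabla^2 e_\lambda g=\frac{1}{\lambda}(I-\nabla\text{\rm Prox}_{\lambda g})$; passing to the limit defining $A$ shows $\frac{1}{\lambda}(I-A)\in\partial_B\nabla e_\lambda g(z)$. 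Feeding this into \eqref{BandM} gives $\frac{1}{\lambda}(I-A)w\in\partial^2 e_\lambda g(z)(w)$, and since $\ow=\nabla e_\lambda g(z)$, Lemma~\ref{2ndofMoreau} (applied to $g$ at $z$ with $v=\ow$, whence $z-\lambda v=\ox$) converts this, for $v^*:=w$ and $x^*:=\frac{1}{\lambda}(I-A)w$, into $x^*\in\partial^2 g(\ox,\ow)(w-\lambda x^*)$; the argument collapses because $w-\lambda x^*=w-(I-A)w=Aw$, which is exactly $(\star)$.

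Next, the $\mathcal{C}^2$-smoothness of $f$ and the second-order sum rule \cite[Proposition~1.121]{Mordukhovich06} give $\partial^2\varphi(\ox,\ov)(d)=\nabla^2 f(\ox)d+\partial^2 g(\ox,\ow)(d)$, so the Newton system \eqref{newton-inc} at $(\ox,\ov)$ amounts to $-\ov-\nabla^2 f(\ox)d\in\partial^2 g(\ox,\ow)(d)$. For (ii), a solution $d$ of \eqref{linearNM} rearranges to $d=Aw$ with $w:=d-\lambda\nabla^2 f(\ox)d-\lambda\ov$, and then $\frac{1}{\lambda}(I-A)w=\frac{1}{\lambda}(w-d)=-\ov-\nabla^2 f(\ox)d$, so $(\star)$ delivers precisely the reduced inclusion. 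For (i), if $(I-A+\lambda A\nabla^2 f(\ox))d=0$, the same rearrangement gives $d=Aw$ with $w:=d-\lambda\nabla^2 f(\ox)d$ and $\frac{1}{\lambda}(I-A)w=-\nabla^2 f(\ox)d$; hence $(\star)$ yields $-\nabla^2 f(\ox)d\in\partial^2 g(\ox,\ow)(d)$, i.e.\ $0\in\partial^2\varphi(\ox,\ov)(d)=D^*\partial\varphi(\ox,\ov)(d)$. The assumed metric regularity of $\partial\varphi$ around $(\ox,\ov)$ and the Mordukhovich criterion \eqref{cod-cr} then force $d=0$, so $I-A+\lambda A\nabla^2 f(\ox)$ is nonsingular.

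The main obstacle is the clean derivation of $(\star)$: one must confirm that $\frac{1}{\lambda}(I-A)$ genuinely belongs to $\partial_B\nabla e_\lambda g(z)$ (legitimizing \eqref{BandM}) and track the graph transformation in Lemma~\ref{2ndofMoreau} carefully enough that the argument $w-\lambda x^*$ simplifies to exactly $Aw$. Once $(\star)$ is secured, the decisive algebraic observation is that \eqref{linearNM} and its homogeneous counterpart both factor through the common form $d=A\big(d-\lambda\nabla^2 f(\ox)d-\lambda\ov\big)$ and $d=A\big(d-\lambda\nabla^2 f(\ox)d\big)$, respectively; (ii) then becomes an identity and (i) needs only the Mordukhovich criterion.
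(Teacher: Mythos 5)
Your proposal is correct and follows essentially the same route as the paper's proof: the sum rule for Bouligand Jacobians giving $\frac{1}{\lambda}(I-A)\in\partial_B\nabla e_\lambda g(z)$, the inclusion \eqref{BandM}, Lemma~\ref{2ndofMoreau} to pass from $\partial^2 e_\lambda g$ to $\partial^2 g$, the second-order sum rule, and the Mordukhovich criterion for (i). The only difference is organizational — you isolate the common inclusion $(\star)$ once and apply it to both assertions, whereas the paper carries out the same manipulation separately for (i) and (ii).
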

\begin{proof} To verify {\bf (i)}, pick $\lambda \in (0,1/r)$ and  $A \in \partial_B(\text{\rm Prox}_{\lambda g})(\ox +\lambda(\ov -\nabla f(\ox)))$. {It follows from Proposition~\ref{C11} and Remark~\ref{choiceMr}(ii) that $\text{\rm Prox}_{\lm g}$ is single-valued, Lipschitz continuous, and
\begin{equation}\label{gradenvgprx}
\nabla e_\lm g (x) = \frac{1}{\lm}(x- \text{\rm Prox}_{\lm g} (x)) \quad \text{for all }\; x \in \R^n. 
\end{equation}
Taking any $u \in \R^n$ satisfying
\begin{equation}\label{inverseeq}
(I-A+\lambda A\nabla^2 f(\ox))u = 0,
\end{equation}
our goal is to show that $u =0$.} To proceed, rewrite \eqref{inverseeq} in the form
\begin{equation}\label{inverseeq2}
-\nabla^2 f(\ox) u = \frac{1}{\lambda}(I-A)(u -\lambda \nabla^2 f(\ox)u). 
\end{equation}
Using  \eqref{gradenvgprx} and an elementary sum rule for Bouligand’s Jacobian gives us
$$
\partial_B(\text{\rm Prox}_{\lambda g})(\ox +\lambda (\ov -\nabla f(\ox))) = I - \lambda   \partial_B \nabla e_{\lambda} g(\ox +\lambda (\ov -\nabla f(\ox))),
$$
which implies in turn that 
\begin{equation}\label{inBouligand}
\frac{1}{\lambda}(I-A) \in \partial_B \nabla e_{\lambda} g(\ox +\lambda (\ov -\nabla f(\ox))).
\end{equation}
Combining \eqref{BandM}, \eqref{inverseeq2}, and \eqref{inBouligand},  we arrive at the inclusion 
$$
-\nabla^2 f(\ox) u \in \partial^2 e_\lambda g(\ox+ \lambda (\ov -\nabla f(\ox)))(u-\lambda \nabla^2 f(\ox)u).
$$
Then Lemma~\ref{2ndofMoreau} tells us that
$$
-\nabla^2 f(\ox) u \in \partial^2 g (\ox,\ov- \nabla f(\ox))(u),
$$
which is equivalent to $0 \in \partial^2\varphi(\ox,\ov)(u)$ by  the second-order sum rule from \cite[Proposition~1.121]{Mordukhovich06}. Since $\partial \varphi$ is metrically regular around $(\ox,\ov)$, it follows from the Mordukhovich criterion \eqref{cod-cr} that $u =0$ as claimed in (i). 
 
To verify (ii), let $d$ be a solution to \eqref{linearNM} and rewrite \eqref{linearNM} in the form
$$
-\ov -\nabla^2 f(\ox) d = \frac{1}{\lambda}(I-A)(d +\lambda (-\ov -\nabla^2 f(\ox)d)), 
$$
which readily implies  the inclusion
$$
-\ov - \nabla^2 f(\ox) d \in \partial^2  e_{\lambda} g(\ox +\lambda (\ov -\nabla f(\ox)))(d +\lambda (-\ov -\nabla^2 f(\ox)d)).
$$
Using Lemma~\ref{2ndofMoreau} again, we deduce that 
$$
-\ov - \nabla^2 f(\ox) d \in \partial^2 g (\ox,\ov - \nabla f(\ox))(d),
$$
and thus $d$ satisfies \eqref{newton-inc} by the second-order sum rule from \cite[Proposition~1.121]{Mordukhovich06}. \end{proof}\vspace*{-0.15in}

\begin{Remark}[\bf improvements of known results] \rm Our previous paper \cite{BorisKhanhPhat} verifies the local superlinear convergence of the coderivative-based Newton algorithm in \cite[Algorithm~7.1]{BorisKhanhPhat}) to solve \eqref{ncvopt} in the case where $f$ is a quadratic function and $g$ is continuously prox-regular around the reference point. The aforementioned algorithm is a special case of the novel Algorithm~\ref{NMcompositelocal}. Let us emphasize that now we {\em don't require} as that $f$ is {\em quadratic}, which was a serious restriction in \cite{BorisKhanhPhat} essential in the proof therein. 
\end{Remark}\vspace*{-0.25in}

\section{Generalized Line Search Proximal Gradient Method}\label{sec:newlinesearch}\vspace*{-0.05in}

In the line of the classical {\em pure} Newton method, the generalized one from 
Section~\ref{sec:localNMcompo} provides fast convergence when the iterative sequence is near the solution point. To get its {\em globalization}  that converges for any starting point in $\R^n$ in the nonconvex composite optimization problem \eqref{ncvopt}, a natural approach is to replace Step~5 in Algorithm~\ref{NMcompositelocal} by a {\em damped} step of the form 
$$
x^{k+1}:= \Hat{x}^k + \tau_k d^k
$$
with an appropriate stepsize selection $\tau_k \in (0,1]$. To accomplish this, we develop in this section a {\em new line search method} based on the \textit{forward-backward envelope} (FBE) $\ph_\lm$ of the function $\ph$ from \eqref{ncvopt}, which is defined below to guarantee the {\em descent property} at each iteration, i.e., 
\begin{equation}\label{dpofFB}
\varphi_\lambda(\Hat{x}^k +\tau_k d^k) < \varphi_\lambda(x^k), \quad k=0,1,\ldots. 
\end{equation}
To be more specific, we employ the construction of FBE introduced in \cite{pb} for convex composite functions and then largely used to design numerical methods in composite optimization; see. e.g., in \cite{kmptmp,Mor24,stp,stp2}. In general, the FBE $\varphi_\lambda$ approximates $\varphi$ from below while linearizing the smooth part of the function $\varphi$ in \eqref{ncvopt}. This construction plays a crucial role in developing our new line search method to guarantee the {\em global convergence} of the generalized coderivative-based Newton method presented below in Section \ref{sec:globalNMcompo}.\vspace*{-0.05in}

\begin{Definition}[\bf forward-backward envelopes] \rm Let	$\varphi$ {be given as in}   \eqref{ncvopt}. The {\em forward-backward envelope} $($FBE$)$ of $\varphi$ with the	parameter value $\lambda>0$ is given by \begin{equation}\label{FBE}	\varphi_\lambda(x):= \inf_{y \in \R^n} \Big\{f(x) + \langle \nabla	f(x),y -x\rangle + g(y) + \frac{1}{2\lambda}\|y-x\|^2\Big\}.
\end{equation} 
\end{Definition} 
\noindent
The following properties are clearly implied by the definition.

{\bf(i)} $\varphi_\lambda(x) \leq \varphi(x)$ for all $x \in \R^n$. 

{\bf(ii)} We have for each $x \in \R^n$, $\lambda \in (0,\lambda_g)$, and $\Hat{x} \in \mathcal{G}_\lambda (x)$ that
\begin{equation}\label{FBE=hatxexpress}
\varphi_\lambda (x) = f(x) + \langle \nabla f(x), \Hat{x}-x\rangle + g(\Hat{x}) + \frac{1}{2\lambda}\|\Hat{x}-x\|^2,
\end{equation}
where $\mathcal{G}_\lambda$ is defined in \eqref{prox-grad-map}. 
Furthermore, FBE \eqref{FBE} can be equivalently written as 
\begin{equation}\label{FBE2}
\varphi_\lambda(x)= f(x) - \frac{\lambda}{2}\|\nabla f(x)\|^2+
e_\lambda g\big(x-\lambda\nabla f(x)\big). 
\end{equation} 

It follows from Proposition \ref{C11} and \eqref{FBE2} that  {under fairly mild assumptions}, the FBE  $\ph_\lm$ is {\em ${\cal C}^1$-smooth} with the explicit calculation of its gradient and the preservation of {\em stationary} points of $\ph$.\vspace*{-0.05in}

\begin{Proposition}[\bf smoothness of FBEs and stationary points]\label{FBEdiff}  Let $\ox$ be an {M-stationary point of \eqref{ncvopt}, and let $g$ be continuously  $r$-level prox-regular at $\ox$ for $\ov:=-\nabla f(\ox)$ for some $r>0$}. {Suppose that $\ox$ is a $\bar{\lambda}$-critical point of \eqref{ncvopt} for some $\bar{\lambda} \in (0,1/r).$} Then for all $\lambda  \in (0, \bar{\lambda})$, there exists a	neighborhood $U$ of $\ox$ on which the FBE $\varphi_\lambda$ is continuously differentiable and its gradient mapping is calculated by 
\begin{equation}\label{gradFBE}
\nabla\varphi_\lambda (x) = \lambda^{-1}\big(I-\lambda  \nabla^2f(x)\big)\big(x- \text{\rm Prox}_{\lambda g}(x-\lambda \nabla f(x))\big),\end{equation} 
where $I$ is the identity matrix. Moreover, if $I -\lambda \nabla^2 f(x)$ is nonsingular for  $x \in U$, we have the equivalence	
$$
\nabla\varphi_\lambda (x) = 0 \iff 0 \in \partial \varphi(x).
$$
If in addition $g$ is convex, then $\lambda$ above can be chosen arbitrarily in $(0,\infty)$.
\end{Proposition}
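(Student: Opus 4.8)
The plan is to differentiate the FBE through its envelope representation \eqref{FBE2} and then read off both the gradient formula and the stationarity equivalence from the resulting expression. First I would exploit the $\bar\lambda$-criticality of $\ox$: by Remark~\ref{choiceMr}(iii) this yields $\bar\lambda \in P$, so Proposition~\ref{C11} applies to $g$ at $\ox$ for $\ov = -\nabla f(\ox)$ and produces, for each $\lambda \in (0,\bar\lambda)$, a neighborhood $U_\lambda$ of $\ox - \lambda\nabla f(\ox) = \ox + \lambda\ov$ on which $\text{\rm Prox}_{\lambda g}$ is single-valued and Lipschitz, $e_\lambda g$ is $\mathcal{C}^{1,1}$, and the gradient identity \eqref{GradEnvelope} holds. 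Since $f$ is $\mathcal{C}^2$-smooth, the map $x \mapsto x - \lambda\nabla f(x)$ is $\mathcal{C}^1$ and sends $\ox$ to $\ox + \lambda\ov$, so by continuity there is a neighborhood $U$ of $\ox$ with $(I - \lambda\nabla f)(U) \subset U_\lambda$; on $U$ every ingredient of \eqref{FBE2} is well-defined and $\mathcal{C}^1$.

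For the gradient formula I would apply the chain rule to \eqref{FBE2}, writing $h(x) := x - \lambda\nabla f(x)$ so that $\nabla h(x) = I - \lambda\nabla^2 f(x)$, which is symmetric because $\nabla^2 f$ is. Differentiating the three terms $f(x)$, $-\tfrac{\lambda}{2}\|\nabla f(x)\|^2$, and $e_\lambda g(h(x))$, and substituting $\nabla e_\lambda g(h(x)) = \tfrac{1}{\lambda}\bigl(h(x) - \text{\rm Prox}_{\lambda g}(h(x))\bigr)$ from \eqref{GradEnvelope}, the terms $\nabla f(x)$ and $\pm\lambda\nabla^2 f(x)\nabla f(x)$ cancel in pairs and leave exactly \eqref{gradFBE}. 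Continuity of $\nabla\varphi_\lambda$ on $U$ is then immediate from the continuity of $\nabla^2 f$ and the Lipschitz continuity of $\text{\rm Prox}_{\lambda g}$, so $\varphi_\lambda$ is $\mathcal{C}^1$ on $U$. This calculation is the main computational step but is routine once differentiability is secured; the only care needed is in tracking the cancellations.

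To get the equivalence under nonsingularity of $I - \lambda\nabla^2 f(x)$, set $p(x) := \text{\rm Prox}_{\lambda g}(x - \lambda\nabla f(x))$. If $\nabla\varphi_\lambda(x) = 0$, then \eqref{gradFBE} and nonsingularity force $x = p(x)$, so $x \in \mathcal{G}_\lambda(x)$ is $\lambda$-critical and Remark~\ref{crit->Fsta}(ii) gives $0 \in \widehat{\partial}\varphi(x) \subset \partial\varphi(x)$. Conversely, suppose $0 \in \partial\varphi(x)$; the subdifferential sum rule yields $-\nabla f(x) \in \partial g(x)$, i.e. $\tfrac{1}{\lambda}(h(x) - x) \in \partial g(x)$, while the optimality condition for the prox subproblem gives $\tfrac{1}{\lambda}(h(x) - p(x)) \in \partial g(p(x))$, so both $\bigl(x,\tfrac{1}{\lambda}(h(x)-x)\bigr)$ and $\bigl(p(x),\tfrac{1}{\lambda}(h(x)-p(x))\bigr)$ lie in $\gph\partial g$ near $(\ox,\ov)$ for $x$ near $\ox$. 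I expect this converse to be the delicate point: applying the hypomonotonicity estimate $\langle v_1 - v_2, w_1 - w_2\rangle \geq -r\|w_1 - w_2\|^2$ furnished by $r$-level prox-regularity (Proposition~\ref{subgradientvar} with $s = -r$) to these two pairs gives $(r - \tfrac{1}{\lambda})\|x - p(x)\|^2 \geq 0$, and since $\lambda < \bar\lambda \le 1/r$ forces $\tfrac{1}{\lambda} > r$, we conclude $x = p(x)$ and hence $\nabla\varphi_\lambda(x) = 0$ by \eqref{gradFBE}; note that this direction needs no nonsingularity.

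Finally, for convex $g$ I would observe that $g$ is $0$-level prox-regular, so $r$ may be taken arbitrarily small and, by Remark~\ref{choiceMr}(ii), $\lambda$ ranges over all of $(0,\infty)$. Moreover $\text{\rm Prox}_{\lambda g}$ is then globally single-valued and nonexpansive and $e_\lambda g$ is globally $\mathcal{C}^{1,1}$, so the neighborhood restriction in the construction of $U$ becomes immaterial; in particular the hypomonotonicity estimate reduces to plain monotonicity ($r=0$), and every step above goes through verbatim for each $\lambda > 0$.
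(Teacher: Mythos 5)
Your argument is correct and follows exactly the route the paper indicates (it states the proposition without proof, saying only that it ``follows from Proposition~\ref{C11} and \eqref{FBE2}''): you use the $\bar\lambda$-criticality of $\ox$ together with Remark~\ref{choiceMr}(iii) to activate Proposition~\ref{C11} for $g$ at $\ox$ for $\ov$, differentiate \eqref{FBE2} by the chain rule with the cancellations you describe, and settle the stationarity equivalence via $\lambda$-criticality in one direction and local hypomonotonicity of $\partial g$ in the other. The one place where you should add a sentence is the converse implication: Proposition~\ref{subgradientvar}(ii) gives the inequality $\langle v_1-v_2,u_1-u_2\rangle\ge -r\|u_1-u_2\|^2$ only for pairs in $\gph\partial g\cap(U_\epsilon\times V)$ with $U_\epsilon=\{u\in U\mid g(u)<g(\ox)+\epsilon\}$, so besides checking that $x$, $p(x)$, and the two subgradients are close to $(\ox,\ov)$, you must also verify the function-value condition $g(x)<g(\ox)+\epsilon$ and $g(p(x))<g(\ox)+\epsilon$; this is exactly where the assumed \emph{continuous} (i.e.\ subdifferentially continuous) prox-regularity of $g$ at $\ox$ for $\ov$ is needed, since both pairs lie in $\gph\partial g$ near $(\ox,\ov)$. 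With that one invocation made explicit, the proof is complete, and your treatment of the convex case is also fine.
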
 \vspace*{-0.1in}

\begin{Remark}[\bf nonsingularity of the shifted matrix and $\mathcal{C}^{1,1}$-property of FBEs] \label{C11FBE}\rm \,

{\bf(i)} The nonsingularity of the shifted matrix $I-\lambda \nabla^2 f(x)$ in Proposition~\ref{FBEdiff} is guaranteed  {if we assume} in addition that $\nabla f$ is Lipschitz continuous on $\R^n$ with modulus $L_f>0$ and that $\lambda\in(0, 1/L_f)$. Indeed, the Lipschitz continuity of $\nabla f$ with modulus $L_f>0$ implies that the matrix  $L_f I - \nabla^2 f(x)$ is positive-semidefinite for all $x \in \R^n$. Therefore, the matrix $I-\lambda \nabla^2 f(x)$ is positive-definite for all $x \in \R^n$ by the choice of $\lambda \in (0, 1/L_f)$. In this case, it follows from \eqref{gradFBE} that
\begin{equation}\label{criticalFBE}
\nabla \varphi_\lambda(x) =0 \iff x = \text{\rm Prox}_{\lambda g}\big(x-\lambda \nabla f(x)\big).
\end{equation}

{\bf(ii)} The  FBE $\varphi_\lambda$ is of class $\mathcal{C}^{1,1}$ around $\ox$ if the mapping $\nabla^2 f$ is locally Lipschitzian around $\ox$. Indeed, Proposition~\ref{C11} tells us that the Moreau envelope $e_\lambda g$ is of class $\mathcal{C}^{1,1}$ around $\ox-\lambda \nabla f(\ox)$. Combining this with the FBE representation in form \eqref{FBE2} verifies the claim.
\end{Remark}\vspace*{-0.05in}

Prior to the design and verification of our new line-search method, we establish some related auxiliary results, which are certainly of their own interest. The first lemma provides a precise formula for representing the generalized Hessian \eqref{2nd} of FBE \eqref{FBE} at critical points.
\vspace*{-0.05in}

\begin{Lemma}[\bf second-order subdifferential of FBEs at critical points]
\label{calculatepsi} Let $\ox$ be an {M-stationary point of \eqref{ncvopt}, and let $g$ be $r$-level prox-regular at $\ox$ for $\ov:=-\nabla f(\ox)$ for some $r>0$}. {Suppose that $\ox$ is a $\bar{\lambda}$-critical point of \eqref{ncvopt} for some $\bar{\lambda} \in (0,1/r)$}, and $\nabla^2 f$ is strictly differentiable at $\ox$. For any $\lambda \in (0,\bar{\lambda})$ such that $Q(\ox):=I-\lambda\nabla^2 f(\ox)$ is nonsingular, we have the FBE generalized Hessian representation
\begin{equation}\label{2ndsubofFBE}
\partial^2\varphi_\lambda (\ox)(w) = Q(\ox)\nabla^2f(\ox)w + Q(\ox)\partial^2 e_\lambda g\big(\oz, -\nabla f(\ox)\big)\big(Q(\ox)w\big)\;\text{ whenever }\; w \in \R^n,
\end{equation}
where $\oz:= \ox- \lambda\nabla f(\ox)$. Consequently, we get the equivalence
\begin{equation}\label{2ndsubofFBEother}
z \in \partial^2 \varphi_\lambda (\ox)(w) \iff z \in \partial^2 \varphi(\ox,0)\big(w-\lambda Q(\ox)^{-1}z\big)\;\text{ for any }\; w \in \R^n
\end{equation}
provided that $g$ is subdifferentially continuous at $\ox$ for $-\nabla f(\ox)$. 
\end{Lemma}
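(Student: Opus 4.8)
The plan is to reduce the second-order subdifferential of $\varphi_\lambda$ to that of the Moreau envelope $e_\lambda g$, and then convert the latter into a statement about $\partial^2\varphi(\ox,0)$ via Lemma~\ref{2ndofMoreau} and the second-order sum rule. First I would differentiate the FBE representation \eqref{FBE2}: since Proposition~\ref{C11} (applied to $g$, using the $\bar\lambda$-criticality through Remark~\ref{choiceMr}(iii)) guarantees that $e_\lambda g$ is $\mathcal{C}^{1,1}$ near $\oz:=\ox-\lambda\nabla f(\ox)=\ox+\lambda\ov$, the chain rule gives
\[
\nabla\varphi_\lambda(x)=Q(x)\bigl[\nabla f(x)+\nabla e_\lambda g\bigl(x-\lambda\nabla f(x)\bigr)\bigr]=:Q(x)\Psi(x),\qquad Q(x):=I-\lambda\nabla^2 f(x).
\]
Writing $h(x):=x-\lambda\nabla f(x)$, I would record that $\nabla h(\ox)=Q(\ox)$ and that, because $\ox$ is $\lambda$-critical, $\mathrm{Prox}_{\lambda g}(\oz)=\ox$, so \eqref{GradEnvelope} yields $\nabla e_\lambda g(\oz)=-\nabla f(\ox)$, whence $\Psi(\ox)=0$ and $\nabla\varphi_\lambda(\ox)=0$. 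As $\nabla^2 f$ is strictly differentiable at $\ox$ (hence Lipschitz near $\ox$) and $e_\lambda g$ is $\mathcal{C}^{1,1}$, the map $\nabla\varphi_\lambda$ is Lipschitz near $\ox$, so $\partial^2\varphi_\lambda(\ox)=D^*\nabla\varphi_\lambda(\ox)$ is well defined.

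The key reduction is the identity $\partial^2\varphi_\lambda(\ox)(w)=D^*\Psi(\ox)(Q(\ox)w)$. I would establish it by splitting
\[
\nabla\varphi_\lambda(x)=Q(\ox)\Psi(x)+E(x),\qquad E(x):=\bigl(Q(x)-Q(\ox)\bigr)\Psi(x).
\]
The first summand is a constant invertible linear image of $\Psi$, so transforming the normal cone to $\gph\Psi$ under $(x,y)\mapsto(x,Q(\ox)y)$ gives $D^*\bigl(Q(\ox)\Psi\bigr)(\ox)(w)=D^*\Psi(\ox)(Q(\ox)^{*}w)=D^*\Psi(\ox)(Q(\ox)w)$, using that $Q(\ox)$ is symmetric. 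The delicate point, and the main obstacle, is to show that $E$ does not contribute; this is precisely where the hypothesis that $\nabla^2 f$ is strictly differentiable at $\ox$ is used. It makes $Q$ strictly differentiable at $\ox$, and combined with $\Psi$ Lipschitz and $\Psi(\ox)=0$, a short estimate on $E(x)-E(x')$ (splitting it as $(Q(x)-Q(\ox))(\Psi(x)-\Psi(x'))+(Q(x)-Q(x'))\Psi(x')$ and using that both $\|Q(x)-Q(\ox)\|$ and $\|\Psi(x')\|$ tend to $0$) shows $E$ is strictly differentiable at $\ox$ with $\nabla E(\ox)=0$. The coderivative sum rule with a strictly differentiable term then eliminates $E$ and yields the reduction. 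This is cleaner than a naive change-of-variables $(x,y)\mapsto(x,Q(x)^{-1}y)$, which would require $Q(\cdot)^{-1}$ to be $\mathcal{C}^1$ on a whole neighborhood, stronger than what is assumed.

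Next I would compute $D^*\Psi(\ox)$ termwise: the smooth summand $\nabla f$ contributes $\nabla^2 f(\ox)$ by the coderivative sum rule, while for $\nabla e_\lambda g\circ h$ the coderivative chain rule with the strictly differentiable inner map $h$ gives $\nabla h(\ox)^{*}\,\partial^2 e_\lambda g(\oz,-\nabla f(\ox))=Q(\ox)\,\partial^2 e_\lambda g(\oz,-\nabla f(\ox))$. Substituting the argument $Q(\ox)w$ and invoking the commutation $\nabla^2 f(\ox)\,Q(\ox)=Q(\ox)\,\nabla^2 f(\ox)$, immediate from $Q(\ox)=I-\lambda\nabla^2 f(\ox)$, produces exactly \eqref{2ndsubofFBE}.

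Finally, for the equivalence \eqref{2ndsubofFBEother}: given $z\in\partial^2\varphi_\lambda(\ox)(w)$, formula \eqref{2ndsubofFBE} provides $p\in\partial^2 e_\lambda g(\oz,-\nabla f(\ox))(Q(\ox)w)$ with $z=Q(\ox)\nabla^2 f(\ox)w+Q(\ox)p$, i.e. $p=Q(\ox)^{-1}z-\nabla^2 f(\ox)w$. Subdifferential continuity of $g$ places us in the setting of Lemma~\ref{2ndofMoreau}, which, since $\oz-\lambda(-\nabla f(\ox))=\ox$, converts this into $p\in\partial^2 g(\ox,-\nabla f(\ox))(Q(\ox)w-\lambda p)$; a direct computation gives $Q(\ox)w-\lambda p=w-\lambda Q(\ox)^{-1}z$. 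Applying the second-order sum rule $\partial^2\varphi(\ox,0)(\cdot)=\nabla^2 f(\ox)(\cdot)+\partial^2 g(\ox,-\nabla f(\ox))(\cdot)$ from \cite[Proposition~1.121]{Mordukhovich06} at the argument $w-\lambda Q(\ox)^{-1}z$, and checking that $\nabla^2 f(\ox)\bigl(w-\lambda Q(\ox)^{-1}z\bigr)+p=Q(\ox)Q(\ox)^{-1}z=z$, yields $z\in\partial^2\varphi(\ox,0)\bigl(w-\lambda Q(\ox)^{-1}z\bigr)$. Since Lemma~\ref{2ndofMoreau} is an ``iff'' and the sum rule is an equality, every step reverses, giving the converse and completing the proof.
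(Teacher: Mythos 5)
Your argument is correct, and its second half (deriving \eqref{2ndsubofFBEother} from \eqref{2ndsubofFBE} via Lemma~\ref{2ndofMoreau} and the second-order sum rule, with the algebraic identities $Q(\ox)w-\lambda p=w-\lambda Q(\ox)^{-1}z$ and $\nabla^2 f(\ox)(w-\lambda Q(\ox)^{-1}z)+p=z$) coincides with the paper's. The first half, however, takes a genuinely different route to \eqref{2ndsubofFBE}. The paper splits the \emph{function} $\varphi_\lambda=\psi+h$ with $\psi(x)=f(x)-\tfrac{\lambda}{2}\|\nabla f(x)\|^2$ and $h=e_\lambda g\circ F$, applies the second-order sum rule and the second-order chain rule \cite[Theorem~1.127]{Mordukhovich06}, and then observes that the two third-derivative cross terms $-\lambda\nabla^2\langle\nabla f(\ox),\nabla f\rangle(\ox)w$ (from $\nabla^2\psi$) and $+\lambda\nabla^2\langle\nabla f(\ox),\nabla f\rangle(\ox)w$ (from the chain-rule curvature term, using $\nabla e_\lambda g(\oz)=-\nabla f(\ox)$) cancel. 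You instead work at the level of the \emph{gradient mapping}, factoring $\nabla\varphi_\lambda=Q(\cdot)\Psi(\cdot)$, using criticality to get $\Psi(\ox)=0$, and showing that the remainder $E(x)=(Q(x)-Q(\ox))\Psi(x)$ is strictly differentiable at $\ox$ with $\nabla E(\ox)=0$, so that only first-order coderivative calculus (sum rule with a strictly differentiable summand, chain rule with the smooth invertible inner map $h$, and the linear-image formula for $Q(\ox)\Psi$) is needed; the final commutation $\nabla^2 f(\ox)Q(\ox)=Q(\ox)\nabla^2 f(\ox)$ then delivers \eqref{2ndsubofFBE}. Both arguments use the criticality of $\ox$ at exactly the same point ($\nabla e_\lambda g(\oz)=-\nabla f(\ox)$) and both need the strict differentiability of $\nabla^2 f$; the paper's version is more modular in that it cites ready-made second-order calculus rules, while yours makes the cancellation structural rather than computational and makes transparent why strict differentiability of $\nabla^2 f$ together with $\Psi(\ox)=0$ is precisely what kills the third-order contribution. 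One small point worth making explicit in your write-up is that the exactness (equality, not just inclusion) of the coderivative chain rule for $\nabla e_\lambda g\circ h$ uses the nonsingularity of $\nabla h(\ox)=Q(\ox)$, which is among the hypotheses.
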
 
\begin{proof} To verify representation \eqref{2ndsubofFBE}, fix $\ox,\oz,w,\lambda$ as above and define the functions $\psi:\R^n \to \R$, $h: \R^n\to\overline{\R}$ and the mapping $F: \R^n \to \R^n$ by
$$
\psi(x):= f(x) - \frac{\lambda}{2}\|\nabla f(x)\|^2,\; h(x):= e_{\lambda}g\big(F (x)\big)\;\text{ with}\;
F (x):= x-\lambda \nabla f(x),\quad x \in \R^n.
$$
It follows from the differentiability of $\nabla^2 f$ at $\ox$ that $\psi$ is twice differentiable there. We have by \eqref{FBE2} that $\varphi_\lambda = \psi + h$ and thus apply to this sum the second-order sum rule from \cite[Proposition~1.121]{Mordukhovich06} to get
\begin{equation}\label{cal1}
\partial^2\varphi_\lambda(\bar{x})(w) = \nabla^2 \psi(\ox)w + \partial^2 h(\bar{x})(w),\quad w\in\R^n.
\end{equation}
The Hessian matrix of $\psi$ is clearly calculated by
\begin{equation}\label{3rdderi}
\begin{array}{ll}
\nabla^2 \psi(\ox)&= \nabla^2 f(\ox) -\lambda \nabla^2 f(\ox) \nabla^2 f(\ox) - \lambda \nabla^2 \langle \nabla f(\ox), \nabla f\rangle (\ox) = Q(\ox)\nabla^2 f(\ox) -\lambda \nabla^2 \langle \nabla f(\ox), \nabla f\rangle (\ox).
\end{array} 
\end{equation} 
It follows from Proposition~\ref{C11} that $h$ is continuously differentiable at $\ox$.
By using the second-order chain rule from \cite[Theorem~1.127]{Mordukhovich06}, we have the representation
\begin{equation}\label{cal2}
\partial^2h(\bar{x})(w)  = \nabla^2\langle  \nabla e_\lambda g(\oz), F\rangle(\ox) w + Q(\ox)\partial^2e_\lambda g(\oz, \nabla e_\lambda g(\oz))(Q(\ox)w).
\end{equation}
{Since $\ox$ is a $\bar{\lambda}$-critical point of $\ph$, it is also $\lambda$-critical point due to Remark \ref{crit->Fsta}}. Moreover, it follows from Proposition~\ref{C11} that
\begin{equation}\label{barv}
\nabla e_\lambda g(\oz)= \frac{1}{\lambda}\big(\oz - \text{\rm Prox}_{\lambda g}(\oz)\big) = \frac{1}{\lambda}\big(\ox -  \lambda \nabla f(\ox) - \text{\rm Prox}_{\lambda g}\big(\ox- \lambda \nabla f(\ox)\big)\big)=-\nabla f(\ox). 
\end{equation}
Combining \eqref{cal2} and \eqref{barv} gives us the equality
\begin{equation}\label{2ndh} 
\partial^2 h(\ox)(w) = \lambda \nabla^2 \langle \nabla f(\ox), \nabla f\rangle (\ox)w + Q(\ox)\partial^2e_\lambda g\big(\oz,-\nabla f(\ox)\big)\big(Q(\ox)w\big),
\end{equation} 
which being combined with \eqref{cal1}, \eqref{3rdderi}, and \eqref{2ndh} justifies the claimed representation \eqref{2ndsubofFBE}. 

It remains to verify \eqref{2ndsubofFBEother}. To this end, observe from \eqref{2ndsubofFBE} that $z \in \partial^2\varphi_\lambda(\ox)(w)$ if and only if we have 
$$
Q(\ox)^{-1}z -\nabla^2 f(\ox) w \in \partial^2 e_\lambda g\big(\oz, -\nabla f(\ox)\big)\big(Q(\ox)w\big).  
$$
By Lemma~\ref{2ndofMoreau}, the above inclusion is equivalent  to
$$
Q(\ox)^{-1}z -\nabla^2 f(\ox) w \in \partial^2 g\big(\ox, -\nabla f(\ox)\big)\big(Q(\ox)w -\lambda Q(\ox)^{-1}z +\lambda \nabla^2 f(\ox)w\big),
$$
from which we can conclude that
\begin{equation}\label{calincFBE}
Q(\ox)^{-1}z -\nabla^2 f(\ox) w \in \partial^2 g\big(\ox, -\nabla f(\ox)\big)\big(w - \lambda Q(\ox)^{-1} z\big).
\end{equation}
Applying finally the second-order subdifferential sum rule from  \cite[Proposition~1.121]{Mordukhovich06} gives us
\begin{equation}\label{sumrFBE}
\partial^2\varphi(\ox,0)\big(w -\lambda Q(\ox)^{-1}z\big) = \nabla^2 f(\ox)\big(w -\lambda Q(\ox)^{-1}z\big) + \partial^2 g\big(\ox, -\nabla f(\ox)\big)\big(w - \lambda Q(\ox)^{-1} z\big).
\end{equation}
Due to \eqref{sumrFBE}, the inclusion in \eqref{calincFBE} is equivalent to 
$$
Q(\ox)^{-1}z - \nabla^2 f(\ox)w + \nabla^2 f(\ox)\big(w -\lambda Q(\ox)^{-1}z\big) \in \partial^2\varphi(\ox,0)\big(w -\lambda Q(\ox)^{-1}z\big),
$$
which readily reduces to $z \in \partial^2\varphi(\ox,0)(w -\lambda Q(\ox)^{-1}z)$ and thus completes the proof.
\end{proof}\vspace*{-0.05in}

The next result shows that the {\em tilt stability} of the structured cost function \eqref{ncvopt} at a stationary point yields the {\em strong convexity} in its forward-backward envelope around this point. To proceed, recall that a set-valued mapping $T: \R^n \rightrightarrows \R^n$ is {\em positive-semidefinite} if $\langle z,u \rangle \geq 0$ for any $u \in \R^n$ and $z \in T(u)$. If $\langle z, u\rangle >0$ whenever $u\in \R^n\setminus\{0\}$ and $z \in T(u)$, then $T$ is said to be {\em positive-definite}.\vspace*{-0.05in}

\begin{Lemma}[\bf tilt-stable minimizers and strong convexity of FBEs]\label{localstofFBE}
In the setting of Lemma~{\rm\ref{calculatepsi}}, suppose that $g$ is subdifferentially continuous at $\ox$ for $-\nabla f(\ox)$, and let $\lambda \in (0,\bar{\lambda})$ be such that $Q(\ox)= I-\lambda \nabla^2f(\ox)$ is positive-definite. If $\ox$ is a tilt-stable local minimizer of $\varphi$ in \eqref{ncvopt}, then FBE \eqref{FBE} is strongly convex around $\ox$. 
\end{Lemma}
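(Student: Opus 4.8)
The plan is to reduce the strong convexity of $\varphi_\lambda$ near $\ox$ to the positive-definiteness of its generalized Hessian $\partial^2\varphi_\lambda(\ox)$, to obtain the latter from the tilt stability of $\varphi$ through the equivalence \eqref{2ndsubofFBEother} of Lemma~\ref{calculatepsi}, and finally to propagate this pointwise second-order estimate to a whole neighborhood. I would first record that $\ox$ is a critical point of $\varphi_\lambda$: being $\bar\lambda$-critical it is $\lambda$-critical by Remark~\ref{crit->Fsta}, so $\text{\rm Prox}_{\lambda g}(\ox-\lambda\nabla f(\ox))=\ox$, whence \eqref{gradFBE} gives $\nabla\varphi_\lambda(\ox)=0$.

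Second, I would turn the tilt stability of $\varphi$ into positive-definiteness of $\partial^2\varphi(\ox,0)$. For the continuously prox-regular $\varphi$ this is the second-order characterization of tilt stability, and it can also be reached inside the present framework by combining Proposition~\ref{equitiltstr} (tilt stability $\Leftrightarrow$ $\sigma$-strong variational convexity), Proposition~\ref{subgradientvar}(ii) (local strong monotonicity of $\partial\varphi$ around $(\ox,0)$), and the coderivative characterization of strong monotonicity. In either case one obtains $\kappa>0$ with $\langle z,p\rangle\geq\kappa\|p\|^2$ for all $p$ and all $z\in\partial^2\varphi(\ox,0)(p)$.

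Third comes the transfer computation, which is the mechanical heart of the argument. Given $z\in\partial^2\varphi_\lambda(\ox)(w)$, I would set $p:=w-\lambda Q(\ox)^{-1}z$, so that $z\in\partial^2\varphi(\ox,0)(p)$ by \eqref{2ndsubofFBEother}. Since $Q(\ox)=I-\lambda\nabla^2 f(\ox)$ is symmetric and positive-definite, so is $Q(\ox)^{-1}$, giving $\langle z,Q(\ox)^{-1}z\rangle\geq\mu\|z\|^2$ for some $\mu>0$. Writing $w=p+\lambda Q(\ox)^{-1}z$ yields $\langle z,w\rangle=\langle z,p\rangle+\lambda\langle z,Q(\ox)^{-1}z\rangle\geq\kappa\|p\|^2+\lambda\mu\|z\|^2$, and bounding $\|w\|^2\leq 2\|p\|^2+2\lambda^2\|Q(\ox)^{-1}\|^2\|z\|^2$ then produces a constant $c>0$ with $\langle z,w\rangle\geq c\|w\|^2$ for every such pair. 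Hence $\partial^2\varphi_\lambda(\ox)$ is positive-definite with a uniform modulus.

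Finally, I would upgrade this pointwise bound to local strong convexity. Invoking Remark~\ref{C11FBE}(ii) (which requires $\nabla^2 f$ to be locally Lipschitzian around $\ox$), the FBE $\varphi_\lambda$ is of class $\mathcal{C}^{1,1}$ near $\ox$, so $\nabla\varphi_\lambda$ is locally Lipschitz and its coderivative $\partial^2\varphi_\lambda$ is locally bounded and outer semicontinuous. A standard compactness/contradiction argument then propagates the estimate $\langle z,w\rangle\geq c\|w\|^2$ from $\ox$ to a convex neighborhood with modulus $c/2$, and the coderivative characterization of strong monotonicity together with the equivalence between strong monotonicity of $\nabla\varphi_\lambda$ and strong convexity of $\varphi_\lambda$ on that neighborhood completes the proof. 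The main obstacle is exactly this last passage: a pointwise positive-definiteness condition at $\ox$ does not by itself force strong convexity, and it is the local boundedness of the generalized Hessian afforded by the $\mathcal{C}^{1,1}$ property (hence by Lipschitz continuity of $\nabla^2 f$) that legitimizes the propagation; an alternative would be to note that positive-definiteness of $\partial^2\varphi_\lambda(\ox)$ identifies $\ox$ as a tilt-stable minimizer of $\varphi_\lambda$, apply Proposition~\ref{equitiltstr} to $\varphi_\lambda$, and use that strong variational convexity of a $\mathcal{C}^1$ function reduces to ordinary strong convexity around the point.
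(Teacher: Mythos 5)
Your proposal is correct and follows essentially the same route as the paper: tilt stability of $\varphi$ gives positive-definiteness of $\partial^2\varphi(\ox,0)$ via the pointbased characterization, the equivalence \eqref{2ndsubofFBEother} transfers this to positive-definiteness of $\partial^2\varphi_\lambda(\ox)$ through exactly the inner-product splitting $\langle z,w\rangle=\langle z,w-\lambda Q(\ox)^{-1}z\rangle+\lambda\langle z,Q(\ox)^{-1}z\rangle$, and the $\mathcal{C}^{1,1}$ property of $\varphi_\lambda$ (from Lipschitz continuity of $\nabla^2 f$) converts this pointwise estimate into local strong convexity. The only difference is that the paper outsources this last reduction to \cite[Proposition~4.6]{ChieuLee17} rather than re-deriving the propagation argument, and it settles for the qualitative inequality $\langle z,w\rangle>0$ where you produce a uniform modulus.
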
\vspace*{-0.15in}
\begin{proof} It follows from the assumptions that $\varphi$ is continuously prox-regular at $\ox$ for $0$. Combining this with the pointbased  second-order characterization of tilt stability from \cite[Theorem~1.3]{Poli}  gives us the positive-definiteness of $\partial^2\varphi(\ox,0)$. Note that the strict differentiability of $\nabla^2 f$ yields the local Lipschitz continuity of $\nabla^2 f$ around $\ox$. Therefore, $\varphi_\lambda$ is of class $\mathcal{C}^{1,1}$ around $\ox$ due to Remark~\ref{C11FBE}(ii). By \cite[Proposition~4.6]{ChieuLee17}, we verify the strong convexity of $\varphi_\lambda$ around $\ox$ via showing that $\partial^2\varphi_\lambda(\ox)$ is positive-definite.  To furnish this, pick $w \in \R^n\setminus\{0\}$ and $z \in \partial^2\varphi_\lambda(\ox)(w)$. Then we deduce from \eqref{2ndsubofFBEother} that $z \in \partial^2\varphi(\ox,0)(w-\lambda Q(\ox)^{-1}z)$, which ensures that $z \ne 0$ by the positive-definiteness of $\partial^2\varphi(\ox,0)$. This also implies that $\langle z , w - \lambda Q(\ox)^{-1}z\rangle \geq 0$, and hence
$$
\langle z , w  \rangle \geq  \lambda \langle Q(\ox)^{-1}z, z\rangle>0
$$
due to the positive-definiteness of $Q(\ox)^{-1}$. {This verifies the claim.}
\end{proof}\vspace*{-0.05in}

To design and justify our next algorithm to solve the structured optimization model \eqref{ncvopt}, we need the following global Lipschitzian property of the gradient of $f$ in \eqref{ncvopt}.\vspace*{-0.05in}

\begin{Assumption}\label{Lipnablaf}  $\nabla f$ is Lipschitz continuous on $\R^n$ with modulus $L_f>0$.
\end{Assumption}\vspace*{-0.05in}

Having this property allows us to design the novel line-search method to solve \eqref{ncvopt} that involves forward-backward envelopes. We label this algorithm as the \textit{generalized line-search proximal gradient method} because it extends the proximal gradient method as explained below in Remark~\ref{pgmcompare}.\vspace*{-0.1in} 

\begin{algorithm}[H]
\caption{\bf(generalized line-search proximal gradient method)}\label{FBEbasedLM}
\begin{algorithmic}[1]
\Require {$x^0\in\R^n$,   {$\lambda \in (0,\text{\rm min}\{1/L_f,\lambda_g\})$, $\sigma \in \left(0, \frac{\lambda(1-\lambda L_f)}{2(1+\lambda L_f)^2} \right)$}, $\beta \in (0,1)$}
\For{\texttt{$k=0,1,\ldots$}}
\State\text{If $x^k \in \text{\rm Prox}_{\lambda g}(x^k -\lambda \nabla f(x^k))$, stop; otherwise  go to the next step}
\State \text{Define  $\Hat{x}^k$ and $\Hat{v}^k$ by}
$$
\Hat{x}^k \in  \text{\rm Prox}_{\lambda g}(x^k -\lambda \nabla f(x^k)), \; \Hat{v}^k:= \nabla f(\Hat{x}^k) -\nabla f(x^k) +\frac{1}{\lambda} (x^k   -\Hat{x}^k)
$$ 
\State {\bf Abstract direction step:} Select some direction $d^k\in\R^n$ (see specifications below)
\State Set $\tau_k = 1$
\While{$\varphi_\lambda(\Hat{x}^k+\tau_kd^k)>\varphi_\lambda (x^k)-\sigma \left\|\Hat{v}^k\right\|^2 $}
\State\text{set $\tau_k:= \beta\tau_k$}
\EndWhile\label{dampedendwhile}
\State \text{Set $x^{k+1}:=\Hat{x}^k+ \tau_k d^k$}
\EndFor
\end{algorithmic}
\end{algorithm}\vspace*{-0.15in}

In the rest of this section, we analyze the well-posedness of Algorithm~\ref{FBEbasedLM} and the convergence of its iterates. {\em Our achievements can be summarized as follows}:

$\bullet$ Algorithm~\ref{FBEbasedLM} is always well-defined (Lemma~\ref{linesearchwell-defined}), and the \textit{proximal gradient algorithm} is a special case of this algorithm (Remark~\ref{pgmcompare}), which motivates our naming of Algorithm~\ref{FBEbasedLM}. 

$\bullet$ Every accumulation point of $\{x^k\}$ generated by 
Algorithm~\ref{FBEbasedLM} solves \eqref{ncvopt} (Proposition~\ref{limitingNM}).

$\bullet$ If the accumulation point is isolated and the directions satisfy some additional property, the convergence is achieved (Proposition~\ref{abstractconvergence1}). 

$\bullet$ Under appropriate assumptions, both global and linear local convergence are achieved (Theorem~\ref{abstractconvergencelinear}).\vspace*{0.07in}

The next lemma shows that the stepsize $\tau_k$ in Algorithm~\ref{FBEbasedLM} is well-defined. \vspace*{-0.05in}

\begin{Lemma}[\bf stepsize in Algorithm~\ref{FBEbasedLM}]\label{linesearchwell-defined}  Let Assumption~{\rm\ref{Lipnablaf}} be satisfied in \eqref{ncvopt}. For each $\lambda \in (0,\text{\rm min}\{1/L_f,\lambda_g\})$, ${\sigma \in \left(0, \frac{\lambda(1-\lambda L_f)}{2(1+\lambda L_f)^2} \right)}$, $d\in\R^n$, and $x \in \R^n$,  we define  $\Hat{x}$ and $ \Hat{v}$ by
$$
\Hat{x} \in \text{\rm Prox}_{\lambda g}(x-\lambda \nabla f(x))\;\mbox{ and }\:\Hat{v}:= \nabla f(\Hat{x}) -\nabla f(x)  + \frac{1}{\lambda}(x-\Hat{x}).
$$
If $\Hat{x}\ne x$, then $\Hat{v}\ne 0$ while ensuring the estimate
\begin{equation}\label{decredirectwithoutt}
\varphi_\lambda(\Hat{x}) < \varphi_\lambda(x) -\sigma\|\Hat{v}\|^2.
\end{equation} 
Consequently, there exists a number $\bar{\tau} >0$ for which
\begin{equation}\label{decredirect}
\varphi_\lambda(\Hat{x}+\tau d) \leq \varphi_\lambda(x) - \sigma \|\Hat{v}\|^2\; \text{ whenever }\; \tau \in (0,\bar{\tau}).
\end{equation}
\end{Lemma}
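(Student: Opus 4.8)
The plan is to establish the sharper descent inequality \eqref{decredirectwithoutt} at the prox-gradient point $\Hat{x}$ first, and then to obtain \eqref{decredirect} along the abstract direction $d$ by a pure continuity argument. I would begin by showing $\Hat{v}\ne 0$ under the hypothesis $\Hat{x}\ne x$: if $\Hat{v}=0$, then its definition gives $\tfrac{1}{\lambda}(x-\Hat{x})=\nabla f(\Hat{x})-\nabla f(x)$, and taking norms together with Assumption~\ref{Lipnablaf} yields $\tfrac{1}{\lambda}\|x-\Hat{x}\|\le L_f\|x-\Hat{x}\|$. Since $\lambda<1/L_f$ forces $1/\lambda>L_f$, this would force $\|x-\Hat{x}\|=0$, contradicting $\Hat{x}\ne x$.

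For the main estimate, the two key identities are the exact FBE value at $x$ from \eqref{FBE=hatxexpress}, namely
$$
\varphi_\lambda(x)=f(x)+\langle\nabla f(x),\Hat{x}-x\rangle+g(\Hat{x})+\frac{1}{2\lambda}\|\Hat{x}-x\|^2,
$$
together with the trivial upper bound $\varphi_\lambda(\Hat{x})\le f(\Hat{x})+g(\Hat{x})$ obtained by evaluating the infimum in \eqref{FBE} at the feasible point $y=\Hat{x}$. Applying the descent lemma \cite[Lemma~A.11]{Solo14} to the $\mathcal{C}^2$ function $f$ to bound $f(\Hat{x})$ and then subtracting the displayed identity leaves
$$
\varphi_\lambda(\Hat{x})-\varphi_\lambda(x)\le-\frac{1-\lambda L_f}{2\lambda}\|\Hat{x}-x\|^2.
$$
To recast this in terms of $\|\Hat{v}\|$, I would use the triangle inequality and Assumption~\ref{Lipnablaf} to get $\|\Hat{v}\|\le\bigl(L_f+\tfrac{1}{\lambda}\bigr)\|\Hat{x}-x\|=\tfrac{1+\lambda L_f}{\lambda}\|\Hat{x}-x\|$, which rearranges to the lower bound $\|\Hat{x}-x\|^2\ge\tfrac{\lambda^2}{(1+\lambda L_f)^2}\|\Hat{v}\|^2$. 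Substituting gives $\varphi_\lambda(\Hat{x})\le\varphi_\lambda(x)-\tfrac{\lambda(1-\lambda L_f)}{2(1+\lambda L_f)^2}\|\Hat{v}\|^2$, and since $\sigma$ lies strictly below this coefficient and $\|\Hat{v}\|^2>0$, the strict inequality \eqref{decredirectwithoutt} follows.

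Finally, \eqref{decredirect} is immediate from continuity. By the representation \eqref{FBE2}, $\varphi_\lambda=f-\tfrac{\lambda}{2}\|\nabla f\|^2+e_\lambda g\circ(I-\lambda\nabla f)$, which is continuous (indeed real-valued) on $\R^n$ because $f$ is $\mathcal{C}^2$ and $e_\lambda g$ is locally Lipschitzian by Proposition~\ref{pbimplycontinuity}(i) (using $\lambda<\lambda_g$). Hence $\tau\mapsto\varphi_\lambda(\Hat{x}+\tau d)$ is continuous at $\tau=0$ with value $\varphi_\lambda(\Hat{x})$; since the strict gap $\varphi_\lambda(\Hat{x})<\varphi_\lambda(x)-\sigma\|\Hat{v}\|^2$ is already in hand, it persists for all sufficiently small $\tau>0$, producing the required $\bar{\tau}>0$.

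The computations are elementary; the only point demanding care is tracking the direction of the inequality when passing from $\|\Hat{x}-x\|^2$ to $\|\Hat{v}\|^2$, where one needs a \emph{lower} bound on $\|\Hat{x}-x\|$ (equivalently an \emph{upper} bound on $\|\Hat{v}\|$) so that the negative quadratic term is sufficiently large in magnitude. It is worth noting that the resulting coefficient $\tfrac{\lambda(1-\lambda L_f)}{2(1+\lambda L_f)^2}$ coincides exactly with the upper endpoint of the admissible interval prescribed for $\sigma$, which is what makes the strict inequality available.
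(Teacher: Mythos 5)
Your proof is correct and follows essentially the same route as the paper's: the two-sided Lipschitz bounds relating $\|\Hat{v}\|$ and $\|\Hat{x}-x\|$, the descent lemma combined with \eqref{FBE=hatxexpress} and the bound $\varphi_\lambda(\Hat{x})\le\varphi(\Hat{x})$, and a continuity argument via \eqref{FBE2} and Proposition~\ref{pbimplycontinuity}(i) for the final step. (There is a harmless sign slip in your $\Hat{v}\ne 0$ argument --- setting $\Hat{v}=0$ gives $\tfrac{1}{\lambda}(x-\Hat{x})=\nabla f(x)-\nabla f(\Hat{x})$ rather than its negative --- but the norm inequality you draw from it is unaffected.)
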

\begin{proof} By Assumption \ref{Lipnablaf}, we have the relationships
\begin{equation}\label{estiFnorFnat1>}
\|\Hat{v}\| =  \Big\| \nabla f(\Hat{x} ) -\nabla f(x ) + \frac{1}{\lambda}(x -\Hat{x} ) \Big\| \geq  \frac{1}{\lambda}\|x -\Hat{x} \|  - \|\nabla f(\Hat{x} ) - \nabla f(x )\|\geq \Big(   \frac{1}{\lambda} - L_f\Big) \|\Hat{x}  - x \|,
\end{equation}
\begin{equation}\label{estiFnorFnat1}
\|\Hat{v}\| =  \Big\| \nabla f(\Hat{x}) -\nabla f(x) + \frac{1}{\lambda}(x-\Hat{x}) \Big\| \leq \|\nabla f(\Hat{x}) - \nabla f(x)\| + \frac{1}{\lambda}\|x-\Hat{x}\| \leq \Big(\frac{1}{\lambda}+L_f\Big) \|\Hat{x} - x\|.
\end{equation}
It follows from \eqref{estiFnorFnat1>} that $\Hat{v} \ne 0$ since $x \ne \Hat{x}$. To verify \eqref{decredirectwithoutt}, we deduce from \eqref{FBE=hatxexpress}, the Lipschitz continuity of $\nabla f$, and \cite[Lemma A.11]{Solo14} that 
$$ 
\varphi_\lambda (x)= f(x) +\langle \nabla f(x),\Hat{x}-x\rangle + g(\Hat{x}) + \frac{1}{2\lambda}\|\Hat{x}-x\|^2 
 \geq f(\Hat{x}) -\frac{L_f}{2}\|\Hat{x}-x\|^2+g(\Hat{x}) + \frac{1}{2\lambda}\|\Hat{x}-x\|^2,
$$ 
which implies in turn that 
\begin{equation}\label{ineqcontrary2}
\varphi_\lambda(x) \geq \varphi(\Hat{x}) + \frac{1-\lambda L_f}{2\lambda}\|x-\Hat{x}\|^2.
\end{equation}
Combining \eqref{estiFnorFnat1} and  \eqref{ineqcontrary2} tells us that
\begin{equation}\label{ineqcontrary3}
\varphi_\lambda(x) \geq \varphi(\Hat{x}) +  \frac{\lambda(1-\lambda L_f)}{2(1+\lambda L_f)^2}\|\Hat{v}\|^2,
\end{equation}
which being combined with $\Hat{v} \ne 0$  and \eqref{ineqcontrary3} yields 
$$
\varphi_\lambda(\Hat{x}) \leq \varphi(\Hat{x})  
\le\varphi_\lambda(x) 
-\frac{\lambda(1-\lambda L_f)}{2(1+\lambda L_f)^2}  \|\Hat{v} \|^2  <\varphi_\lambda(x) - \sigma\|\Hat{v}\|^2 
$$
and thus justifies \eqref{decredirectwithoutt}. It follows from 
Proposition~\ref{pbimplycontinuity} that the prox-boundedness of $g$ with threshold $\lambda_g$ implies the continuity of the Moreau envelope $e_\lambda g$ on $\R^n$, and hence $\varphi_\lambda$ is also continuous on $\R^n$ due to \eqref{FBE2}. Using the latter together with \eqref{decredirectwithoutt} verifies \eqref{decredirect} and completes the proof.
\end{proof}
\vspace*{-0.15in}

\begin{Remark}[\bf relationship with the proximal gradient algorithm] \label{pgmcompare} \rm Note that when $d^k:=0$ for all $k\in\N$ in Algorithm~\ref{FBEbasedLM}, this algorithm reduces to the \textit{proximal gradient method} in which
\begin{equation}\label{PPMtrans}
 x^{k+1}:= \Hat{x}^k \in \text{\rm Prox}_{\lambda g}\big(x^k-\lambda \nabla f(x^k)\big),\quad k=0,1,\ldots
\end{equation}
Indeed, it follows from $d^k =0$ and Lemma~\ref{linesearchwell-defined} that
$$
\varphi_\lambda(\Hat{x}^k+1\cdot d^k)= \varphi_\lambda(\Hat{x}^k) \leq \varphi_\lambda (x^k)-\sigma\|\Hat{v}^k\|^2,  
$$
which implies that $\tau_k =1$  for all $k \in \N$, and thus we arrive at \eqref{PPMtrans}.
\end{Remark}\vspace*{-0.05in}

Now we show that Algorithm~\ref{FBEbasedLM} is well-defined and its accumulation points are critical for \eqref{ncvopt}.\vspace*{-0.03in}

\begin{Proposition}[\bf well-posedness of Algorithm~\ref{FBEbasedLM}] \label{limitingNM} Let Assumption~{\rm\ref{Lipnablaf}} hold in \eqref{ncvopt}. Then the sequence $\{x^k\}$  generated by Algorithm~{\rm\ref{FBEbasedLM}} is well-defined, and we have the estimates
\begin{equation}\label{xkhatxkto0}
\Big(\frac{1}{\lambda} - L_f\Big) \|\Hat{x}^k - x^k\| \leq \|\Hat{v}^k\| \le\Big(   \frac{1}{\lambda} + L_f\Big) \|\Hat{x}^k - x^k\|\; \text{ for all }\; k \in \N. 
\end{equation}
If $\inf \varphi>-\infty$,  then  the sequence $\{\varphi_\lambda(x^k)\}$ is convergent and 
\begin{equation}\label{seriesvk}
\sum_{k=0}^\infty \|\Hat{v}^k\|^2 <\infty, \quad \sum_{k=0}^\infty \|\Hat{x}^k-x^k\|^2 <\infty.
\end{equation} 
Consequently,  both sequences $\{\Hat{v}^k\}$ and $\{\Hat{x}^k -x^k\}$ converge to $0$ as $k \to \infty$. Finally, any accumulation point of $\{x^k\}$ is a {$\lambda$-critical point of \eqref{ncvopt} for all  $\lambda \in (0,\text{\rm min}\{1/L_f,\lambda_g\})$.} 
\end{Proposition}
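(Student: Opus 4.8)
The plan is to read off most of the assertions directly from Lemma~\ref{linesearchwell-defined} and to supplement it with a lower-boundedness argument for the forward-backward envelope. First I would verify well-posedness: passing Step~2 means $x^k \notin \text{\rm Prox}_{\lambda g}(x^k-\lambda\nabla f(x^k))$, while the point $\Hat{x}^k$ chosen in Step~3 does lie in this set (which is nonempty by the prox-boundedness of $g$ and Proposition~\ref{pbimplycontinuity}(ii)), so $\Hat{x}^k \ne x^k$. Lemma~\ref{linesearchwell-defined} then gives $\Hat{v}^k \ne 0$ and, through \eqref{decredirect}, a threshold $\bar\tau>0$ below which the acceptance test is satisfied; since the backtracking $\tau_k := \beta\tau_k$ drives $\tau_k$ into $(0,\bar\tau)$ after finitely many reductions, the inner loop terminates and $x^{k+1}$ is well-defined. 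The two-sided estimate \eqref{xkhatxkto0} is exactly \eqref{estiFnorFnat1>} and \eqref{estiFnorFnat1} applied at $(x,\Hat{x},\Hat{v})=(x^k,\Hat{x}^k,\Hat{v}^k)$, so nothing new is required there.

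Next I would establish convergence of $\{\varphi_\lambda(x^k)\}$ and the summability in \eqref{seriesvk}. The exit condition of the while-loop yields the per-step decrease $\varphi_\lambda(x^{k+1}) \le \varphi_\lambda(x^k) - \sigma\|\Hat{v}^k\|^2$, so $\{\varphi_\lambda(x^k)\}$ is nonincreasing; the crux is a uniform lower bound. I would obtain it from the descent lemma under Assumption~\ref{Lipnablaf}: rearranging $f(y) \le f(x) + \langle\nabla f(x),y-x\rangle + \frac{L_f}{2}\|y-x\|^2$ inside the infimum defining $\varphi_\lambda$ in \eqref{FBE} bounds the bracket below by $\varphi(y) + \frac{1}{2}\left(\frac{1}{\lambda} - L_f\right)\|y-x\|^2 \ge \varphi(y)$, because $\lambda < 1/L_f$; taking the infimum over $y$ shows $\varphi_\lambda \ge \inf\varphi$ pointwise. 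Hence $\{\varphi_\lambda(x^k)\}$ is bounded below and converges. Telescoping the decrease gives $\sigma\sum_{k=0}^N\|\Hat{v}^k\|^2 \le \varphi_\lambda(x^0) - \inf\varphi$, so $\sum_k\|\Hat{v}^k\|^2 < \infty$; the left inequality in \eqref{xkhatxkto0}, where the factor $\frac{1}{\lambda}-L_f$ is positive, then transfers summability to $\sum_k\|\Hat{x}^k-x^k\|^2 < \infty$. Summability forces $\|\Hat{v}^k\| \to 0$ and $\|\Hat{x}^k - x^k\| \to 0$.

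For the criticality of accumulation points I would fix a subsequence $x^{k_j} \to \ox$. Since $\|\Hat{x}^{k_j} - x^{k_j}\| \to 0$, also $\Hat{x}^{k_j} \to \ox$, and the continuity of $\nabla f$ (from Assumption~\ref{Lipnablaf}) gives $x^{k_j} - \lambda\nabla f(x^{k_j}) \to \ox - \lambda\nabla f(\ox)$. As $\Hat{x}^{k_j} \in \text{\rm Prox}_{\lambda g}(x^{k_j} - \lambda\nabla f(x^{k_j}))$, the outer-semicontinuity statement in Proposition~\ref{pbimplycontinuity}(iii) places every accumulation point of $\{\Hat{x}^{k_j}\}$ in $\text{\rm Prox}_{\lambda g}(\ox - \lambda\nabla f(\ox))$; since that accumulation point is $\ox$ itself, we conclude $\ox \in \text{\rm Prox}_{\lambda g}(\ox - \lambda\nabla f(\ox)) = \mathcal{G}_\lambda(\ox)$, i.e., $\ox$ is $\lambda$-critical in the sense of Definition~\ref{stat}(iii) for the algorithm's parameter $\lambda$. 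Because this reasoning applies to any admissible parameter value, it delivers the stated conclusion across $\lambda \in (0,\text{\rm min}\{1/L_f,\lambda_g\})$.

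I expect the main obstacle to be the lower bound $\varphi_\lambda \ge \inf\varphi$: unlike the trivial upper bound $\varphi_\lambda \le \varphi$, it is not immediate from the definition and relies essentially on coupling the descent lemma with the parameter restriction $\lambda < 1/L_f$ so that the quadratic remainder carries a nonnegative coefficient. A secondary subtlety is that at this level of generality $\text{\rm Prox}_{\lambda g}$ need not be single-valued, since no prox-regularity of $g$ is yet imposed; the limiting argument must therefore invoke the outer semicontinuity of the proximal mapping from Proposition~\ref{pbimplycontinuity}(iii) rather than continuity, and one must keep track of $\Hat{x}^k \ne x^k$ to guarantee $\Hat{v}^k \ne 0$ along the iterations.
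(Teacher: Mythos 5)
Your proof is correct and follows the same overall skeleton as the paper's (well-posedness and \eqref{xkhatxkto0} from Lemma~\ref{linesearchwell-defined}, telescoping the descent inequality for \eqref{seriesvk}, then passing to the limit along a convergent subsequence), but it substitutes two self-contained arguments for steps the paper handles by citation. First, for the lower bound you derive $\varphi_\lambda\ge\inf\varphi$ directly by inserting the descent lemma into the infimum in \eqref{FBE} and using $\lambda<1/L_f$ to discard the nonnegative quadratic remainder; the paper instead invokes \cite[Theorem~4.4]{stp2} for $\inf\varphi_\lambda=\inf\varphi$. Your derivation is a clean, correct replacement (and your conclusion that the monotone bounded sequence $\{\varphi_\lambda(x^k)\}$ merely converges is, if anything, stated more carefully than the paper's). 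Second, for the criticality of an accumulation point $\ox$ you argue directly that $\Hat{x}^{k_j}\to\ox$ (since $\Hat{x}^{k_j}-x^{k_j}\to0$) and apply the outer semicontinuity of $\text{\rm Prox}_{\lambda g}$ from Proposition~\ref{pbimplycontinuity}(iii) to get $\ox\in\mathcal{G}_\lambda(\ox)$ in one step, whereas the paper routes through $\Hat{v}^k\to0$, the normal map, and Proposition~\ref{characsol}(ii); your route is slightly more direct and avoids the normal-map characterization entirely, at the small cost of not exhibiting $0\in\mathcal{F}^{\rm nor}_\lambda(\ox-\lambda\nabla f(\ox))$, which the paper reuses elsewhere. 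Both you and the paper in fact only establish $\lambda$-criticality for the algorithm's own parameter value; the ``for all $\lambda$'' phrasing in the statement is to be read as ``for whichever admissible $\lambda$ the algorithm is run with,'' and your closing sentence handles this the same way the paper implicitly does.
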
 \vspace*{-0.15in}
\begin{proof} It follows from Lemma~\ref{linesearchwell-defined} that Algorithm~\ref{FBEbasedLM} is well-defined and exhibits the descent property $\varphi_\lambda(x^{k+1}) < \varphi_\lambda(x^k)$. Arguing similarly to the proof of inequalities \eqref{estiFnorFnat1>} and \eqref{estiFnorFnat1} brings us to \eqref{xkhatxkto0}. By \cite[Theorem~4.4]{stp2}, we have $\inf \varphi_\lambda = \inf \varphi>-\infty$. Remembering that the sequence $\{\varphi_\lambda(x^k) \}$ is monotonically decreasing yields the convergence $\varphi_\lambda(x^k)\to \inf\varphi_\lambda$ as $k \to \infty$. Moreover, we have the estimates
$$
\begin{array}{ll}
\disp\sigma\sum_{i=0}^k \|\Hat{v}^i\|^2 \leq \sum_{i=0}^k \left[\varphi_\lambda(x^i)-\varphi_\lambda(x^{i+1}) \right]=\varphi_\lambda(x^0) -\varphi_\lambda(x^{k+1}) \leq  \varphi_\lambda (x^0) -\inf \varphi_\lambda,\\
\disp\Big(\frac{1}{\lambda} - L_f\Big)^2 \sum_{i=0}^k \|\Hat{x}^i-x^i\|^2 \leq \sum_{i=0}^k \|\Hat{v}^i\|^2 \leq \frac{1}{\sigma}(\varphi_\lambda (x^0) -\inf \varphi_\lambda),
\end{array}
$$
which justify \eqref{seriesvk} and ensures that both sequences $\{\Hat{v}^k\}$ and $\{\Hat{x}^k -x^k\}$ converge to $0$ as $k \to \infty$.

Suppose further that $\ox$ is an accumulation point of $\{x^k\}$ and hence find an infinite set $J\subset \N$ such that $x^k \overset{J}{\rightarrow} \ox$. By Proposition~\ref{pbimplycontinuity}, we deduce from the inclusion $\Hat{x}^k \in \text{\rm Prox}_{\lambda g}(x^k-\lambda \nabla f(x^k))$ that the sequence $\{\Hat{x}^k\}_{k \in J}$ is bounded and that all its accumulation points lie in $\text{\rm Prox}_{\lambda g} (\ox-\lambda \nabla f(\ox))$. Without loss of generality, assume that $\Hat{x}^k \overset{J}{\rightarrow} \Hat{x} \in \text{\rm Prox}_{\lambda g}(\ox-\lambda \nabla f(\ox))$.  By $\Hat{v}^k \to 0$, it yields
$$
0 = \lim_{k\to \infty, k \in J} \|\Hat{v}^k\|=   \| \nabla f(\Hat{x}) +\lambda^{-1}(\ox -\lambda \nabla f(\ox) -\Hat{x})\|, 
$$
which implies in turn that $0 \in \mathcal{F}_\lambda^{\text{\rm nor}} (\ox-\lambda \nabla f(\ox))$. Using Proposition~\ref{characsol}, we conclude that $\ox$ is a $\lambda$-critical point of \eqref{ncvopt} and thus complete the proof of the proposition.
\end{proof}\vspace*{-0.05in}

The next proposition verifies the global convergence of the iterates in Algorithm~\ref{FBEbasedLM} to an {\em isolated} accumulation point under a particular direction choice.\vspace*{-0.05in}

\begin{Proposition}[\bf global convergence to isolated accumulation points]
\label{abstractconvergence1} In the setting of Proposition~{\rm\ref{limitingNM}}, assume that $\ox$ is an isolated accumulation point of $\{x^k\}$ in Algorithm~{\rm\ref{FBEbasedLM}} with
\begin{equation}\label{dkvkrela}
\|d^k\| \leq \zeta\|\Hat{v}^k\|\;\text{ for all large }\;k \in J,
\end{equation}
where $\zeta>0$ is fixed. Then the sequence $\{x^k\}$ converges to $\ox$ as $k\to \infty$. 
\end{Proposition}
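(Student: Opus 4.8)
The plan is to reduce the statement to the classical fact that a sequence with an \emph{isolated} accumulation point, along whose approaching subsequences the consecutive differences $x^{k+1}-x^k$ vanish, must converge to that point. First I would collect from Proposition~\ref{limitingNM} the ingredients that hold for the whole sequence: $\|\Hat v^k\|\to 0$, $\|\Hat x^k-x^k\|\to 0$, the monotone convergence of $\{\varphi_\lambda(x^k)\}$, and the fact that every accumulation point is $\lambda$-critical (so in particular $\varphi_\lambda(x^k)\to\varphi_\lambda(\ox)$ by continuity of $\varphi_\lambda$, which comes from \eqref{FBE2} and Proposition~\ref{pbimplycontinuity}).

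Next I would estimate the consecutive differences. Since $x^{k+1}=\Hat x^k+\tau_k d^k$ with $\tau_k\in(0,1]$, we have
\begin{equation*}
\|x^{k+1}-x^k\|\le\|\Hat x^k-x^k\|+\tau_k\|d^k\|\le\|\Hat x^k-x^k\|+\|d^k\|.
\end{equation*}
On the indices where the direction bound \eqref{dkvkrela} is in force we may replace $\|d^k\|$ by $\zeta\|\Hat v^k\|$, and because both $\|\Hat x^k-x^k\|\to0$ and $\|\Hat v^k\|\to0$ along the whole sequence, this yields $\|x^{k+1}-x^k\|\to 0$ along every subsequence of $\{x^k\}$ that approaches $\ox$. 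This is exactly the input consumed by the isolated–accumulation–point argument.

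Then I would run the standard annulus argument. Fix $\epsilon>0$ so small that $\ox$ is the only accumulation point of $\{x^k\}$ in $\overline{\B}_{2\epsilon}(\ox)$; the compact annulus $\{x:\epsilon\le\|x-\ox\|\le 2\epsilon\}$ then contains only finitely many iterates, so for large $k$ each $x^k$ lies either in $\B_\epsilon(\ox)$ or outside $\B_{2\epsilon}(\ox)$. If $\{x^k\}$ failed to converge to $\ox$, infinitely many iterates would lie outside $\B_{2\epsilon}(\ox)$, while the subsequence indexed by $J$ stays in $\B_\epsilon(\ox)$; hence there would be infinitely many crossing indices $m$ with $x^m\in\B_\epsilon(\ox)$ and $x^{m+1}\notin\B_{2\epsilon}(\ox)$, forcing $\|x^{m+1}-x^m\|>\epsilon$. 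The bounded iterates $\{x^m\}$ at these crossing indices admit a further subsequence converging to an accumulation point in $\overline{\B}_\epsilon(\ox)$, which must be $\ox$; the step-vanishing established above would then give $\|x^{m+1}-x^m\|\to 0$ along this subsequence, contradicting $\|x^{m+1}-x^m\|>\epsilon$. Therefore $x^k\to\ox$.

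The main obstacle is the second step: making sure that $\|x^{k+1}-x^k\|\to0$ holds along \emph{every} subsequence approaching $\ox$, in particular at the crossing indices $m$, and not merely along the single fixed subsequence $J$. Since $d^k$ is only constrained by \eqref{dkvkrela} near $\ox$, the delicate point is to guarantee that the direction estimate is available precisely at those indices where $x^m$ is already close to $\ox$, so that the contradiction in the annulus argument is actually triggered; here the descent property $\varphi_\lambda(x^{k+1})\le\varphi_\lambda(x^k)-\sigma\|\Hat v^k\|^2$ together with $\varphi_\lambda(x^k)\to\varphi_\lambda(\ox)$ serves to exclude the iterates from stalling near a second accumulation point carrying the same FBE value.
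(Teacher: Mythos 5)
Your argument is correct and coincides with the paper's: the paper derives the very same bound $\|x^{k+1}-x^k\|\le\tau_k\|d^k\|+\|\Hat{x}^k-x^k\|\le\zeta\|\Hat{v}^k\|+\|\Hat{x}^k-x^k\|\to 0$ along a subsequence converging to $\ox$ and then simply cites Ostrowski's condition (Facchinei--Pang, Proposition~8.3.10), whose proof is precisely the annulus/crossing-index argument you spell out. The ``obstacle'' you raise at the end is resolved by reading \eqref{dkvkrela} as holding along any subsequence converging to $\ox$ --- which is exactly how the hypothesis is verified when the proposition is invoked in Theorem~\ref{converNMglobal} --- so no appeal to the descent property is needed (and the fix you sketch would not work anyway, since $\{\varphi_\lambda(x^k)\}$ is monotone and convergent, so all accumulation points carry the same FBE value).
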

\begin{proof}
It follows from Proposition~\ref{limitingNM} that $\ox$ is a $\lambda$-critical point of \eqref{ncvopt} with $\Hat{v}^k\to 0$ and  $\Hat{x}^k -x^k \to 0$  as $k \to \infty$.  Let a subsequence $\{x^k\}_{k\in J}$ converge to $\ox$. For all large $k$, we get from \eqref{dkvkrela} that
$$ 
\|x^{k+1}-x^k\| \leq \|x^{k+1}-\Hat{x}^k\|+ \|\Hat{x}^k-x^k\| = \tau_k \|d^k\| + \|\Hat{x}^k-x^k\| \leq \zeta\|\Hat{v}^k\|+ \|\Hat{x}^k -x^k\|,
$$ 
which implies that  $\|x^{k+1}-x^k\|\overset{J}{\rightarrow} 0$. Then we deduce from Ostrowski’s condition \cite[Proposition~8.3.10]{JPang}that the entire sequence $\{x^k\}$ converges to $\bar{x}$ as $k\to\infty$.
\end{proof}\vspace*{-0.05in}

To establish {\em convergence rates} for Algorithm~\ref{FBEbasedLM}, we need some additional assumptions.\vspace*{-0.05in}

\begin{Assumption}\label{assumonlimitpoint}\rm Let $\ox$ be an {M-stationary point of \eqref{ncvopt}}. Suppose that:
 
{\bf(i)} $\ox$ is a tilt-stable local minimizer of $\varphi$. 

{\bf(ii)} $\nabla^2 f$ is strictly differentiable at $\ox$.

{\bf(iii)} $g$ is {continuously $r$-level prox-regular at $\ox$ for $-\nabla f(\ox)$ for some $r>0$.}
\end{Assumption}\vspace*{-0.03in}

The following lemma taken from \cite[Lemma~4]{kmptmp} provides sufficient conditions for the {\em $R$-linear} and {\em $Q$-linear convergence rates} of general iterative sequences.\vspace*{-0.05in}

\begin{Lemma}[\bf linear convergence rates for abstract sequences]\label{QRlinear}  Let $\{\alpha_k\},\; 
\{\beta_k \}$, and $\{\gamma_k \}$ be sequences of positive numbers. Assume that there are $c_i>0$, $i=1,2,3$, and $k_0\in \N$ such that for all $k \ge k_0$ we have

{\bf (i)} $\alpha_k - \alpha_{k+1} \ge c_1 \beta_k^2$.

{\bf(ii)} $\beta_k \geq c_2 \gamma_k$.

{\bf(iii)} $c_3 \gamma_k^2 \ge \alpha_k$.\\
Then $\{\alpha_k \}$ Q-linearly converges to zero, while $\{\beta_k \}$ and $\{\gamma_k \}$ converge to zero $R$-linearly as $k\to\infty$.
\end{Lemma}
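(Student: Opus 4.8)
The plan is to treat hypothesis (iii) as the engine that upgrades the mere decrease encoded in (i) into a genuine geometric contraction for $\{\alpha_k\}$, and then to extract the $R$-linear rates for $\{\beta_k\}$ and $\{\gamma_k\}$ from the complementary \emph{upper} bounds that (i) and (ii) also supply. The first routine observation is that (i) gives $\alpha_k-\alpha_{k+1}\ge c_1\beta_k^2\ge 0$ for all $k\ge k_0$, so the tail of $\{\alpha_k\}$ is nonincreasing; being a sequence of positive numbers it is bounded below, hence convergent, and the only remaining task is to identify the rate.

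The key step is to chain the three hypotheses in the correct direction to produce a one-step contraction. Starting from (i) and inserting the lower bound $\beta_k^2\ge c_2^2\gamma_k^2$ from (ii) followed by $\gamma_k^2\ge \alpha_k/c_3$ from (iii), I obtain $\alpha_k-\alpha_{k+1}\ge (c_1 c_2^2/c_3)\,\alpha_k$, i.e. $\alpha_{k+1}\le(1-\mu)\,\alpha_k$ for all $k\ge k_0$, where $\mu:=c_1 c_2^2/c_3>0$. Iterating this gives $\alpha_k\le (1-\mu)^{\,k-k_0}\alpha_{k_0}$ for $k\ge k_0$, which is exactly $Q$-linear convergence of $\{\alpha_k\}$ to zero once we know the ratio $1-\mu$ lies in $[0,1)$. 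For the $R$-linear claims I would then exploit (i) in the opposite direction: since $\alpha_{k+1}>0$, we have $c_1\beta_k^2\le \alpha_k-\alpha_{k+1}\le \alpha_k$, so $\beta_k\le (\alpha_k/c_1)^{1/2}$, and (ii) yields $\gamma_k\le \beta_k/c_2\le (\alpha_k/(c_1 c_2^2))^{1/2}$. Because $\alpha_k$ decays geometrically with factor $1-\mu$, the square root $\alpha_k^{1/2}$ decays geometrically with factor $(1-\mu)^{1/2}\in(0,1)$, whence both $\{\beta_k\}$ and $\{\gamma_k\}$ are dominated by a geometric sequence and converge to zero $R$-linearly.

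The one point needing care, which I would flag as the main obstacle, is verifying that the contraction factor is genuinely admissible, namely that $\mu<1$ so that $1-\mu\in(0,1)$. This is not an extra assumption but is forced by the positivity of the sequence: if one had $\mu\ge 1$, then $1-\mu\le 0$ and the recursion $\alpha_{k+1}\le(1-\mu)\alpha_k$ together with $\alpha_k>0$ would give $\alpha_{k+1}\le 0$, contradicting that $\{\alpha_k\}$ consists of positive numbers. Hence $\mu\in(0,1)$ automatically, the $Q$-linear rate $1-\mu$ and the $R$-linear rate $(1-\mu)^{1/2}$ are both strictly between $0$ and $1$, and the proof is complete.
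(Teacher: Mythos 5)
Your proof is correct and complete. Note that the paper itself gives no proof of this lemma --- it is quoted verbatim from Lemma~4 of the authors' earlier work (the reference \texttt{kmptmp}) --- and your argument is exactly the standard one used there: chaining (i), (ii), (iii) to get the contraction $\alpha_{k+1}\le(1-c_1c_2^2/c_3)\alpha_k$, and then reading off the $R$-linear rates for $\beta_k$ and $\gamma_k$ from the upper bounds $\beta_k\le(\alpha_k/c_1)^{1/2}$ and $\gamma_k\le\beta_k/c_2$. Your explicit observation that positivity of $\alpha_{k+1}$ forces the contraction factor $1-\mu$ to lie in $(0,1)$ is the one point that is often glossed over, and you handle it correctly.
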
\vspace*{-0.05in}

Now we are ready to establish linear convergence rates for Algorithm~\ref{FBEbasedLM}.\vspace*{-0.05in}

\begin{Theorem}[\bf linear convergence rates for the generalized proximal gradient method]\label{abstractconvergencelinear} In the setting of 
Proposition~{\rm\ref{abstractconvergence1}},  suppose that  the accumulation point $\ox$ satisfies all the conditions in Assumption~{\rm\ref{assumonlimitpoint}}. 
In Algorithm~{\rm\ref{FBEbasedLM}}, choose $\lambda \in (0,1/r)$ such that $Q(\ox)= I-\lambda \nabla^2f(\ox)$ is positive-definite. Then we have the assertions:

{\bf(i)} The convergence rate of $\{\varphi_\lambda (x^k)\}$ is at least Q-linear.

{\bf(ii)} The convergence rates of $\{x^k\} $ and $\{\|\nabla\varphi_\lambda (x^k)\|\}$ are at least R-linear.
\end{Theorem}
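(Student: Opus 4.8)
The plan is to reduce everything to the abstract linear-rate Lemma~\ref{QRlinear} by choosing the three scalar sequences
$$\alpha_k := \varphi_\lambda(x^k) - \varphi_\lambda(\ox), \quad \beta_k := \|\Hat{v}^k\|, \quad \gamma_k := \|x^k - \ox\|,$$
and verifying its three hypotheses for all large $k$. Before doing so, I would record the two structural facts that make the estimates work. First, being in the setting of Proposition~\ref{abstractconvergence1} (hence of Proposition~\ref{limitingNM}), the whole sequence $\{x^k\}$ converges to $\ox$, the descent inequality $\varphi_\lambda(x^{k+1}) \le \varphi_\lambda(x^k) - \sigma\|\Hat{v}^k\|^2$ holds, and the two-sided bound \eqref{xkhatxkto0} relates $\|\Hat{v}^k\|$ and $\|\Hat{x}^k - x^k\|$. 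Second, Assumption~\ref{assumonlimitpoint} together with the positive-definiteness of $Q(\ox)$ places us exactly in the hypotheses of Lemma~\ref{localstofFBE}, so $\varphi_\lambda$ is strongly convex on some neighborhood $V$ of $\ox$, say with modulus $\mu > 0$; moreover $\varphi_\lambda$ is of class $\mathcal{C}^{1,1}$ near $\ox$ by Remark~\ref{C11FBE}(ii), its gradient being Lipschitz with some constant $\ell$, and $\nabla\varphi_\lambda(\ox) = 0$ because $\ox$ is $\lambda$-critical, by \eqref{criticalFBE} (equivalently by Proposition~\ref{FBEdiff}).

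Condition (i) of Lemma~\ref{QRlinear} is immediate: the descent inequality gives $\alpha_k - \alpha_{k+1} \ge \sigma\beta_k^2$, so one takes $c_1 := \sigma$. Condition (iii) follows from the $\mathcal{C}^{1,1}$ property: applying the descent lemma to $\varphi_\lambda$ at $\ox$ and using $\nabla\varphi_\lambda(\ox) = 0$ yields $\alpha_k = \varphi_\lambda(x^k) - \varphi_\lambda(\ox) \le \tfrac{\ell}{2}\|x^k - \ox\|^2 = \tfrac{\ell}{2}\gamma_k^2$, so $c_3 := \ell/2$.

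The crux is condition (ii), $\beta_k \ge c_2\gamma_k$, which I would obtain by chaining two inequalities through $\|\nabla\varphi_\lambda(x^k)\|$. On the one hand, the gradient formula \eqref{gradFBE} together with \eqref{xkhatxkto0} and the local boundedness of $\|I - \lambda\nabla^2 f(x^k)\|$ near $\ox$ gives an upper bound $\|\nabla\varphi_\lambda(x^k)\| \le C\|\Hat{v}^k\| = C\beta_k$ for some $C > 0$. On the other hand, strong convexity of $\varphi_\lambda$ on $V$ with $\nabla\varphi_\lambda(\ox) = 0$ gives the lower bound $\|\nabla\varphi_\lambda(x^k)\| \ge \mu\|x^k - \ox\| = \mu\gamma_k$, obtained from $\langle\nabla\varphi_\lambda(x^k), x^k - \ox\rangle \ge \mu\gamma_k^2$ and the Cauchy--Schwarz inequality. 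Combining these yields $\beta_k \ge (\mu/C)\gamma_k$, so $c_2 := \mu/C$. All three estimates hold once $k$ is large enough that $x^k \in V$, which occurs since $x^k \to \ox$.

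Applying Lemma~\ref{QRlinear} then finishes both claims. Its conclusion that $\{\alpha_k\}$ converges Q-linearly to zero is precisely assertion (i), namely $\varphi_\lambda(x^k) \to \varphi_\lambda(\ox)$ Q-linearly. Its conclusion that $\{\gamma_k\}$ and $\{\beta_k\}$ converge R-linearly to zero gives the R-linear convergence of $\{x^k\}$ to $\ox$, and the upper bound $\|\nabla\varphi_\lambda(x^k)\| \le C\beta_k$ transfers the R-linear rate of $\{\beta_k\}$ to $\{\|\nabla\varphi_\lambda(x^k)\|\}$, establishing assertion (ii). I expect the main obstacle to be the lower bound in condition (ii): it rests entirely on the strong convexity of $\varphi_\lambda$ near $\ox$, which is precisely why Assumption~\ref{assumonlimitpoint} (tilt stability, strict differentiability of $\nabla^2 f$, continuous prox-regularity) and the positive-definiteness of $Q(\ox)$ must be invoked through Lemma~\ref{localstofFBE}; the remaining inequalities are routine once $\nabla\varphi_\lambda(\ox)=0$ and the comparison with $\|\Hat{v}^k\|$ via \eqref{gradFBE} and \eqref{xkhatxkto0} are in place.
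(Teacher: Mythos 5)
Your proposal is correct and follows essentially the same route as the paper: both reduce to Lemma~\ref{QRlinear} using the descent inequality, the strong convexity of $\varphi_\lambda$ from Lemma~\ref{localstofFBE}, the $\mathcal{C}^{1,1}$ upper estimate, and the two-sided comparison of $\|\Hat{v}^k\|$, $\|x^k-\Hat{x}^k\|$, and $\|\nabla\varphi_\lambda(x^k)\|$ via \eqref{gradFBE} and \eqref{xkhatxkto0}. The only difference is cosmetic: you take $\beta_k=\|\Hat{v}^k\|$ and transfer the R-linear rate to $\|\nabla\varphi_\lambda(x^k)\|$ afterwards, whereas the paper takes $\beta_k=\|\nabla\varphi_\lambda(x^k)\|$ directly; the underlying inequalities are identical.
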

\vspace*{-0.15in}
\begin{proof} It follows from Proposition~\ref{limitingNM} that $\Hat{v}^k\to 0$ with the estimates in \eqref{xkhatxkto0}. Invoking 
Proposition~\ref{abstractconvergence1} ensures that $x^k\to\ox$ as $k\to\infty$, and $\ox$ is a {$\lambda$-critical point of \eqref{ncvopt} for all $\lambda \in (0,\text{\rm min}\{1/L_f,\lambda_g\})$. Fix $\lambda$ such that $Q(\ox)= I-\lambda \nabla^2f(\ox)$ is positive-definite. By the $r$-level prox-regularity of $g$ at $\ox$ for $-\nabla f(\ox)$,} Proposition~\ref{singlevalueproxmap} tells us that the mapping $x \mapsto \text{\rm Prox}_{\lambda g}(x-\lambda \nabla f(x))$ is single-valued around $\ox$. Then we deduce from Remark~\ref{C11FBE}(i), the {$\lambda$-criticality of $\ox$}, and \eqref{criticalFBE} that $\nabla \varphi_\lambda(\ox)=0$. It follows from Remark~\ref{C11FBE}(ii) that $\varphi_\lambda$ is of class $\mathcal{C}^{1,1}$ around $\ox$, and thus we get that $\varphi_\lambda(x^k) \to \varphi_\lambda(\ox)$ and $\|\nabla \varphi_\lambda(x^k)\| \to 0$ as $k \to \infty$. 
Moreover, Lemma~\ref{localstofFBE} implies that $\varphi_\lambda$ is strongly convex around $\ox$, and hence there exists $\kappa>0$ such that
$$
\varphi_\lambda (x) \geq \varphi_\lambda (\ox) + \langle \nabla \varphi_\lambda(\ox), x- \ox\rangle +  \frac{\kappa}{2}\|x-\ox\|^2 =  \varphi_\lambda (\ox)+\frac{\kappa}{2}\|x-\ox\|^2,
$$
\begin{equation}\label{strongmonoFBSsec5}
\langle \nabla \varphi_\lambda(x), x-\ox\rangle = \langle \nabla \varphi_\lambda(x)-\nabla\varphi_\lambda(\ox),x-\ox\rangle \geq \kappa \|x-\ox\|^2 
\end{equation}
for all $x$ near $\ox$.  By using the descent lemma ~\cite[Lemma A.11]{Solo14}, we find $\ell>0$ with 
\begin{equation}\label{lipnablaFBEsec5}
\varphi_\lambda(x) \leq \varphi_\lambda (\ox)+ \langle \nabla \varphi_\lambda (\ox), x- \ox\rangle +  \frac{\ell}{2}\|x-\ox\|^2= \varphi_\lambda (\ox) +  \frac{\ell}{2}\|x-\ox\|^2
\end{equation}
for all such $x$. Combining the latter with \eqref{xkhatxkto0} tells us that 
$$ 
\|\Hat{v}^k\| \ge\Big(\frac{1}{\lambda} - L_f\Big) \|\Hat{x}^k - x^k\|  =\Big(   \frac{1}{\lambda} - L_f\Big) \|x^k- \text{\rm Prox}_{\lambda g}(x^k-\lambda \nabla f(x^k))\|,
$$ 
which yields by Algorithm \ref{FBEbasedLM} the estimates
\begin{equation}\label{sequenceFBSdecrease}
\varphi_\lambda(x^k)- \varphi_\lambda(x^{k+1}) \geq \sigma\|\Hat{v}^k\|^2\geq \sigma \Big(\frac{1}{\lambda}-L_f\Big)^2 \|x^k -\text{\rm Prox}_{\lambda g}(x^k-\lambda \nabla f(x^k))\|^2
\end{equation}
for large $k$. The Lipschitz continuity of $\nabla f$ on $\R^n$ ensures that the gradient of $x\mapsto \frac{\|x\|^2}{2}-\lambda f(x)$ is also global Lipschitz continuous on $\R^n$. Then there exists $M>0$ such that $\|I-\lambda \nabla^2 f(x)\| \leq M$. 
Due to \eqref{gradFBE}, we have 
\begin{equation}\label{gradFBEbounded}
\|\nabla \varphi_\lambda(x) \|=\left\|\lambda^{-1}\big(I-\lambda \nabla^2f(x)\big)\big(x-\text{\rm Prox}_{\lambda g}(x-\lambda \nabla f(x))\big) \right\| \leq \frac{M}{\lambda}\big\|x-\text{\rm Prox}_{\lambda g}(x-\lambda \nabla f(x))\big\|
\end{equation}
for all $x$ near $\ox$. Combining \eqref{sequenceFBSdecrease} and \eqref{gradFBEbounded} gives us the relationships
\begin{align}\label{cond1FBS}
\begin{array}{ll}
[\varphi_\lambda(x^k) -\varphi_\lambda(\ox)] -[\varphi_\lambda(x^{k+1})-\varphi_\lambda(\ox)] & = \varphi_\lambda(x^k) -\varphi_\lambda(x^{k+1}) \geq\disp\frac{\sigma\lambda^2}{M^2}\Big(\frac{1}{\lambda}-L_f \Big)^2 \|\nabla \varphi_\lambda(x^k)\|^2
\end{array}
\end{align}
for large $k$. Then \eqref{strongmonoFBSsec5} and the Cauchy-Schwarz inequality ensure that 
\begin{equation}\label{cond2FBS}
\|\nabla\varphi_\lambda (x^k)\| \geq \kappa \|x^k -\ox\|
\end{equation}
for all such $k$. The estimate in \eqref{lipnablaFBEsec5} tells us that 
\begin{equation}\label{cond3FBS}
\frac{\ell}{2}\|x^k-\ox\|^2 \geq \varphi_\lambda(x^k)-\varphi_\lambda(\ox)\; \text{ whenever $k$  is sufficiently large}.
\end{equation}
Using \eqref{cond1FBS}, \eqref{cond2FBS}, and \eqref{cond3FBS} and then applying Lemma~\ref{QRlinear} to the sequences 
$$
\alpha_k:= \varphi_\lambda (x^k)-\varphi_\lambda(\ox),\; \beta_k:= \|\nabla \varphi_\lambda(x^k)\|,\; \gamma_k:= \|x^k-\ox\|, \quad k=0,1,\ldots,
$$
verify the claimed assertions and thus completes the proof.
\end{proof}\vspace*{-0.17in}
 
\begin{Remark}[\bf comparison with other FBE-based line search methods] $\,$\rm 

{\bf (i)} The primary difference between the line search method in Algorithm \ref{FBEbasedLM} and the ZeroFPR method presented in \cite{stp2} is that we require the {\em monotonicity} of $\varphi_\lambda $ after each iteration, i.e., the condition in \eqref{dpofFB} must always be satisfied. Additionally, our method relies on the computation of $\Hat{v}^k$, the value of the normal map, which is crucial for justifying our globalized coderivative-based Newton method (GCNM) as discussed in Section \ref{sec:globalNMcompo}. This plays an essential role in the convergence analysis of GCNM, whereas ZeroFPR does not require this information. Regarding the convergence results for the iterative sequence of our line search method in Algorithm~\ref{FBEbasedLM}, the general properties concerning well-definedness and accumulation point behavior are similar to those of ZeroFPR. However, when the accumulation point is isolated, which is crucial for ensuring the superlinear convergence of our generalized Newton method discussed in Section \ref{sec:globalNMcompo}, Theorem \ref{abstractconvergencelinear} guarantees the linear convergence of the {sequence $\{x^k\}$} under Assumption \ref{assumonlimitpoint} and also ensures the local superlinear convergence if $d^k$ is chosen by solving the generalized Newton inclusions as suggested in Algorithm~\ref{NMcompositeglobal}. In contrast, to guarantee the superlinear convergence of ZeroFPR, the authors require strict twice epi-differentiability and the second-order epi-derivative of the regularizer $g$ to be generalized quadratic, which is not often satisfied and is difficult to verify; see \cite[Theorems~5.10 and 5.11]{stp2} for more details. This requirement is not needed in our Assumption~\ref{assumonlimitpoint}, as we do not rely on the $\mathcal{C}^2$-smoothness of FBE around the limiting point, while ZeroFPR requires this condition to apply the classical quasi-Newton methods. Numerical experiments comparing our generalized Newton-type methods with ZeroFPR methods  will be presented in Section~\ref{sec:numerical}.

{\bf (ii)} {The papers} \cite{kmptjogo, kmptmp} also employ FBE to establish line search methods for achieving the global convergence. However, the ideas presented in these works are fundamentally different from those in our current paper. In \cite{kmptjogo, kmptmp}, the authors transform nonsmooth optimization problems into $\mathcal{C}^{1,1}$-smooth optimization problems and then apply coderivative-based Newton methods to the transformed problems, with the classical Armijo's line search designed for continuously differentiable functions. In contrast, our
Algorithm~\ref{FBEbasedLM} does not need to reformulate the original optimization problem. Moreover, the developed convergence analysis does not require any convexity assumptions on the functions or the quadratic form of the loss functions as needed in \cite{kmptjogo, kmptmp}. 
\end{Remark}\vspace*{-0.25in}

\section{Globalization of Coderivative-Based Newton Method  in  Structured Nonconvex  Optimization}\label{sec:globalNMcompo}\vspace*{-0.05in}

In this section, we design a new generalized coderivative-based Newton method to solve \eqref{ncvopt} and justify its {\em fast global convergence}. With the Lipschitz constant $L_f$ from Assumption~\ref{Lipnablaf}, our {\em globalized coderivative-based Newton method} (GCNM) of solving \eqref{ncvopt} is formulated as follows.\vspace*{-0.1in}

\begin{algorithm}[H]
\caption{{\bf(coderivative-based Newtonian algorithm in nonconvex optimization)}}\label{NMcompositeglobal}
\begin{algorithmic}[1]
\Require {$x^0\in\R^n$,  {$\lambda \in (0,\text{\rm min}\{1/L_f,\lambda_g\})$, ${\sigma \in \left(0, \frac{\lambda(1-\lambda L_f)}{2(1+\lambda L_f)^2} \right)}$}, $\beta \in (0,1)$}
\For{\texttt{$k=0,1,\ldots$}}
\State\text{If $x^k \in \text{\rm Prox}_{\lambda g}(x^k-\lambda \nabla f(x^k))$, stop; otherwise  go to the next step}
\State  \text{Define  $\Hat{x}^k$ and $\Hat{v}^k$ by}
$$
\Hat{x}^k \in  \text{\rm Prox}_{\lambda g}(x^k -\lambda \nabla f(x^k))\;\mbox{ and }\;\Hat{v}^k:= \nabla f(\Hat{x}^k) -\nabla f(x^k) +\frac{1}{\lambda} (x^k   -\Hat{x}^k)
$$
\State {\bf Newton step:} Choose $d^k\in\R^n$ satisfying 
$$
-\Hat{v}^k - \nabla^2 f(\Hat{x}^k)d^k \in 
 \partial^2 g(\Hat{x}^k,\Hat{v}^k- \nabla f(\Hat{x}^k))(d^k)
$$
If such $d^k$ doesn't exist, select an arbitrary $d^k \in \R^n$
\State Set $\tau_k = 1$
\While{$\varphi_\lambda(\Hat{x}^k+\tau_kd^k)>\varphi_\lambda (x^k)-\sigma \left\|\Hat{v}^k\right\|^2 $}
\State\text{set $\tau_k:= \beta\tau_k$}
\EndWhile\label{dampedendwhile2}
\State \text{Set $x^{k+1}:=\Hat{x}^k+ \tau_k d^k$}
\EndFor
\end{algorithmic}
\end{algorithm}\vspace*{-0.15in}

Note that Algorithm~\ref{NMcompositeglobal} is a special case of 
Algorithm~\ref{FBEbasedLM} in which the direction $d^k$ is now chosen based on \textit{second-order information}. Under some appropriate assumptions, the unit stepsize is eventually accepted, and thus Algorithm~\ref{NMcompositeglobal} eventually becomes the generalized pure Newton algorithm from Algorithm~\ref{NMcompositelocal}, which ensures its Q-superlinear convergence. The following theorem explains the aforementioned observations in more detail. \vspace*{-0.05in} 

\begin{Theorem}[\bf Q-superlinear convergence of globalized coderivative-based damped Newton algorithm] \label{converNMglobal}  Given $\varphi:\R^n\to\overline{\R}$ in \eqref{ncvopt} under Assumption~{\rm\ref{Lipnablaf}}, suppose that $\inf \varphi>-\infty$.  If $\ox$ is an accumulation point of the sequence $\{x^k\}$ generated by Algorithm~{\rm\ref{NMcompositeglobal}}, then $\ox$ is a {$\lambda$-critical point of \eqref{ncvopt} for all  $\lambda \in (0,\text{\rm min}\{1/L_f,\lambda_g\})$}. Assuming further that the subgradient mapping $\partial g$ is semismooth$^*$ at $(\bar{x},-\nabla f(\ox))$ and the $\ox$ satisfies all the conditions in Assumption~{\rm \ref{assumonlimitpoint}}, choose in Algorithm~{\rm\ref{NMcompositeglobal}} the parameter $\lambda \in (0,1/r)$  such that $Q(\ox)= I-\lambda \nabla^2f(\ox)$ is 
positive-definite. Then Algorithm~{\rm\ref{NMcompositeglobal}} generates the sequences $\{x^k\}$, $\|\nabla\varphi_\lambda(\ox)\|$, and $\{\varphi_\lambda (x^k)\}$ converging respectively to $\ox$, $0$, and $\ph_\lambda(\ox)$ with Q-superlinear convergence rates. 
\end{Theorem}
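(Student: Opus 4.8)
The plan is to settle the accumulation-point claim directly and then bootstrap the local pure-Newton theory through the line search. Since Algorithm~\ref{NMcompositeglobal} is exactly the instance of Algorithm~\ref{FBEbasedLM} in which $d^k$ is produced by the generalized Newton step, Proposition~\ref{limitingNM} applies verbatim: with $\inf\varphi>-\infty$ it gives $\Hat v^k\to0$, $\Hat x^k-x^k\to0$, and that every accumulation point $\ox$ of $\{x^k\}$ is a $\lambda$-critical point for all admissible $\lambda$, which is precisely the first assertion. For the quantitative part I would first record the standing consequences of Assumption~\ref{assumonlimitpoint}: the limiting sum rule makes $\varphi$ continuously prox-regular at $\ox$ for $0$; tilt stability together with Proposition~\ref{equitiltstr} makes $\partial\varphi$ strongly metrically regular (hence metrically regular) around $(\ox,0)$; and, as in the proof of Theorem~\ref{localNMcomposite}, the semismoothness$^*$ of $\partial g$ at $(\ox,-\nabla f(\ox))$ upgrades to semismoothness$^*$ of $\partial\varphi=\nabla f+\partial g$ at $(\ox,0)$. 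These are exactly the hypotheses of Theorems~\ref{newsolNM} and~\ref{estiNMitercod}, and the $\lambda$-criticality from the first assertion lets us invoke Lemma~\ref{localstofFBE} and Remark~\ref{C11FBE} to get that $\varphi_\lambda$ is strongly convex and of class $\mathcal{C}^{1,1}$ around $\ox$ with $\nabla\varphi_\lambda(\ox)=0$.

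Second, I would prove that the whole sequence converges to $\ox$. Strong metric regularity forces $\ox$ to be the unique M-stationary point of $\varphi$ in a neighborhood, hence by Remark~\ref{crit->Fsta} the unique critical point there; combined with the first assertion this makes $\ox$ an isolated accumulation point of $\{x^k\}$. For $k$ large the Newton system is solvable (Theorem~\ref{newsolNM}), so $d^k$ is a genuine Newton direction, and the coderivative characterization of the metric regularity of $\partial\varphi$ applied to $-\Hat v^k\in\partial^2\varphi(\Hat x^k,\Hat v^k)(d^k)$ yields a uniform bound $\|d^k\|\le\zeta\|\Hat v^k\|$. This is exactly condition \eqref{dkvkrela}, so Proposition~\ref{abstractconvergence1} delivers $x^k\to\ox$.

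The hard part will be showing that the unit step $\tau_k=1$ is eventually accepted, since only then does the damped scheme collapse onto the pure Newton iteration. Here I would combine, for large $k$, three ingredients. First, the sharpened descent estimate from the proof of Lemma~\ref{linesearchwell-defined}, $\varphi_\lambda(x^k)\ge\varphi(\Hat x^k)+c_\sigma\|\Hat v^k\|^2$ with $c_\sigma=\tfrac{\lambda(1-\lambda L_f)}{2(1+\lambda L_f)^2}>\sigma$, together with $\varphi(\Hat x^k)\ge\varphi(\ox)=\varphi_\lambda(\ox)$ (the equality holding because $\ox$ is $\lambda$-critical). Second, a lower bound $\|\Hat v^k\|\ge c'\|x^k-\ox\|$ obtained by chaining the left inequality in \eqref{xkhatxkto0}, the strong convexity of $\varphi_\lambda$, and the gradient bound \eqref{gradFBEbounded}. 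Third, the superlinear contraction $\|\Hat x^k+d^k-\ox\|=o(\|x^k-\ox\|)$ furnished by Theorem~\ref{estiNMitercod} and Corollary~\ref{apstepncv}. The $\mathcal{C}^{1,1}$ inequality then gives $\varphi_\lambda(\Hat x^k+d^k)-\varphi_\lambda(\ox)\le\tfrac{\ell}{2}\|\Hat x^k+d^k-\ox\|^2=o(\|x^k-\ox\|^2)\le(c_\sigma-\sigma)\|\Hat v^k\|^2$ for $k$ large, and adding the first ingredient yields $\varphi_\lambda(\Hat x^k+d^k)\le\varphi_\lambda(x^k)-\sigma\|\Hat v^k\|^2$, so the while-loop exits at $\tau_k=1$. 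The decisive point is that $c_\sigma-\sigma>0$ is guaranteed by the admissible range of $\sigma$, so no smallness of $\sigma$ relative to unknown problem constants is required.

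Finally, once $\tau_k=1$ for all large $k$, the iteration is the pure step $x^{k+1}=\Hat x^k+d^k$, and Theorem~\ref{estiNMitercod} with Corollary~\ref{apstepncv} gives $\|x^{k+1}-\ox\|\le\epsilon\eta\|x^k-\ox\|$ for every $\epsilon>0$, i.e. the $Q$-superlinear convergence of $\{x^k\}$. The remaining two rates follow from the quadratic sandwich around $\ox$: strong convexity and the $\mathcal{C}^{1,1}$ property of $\varphi_\lambda$ give $\tfrac{\kappa}{2}\|x-\ox\|^2\le\varphi_\lambda(x)-\varphi_\lambda(\ox)\le\tfrac{\ell}{2}\|x-\ox\|^2$ and $\kappa\|x-\ox\|\le\|\nabla\varphi_\lambda(x)\|\le\ell\|x-\ox\|$ near $\ox$, so the successive ratios of $\{\varphi_\lambda(x^k)-\varphi_\lambda(\ox)\}$ and $\{\|\nabla\varphi_\lambda(x^k)\|\}$ are controlled by powers of $\|x^{k+1}-\ox\|/\|x^k-\ox\|\to0$, yielding the claimed $Q$-superlinear rates for all three sequences.
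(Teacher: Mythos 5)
Your proposal is correct and follows essentially the same architecture as the paper's proof: accumulation points are $\lambda$-critical via Proposition~\ref{limitingNM}; tilt stability yields strong metric regularity, an isolated accumulation point, solvability of the Newton system, and the bound $\|d^k\|=O(\|\Hat{v}^k\|)$ feeding Proposition~\ref{abstractconvergence1}; eventual acceptance of $\tau_k=1$ via Theorem~\ref{estiNMitercod} and the FBE descent estimates; and the final rates from Theorem~\ref{localNMcomposite} plus the quadratic sandwich for $\varphi_\lambda$. The only (harmless) deviations are that you pivot the unit-stepsize estimate around $\varphi_\lambda(\ox)=\varphi(\ox)\le\varphi(\Hat{x}^k)$ using local minimality of $\ox$, whereas the paper bounds $\varphi_\lambda(\Hat{x}^k+d^k)-\varphi_\lambda(\Hat{x}^k)$ directly through strong convexity and Lipschitz continuity of $\nabla\varphi_\lambda$, and that you obtain $\|d^k\|\lesssim\|\Hat{v}^k\|$ from the coderivative norm estimate for metric regularity rather than from the positive-definiteness inequality \eqref{varstrong2nd} supplied by tilt stability.
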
\vspace*{-0.15in}
\begin{proof} 
It follows from Proposition~\ref{limitingNM} that Algorithm~\ref{NMcompositeglobal}  produces sequences $\{x^k\}$, $\{\Hat{x}^k\}$, and $\{\Hat{v}^k\}$ such that both $\{\Hat{v}^k\}$ and $\{\Hat{x}^k-x^k\}$ converge to $0$ as $k \to \infty$, and that $\ox$ is a {$\lambda$-critical point of \eqref{ncvopt} for all  $\lambda \in (0,\text{\rm min}\{1/L_f,\lambda_g\})$}. Let us verify the Q-superlinear convergence of $\{x^k\}$ and $\{\varphi_\lambda(x^k)\}$ under all the conditions in 
Assumption~\ref{assumonlimitpoint} and the semismoothness$^*$ of $\partial g$ at $(\ox,-\nabla f(\ox))$. To proceed, we split the proof into the three claims.\\[0.5ex]
{\bf Claim~1:} {\it Both sequences $\{x^k\}$ and $\{\Hat{x}^k\}$ converge  to $\ox$, and  the sequence $\{d^k\}$ converges to $0$ as $k\to\infty$.} To furnish this, we first show that $\ox$ is an isolated accumulation point of $\{x^k\}$. Indeed, since $\ox$ is a tilt-stable local minimizer of $\varphi$, it follows from  {Proposition \ref{equitiltstr}} that $\partial\varphi$ is strongly metrically regular around $(\ox,0)$. Thus there exists a neighborhood $U$ of $\ox$ such that $\partial\varphi^{-1}(0)\cap U=\{\ox\}$. Suppose that there is an accumulation point $\Hat{x}$ of the sequence $\{x^k\}$ for which $\Hat{x}\in U$ and $\Hat{x}\ne \ox$. Proposition~\ref{limitingNM} ensures that $\Hat{x}$ is a {$\lambda$-critical point of \eqref{ncvopt} for all  $\lambda \in (0,\text{\rm min}\{1/L_f,\lambda_g\})$}, and hence $0 \in \partial \varphi(\Hat{x})$. Then we get that $\Hat{x}\in\partial\ph^{-1}(0)\cap U$ and thus $\Hat x=\ox$, a contradiction meaning that $\ox$ is an isolated accumulation point of $\{x^k\}$. As follows from  Proposition~\ref{solvabilityMOR}, the strong metric regularity of $\partial\varphi$ around $(\ox,0)$ yields the existence of a neighborhood $W$ of $(\ox,0)$ such that for each $(x,v)\in\gph \partial\varphi \cap W$ we find $d\in \R^n$ with
\begin{equation}\label{solv2}
-v\in \partial^2\varphi(x,v)(d) = \nabla^2 f(x)d + \partial^2 g(x, v- \nabla f(x))(d),
\end{equation} 
where the equality follows from the second-order subdifferential sum rule in \cite[Proposition~1.121]{Mordukhovich06}. This allows us to employ Theorem~\ref{abstractconvergencelinear} for clarifying the convergence of $\{x^k\}$ to $\ox$. Picking now an arbitrary subsequence $\{x^k\}_{k\in J}$ of $\{x^k\}$ converging to $\ox$, we intend to show that 
\begin{equation}\label{csvkdk}
\|d^k\| \leq \kappa\|\Hat{v}^k\|\;\text{ for all large }\;k \in J, 
\end{equation} 
where $\kappa > 0$ is a modulus of tilt stability of $\ox$.  Since $f$ is $\mathcal{C}^2$-smooth  around $\ox$ and $g$ is continuously prox-regular at $\ox$ for $-\nabla f(\ox)$, it follows that $\varphi$ is continuously prox-regular at $\ox$ for $0$. 
 {\cite[Corollary 5.7]{kmp22convex} tells us that the  tilt stability of $\ox$ with modulus $\kappa$ guarantees the existence of a positive number $\delta$ such that}  
\begin{equation}\label{varstrong2nd} 
\langle z, w\rangle \geq \kappa^{-1} \|w\|^2\;\text{ for all }\; z \in \partial^2\varphi(x,v)(w), \; (x,v)\in \gph\partial\varphi\cap \mathbb{B}_\delta(\ox,0),\; w \in \R^n.
\end{equation} 
Since $\{x^k -\Hat{x}^k\}$ converges to $0$, we have that  $\Hat{x}^k \overset{J}{\rightarrow} \ox$ as $k \to \infty$. Combining the latter with \eqref{solv2} gives us  directions $d^k\in \R^n$ satisfying the second-order inclusions
$$
-\Hat{v}^k \in \partial^2\varphi(\Hat{x}^k,\Hat{v}^k)(d^k) =  
\nabla^2 f(\Hat{x}^k)d^k + \partial^2 g(\Hat{x}^k,\Hat{v}^k- \nabla f(\Hat{x}^k)(d^k)\;\mbox{ for large }\;k\in J.
$$ 
Moreover, the inclusions $-\Hat{v}^k \in \partial^2\varphi(\Hat{x}^k,\Hat{v}^k)(d^k)$ for all $k \in J$ with the convergence $\Hat{x}^k \overset{J}{\rightarrow}  \ox$ and $\Hat{v}^k \overset{J}{\rightarrow}0$ lead us by \eqref{varstrong2nd} to $\langle -\Hat{v}^k, d^k\rangle \geq \kappa^{-1} \|d^k\|^2$ for large $k \in J$. Then the Cauchy-Schwarz inequality yields \eqref{csvkdk}, and thus $x^k\to\ox$ as $k \to\infty$ due to Theorem~\ref{abstractconvergencelinear}.  Combining this with \eqref{csvkdk} and the convergence $\Hat{v}^k\to 0$, $\Hat{x}^k-x^k\to 0$ as $k \to \infty$ tells us that $d^k \to 0$ and $\Hat{x}^k\to \ox$ as $k\to\infty$, which verifies all the conclusions of this claim.\\[0.5ex] 
{\bf Claim~2:} {\it Whenever $k\in\N$ is large enough, we get the unit stepsize $\tau_k=1$.} Indeed, the {$r$-level prox-regularity} of $g$ at $\ox$ for $-\nabla f(\ox)$ and Proposition~\ref{singlevalueproxmap} tell us that {for a fixed parameter $\lambda\in (0,1/r)$ such that $Q(\ox)= I-\lambda \nabla^2f(\ox)$ is 
positive-definite, the mapping $x \mapsto \text{\rm Prox}_{\lambda g}(x-\lambda \nabla f(x))$ is single-valued around $\ox$.} Due to Remark~\ref{C11FBE}, the FBE $\varphi_\lambda$ is of class $\mathcal{C}^{1,1}$ around $\ox$, and  the $\lambda$-criticality of $\ox$ yields $\nabla \varphi_\lambda(\ox)=0$.  By Lemma~\ref{localstofFBE}, the tilt stability of $\ox$ implies that $\varphi_\lambda$ is strongly convex around $\ox$. Thus we find  $\mu>0$ with
\begin{equation}\label{strongineqVC}
\varphi_\lambda (x) \geq \varphi_\lambda (u) + \langle \nabla \varphi_\lambda(u), x- u\rangle +  \frac{\mu}{2}\|x-u\|^2,
\end{equation}
\begin{equation}\label{strongineqVC2}
\langle \nabla\varphi_\lambda(x) - \nabla\varphi_\lambda(u), x-u\rangle \geq \mu\|x-u\|^2 
\end{equation}
for all $x, u$ near $\ox$. The Lipschitz continuity of $\nabla \varphi_\lambda$ around $\ox$ with modulus $\ell$ implies that 
\begin{equation}\label{lipnablaFBE}
\varphi_\lambda(x)\le\varphi_\lambda (u)+ \langle \nabla \varphi_\lambda (u), x- u\rangle +\frac{\ell}{2}\|x-u\|^2,
\end{equation}
\begin{equation}\label{lipnablaFBE2}
\|\nabla\varphi_\lambda(x) -\nabla\varphi_\lambda(u)\|\leq \ell \|x-u\|
\end{equation}
for all $x, u$ near $\ox$. By Claim 1, we have $\Hat{x}^k +d^k \to \ox$ and $\Hat{x}^k \to \ox$ as $k \to \infty$. Combining this with \eqref{csvkdk}, \eqref{strongineqVC}, and \eqref{lipnablaFBE2}  brings us to the conditions
\begin{eqnarray}\label{estimatesuplinear}
\begin{array}{ll}
\disp\varphi_\lambda (\Hat{x}^k+d^k)-\varphi_\lambda (\Hat{x}^k) \disp\le\langle\nabla\varphi_\lambda (\Hat{x}^k+d^k),d^k\rangle-\frac{\mu}{2}\|d^k\|^2\\
\disp\le\|\nabla\varphi_\lambda(\Hat{x}^k+d^k)\|\cdot\|d^k\|
\disp\le\|\nabla\varphi_\lambda(\Hat{x}^k+d^k)- \nabla \varphi_\lambda (\ox) \|\cdot\|d^k\|\\
\disp\le\frac{\ell}{\kappa} \|\Hat{x}^{k}+d^k-\bar{x}\|\cdot\|\Hat{v}^k\|\;\mbox{ for large }\;k\in\N.
\end{array}
\end{eqnarray} 
It follows from \cite[Proposition~3.6] {Helmut} that the semismoothness$^*$ of $\partial g$ at $(\ox,-\nabla f(\ox))$ yields the semismoothness$^*$ of $\partial\varphi$ at $(\ox,0)$.  By  {Proposition \ref{equitiltstr}}, the tilt stability of
$\ox$ implies that the subgradient mapping $\partial\varphi$ is strongly metrically regular around $(\ox,0)$. Combining the latter statements and using Theorem~\ref{estiNMitercod}, we have 
\begin{equation} \label{xkvkdksuperlinear}
\|\Hat{x}^k + d^k -\ox\| = o(\|(\Hat{x}^k,\Hat{v}^k)-(\ox,0)\|)\;\text{ as }\;k \to \infty 
\end{equation} 
by $\Hat{x}^k \to \ox$, $\Hat{v}^k \to 0$, and $-\Hat{v}^k \in \partial^2\varphi(\Hat{x}^k,\Hat{v}^k)(d^k)$. 
 {Combining this with \eqref{varstrong2nd}, we deduce that $\|\Hat{v}^k\| \geq \kappa^{-1}\|d^k\|$ for large $k$. Moreover, by using $\|d^k\| \geq \|\Hat{x}^k - \ox\| - \|\Hat{x}^k + d^k - \ox\|$ and \eqref{xkvkdksuperlinear}, this  leads to $\|\Hat{x}^k - \ox\| = \mathcal{O}(\|\Hat{v}^k\|)$ and}
\begin{equation}\label{supervk} 
\|\Hat{x}^k + d^k -\ox\| = o(\|\Hat{v}^k\|)\;\text{ as }\;k\to\infty. 
\end{equation} 
Unifying further \eqref{estimatesuplinear} and \eqref{supervk} results in
\begin{equation}\label{limsupphilambda}
\limsup_{k\to \infty} \frac{\varphi_\lambda(\Hat{x}^k +d^k)-\varphi_\lambda(\Hat{x}^k)}{\|\Hat{v}^k\|^2} \leq 0. 
\end{equation}
Choose $\epsilon:=\frac{\lambda(1-\lambda L_f)}{2(1+\lambda L_f)^2} - \sigma >0$ and deduce from \eqref{limsupphilambda} the existence of $k_0 \in \N$ with
\begin{equation}\label{FBEvhat} 
\varphi_\lambda(\Hat{x}^k + d^k) -\varphi_\lambda (\Hat{x}^k) \leq \epsilon\|\Hat{v}^k\|^2\;\text{ for all }\;k\geq k_0.  
\end{equation} 
Furthermore, it follows from \eqref{xkhatxkto0} that
\begin{equation} \label{estiFnorFnat:1}
\|\Hat{v}^k\| \leq \Big(\frac{1}{\lambda}+L_f\Big) \|\Hat{x}^k - x^k\|.
\end{equation} 
By \eqref{FBE=hatxexpress} and the Lipschitz continuity of $\nabla f$, we deduce from \cite[Lemma A.11]{Solo14} that 
\begin{align}\label{estFBE<}
\varphi_\lambda (x^k)  & = f(x^k) +\langle \nabla f(x^k),\Hat{x}^k-x^k\rangle + g(\Hat{x}^k) + \frac{1}{2\lambda}\|\Hat{x}^k-x^k\|^2\nonumber\\
& \geq f(\Hat{x}^k) -\frac{L_f}{2}\|\Hat{x}^k-x^k\|^2+g(\Hat{x}^k) + \frac{1}{2\lambda}\|\Hat{x}^k-x^k\|^2=\varphi(\Hat{x}^k) + \frac{1-\lambda L_f}{2\lambda}\|\Hat{x}^k-x^k\|^2.
\end{align}
Combining finally \eqref{FBEvhat}, \eqref{estiFnorFnat:1}, and \eqref{estFBE<} brings us to the estimates
\begin{align*}
&\varphi_\lambda(\Hat{x}^k +d^k) \leq \varphi_\lambda (\Hat{x}^k) +\epsilon\|\Hat{v}^k\|^2\leq \varphi(\Hat{x}^k) +\epsilon\|\Hat{v}^k\|^2 \le\varphi_\lambda (x^k) + \epsilon\|\Hat{v}^k\|^2  -\frac{1-\lambda L_f}{2\lambda}\|x^k - \Hat{x}^k\|^2\\
&\leq \varphi_\lambda (x^k) + \epsilon\|\Hat{v}^k\|^2  - \frac{\lambda(1-\lambda L_f)}{2(1+\lambda L_f)^2} \|\Hat{v}^k\|^2 = \varphi_\lambda(x^k) - \sigma \|\Hat{v}^k\|^2
\end{align*}
for all $k \geq k_0$, which tells us that $\tau_k =1$ whenever $k\geq k_0$ and thus completes the proof of the claim.\\[0.5ex]
{\bf Claim 3:} {\it The convergence rates of $\{x^k\}$, $\{\|\nabla\varphi_\lambda(x^k)\|\}$ and $\{\varphi_\lambda(x^k)\}$ are at least Q-superlinear.} Indeed, it follows from Claim~2 that the iterative sequence $\{x^k\}$ generated by Algorithm~\ref{NMcompositeglobal} eventually becomes the one $\{x^k\}$ generated by Algorithm~\ref{NMcompositelocal}. Hence the claimed $Q$-superlinear convergence of $\{x^k\}$ follows from Theorem~\ref{localNMcomposite}.

Next we verify the  $Q$-superlinear convergence of $\{\|\nabla\varphi_\lambda(x^k)\|\}$ and $\{\varphi_\lambda(x^k)\}$. To furnish this for the sequence $\{\varphi_\lambda(x^k)\}$, deduce from \eqref{strongineqVC} and $\nabla\varphi_\lambda(\ox)=0$ that  
\begin{equation*}
\varphi_\lambda(x^k)-\varphi_\lambda(\ox)\ge\frac{\mu}{2}\|x^k-\ox\|^2\;\mbox{ for large }\;k\in\N. 
\end{equation*} 
Furthermore, we get from \eqref{lipnablaFBE} whenever $k\in\N$ is large that \begin{equation*}
0< \varphi_\lambda(x^{k+1})-\varphi_\lambda(\ox)\le\frac{\ell}{2}\|x^{k+1}-\ox\|^2, 
\end{equation*} 
and thus combining the two estimates above gives us
\begin{equation*}
\frac{|\varphi_\lambda(x^{k+1})-\varphi_\lambda(\ox)|}{|\varphi_\lambda(x^k)-\varphi_\lambda(\ox)|}\le\frac{\ell}{\mu}\frac{\|x^{k+1}-\ox\|^2}{\|x^k-\ox\|^2}.
\end{equation*} 
This justifies the claimed Q-superlinear convergence of $\{\ph_\lambda(\ox)\}$ by the one established for	$\{x^k\}$.

To finish the proof of the theorem, it remains to show that the sequence $\{\nabla\varphi_\lambda(x^k)\}$ converges to $0$ with the Q-superlinear rate. Indeed, it follows from \eqref{lipnablaFBE2} that
\begin{equation*}
\|\nabla\varphi_\lambda(x^{k+1})\|=\|\nabla\varphi_\lambda(x^{k+1})-\nabla\varphi_\lambda(\ox)\|\le\ell\|x^{k+1}-\ox\|\;\mbox{ for large }\;k\in\N. \end{equation*} 
Therefore, for such $k$ we get from \eqref{strongineqVC2} that
\begin{equation*}
\langle\nabla\varphi_\lambda(x^k)-\nabla\varphi_\lambda(\ox),x^k-\ox\rangle\ge\mu\|x^k-\bar{x}\|^2,
\end{equation*} 
which implies in turn that $\|\nabla\varphi_\lambda(x^k)\|\ge\kappa\|x^k-\ox\|$ and brings us to the estimate
\begin{equation*}
\frac{\|\nabla\varphi_\lambda(x^{k+1})\|}{\|\nabla\varphi_\lambda(x^k)\|}\le\frac{\ell}{\mu}\frac{\|x^{k+1}-\ox\|}{\|x^k-\ox\|}.
\end{equation*} 
This shows that the gradient sequence
$\{\nabla\ph_\lambda(x^k)\}$ converges to 0 with the  Q-superlinear rate $k\to\infty$ due to the established convergence of $x^k\to\ox$, and thus we are done with the proof. 
\end{proof}\vspace*{-0.17in}

\begin{Remark}[\bf on the modulus of prox-regularity] \rm The convergence analysis in Theorem \ref{converNMglobal} guarantees the fast local convergence provided that the parameter $\lambda$ lies in the interval $0 < \lambda < r^{-1}$, where $r$ is the modulus of prox-regularity of $g$ at the limiting point $\ox$. In principle, this condition requires the explicit knowledge of $\ox$. Although such dependence on local and often unknown constants is somewhat uncommon in globalized numerical methods, in many important cases the value of $r$ can be determined independently of $\ox$. For example, when the regularizer $g$ is {\em $r$-weakly convex}, as discussed in \cite{bw21} for several important regularizers in machine learning models, the weak convexity modulus also serves as the modulus of prox-regularity at every point. Similarly, when $g$ is the $\ell_0$-norm, which is {\em variationally convex} at every point as shown in Theorem~\ref{varconsubl0lemma} below, the modulus of prox-regularity is $r = 0$. In this case, $\lambda$ can be chosen as any positive value.
\end{Remark}\vspace*{-0.12in}

\begin{Remark}[\bf comparison with other globalized coderivative-based Newton methods]\label{rem:comar}  \rm Several globally convergent coderivative-based Newton methods were developed in \cite{kmptjogo, kmptmp} to solve \eqref{ncvopt} in the case where $f$ is a convex quadratic function and $g$ is l.s.c.\ and convex. Besides, the generalized damped Newton methods in \cite{kmptjogo, kmptmp}  require the positive-definiteness of $\partial^2\varphi$ on the whole space, which is a restrictive assumption for many practical models. In contrast, our new coderivative-based Newton method in Algorithm~\ref{NMcompositeglobal} doesn't assume that $f$ is convex quadratic and $g$ is convex and doesn't impose the global assumption on $\partial^2\varphi$. Recall to this end that the tilt stability requirement in Assumption~\ref{assumonlimitpoint} is equivalent for continuously prox-regular functions to the positive-definiteness of the generalized Hessian only {\em at one} solution point, not on the entire space.
\end{Remark}\vspace*{-0.25in}
 
\section{Applications and Numerical Experiments}\label{sec:numerical}\vspace*{-0.05in}

This section presents explicit implementations of our newly developed generalized coderivative-based Newton method (abbr.\ GCNM) outlined in Algorithm~\ref{NMcompositeglobal} for nonsmooth, nonconvex optimization models of type \eqref{ncvopt}. We also demonstrate its competitiveness against several state-of-the-art methods. All numerical experiments were performed on a desktop equipped with a 12th Gen Intel(R) Core(TM) i5-1235U processor (10 cores, 12M Cache, up to 4.4 GHz) and 16GB of memory. The code was written in Python and executed on Google Colab.\vspace*{-0.12in} 

\subsection{Nonconvex Least Square Regression Models}\label{sec:appl0l2}\vspace*{-0.05in}

The $\ell_0$-$\ell_2$ regularized least squares regression problem, whose significance has been well recognized in applications to practical models in statistics, machine learning, etc., has been widely studied; see, e.g., \cite{hama20} and the references therein. This problem is formulated as follows:   Given an $m\times n$ matrix $A$ and a vector $b\in\R^m$, the {\em $\ell_0$-$\ell_2$ regularized least square} problem is
\begin{equation}\label{l0}
\text{minimize } \quad \varphi(x):=  \frac{1}{2}\|Ax-b\|^2 + \mu_0 \|x\|_0+ \mu_2\|x\|^2  \quad 
\text{subject to} \quad x \in \R^n,
\end{equation}
where  {$\mu_0 > 0$, $\mu_2 \geq 0$,} and $\|x\|_0$ is the  $\ell_0$-norm of $x$ counting the number of nonzero elements of $x$, i.e.,
\begin{equation}\label{Il0}
\|x\|_0: = \sum_{i=1}^n I(x_i)\;\text{ for }\; x=(x_1,\ldots,x_n)\in \R^n\;\mbox{ with }\;I(t):= \begin{cases}
1&\text{if} \quad t \ne 0,\\
0&\text{otherwise}.
\end{cases}
\end{equation}
Problem \eqref{l0} is nonconvex and nonsmooth, and it is a particular case of model \eqref{ncvopt} with the data
\begin{equation}\label{representl0fg} 
f(x):= \frac{1}{2}\|Ax-b\|^2+ \mu_2 \|x\|^2\;\text{ and }\; g(x):= \mu_0\|x\|_0. 
\end{equation} 
We directly calculate the gradient and Hessian of $f$ by 
\begin{equation}\label{nablafboth}
\nabla f(x) = A^*(Ax-b) + 2\mu_2 x\quad \text{and }\; \nabla^2 f(x) = A^*A +2\mu_2 I, \quad x \in \R^n. 
\end{equation}
It follows from  \cite[Page~121]{abs13} that  the proximal mapping of $g(x)=\mu_0\|x\|_0$ is calculated by 
\begin{equation}\label{proxofgl0}
\big(\text{\rm Prox}_{\lambda g}(x)\big)_i=\begin{cases}
x_i &\text{if}\quad |x_i|> \sqrt{2\lambda\mu_0},\\
\{0,x_i\}&\text{if}\quad |x_i|=\sqrt{2\lambda\mu_0},\\
0&\text{if}\quad |x_i|< \sqrt{2\lambda\mu_0}.
\end{cases}
\end{equation}
To run Algorithm~\ref{NMcompositeglobal}, we need to explicitly determine the direction sequence $\{d^k\}$ in the Newton step, which requires the formula for the second-order subdifferential of $g(x)=\mu_0\|x\|_0$. This is done as follows.\vspace*{-0.05in}
 
\begin{Proposition}[\bf second-order subdifferentials of the $\ell_0$-norm]\label{2ndgraofl0} The limiting subdifferential of the function $g(x)=\mu_0\|x\|_0$ at any $x\in\R^n$ is calculated by 
\begin{equation}\label{1stsubofl0} 
\partial g(x)=\left\{v\in\R^n\;\bigg|\;
\begin{array}{@{}cc@{}}
v_i=0,\;x_i\ne 0,\\
v_i\in \R,\;x_i=0
\end{array}\right\} \quad\text{whenever }\; x\in\R^n.
\end{equation} 
The second-order subdifferential  of $g$ is represented in the form
\begin{equation}\label{2ndsubl0}
\partial^2 g(x,y)(v) = \big\{w\in\R^n\;\big|\;\left(w_i ,-v_i\right)\in F\left(x_i,y_i \right),\;i=1,\ldots,n\big\}
\end{equation}
for each $(x,y)\in\gph \partial g$ and $v=(v_1,\ldots,v_n)\in\R^n$,
where the mapping $F\colon\R^2\tto\R^2$ is defined by
$$
F(t,p):= \begin{cases}
\{0\} \times \R &\text{if }\; t \ne 0, p =0,\\
(\{0\}\times \R)\cup (\R \times \{0\}) &\text{if }\; t=0,p=0,\\
\R \times \{0\} &\text{if }\; t=0, p\ne 0,\\
\emptyset &\text{otherwise}. 
\end{cases}
$$
\end{Proposition}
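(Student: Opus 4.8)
The plan is to exploit the \emph{separable} structure of the $\ell_0$-norm and reduce everything to a one-dimensional computation. Writing $g(x)=\sum_{i=1}^n\phi(x_i)$ with the univariate function $\phi(t):=\mu_0 I(t)$, which equals $0$ at $t=0$ and $\mu_0$ for $t\ne 0$, I would first compute $\partial\phi$ directly from the definitions. For $t\ne 0$ the function $\phi$ is locally constant, so $\widehat{\partial}\phi(t)=\partial\phi(t)=\{0\}$. At $t=0$ the difference quotient $(\phi(t)-\phi(0)-vt)/|t|=(\mu_0-vt)/|t|\to+\infty$ as $t\to 0$ for every $v\in\R$, whence $\widehat{\partial}\phi(0)=\R$ and therefore $\partial\phi(0)=\R$ as well (the regular subdifferential is already closed). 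Assembling these coordinatewise via the separable sum rule for limiting subdifferentials (see \cite{Mordukhovich06,Rockafellar98}) yields \eqref{1stsubofl0}.

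For the second-order part, I would first record the product description $\gph\partial g=\{(x,y)\mid (x_i,y_i)\in\gph\partial\phi,\ i=1,\dots,n\}$, which under the orthogonal coordinate regrouping $(x,y)\mapsto((x_1,y_1),\dots,(x_n,y_n))$ identifies $\gph\partial g$ with the Cartesian product $(\gph\partial\phi)^n$. Applying the product rule $N_{\prod\Omega_i}=\prod N_{\Omega_i}$ for the limiting normal cone then reduces the coderivative to a coordinatewise statement: $(w,-v)\in N_{\gph\partial g}(x,y)$ if and only if $(w_i,-v_i)\in N_{\gph\partial\phi}(x_i,y_i)$ for every $i$. Hence it suffices to show that $N_{\gph\partial\phi}(t,p)=F(t,p)$ in the four cases defining $F$. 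Here the univariate graph is the \emph{cross} $C:=\gph\partial\phi=(\R\times\{0\})\cup(\{0\}\times\R)$. On the punctured horizontal axis ($t\ne0,\,p=0$) the set $C$ is locally the line $\R\times\{0\}$, so $N_C(t,0)=\{0\}\times\R$; on the punctured vertical axis ($t=0,\,p\ne0$) it is locally $\{0\}\times\R$, so $N_C(0,p)=\R\times\{0\}$; and for $(t,p)\notin C$ (the case $t\ne0,\,p\ne0$) the normal cone is empty by convention. These three cases match the first, third, and fourth branches of $F$.

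The main obstacle is the crossing point $(0,0)$, where $C$ fails to be a manifold. There the tangent cone is all of $C$, so the \emph{regular} normal cone is the polar $\widehat{N}_C(0,0)=C^\ast=\{(0,0)\}$, which alone does not reveal the answer. The correct object is the \emph{limiting} normal cone, obtained as the outer limit of regular normals at nearby points of $C$: approaching along the horizontal axis contributes $\{0\}\times\R$ and along the vertical axis contributes $\R\times\{0\}$, giving $N_C(0,0)=(\{0\}\times\R)\cup(\R\times\{0\})$, exactly the second branch of $F$. Feeding the four normal-cone computations back through the coordinatewise reduction and reading off $\partial^2 g(x,y)(v)=\{w\mid (w,-v)\in N_{\gph\partial g}(x,y)\}$ then produces \eqref{2ndsubl0}, completing the argument.
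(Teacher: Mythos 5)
Your proposal is correct and follows essentially the same route as the paper: reduce by separability to the univariate function, compute its limiting subdifferential and the limiting normal cone to the graph of that subdifferential (the cross $(\R\times\{0\})\cup(\{0\}\times\R)$) in the same four cases, including the union of the two axes at the origin, and reassemble coordinatewise. The only inessential difference is that you derive the coordinatewise splitting by hand from the product rule for limiting normal cones, whereas the paper invokes the ready-made separable second-order subdifferential formula of Mordukhovich--Outrata (and works with $I$ plus a $\mu_0$-rescaling rather than directly with $\mu_0 I$, which changes nothing since all the sets involved are cones).
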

\begin{proof}
Let $I:\R\to\R$ {be given as in}   \eqref{Il0}. By direct computations, it is not hard to get 
$$
\partial I(t)= \begin{cases}
\{0\} &\text{if }\; t \ne 0,\\
\R &\text{if }\; t =0,
\end{cases}
$$
$$
N_{{\rm \text{\rm gph}}\,\partial I}(t, p) =  \begin{cases}
\{0\} \times \R &\text{if }\; t \ne 0, p =0,\\
(\{0\}\times \R)\cup (\R \times \{0\}) &\text{if }\; t=0,p=0,\\
\R \times \{0\} &\text{if }\; t=0, p\ne 0, 
\end{cases}
$$
which clearly imply the formulas
$$
\partial g(x) = \partial (\mu_0 I)(x_1)\times \ldots \times \partial (\mu_0 I)(x_n) = \left\{v\in\R^n\;\bigg|\;
\begin{array}{@{}cc@{}}
v_i=0,\;x_i\ne 0,\\
v_i\in \R,\;x_i=0
\end{array}\right\}.
$$
This allows us to deduce from \cite[Theorem~4.3]{BorisOutrata} the representation
\begin{equation*}
\partial^2 g(x,y)(v) = \Big\{w\in\R^n\;\Big|\;\Big(\frac{1}{\mu_0}w_i,-v_i\Big)\in N_{{\rm \text{\rm gph}}\,\partial I}\Big(x_i,\frac{1}{\mu_0} y_i\Big),\;i=1,\ldots,n\Big\}
\end{equation*}
verifying \eqref{2ndsubl0} and thus completing the proof of the proposition. 
\end{proof} \vspace*{-0.05in}

As a consequence of the above calculations, we get the following corollary ensuring the positive-semidefiniteness of the cost function in \eqref{l0}.\vspace*{-0.05in}

\begin{Corollary}[\bf positive-semidefiniteness of the cost in $\ell_0$-$\ell_2$ model] \label{PDofl0} Let $\varphi:\R^n\to\R$ {be given as in}  \eqref{l0}.  {Then $\partial^2\varphi(x,y)$ is positive-semidefinite with modulus $\kappa:=\lambda_{\text{\rm min}} (A^*A)+2\mu_2 \geq 0$ for all $(x,y) \in \gph \partial\varphi$,} i.e., 
$$
\langle z, w\rangle \geq \kappa\|w\|^2\;\text{ whenever }\;z \in \partial^2\varphi(x,y)(w), \; (x,y)\in \gph \partial\varphi, \; w \in \R^n.
$$
\end{Corollary}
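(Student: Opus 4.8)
The plan is to exploit the additive structure $\varphi=f+g$ together with the second-order sum rule from \cite[Proposition~1.121]{Mordukhovich06}, reducing the claimed quadratic estimate to a componentwise analysis of the graphical data of $g=\mu_0\|\cdot\|_0$ already computed in Proposition~\ref{2ndgraofl0}. Since $f$ is $\mathcal{C}^2$-smooth with $\nabla^2 f(x)=A^*A+2\mu_2 I$ by \eqref{nablafboth}, the sum rule gives, for every $(x,y)\in\gph\partial\varphi$ and $w\in\R^n$,
$$
\partial^2\varphi(x,y)(w)=\nabla^2 f(x)w+\partial^2 g\big(x,y-\nabla f(x)\big)(w),
$$
so that any $z\in\partial^2\varphi(x,y)(w)$ splits as $z=(A^*A+2\mu_2 I)w+u$ with $u\in\partial^2 g(x,y-\nabla f(x))(w)$.

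First I would record that $y-\nabla f(x)\in\partial g(x)$, because $y\in\partial\varphi(x)=\nabla f(x)+\partial g(x)$ by the limiting subdifferential sum rule, so that $(x,y-\nabla f(x))\in\gph\partial g$ and formula \eqref{2ndsubl0} applies. Writing $y':=y-\nabla f(x)$, the membership $u\in\partial^2 g(x,y')(w)$ means $(u_i,-w_i)\in F(x_i,y'_i)$ for each index $i$. The crucial step is to show that the cross term vanishes, namely $\langle u,w\rangle=\sum_i u_iw_i=0$. This I would verify componentwise: since $y'\in\partial g(x)$, the description \eqref{1stsubofl0} forces $y'_i=0$ whenever $x_i\neq 0$, which rules out the empty (fourth) alternative in $F$; in each of the three remaining branches of $F(x_i,y'_i)$ at least one of $u_i$ and $w_i$ is pinned to $0$ (namely $u_i=0$ when $x_i\neq 0,\ y'_i=0$; $w_i=0$ when $x_i=0,\ y'_i\neq 0$; and either $u_i=0$ or $w_i=0$ when $x_i=y'_i=0$), so every product $u_iw_i$ equals $0$.

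With the cross term eliminated, the estimate reduces to the smooth part: $\langle z,w\rangle=\langle(A^*A+2\mu_2 I)w,w\rangle=\|Aw\|^2+2\mu_2\|w\|^2\ge\big(\lambda_{\text{\rm min}}(A^*A)+2\mu_2\big)\|w\|^2=\kappa\|w\|^2$, using that the Gram matrix $A^*A$ is symmetric positive-semidefinite so $\lambda_{\text{\rm min}}(A^*A)\ge 0$, and $\mu_2\ge 0$ then yields $\kappa\ge 0$. I expect the only delicate point to be the componentwise bookkeeping establishing $\langle u,w\rangle=0$ rather than merely $\langle u,w\rangle\ge 0$; everything else is a direct consequence of the sum rule and the spectral bound on $A^*A$. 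The rigid structure of $F$ makes this clean, since in each admissible branch at least one factor of every product $u_iw_i$ is forced to vanish.
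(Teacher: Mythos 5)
Your proposal is correct and follows essentially the same route as the paper: decompose via the second-order sum rule from \cite[Proposition~1.121]{Mordukhovich06}, observe that the $\ell_0$-part contributes nonnegatively (you in fact show the sharper fact that the cross term $\langle u,w\rangle$ vanishes identically, where the paper only invokes positive-semidefiniteness of $\partial^2 g$), and bound the smooth part by $\kappa$-convexity of $f$. The extra componentwise bookkeeping with the branches of $F$ is a correct, slightly more explicit version of what the paper leaves as "follows directly from Proposition~\ref{2ndgraofl0}."
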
 
\begin{proof} It follows directly from Proposition~\ref{2ndgraofl0} that $\partial^2 g(x,v)$  is positive-semidefinite for all $(x,v)\in \gph \partial g$. Since $f$ in \eqref{representl0fg} is  $\kappa$-convex, then 
$$
\langle \nabla^2 f(x)w,w\rangle \geq \kappa \|w\|^2 \quad \text{for all }\;x\in\R^n, w \in \R^n. 
$$
By using the subdifferential sum rule from \cite[Proposition~1.107]{Mordukhovich06}, and the second-order sum rule from \cite[Proposition~1.121]{Mordukhovich06}, we justify the claimed conclusions. 
\end{proof}\vspace*{-0.05in}

Next we establish some additional variational properties of the $\ell_0$-norm, which are of their own interest and help us to solve the $\ell_0$-$\ell_2$ problem by employing the developed GCNM algorithm.\vspace*{-0.05in}

\begin{Theorem}[\bf variational convexity and subdifferential continuity of the $\ell_0$-norm]\label{varconsubl0lemma} The function $g(x):= 
 \|x\|_0$ is variationally convex at every $x \in \R^n$ for any $v\in\partial g(x)$. If in addition
 \begin{equation}\label{sufcforcont} 
\bar{v}_i \ne 0 \quad \text{whenever }\; \bar{x}_i = 0,\;\bar{v} \in \partial g(\ox),\;\mbox{ and }\;\bar{x}\in\R^n,
\end{equation} 
then $g$ is subdifferentially continuous at $\ox$ for $\ov$. 
\end{Theorem}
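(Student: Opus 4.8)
The plan is to verify variational convexity through the local monotonicity characterization in Proposition~\ref{subgradientvar}(ii) with $s=0$, and to establish subdifferential continuity directly from the definition. Since $g$ is separable with $g(x)=\sum_{i=1}^n I(x_i)$, I first record that the regular and limiting subdifferentials of $g$ coincide: for the scalar indicator $I$ one has $\widehat{\partial} I(t)=\{0\}$ when $t\ne 0$, while $\widehat{\partial} I(0)=\R$, because $I(t)-I(0)=1$ for all small $t\ne 0$ forces the difference quotient $(I(t)-I(0)-vt)/|t|\to+\infty$; this matches $\partial I$ read off from Proposition~\ref{2ndgraofl0}. Taking products gives $\widehat{\partial} g=\partial g$, so every $\ov\in\partial g(\ox)$ also lies in $\widehat{\partial} g(\ox)$ and Proposition~\ref{subgradientvar} applies.

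For variational convexity, fix $\ox$ and $\ov\in\partial g(\ox)$, and split the indices into $S:=\{i\mid \ox_i\ne 0\}$ and $Z:=\{i\mid \ox_i=0\}$. I would take product neighborhoods $U=\prod_i U_i$ of $\ox$ and $V=\prod_i V_i$ of $\ov$ with each $U_i$ for $i\in S$ a small interval around $\ox_i$ avoiding the origin, and set $\epsilon\in(0,1)$. The key step is a level-set collapse: since $g(\ox)=|S|$ and $I(x_i)=1$ on $U_i$ for $i\in S$, the constraint $g(x)<g(\ox)+\epsilon$ defining $U_\epsilon$ reads $\#\{i\in Z\mid x_i\ne 0\}<\epsilon<1$, forcing $x_i=0$ for every $i\in Z$; thus every $x\in U_\epsilon$ has support exactly $S$. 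Consequently, for any two pairs $(u_1,v_1),(u_2,v_2)\in\gph\partial g\cap(U_\epsilon\times V)$, every summand of $\langle v_1-v_2,u_1-u_2\rangle=\sum_i (v_{1,i}-v_{2,i})(u_{1,i}-u_{2,i})$ vanishes: for $i\in S$ one has $u_{1,i},u_{2,i}\ne 0$, so $v_{1,i}=v_{2,i}=0$ by Proposition~\ref{2ndgraofl0}, while for $i\in Z$ one has $u_{1,i}=u_{2,i}=0$. Hence $\langle v_1-v_2,u_1-u_2\rangle=0\ge 0$, which is the local monotonicity in Proposition~\ref{subgradientvar}(ii), yielding variational convexity at $\ox$ for $\ov$.

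For subdifferential continuity under \eqref{sufcforcont}, fix $\epsilon>0$ and recall $g(\ox)=|S|$. I would choose
$$
\delta<\min\Big\{\min_{i\in S}|\ox_i|,\ \min_{i\in Z}|\ov_i|\Big\}
$$
with the convention that an empty minimum is $+\infty$; the second term is strictly positive precisely because \eqref{sufcforcont} forces $\ov_i\ne 0$ for all $i\in Z$. Take $(x,v)\in\gph\partial g\cap(\B_\delta(\ox)\times\B_\delta(\ov))$. For $i\in S$, $|x_i-\ox_i|<\delta\le|\ox_i|$ gives $x_i\ne 0$; for $i\in Z$, $|v_i-\ov_i|<\delta\le|\ov_i|$ gives $v_i\ne 0$, and the membership $v_i\in\partial I(x_i)$ then forces $x_i=0$. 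Therefore $x$ has support exactly $S$, so $g(x)=|S|=g(\ox)$ and $|g(x)-g(\ox)|=0<\epsilon$, which establishes subdifferential continuity.

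The only genuinely delicate point is the non-separability of the level set $U_\epsilon$ in the first part: the defining inequality $g(x)<g(\ox)+\epsilon$ couples all coordinates, so monotonicity cannot be tested coordinatewise in isolation. The choice $\epsilon<1$ is exactly what decouples it, collapsing $U_\epsilon$ to vectors carrying the support of $\ox$; everything else reduces to routine bookkeeping on the index sets $S$ and $Z$.
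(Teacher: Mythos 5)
Your proof is correct and follows essentially the same route as the paper's: the same index split into the support of $\ox$ and its complement, the same choice of $\epsilon\in(0,1)$ to collapse the level set $U_\epsilon$ onto vectors supported on $S$, the same termwise vanishing of $\langle v_1-v_2,u_1-u_2\rangle$ to invoke Proposition~\ref{subgradientvar}(ii), and the same mechanism ($\ov_i\ne 0$ on $Z$ forcing $x_i=0$ nearby) for subdifferential continuity. The only cosmetic differences are that you verify $\widehat{\partial}g=\partial g$ explicitly before applying Proposition~\ref{subgradientvar} (a hypothesis the paper uses tacitly) and phrase subdifferential continuity in $\epsilon$-$\delta$ form rather than via sequences.
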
\vspace*{-0.15in}
\begin{proof} It is not difficult to check that $g$ is l.s.c.\ on $\R^n$.  Picking $(\ox,\ov) \in \gph \partial g$, we now verify via Proposition~\ref{subgradientvar} the variational convexity of $g$ at $\ox$ for $\ov$    by  {showing that there exist neighborhoods $U$ of $\ox$ and $V$ of $\ov$ such that }  
\begin{equation}\label{g-monotonicity}
\langle x - u, v - w\rangle \geq 0\;\text {for all} \; (x,v),(u,w) \in \gph \partial g \cap W, \quad \text{where }\;  W:=\big\{(x,v) \in U \times V\;\big|\; g(x) < g(\ox) +\epsilon\big\}.
\end{equation}
To furnish this, define 
$J(\ox):=\big\{j \in \{1,2,\ldots,n\}\;\big|\;\ox_j \ne 0\big\}$ and observe that $\ox_j \ne 0$ for each $j \in  J(\ox)$. Thus there exists $\delta_j >0$ such that $0 \notin (\ox_j -\delta_j, \ox_j + \delta_j)$. Denoting
$$
U:= \big\{x=(x_1,\ldots,x_n)\in\R^n\;\big|\; x_j \in (\ox_j -\delta_j, \ox_j+\delta_j) \; \text{ if }\;j \in J(\ox)\big\}\;\mbox{ and } \; V := \R^n,
$$
we see that $U\times V$ is a convex neighborhood of $(\ox,\ov)$ and that
$$
\sum_{j \in J(\ox)} I(x_j) = \sum_{j \in J(\ox)} I(\ox_j),\quad\sum_{j \in \{1,\ldots,n\} \setminus J(\ox)} I(\ox_j)=0
$$
for all $x \in U$, where $I(\cdot)$ is taken from \eqref{Il0}. This readily implies that 
$$ 
g(x) - g(\ox)   = \sum_{j \in J(\ox)} I(x_j)+  \sum_{j \in \{1,\ldots,n\} \setminus J(\ox)} I(x_j) - \sum_{j \in J(\ox)} I(\ox_j) - \sum_{j \in \{1,\ldots,n\} \setminus J(\ox)} I(\ox_j) = \sum_{j \in \{1,\ldots,n\} \setminus J(\ox)} I(x_j).
$$
{Next we fix $\epsilon \in (0,1)$ and show that} $\partial g$ is monotone relative to the set 
\begin{align*}
W=\big\{(x,v) \in U \times V\;\big|\; g(x) < g(\ox) +\epsilon\big\}=\Big\{(x,v) \in U \times V\;\Big|\; \sum_{j \in \{1,\ldots,n\} \setminus J(\ox)} I(x_j) < \epsilon\Big\}\\
= \big\{(x,v) \in U \times V \;\big|\; x_j = 0 \; \text{for all } j \in \{1,\ldots,n\} \setminus J(\ox)\big\}.
\end{align*}
Indeed, for $(x,v), (u, w) \in \gph \partial g \cap W$ we have 
\begin{equation}\label{mono1} 
\begin{cases}
x_j = u_j = 0 &\text{whenever } j \in \{1,\ldots,n\} \setminus J(\ox),\\
x_j, u_j \in (\ox_j -\delta_j, \ox_j + \delta_j) & \text{whenever } \; j \in J(\ox).
 \end{cases}
\end{equation}
Observe that $x_j, u_j \ne 0$ for any $j \in J(\ox)$ by $0 \notin (\ox_j -\delta_j, \ox_j + \delta_j)$. Therefore, it follows from \eqref{1stsubofl0} that
\begin{equation}\label{mono2} 
 v_j = w_j = 0 \quad \text{for all } \; j \in J(\ox).    
\end{equation}
Combining \eqref{mono1} and \eqref{mono2}, we get the equality
$$
\langle x-u, v -w\rangle = \sum_{j \in J(\ox)} (x_j - u_j) (v_j - w_j) + \sum_{j  \in \{1,\ldots,n\} \setminus  J(\ox)}  (x_j - u_j) (v_j - w_j)=0,
$$ 
which tells us that {\eqref{g-monotonicity} is verified}. Therefore, $g$ is variationally convex at $\ox$ for $\ov$ due to Proposition~\ref{subgradientvar}.

Now we pick $(\ox,\ov)\in\gph \partial g$ satisfying \eqref{sufcforcont} and show that $g$ is subdifferentially continuous at $\ox$ for $\ov$. Indeed, take $(x^k,v^k)\to (\ox,\ov)$, $v^k \in \partial g(x^k)$ and then check that $g(x^k) \to g(\ox)$ as $k \to \infty$.  For $j \in J(\ox)$, we have $\ox_j  \ne 0$ ensuring that $I$ is continuous at $\ox_j$, which implies that 
\begin{equation}\label{subcont1}
I\big((x^k)_j\big) \to I(\ox_j)\;\text{ as }\; k \to \infty\;\text{ whenever }\;j \in J(\ox).   
\end{equation}
Fixing $j \in \{1,\ldots, n\}\setminus J(\ox)$ gives us $\ox_j =0$, and thus $\ov_j \ne 0$ by \eqref{sufcforcont}. Since $(v^k)_j \to \ov_j$ as $k \to \infty$, we have $(v^k)_j \ne 0$ for all large $k$. By $v^k \in \partial g(x^k)$, it follows from \eqref{1stsubofl0} that $(x^k)_j =0$ for large $k$, which yields 
\begin{equation}\label{subcont2} 
I\big((x^k)_j\big) \to 0 = I(\ox_j)\;\text{ as }\; k \to \infty\;\text{ whenever }\;j \in \{1,\ldots,n\}\setminus J(\ox). 
\end{equation} 
Combining \eqref{subcont1} and \eqref{subcont2} brings us to
$$
g(x^k) = \sum_{j=1}^n I\big((x^k)_j\big) \to \sum_{j=1}^n I(\ox_j) = g(\ox)\; \text{ as }\; k \to \infty, 
$$
which justifies the claimed subdifferential continuity and thus completes the proof of the theorem. 
\end{proof}\vspace*{-0.05in}

 {Given a matrix $B \in \R^{m \times n}$ and an index set $I \subset \{1,\ldots,n\}$, let the matrix $B_I \in \R^{m \times |I|}$ consist of the columns of $B$ corresponding to the indices in $I$, and let $B_i$ represent the $i$th column of $B$.  Now we establish explicit conditions to solve \eqref{l0} by Algorithm~\ref{NMcompositeglobal}. Observe that Proposition~\ref{2ndgraofl0} allows us to calculate the directions $d^k$ in this algorithm for \eqref{l0} via the relationships}
$$
\begin{cases}
\big(-\Hat{v}^k-\nabla^2 f(\Hat{x}^k)d^k \big)_i=0&\text{if}\quad\left(\Hat{x}^k \right)_i\ne 0,\\
\big( d^k\big)_i=0&\text{if}\quad\left(\Hat{x}^k \right)_i=0
\end{cases}
$$
in which the approximate vectors $\Hat x^k$ and $\Hat v^k$ are determined  by
$$
\Hat{x}^k \in \text{\rm Prox}_{\lambda (\mu_0\|\cdot\|_0)}\big(x^k - \lambda\nabla f(x^k\big)), \quad \Hat{v}^k = \nabla f(\Hat{x}^k)- \nabla f(x^k)  + \frac{1}{\lambda}(x^k  -\Hat{x}^k).
$$
 {For each $k\in\N$, define the matrix $X^k$ and the vector $y^k$ with the components
\begin{equation}\label{Ximage}
(X^k)_i := \begin{cases}
\big(\nabla^2 f(\Hat{x}^k)\big)_i & \text{if} \quad \left(\Hat{x}^k \right)_i\ne 0,\\
I_i  & \text{if}\quad \left(\Hat{x}^k \right)_i = 0
\end{cases}
\quad \text{and }\; y^k:= \begin{cases}
(-\Hat{v}^k)_i & \text{if} \quad \left(\Hat{x}^k \right)_i\ne 0,\\
0  & \text{if}\quad \left(\Hat{x}^k \right)_i = 0
\end{cases}
\end{equation}
and calculate the direction $d^k$ as follows:
\begin{equation}\label{dkl0l2}
d^k= \begin{cases}
\text{solution to the linear system } X^k d = y^k & \text{if } X^k \text{ is nonsingular,}\;\\
0 &\text{otherwise}.
\end{cases}
\end{equation}}\vspace*{-0.2in}
 
\begin{Theorem}[\bf solving the $\ell_0$-$\ell_2$ regularized least square problem by GCNM]\label{theoryl0l2conver} Consider the $\ell_0$-$\ell_2$ model \eqref{l0} with notation \eqref{representl0fg} for $f$ and $g$. The following assertions hold:

{\bf(i)} Algorithm~{\rm\ref{NMcompositeglobal}} either stops after finitely many iterations, or produces a sequence $\{x^k\}$ such that any  accumulation point $\ox$ of this sequence is a {$\lambda$-critical point of \eqref{l0} for all $\lambda \in (0, 1/L_f)$, where $L_f:=\lambda_{\text{\rm max}}(A^*A)+2\mu_2$}. 

{\bf(ii)} Assume that the accumulation point $\ox$ satisfies the condition  
\begin{equation}\label{subconl0}
\left(A^*(A \ox -b) + 2\mu_2 \ox\right)_i \ne 0 \quad \text{whenever }\; \ox_i=0,  
\end{equation}
and in addition either $\mu_2 > 0$, or we have   
\begin{equation}\label{metricregl0}
A_{J}  \text{ has full column rank, where } J := \big\{i \in \{1,\ldots,n\}\;\big|\;\left(A^*(A \ox -b) + 2\mu_2 \ox\right)_i = 0\big\}. 
\end{equation}
Then $\ox$ is a tilt-stable local minimizer of $\varphi$, and hence the sequence $\{x^k\}$ converges globally to $\bar{x}$ with the Q-superlinear convergence rate.
\end{Theorem}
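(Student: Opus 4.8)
The plan is to reduce both assertions to the general convergence result in Theorem~\ref{converNMglobal}, so the bulk of the work is checking its hypotheses for the specific data \eqref{representl0fg}. For assertion (i) I would first record that $\nabla f(x)=A^*(Ax-b)+2\mu_2 x$ is affine, hence globally Lipschitz with modulus $L_f=\lambda_{\text{max}}(A^*A)+2\mu_2$, which verifies Assumption~\ref{Lipnablaf}. Since every term of $\varphi$ in \eqref{l0} is nonnegative, $\inf\varphi\ge 0>-\infty$, and because $g=\mu_0\|\cdot\|_0\ge 0$ its Moreau envelope is bounded below for every $\lambda>0$, so $g$ is prox-bounded with threshold $\lambda_g=+\infty$; thus $\text{min}\{1/L_f,\lambda_g\}=1/L_f$. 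The first part of Theorem~\ref{converNMglobal} then yields at once that any accumulation point $\ox$ is a $\lambda$-critical point for all $\lambda\in(0,1/L_f)$.

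For assertion (ii) the heart of the matter is to prove that $\ox$ is a tilt-stable local minimizer; the $Q$-superlinear convergence will then follow from the second part of Theorem~\ref{converNMglobal}. I would first establish that $\varphi$ is continuously prox-regular at $\ox$ for $0$: by Theorem~\ref{varconsubl0lemma} the regularizer $g$ is variationally convex at $\ox$ for $\ov:=-\nabla f(\ox)$ (note $\ov\in\partial g(\ox)$ by the $\lambda$-criticality and the subdifferential sum rule), hence prox-regular there, and condition \eqref{subconl0} is exactly condition \eqref{sufcforcont} written for $\ov=-\nabla f(\ox)$, so the same theorem gives subdifferential continuity of $g$ at $\ox$ for $\ov$. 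As $f$ is $\mathcal{C}^2$, the sum $\varphi=f+g$ is then continuously prox-regular at $\ox$ for $0$, which lets me invoke the Poliquin--Rockafellar characterization \cite[Theorem~1.3]{Poli}: tilt stability is \emph{equivalent} to positive-definiteness of $\partial^2\varphi(\ox,0)$.

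Second, I would verify this positive-definiteness using the second-order sum rule $\partial^2\varphi(\ox,0)(w)=\nabla^2 f(\ox)w+\partial^2 g(\ox,\ov)(w)$ together with the explicit formula for $\partial^2 g$ from Proposition~\ref{2ndgraofl0}. Under \eqref{subconl0} no coordinate has simultaneously $\ox_i=0$ and $\ov_i=0$, so the index set $J$ in \eqref{metricregl0} coincides with $\text{supp}(\ox)$; a coordinatewise reading of the mapping $F$ shows that $\partial^2 g(\ox,\ov)(w)\neq\emptyset$ forces $w_i=0$ for $i\notin J$ and that every output is supported on the complement of $J$. Consequently the cross term vanishes and every $z\in\partial^2\varphi(\ox,0)(w)$ satisfies $\langle z,w\rangle=\langle\nabla^2 f(\ox)w,w\rangle=\|Aw\|^2+2\mu_2\|w\|^2$. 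For $w\neq0$ (supported on $J$) this is strictly positive when $\mu_2>0$, and equals $\|A_J w_J\|^2>0$ when $\mu_2=0$ and $A_J$ has full column rank; directions outside $\text{supp}(\ox)$ give an empty image, so the defining condition holds vacuously there. Hence $\partial^2\varphi(\ox,0)$ is positive-definite and $\ox$ is tilt-stable.

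Finally, to obtain convergence I would check the remaining hypotheses of Theorem~\ref{converNMglobal}: $\nabla^2 f\equiv A^*A+2\mu_2 I$ is constant, hence strictly differentiable at $\ox$; $g$ is continuously $r$-level prox-regular for every $r>0$ (being variationally convex and subdifferentially continuous), so I may take an arbitrary level $r\le L_f$ and pick $\lambda\in(0,1/L_f)$, which respects $\lambda<1/r$ and makes $Q(\ox)=I-\lambda\nabla^2 f(\ox)$ positive-definite; and $\partial g$ is semismooth$^*$ at $(\ox,\ov)$ because, by \eqref{1stsubofl0}, $\gph\partial g$ is the finite union over $S\subseteq\{1,\dots,n\}$ of the coordinate linear subspaces $\{(x,v):x_i=0\ (i\notin S),\ v_i=0\ (i\in S)\}$, i.e.\ a piecewise polyhedral mapping, for which semismoothness$^*$ is automatic. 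With Assumption~\ref{assumonlimitpoint} thus in force, the second part of Theorem~\ref{converNMglobal} delivers the global $Q$-superlinear convergence of $\{x^k\}$ to $\ox$. I expect the main obstacle to be the positive-definiteness argument in the degenerate case $\mu_2=0$: there the semidefinite modulus $\kappa=\lambda_{\text{min}}(A^*A)$ of Corollary~\ref{PDofl0} may vanish, so strict positivity cannot be read off globally and must instead be extracted from the interplay between the support structure of the admissible directions (forced by $\partial^2 g$ and condition \eqref{subconl0}) and the full-column-rank hypothesis \eqref{metricregl0} on $A_J$.
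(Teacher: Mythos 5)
Your proposal is correct and follows essentially the same route as the paper: both parts reduce to Theorem~\ref{converNMglobal} after verifying Assumption~\ref{Lipnablaf} and prox-boundedness for (i), and for (ii) both establish continuous prox-regularity of $\varphi$ via Theorem~\ref{varconsubl0lemma}, tilt stability via the Poliquin--Rockafellar criterion \cite[Theorem~1.3]{Poli} together with the second-order sum rule and Proposition~\ref{2ndgraofl0}, and semismoothness$^*$ of $\partial g$ from the polyhedral structure of its graph. The only (minor) divergence is in the degenerate case $\mu_2=0$: the paper shows $\ker\partial^2\varphi(\ox,0)=\{0\}$ and then upgrades positive-semidefiniteness (Corollary~\ref{PDofl0}) to positive-definiteness, whereas you compute $\langle z,w\rangle=\|A_Jw_J\|^2+2\mu_2\|w\|^2$ directly from the support decomposition forced by $\partial^2 g$ and \eqref{subconl0} --- a somewhat more explicit justification of the same step.
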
\vspace*{-0.15in}
\begin{proof}
To verify (i), observe that since $g$ is bounded from below by $0$, it follows from \cite[Exercise~1.24]{Rockafellar98} that $g$ is prox-bounded with threshold $\lambda_g =\infty$. By the calculation in \eqref{nablafboth}, $\nabla f$ is Lipschitz continuous on $\R^n$ with modulus $L_f=\lambda_{\text{\rm max}}(A^*A)+2\mu_2$ expressed via the  largest eigenvalue of $A^*A$. Therefore, 
Assumption~\ref{Lipnablaf} is guaranteed for \eqref{l0}. Applying now Theorem~\ref{converNMglobal} tells us that any accumulation point of $\{x^k\}$ is a {$\lambda$-critical point of \eqref{l0} for all $\lambda \in (0,1/L_f)$}, which therefore justifies (i). 

To prove (ii), we know from (i) that the accumulation point $\ox$ is {$\lambda$-critical} for \eqref{l0}, which implies that $0 \in \partial\varphi(\ox) = \nabla f(\ox) + \partial g(\ox)$, i.e., $-\nabla f(\ox)\in\partial g(\ox)$. 
It follows from Theorem~\ref{varconsubl0lemma} and the formula $\nabla f(\ox) = A^*(A\ox -b) + 2\mu_2\ox$ that condition \eqref{subconl0} yields the subdifferential continuity of $g$  at $\ox$ for $-\nabla f(\ox)$. The prox-regularity of $g$ at $\ox$ for $-\nabla f(\ox)$ is a consequence of the variational convexity of $g$ at $\ox$ for $-\nabla f(\ox)$.  

Finally, we verify the tilt stability of $\ox$ if either $\mu_2 >0$, or \eqref{metricregl0} holds. To proceed for $\mu_2 >0$, observe from Corollary~\ref{PDofl0} that $\partial^2 \varphi(\ox,0)$ is positive-definite, and thus $\ox$ is a tilt-stable local minimizer of $\varphi$ by \cite[Theorem~1.3]{Poli}. In the case where  \eqref{metricregl0} holds, for any $w \in \R^n$ such that $0 \in \partial^2 \varphi(\ox,0)(w)$, we get
$$
-\nabla^2 f(\ox) \in \partial^2 g(\ox,-\nabla f(\ox))(w),
$$
which implies that $A_J w_J =0$ and $w_i=0$ for indices $i$ satisfying $ \left(A^*(A \ox -b) + 2\mu_2 \ox\right)_i \ne 0 $. Combining this with \eqref{metricregl0}, we deduce that $w=0$. It follows from the positive-semidefiniteness of $\partial^2\varphi(\ox,0)$ that
$$
\langle z, w \rangle >0 \quad \text{for all }\; z \in \partial^2\varphi(\ox,0)(w), \; w \ne 0, 
$$
and so $\ox$ is a tilt-stable local minimizer of $\varphi$ by \cite[Theorem~1.3]{Poli}. The graph of $\partial g$ is the union of finitely many closed convex sets, which guarantees by \cite{Helmut} that $\partial g$ is semismooth$^*$ on its graph. All of this allows us to deduce from Theorem~\ref{converNMglobal} the $Q$-superlinear convergence of the iterative sequence $\{x^k\}$ generated by Algorithm~\ref{NMcompositeglobal} to the solution $\ox$ and thus completes the proof of the theorem. 
\end{proof}\vspace*{-0.05in}

Based on the results and calculations obtained for solving the $\ell_0$-$\ell_2$ model \eqref{l0}, we proceed to conduct numerical experiments using GCNM algorithm (Algorithm~\ref{NMcompositeglobal}) and compare its performance with the following generalized second-order Newton-type methods:\vspace*{0.03in}

{\bf(A)} \textit{The globalized semismooth$^*$ Newton method} (GSSN) recently developed in \cite[Algorithm~1]{G25}.

{\bf(B)} \textit{The ZeroFPR method with BFGS updates} (ZeroFPR) proposed in \cite{stp2}.\vspace*{0.03in}

In the conducted numerical experiments, we considered different scenarios obtained by changing the regularization terms $\mu_0$, $\mu_2$ and the size of $A$ keeping a constant column-to-row ratio of $n/m = 5$ for the matrix $A$, which is generated randomly with i.i.d. (identically and independent distributed) standard Gaussian entries, where $b$ is generated randomly with values of components from $0$ to $1$. For all the  algorithms and problems, we used $x^0 = 0$ as the starting iterate. To evaluate the performance of our proposed method, GCNM, we run GCNM, GSSN, and ZeroFPR until the condition $\|x^k - \Hat{x}^k\| \leq 10^{-6}$ is satisfied.  

In both of these experiments, we consider the following two cases. In the first case where $\mu_2 = 0.01 > 0$, generalized Newton directions of GCNM and GSSN always exist, and the superlinear convergence of both algorithms is guaranteed. In the second case where $\mu_2 = 0$, this guarantee does not longer hold, and superlinear convergence may not be achieved. More detailed information for the testing data used in the experiments is presented in Table~\ref{testing data}, where ``TN'' stands for test number. {The numerical results of the tests are summarized in Table~\ref{tab:criteria1}. In the table, ``time'' denotes the computational time in seconds, ``iter'' denotes the number of iterations required to attain the prescribed accuracy, $\delta_k := \|Ax^k - b\|$, $\eta_k := \|x^k - \hat{x}^k\|$, and ``nnz'' denotes the number of nonzero components in the computed approximate solution.}\vspace*{-0.2in}

\begin{center}
\begin{table}[H]
\centering
\begin{tabular}{|lll|} 
  
\hline
\multicolumn{1}{|c}{TN} & \multicolumn{1}{c}{n} & \multicolumn{1}{c|}{$\mu_0$}  \\ 
\hline
1 & 100& $10^{-2}$ \\
2 & 100& $10^{-3}$ \\
3   & 200  & $10^{-2}$ \\
4 & 200& $10^{-3}$ \\
5   & 400  & $10^{-2}$ \\
6 & 400& $10^{-3}$ \\
7 & 800& $10^{-2}$ \\
8  & 800  & $10^{-3}$ \\
9 & 1600& $10^{-2}$ \\
10 & 1600& $10^{-3}$  \\
\hline
\end{tabular}
\caption{Testing data information}
\label{testing data}
\end{table}
\end{center} \vspace*{-0.2in}

\begin{sidewaystable} 
\footnotesize
\centering
\begin{subtable}[t]{\textwidth}
\centering
\begin{tabular}{|l|lllll|lllll|lllll|} 
\hline
& \multicolumn{5}{c|}{GCNM} & \multicolumn{5}{c|}{GSSN} & 
\multicolumn{5}{c|}{ZeroFPR} \\ 
\hline
\multicolumn{1}{|c|}{TN} &    time & iter &  $\delta_k$ & $\eta_k$ & nnz & 
time & iter &  $\delta_k$ & $\eta_k$ & nnz & time & iter &  $\delta_k$ & $\eta_k$ & nnz  \\ 
\hline
1 &   0.007 & 3 & 7.8E-04  & 2.9E-17  & 76  & 0.044 & 13 & 1.2E-03 & 7.2E-18 &  33 &  4.243 & 455 & 1.2E-02 & 6.5E-07 & 85 \\
2 &  0.003 &  3 & 7.7E-04 & 2.7E-17   &  96 & 0.028 & 12 & 7.7E-04 & 2.0E-17 &  84 &  1.747 & 435 & 1.0E-01 & 9.7E-07 & 94\\
3 &   0.021 & 4 & 5.7E-04 & 2.5E-17 &  170 &  0.139 & 12 & 6.1E-04 & 1.5E-17 & 126 &   5.721 & 988 & 7.8E-03 & 7.4E-07 & 184 \\
4 &   0.013 & 3 & 5.7E-04 & 2.3E-17   &  191 & 0.039 & 12 &  5.7E-04 & 1.8E-17 & 162 & 10.721 & 1196 & 1.6E-03 & 3.7E-07 & 195\\
5 &  0.225 & 6  & 4.6E-04 & 3.4E-17   & 320 & 0.757 & 12 & 4.8E-04 & 1.9E-17 &  240 &  29.601 & 1517 & 4.8E-02 & 9.1E-07 & 341\\
6 &   0.015 & 3 & 4.5E-04 & 3.2E-17 &  376 & 0.359 & 14 &  4.5E-04 & 2.7E-17 & 318 &   40.462 & 2234 & 1.2E-02 & 7.2E-07 & 356 \\
7 &   0.281 & 7 & 2.5E-04 & 3.8E-17 & 645   & 1.266 & 13 & 2.8E-04 & 1.8E-17 & 392 &   273.805 & 2839  & 3.8E-02 & 9.8E-07 & 721 \\
8 &   0.158 & 4 & 2.4E-04 & 3.5E-17  & 744  & 2.215 & 13 & 2.4E-04 & 2.9E-17 & 605    & 26.510 & 266 &  2.4E-04 & 8.0E-07 & 765\\
9 &   1.168 &  5 & 1.9E-04 & 5.6E-17   & 1315 & 10.298 & 14 & 2.0E-04 & 3.0E-17 &  877 &   408.838 & 1562 & 3.2E-04 & 9.2E-07 & 1455 \\
10 &  0.630 & 3 & 1.9E-04 & 5.4E-17  & 1517  & 9.756 & 15 & 1.9E-04  & 4.8E-17  & 1370  & 227.216 & 362 & 1.9E-04 & 9.3E-07 & 1456\\
\hline
\end{tabular}
\caption{$\mu_2 > 0$}
\label{tab:mu_positive}
\end{subtable}

\vspace{0.5cm}

\begin{subtable}[t]{\textwidth}
\centering
\begin{tabular}{|l|lllll|lllll|lllll|} 
\hline
  & \multicolumn{5}{c|}{GCNM} & \multicolumn{5}{c|}{GSSN} & \multicolumn{5}{c|}{ZeroFPR} \\ 
\hline
\multicolumn{1}{|c|}{TN} &    time & iter & $\delta_k$ & $\eta_k$ & nnz & 
time & iter & $\delta_k$ & $\eta_k$ & nnz & 
time & iter & $\delta_k$ & $\eta_k$ & nnz  \\ 
\hline
1 &  0.002 & 3 & 5.2E-15 & 5.4E-15 & 60   &  0.012 & 9 & 2.3E-15 & 8.5E-18  & 27  & 0.081 & 37 & 1.2E-05 & 4.9E-07 & 68 \\
2 &   0.003 & 4  & 4.0E-14 & 1.3E-14 & 79   & 0.107 & 22 & 8.9E-06 & 1.3E-07 & 62  & 0.215 & 33 & 1.5E-05 & 8.4E-07 & 99 \\
3 &   0.003 & 2 & 1.5E-14 & 5.2E-15  & 109  & 0.091 & 10 & 7.2E-02 & 7.2E-18 &  37 &   0.809 & 42 & 2.8E-05 & 8.1E-07 & 106 \\
4 &   0.016 & 2 & 1.5E-14 & 4.4E-15  & 169 & 1.394 & 33 & 8.6E-05 & 7.8E-07 &  110 &   0.878 & 36 & 2.9E-05 & 7.5E-07 & 184 \\
5 &  0.028 & 3 & 3.0E-12 & 1.1E-12 & 228   &  0.365 & 10 & 2.3E-02 & 5.2E-18 & 74   & 2.051 & 53 & 5.9E-05 & 8.1E-07 & 176 \\
6 &  0.016 & 2 & 9.9E-14 & 2.8E-14  & 339  & 1.577 & 27 & 5.6E-05 & 4.4E-07   &  231 &  1.774 & 41 & 4.7E-05 & 6.9E-07 & 385 \\
7 &  0.062 & 2 & 1.3E-13 & 8.9E-14    & 448 &  2.068 & 9 & 2.2E-01 & 3.1E-17  & 148    & 10.382 & 49  & 1.2E-04 & 9.1E-07 & 428 \\
8 &   0.145 & 5  & 5.8E-11 & 1.9E-11   &  641 & 9.959 & 36 & 1.2E-04 & 4.9E-07 & 424 &    7.403 & 36 & 8.2E-05 & 9.2E-07 & 757 \\
9 &   0.571 & 4 & 1.9E-11 & 4.9E-12   & 891 &  14.230  & 10 & 1.2E-01 & 3.3E-17   &  300 & 41.439 & 53 & 1.7E-04 & 8.7E-07 & 900\\
10 &  0.297 & 7 & 6.7E-13 & 1.5E-13   & 1303 & 34.760 & 32 & 2.9E-04 & 9.0E-07 & 936 &    26.853 & 37 & 1.0E-04 & 8.2E-07 & 1501\\
\hline
\end{tabular}
\caption{$\mu_2 = 0$}
\label{tab:mu_zero}
\end{subtable}
\caption{Performance of GCNM,  GSSN, ZeroFPR for solving \eqref{l0}, where all algorithms are required to satisfy $\|x^k - \Hat{x}^k\| \leq 10^{-6}$}
\label{tab:criteria1}
\end{sidewaystable}\vspace*{-0.2in}

In many practical applications, the optimization problem \eqref{l0} is employed to find an approximate solution to the underdetermined linear system $Ax = b$, where $A$ is an $m \times n$ matrix with $m < n$. To assess the effectiveness of approximate stationary points obtained from solving \eqref{l0}, we evaluate the quality of the solutions by comparing the norm of the residual $\delta_k=\|Ax^k - b\|$ and the deviation from the  solution $\eta_k=\|x^k - \hat{x}^k\|$ after $k$ iterations. The results presented in Table \ref{tab:criteria1} consistently demonstrate the superior performance of GCNM over GSSN and ZeroFPR across various evaluation metrics. In the first case where $\mu_2 = 0.01 > 0$, GCNM ensures the existence of generalized Newton directions and guarantees superlinear convergence while notably outperforms GSSN in   time. This significant difference can be attributed to the simpler construction of the Newton direction in GCNM, which contrasts with the approach in GSSN that requires boundedness of the direction and involves computationally expensive matrix operations. Additionally, the line search strategy used in GSSN tends to be more complex and requires more steps compared to the more efficient line search method implemented

in Algorithm~\ref{FBEbasedLM} for GCNM. {From Table \ref{tab:criteria1}, we can observe that the solutions obtained by the different methods differ, as expected for a nonconvex problem. The line search strategy used in GSSN generally requires more iterations, whereas GCNM in these experiments appears to take mostly full Newton steps, resulting in fewer iterations.}  Furthermore, in terms of solution quality as discussed earlier, GCNM generally yields smaller residuals $\|Ax^k - b\|$. For larger problem instances, ZeroFPR produces smaller residuals $\|Ax^k - b\|$; however, this algorithm requires higher computational cost and longer runtimes compared to both GCNM and GSSN. It is also worth noting that in many test problems, GSSN produced smaller residuals $\|x^k - \hat{x}^k\|$ compared to GCNM and ZeroFPR.   
The second case, where $\mu_2 = 0$ and superlinear convergence is not guaranteed, further demonstrates the robustness of GCNM. This trend is consistently observed across most tests, with GCNM generally requiring substantially less  time. \vspace*{-0.1in}

\subsection{Student’s t‑Regression with $\ell^0$-Penalty}\vspace*{-0.05in}

In many inverse problem applications, the goal is to compute an approximate solution $\ox \in \R^n$ to the system $Ax = b$, where $A \in \R^{m \times n}$ and $b \in \R^m$ with $m < n$. Apart from  solving the optimization problem \eqref{l0}, an alternative approach is to consider the following optimization formulation:
\begin{equation}\label{student}
\min \quad \varphi(x):= \sum_{i=1}^m \log\left(1+ \frac{(Ax-b)_i^2}{\nu}  \right) +\mu \|x\|_0 \quad 
\text{subject to} \quad x \in \R^n,
\end{equation}
where $\nu, \mu >0$. For more information on Student’s t-distribution, we refer to \cite{afhv12,mu14}. Note  that problem \eqref{student} is nonconvex and nonsmooth, and it is a particular case of the structured model in \eqref{ncvopt} with the data
\begin{equation}\label{representstudent} 
f(x):= \sum_{i=1}^m \log\left(1+ \frac{(Ax-b)_i^2}{\nu}  \right) \;\text{ and }\; g(x):= \mu \|x\|_0. 
\end{equation} 
Observe that $f$ is smooth but nonconvex with the gradient and Hessian matrix calculated by
\begin{equation}\label{nablaimage}
\nabla f(x)=2A^* u, \text{ where } u_i=\frac{(Ax-b)_i}{\nu+(Ax-b)_i^2},\quad i=1,\ldots,n, 
\end{equation} 
\begin{equation}\label{hessianimage}
\displaystyle \nabla^2 f(x) = 2 A^* \begin{pmatrix}
\frac{\nu-(Ax-b)_1^2}{\left(\nu+(Ax-b)_1^2\right)^2}  &  0  & \ldots & 0\\
0 & \frac{\nu-(Ax-b)_2^2}{\left(\nu+(Ax-b)_2^2\right)^2} & \ldots & 0\\
\vdots & \vdots & \ddots & \vdots\\
0 & 0 & \ldots & \frac{\nu-(Ax-b)_n^2}{\left(\nu+(Ax-b)_n^2\right)^2} 
 \end{pmatrix} A \quad \text{for all }\; x \in \R^n. 
\end{equation}
The proximal mapping of $g$  in \eqref{representstudent} is given by \eqref{proxofgl0}. To run Algorithm~\ref{NMcompositeglobal}, {we need to determine} the direction sequence $\{d^k\}$ in the Newton step, which requires calculating the second-order subdifferential of $g(x)=\mu\|x\|_0$ presented in Proposition \ref{2ndgraofl0}.  So we can express $d^k$ in Algorithm~\ref{NMcompositeglobal} for solving \eqref{student} via the conditions
$$
\begin{cases}
\big(-\Hat{v}^k- \nabla^2 f(\Hat{x}^k)d^k \big)_i=0&\text{if}\quad\left(\Hat{x}^k \right)_i\ne 0,\\
\big( d^k\big)_i=0&\text{if}\quad\left(\Hat{x}^k \right)_i=0,
\end{cases}
$$
where the vectors  $\Hat{x}^k, \Hat{v}^k$ are given by
$$
\Hat{x}^k = \text{\rm Prox}_{\lambda g}(x^k - \lambda\nabla f(x^k)), \quad \Hat{v}^k = \nabla f(\Hat{x}^k)- \nabla f(x^k)   + \frac{1}{\lambda}(x^k -\Hat{x}^k), 
$$
and where $\nabla f$ and $\nabla^2 f$ are calculated in \eqref{nablaimage} and \eqref{hessianimage}, respectively. This allows us to choose $d^k$  for each $k\in\N$ by using the explicit formulas in \eqref{dkl0l2} and \eqref{Ximage}.\vspace*{-0.05in}

\begin{Theorem}[\bf solving Student's t-regression problem by GCNM]  Consider problem \eqref{student} with the representations of $f$ and $g$ given in \eqref{representstudent} and the explicit computations presented above. Then  Algorithm~{\rm\ref{NMcompositeglobal}}  either stops after finitely many iterations, or produces a sequence $\{x^k\}$ such that if $\ox$ is an accumulation point of this sequence, then  $\ox$  is a {$\lambda$-critical point of \eqref{student} for all $\lambda \in (0,1/L_f)$, where $L_f=2\|A\|_1\|A\|_\infty$.}
If in addition $\varphi$ is subdifferentially continuous and variationally strongly convex at $\ox$ for $0$, then the sequence $\{x^k\}$  Q-superlinearly converges to $\bar{x}$. 
\end{Theorem}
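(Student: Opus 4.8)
The plan is to mirror the proof of Theorem~\ref{theoryl0l2conver} and reduce everything to the master convergence result Theorem~\ref{converNMglobal}, instantiated with the data \eqref{representstudent}. For the accumulation-point claim I would first dispose of the global hypotheses. Since $g=\mu\|\cdot\|_0\ge 0$ is bounded below, it is prox-bounded with threshold $\lambda_g=\infty$ by \cite[Exercise~1.24]{Rockafellar98}, and $\varphi=f+g\ge 0$ forces $\inf\varphi>-\infty$. The only substantive point here is verifying Assumption~\ref{Lipnablaf}, namely that $\nabla f$ is globally Lipschitz with modulus $L_f=2\|A\|_1\|A\|_\infty$; granting this, Theorem~\ref{converNMglobal} yields at once that every accumulation point $\ox$ is a $\lambda$-critical point of \eqref{student} for all $\lambda\in(0,1/L_f)$.

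For the Lipschitz estimate I would work from the Hessian formula \eqref{hessianimage}, writing $\nabla^2 f(x)=2A^{*}D(x)A$ with $D(x)$ diagonal and entries $\theta\big((Ax-b)_i\big)$, where $\theta(t):=(\nu-t^2)/(\nu+t^2)^2$. A one-variable estimate (the critical points of $\theta$ are $t=0$ and $t^2=3\nu$) gives $|\theta(t)|\le 1/\nu$ for all $t$, so $\|D(x)\|$ is bounded uniformly in $x$. Since $\nabla^2 f(x)$ is symmetric I would then estimate its spectral norm by $\|\nabla^2 f(x)\|_2\le\|\nabla^2 f(x)\|_1\le 2\|A^{*}\|_1\,\|D(x)\|_1\,\|A\|_1=2\|A\|_\infty\,\|D(x)\|_1\,\|A\|_1$, which is uniformly controlled by the stated modulus (up to the harmless factor $1/\nu$); the mean-value inequality then upgrades this to the global Lipschitz continuity of $\nabla f$.

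For the superlinear-rate part the strategy is to extract from the two extra hypotheses precisely the ingredients of Assumption~\ref{assumonlimitpoint} plus semismoothness$^{*}$. Since $f\in\mathcal C^\infty$ and, by Theorem~\ref{varconsubl0lemma}, $g$ is variationally convex at $\ox$ for $-\nabla f(\ox)$ (hence prox-regular with $r=0$), and since the assumed subdifferential continuity of $\varphi$ at $\ox$ for $0$ transfers to $g$ at $\ox$ for $-\nabla f(\ox)$ (the $\mathcal C^2$ part $f$ being continuous and thus harmless), $\varphi$ is continuously prox-regular at $\ox$ for $0$. Proposition~\ref{equitiltstr} then converts the assumed strong variational convexity into tilt stability of $\ox$, which is Assumption~\ref{assumonlimitpoint}(i); part (ii) holds because $\nabla^2 f$ is $\mathcal C^\infty$, hence strictly differentiable; and part (iii) holds because $0$-level prox-regularity implies $r$-level prox-regularity for every $r>0$, with subdifferential continuity of $g$ already in hand. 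The semismoothness$^{*}$ of $\partial g$ at $(\ox,-\nabla f(\ox))$ follows, exactly as in the $\ell_0$-$\ell_2$ case, from $\gph\partial g$ being a finite union of closed convex sets together with \cite{Helmut}. Choosing $\lambda\in(0,1/L_f)$ makes $Q(\ox)=I-\lambda\nabla^2 f(\ox)$ positive-definite by Remark~\ref{C11FBE}(i), and since $g$ is $r$-level prox-regular for arbitrarily small $r>0$ this $\lambda$ is admissible, so Theorem~\ref{converNMglobal} delivers the $Q$-superlinear convergence of $\{x^k\}$ to $\ox$.

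The main obstacle I anticipate is the uniform Hessian bound. The diagonal factor $\theta(t)=(\nu-t^2)/(\nu+t^2)^2$ changes sign and is genuinely nonconvex, so some care is needed to pin down its sharp sup-bound and to pass cleanly from the $1$- and $\infty$-matrix-norm estimate to a spectral bound matching $L_f$. The conceptual heart of the second part is Proposition~\ref{equitiltstr}: it lets the single hypothesis of strong variational convexity stand in for the tilt-stability requirement of Theorem~\ref{converNMglobal}, after which the verification of Assumption~\ref{assumonlimitpoint} is a routine transcription of the $\ell_0$-$\ell_2$ argument.
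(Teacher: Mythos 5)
Your proposal is correct and follows essentially the same route as the paper: reduce to Theorem~\ref{converNMglobal} by checking prox-boundedness of $g$ and Assumption~\ref{Lipnablaf}, and for the superlinear rate convert strong variational convexity plus subdifferential continuity into tilt stability via Proposition~\ref{equitiltstr} and invoke the semismoothness$^*$ of $\partial g$ from the polyhedral structure of its graph. Your extra care with the diagonal factor $\theta(t)=(\nu-t^2)/(\nu+t^2)^2$ is a sound (and slightly more honest) way to get the Lipschitz modulus than the paper's one-line assertion, and your remark about the residual $1/\nu$ factor is a fair observation since the stated constant $2\|A\|_1\|A\|_\infty$ matches the sharp bound only when $\nu\ge 1$ (as in the paper's experiments with $\nu=1$).
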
\vspace*{-0.15in}
\begin{proof} Observe that since $g$ is bounded from below by $0$, it follows from \cite[Exercise~1.24]{Rockafellar98} that $g$ is prox-bounded with threshold $\lambda_g =\infty$. Moreover, $f$ is smooth with the gradient and Hessian are computed in \eqref{nablaimage} and \eqref{hessianimage}, respectively. It follows from \eqref{nablaimage} that $\nabla f$ is Lipschitz with constant $L=2\|A\|_1\|A\|_\infty$. 
Therefore, Assumption~\ref{Lipnablaf} is fulfilled for problem \eqref{student}. Thus it follows from Theorem~\ref{converNMglobal} that if $\ox$ is an accumulation point of $\{x^k\}$, then $\ox$ is a {$\lambda$-critical point of \eqref{student} for all $\lambda \in (0,1/L_f)$}. 

It remains to verify the Q-superlinear convergence of $\{x^k\}$ provided that $\varphi$ is subdifferentially continuous and variationally strongly convex at $\ox$ for $0$. This ensures that $\varphi$ is continuously prox-regular  at $\ox$ for $0$, and so $\ox$ is a tilt-stable local minimizer of $\ph$ by Proposition~\ref{equitiltstr}. As mentioned above, the graph of $\partial g$ is the union of finitely many closed convex sets, which ensures that $\partial g$ is semismooth$^*$ on its graph. Therefore, the superlinear convergence to $\ox$ of the iterates $\{x^k\}$ generated by Algorithm~\ref{NMcompositeglobal} is guaranteed by Theorem \ref{converNMglobal}.
\end{proof}\vspace*{-0.05in}
 
Based on the results and computations obtained for solving model \eqref{student}, we conduct numerical experiments using our GCNM Algorithm~\ref{NMcompositeglobal} and compare its performance with the same second-order Newtonian methods (A) and (B) as in Subsection~\ref{sec:appl0l2}. The test examples are randomly generated in a similar manner as in \cite{mu14}. Specifically, we generate a true sparse signal $x^{\text{true}}$ of length $n$ with $k = \floor{n/40}$ nonzero entries, where the indices are selected at random. The nonzero components are computed via $x^{\text{true}}_i = \eta_1(i)10^{\eta_2(i)}$, where $\eta_1(i) \in \{-1,1\}$ is a random sign and $\eta_2(i)$ is independently drawn from a uniform distribution over $[0, 1]$. The observation vector $b \in \R^m$ is generated by adding Student’s t-distributed noise with the degree of freedom $4$ rescaled by $0.1$ to $Ax^{\text{true}}$. We set $\nu = 1$ and examine different scenarios by varying the regularization parameter $\mu \in \{10^{-1}, 10^{-2}, 10^{-3}\}$, while maintaining a constant column-to-row ratio $n/m = 8$ for the matrix $A$. Each algorithm is initialized with $x^0 = A^*b$ and terminated when the condition $\|x^k - \Hat{x}^k\| \leq 10^{-4}$ is satisfied.
 
\begin{sidewaystable} 
\footnotesize
\centering
\begin{subtable}[t]{\textwidth}
\centering
\begin{tabular}{|l|lllll|lllll|lllll|} 
\hline
  & \multicolumn{5}{c|}{GCNM} & \multicolumn{5}{c|}{GSSN} & \multicolumn{5}{c|}{ZeroFPR}\\ 
\hline
\multicolumn{1}{|c|}{n} &   time & iter  &   $\delta_k$ &  $\eta_k$ & nnz & 
   time & iter  &  $\delta_k$ &  $\eta_k$ & nnz & 
   time & iter  &   $\delta_k$ &  $\eta_k$ & nnz   \\ 
\hline
40 &  0.002 & 2 & 2.6E-06 & 2.9E-05  & 34  & 0.023 & 13 &  5.7E-04 & 3.3E-05   & 7 &   0.026 & 15 & 3.9E-05 & 9.0E-05  & 38   \\
80 &   0.003 & 3 &  1.8E-05 & 8.7E-07 & 51 &   0.548 & 349 &  5.2E-03 & 9.9E-05 &28  & 0.045 & 23 & 3.7E-05 & 8.8E-05 & 51 \\
160   & 0.004 & 2 & 1.5E-05 & 3.4E-07 & 85 &   1.707 & 703  & 1.7E-02 & 9.9E-05  &32  &  0.192 & 62 & 2.6E-04 & 8.0E-05 & 135 \\
320 &  0.035  &4 & 5.6E-06 & 8.7E-08 &   265 & 8.181 & 402 & 1.1E-02 & 9.9E-05  & 160 & 0.238 &25  & 4.9E-05 & 6.4E-05 & 311 \\
640 &  0.132 & 5 &   1.3E-07 & 1.2E-09 & 580&  34.331 & 426 & 1.7E-02 & 9.9E-05  & 374 & 1.335 & 24& 2.8E-05 & 7.9E-05 & 581 \\
1280   & 0.460 & 3 & 6.1E-07 & 2.7E-09 & 1172 &  241.641 & 556 & 3.3E-02 & 9.9E-05  & 775 & 5.629 & 15 & 3.3E-05 & 9.8E-05  & 1180  \\
2560   & 2.472 & 7 & 2.4E-07 & 5.6E-10 & 2377&   1276.129 & 435 & 5.6E-02 & 9.9E-05 & 1660   & 88.735 & 36 & 1.9E-04 & 7.3E-05 & 2435 \\
\hline
\end{tabular}
\caption{$\mu =10^{-1}$}
\label{subtab:student1}
\end{subtable}

\vspace{0.5cm}

\begin{subtable}[t]{\textwidth}
\centering
\begin{tabular}{|l|lllll|lllll|lllll|} 
\hline
  & \multicolumn{5}{c|}{GCNM} & \multicolumn{5}{c|}{GSSN} & \multicolumn{5}{c|}{ZeroFPR} \\ 
\hline
\multicolumn{1}{|c|}{n} &   time & iter  &   $\delta_k$ &  $\eta_k$ & nnz & 
   time & iter  &   $\delta_k$ &  $\eta_k$ & nnz & 
   time & iter  &   $\delta_k$ &  $\eta_k$ & nnz  \\ 
\hline
40 &  0.003 &2  & 1.4E-06 & 1.6E-07 &  37 &  0.032 & 25 & 8.0E-04 & 6.9E-05 & 18  & 0.013 & 10 &  2.8E-05 & 5.5E-05 & 37\\
80 &   0.004 & 3 & 9.7E-08 & 5.5E-09   & 73 & 0.106 & 68  & 2.5E-03 & 9.9E-05 & 51  & 0.019 & 9 & 3.2E-05 & 7.8E-05 & 73 \\
160 &   0.010 & 3 &  2.0E-07 & 5.5E-09 & 128 &   1.139 & 162 &  6.5E-03 & 9.9E-05   &  59 & 0.102 & 16 & 1.8E-05 & 5.5E-05 & 155  \\
320  & 0.064 & 5 & 1.6E-09 & 2.9E-11 & 308&   3.356 & 111 & 6.8E-03 & 9.9E-05 & 247  & 0.480 & 10  & 3.1E-05 & 8.9E-05 & 310  \\
640 &   0.034 & 3 & 3.1E-08 & 2.8E-10 & 620 &  10.493 & 98 & 1.3E-02 & 9.9E-05  & 494  & 0.709 & 14 & 5.0E-05 & 7.7E-05 & 621 \\
1280 &  0.510  & 3 & 2.0E-09 & 9.2E-12 & 1251 &   13.604 & 20  & 2.1E-02 & 8.7E-05 & 1067 &   7.435 & 17  & 3.0E-05 & 8.3E-05 & 1255  \\
2560   & 2.646 & 4 & 1.0E-09 & 1.1E-12 &  2515 &  86.497 & 29 & 1.9E-02 & 4.2E-05 & 2194  & 26.892 & 11 & 1.1E-04 & 6.6E-05 & 2525 \\
\hline
\end{tabular}
\caption{$\mu=10^{-2}$}
\label{subtab:student2}
\end{subtable}

\vspace{0.5cm}

\begin{subtable}[t]{\textwidth}
\centering
\begin{tabular}{|l|lllll|lllll|lllll|} 
\hline
  & \multicolumn{5}{c|}{GCNM} & \multicolumn{5}{c|}{GSSN} & \multicolumn{5}{c|}{ZeroFPR}\\ 
\hline
\multicolumn{1}{|c|}{n} &   time & iter  &   $\delta_k$ &  $\eta_k$ & nnz & 
   time & iter  &   $\delta_k$ &  $\eta_k$ & nnz & 
   time & iter  &   $\delta_k$ &  $\eta_k$ & nnz  \\ 
\hline
40 &  0.002 & 2 &  6.1E-08 & 6.7E-09  & 38  &  0.053 & 26 & 7.6E-04 & 7.3E-05 & 25 &   0.010 & 8  & 4.3E-05 & 8.4E-05 & 38   \\
80 &   0.002 & 2 & 1.4E-10 & 7.9E-12 & 79    & 0.003 & 2 & 7.3E-04 & 4.2E-05  & 79 & 0.010 & 5  & 3.2E-05 & 7.3E-05 & 80   \\
160 &   0.005 & 3 &  4.1E-10 & 1.2E-11 & 153 &  0.123 & 32 & 4.2E-03 & 9.9E-05 & 113 &  0.028 & 7  & 1.8E-05 & 6.8E-05 &153 \\
320 &  0.005 & 2 & 1.1E-14 & 2.0E-17 & 320  & 0.002 & 2 & 1.4E-15 & 8.4E-18 & 320 & 0.010 &2 & 1.4E-15 & 9.3E-18 &320 \\
640   & 0.092 & 3 & 2.5E-10 & 2.3E-12 & 632 &  5.104 & 24 & 5.3E-03 & 4.4E-05 &  561 & 1.037 & 7 & 1.9E-05 & 5.8E-05 & 632 \\
1280  & 0.175 & 5 & 1.5E-10 & 6.9E-13 &  1273& 14.976 & 23 &  4.5E-03 & 1.9E-05  & 1171 & 1.995 & 7 & 2.0E-05 & 5.6E-05 & 1270 \\
2560 & 2.504 & 4 & 2.9E-10 & 1.2E-13    & 2548 & 84.921 & 28 & 3.9E-03 & 8.7E-06  & 2383 & 14.313 & 6  & 1.8E-05 & 6.6E-05  & 2553 \\
\hline
\end{tabular}
\caption{$\mu=10^{-3}$}
\label{subtab:student3}
\end{subtable}
\caption{Performance of GCNM,  GSSN, ZeroFPR for solving \eqref{student}, where all algorithms are required to satisfy $\|x^k - \Hat{x}^k\| \leq 10^{-4}$}
\label{tab:studenttest1}
\end{sidewaystable}
 
All the numerical results for the algorithms applied to \eqref{student} are reported in Table~\ref{tab:studenttest1}. In the table, ``time'' denotes the computational time in seconds, ``iter'' denotes the number of iterations required to reach the prescribed accuracy, $\delta_k := \|Ax^k - b\|$, $\eta_k := \|x^k - \Hat{x}^k\|$, and ``nnz'' denotes the number of nonzero components in the obtained approximate solution. Across all tested scenarios, GCNM consistently exhibits the most efficient performance in terms of the  time. It reaches the stopping criterion $\eta_k\leq 10^{-4}$ significantly faster than both GSSN and ZeroFPR regardless of either the problem size, or the regularization parameter $\mu$. Furthermore, GCNM obtains the smallest values of the residual $\delta_k$, often in the order of $10^{-7}$ or lower, indicating therefore highly accurate approximate solutions to the equation $Ax = b$. ZeroFPR, while showing slightly larger values of $\delta_k$ than GCNM, still maintains reasonably accurate approximations and requires a significantly fewer  time than GSSN. It performs especially well at moderate problem sizes and smaller values of $\mu$. {From the table, we can observe that the solutions obtained by the different methods differ, as expected for a nonconvex problem. The line search strategy used in GSSN generally requires more iterations, whereas GCNM in these experiments appears to take mostly full Newton steps, resulting in fewer iterations. We present below the following special 2-dimensional example of \eqref{student}, which demonstrates that the methods can converge to different points, potentially minimizers or $M$-stationary points.}\vspace*{-0.05in}

\begin{Example}{\bf(illustration of different algorithm performances).} \rm Consider the optimization problem \eqref{student} with the initial data:
$$
m=1, \;n=2,\; A=[1 \quad 1 ], \; b =1,\; \nu=1,\; \mu =\frac{1}{10},
$$
which corresponds to the explicit formulation
\begin{equation}\label{specialstudent}
\min \quad \varphi(x_1,x_2)=\log\left[1+(x_1+x_2 - 1)^2\right] + \frac{1}{10} \|(x_1,x_2)\|_0 \quad \text{\rm subject to }\; (x_1,x_2) \in \R^2. 
\end{equation}
Using the subdifferential formula for the $\ell_0$-norm from Proposition~\ref{2ndgraofl0} together with the gradient of the smooth term $f$ in \eqref{nablaimage}, we can find the M-stationary points of \eqref{specialstudent} as follows:
$$
\mathcal{X}_M:= \{(x_1,x_2) \in \R^2\;|\; (0,0)\in\partial\varphi(x_1,x_2)\}= \{(x_1,x_2) \in \R^2|\; x_1 + x_2 = 1\}.  
$$
It is clear that $(1,0)$ and $(0,1)$ are global minimizers of \eqref{specialstudent}. We now conduct numerical experiments using our GCNM Algorithm~\ref{NMcompositeglobal} together with other methods including GSSN and ZeroFPR. All the methods are tested from two different starting points and are terminated once the condition $\|x^k - \Hat{x}^k\| \leq 10^{-4}$ is satisfied.
The results are summarized in Table \ref{tab:l0student2dimension}. As indicated by the numerical results, both GCNM and GSSN converge to the global optimal solution $(0,1)$ when initialized at $(5,5)$, whereas ZeroFPR converges to the M-stationary point $(1/2,1/2)$. When all algorithms are initialized at $(-5,5)$, they converge to the M-stationary point $(-9/2,\,11/2)$. These observations motivate further investigation into the conditions under which our algorithm converges to a global optimal solution rather than to a merely M-stationary point, a behavior that is guaranteed only under the theoretical assumptions developed in the previous sections.

\begin{table}
\centering
\caption{Numerical results for different starting points}
\label{tab:l0student2dimension}
\begin{tabular}{|c|c|c|c|}
\hline
starting point 
& method 
& iterations 
& approximate solution \\ \hline

\multirow{3}{*}{$(5,5)$ }
& GCNM    & 71 & $(0,0.99999977)$ \\ 
& GSSN    & 18 & $(0,0.99999939)$ \\ 
& ZeroFPR & 30 & $(0.50008385,0.50008385)$ \\ \hline

\multirow{3}{*}{$(-5,5)$}
& GCNM    & 7 & $(-4.50144135,   5.50144135)$ \\ 
& GSSN    & 4 & $(-4.51120306,5.51120444)$ \\ 
& ZeroFPR & 21 & $(-4.49992114,  5.50007886)$\\ \hline

\end{tabular}
\end{table}
    
\end{Example}

\vspace*{-0.12in}

\subsection{Restoration of Noisy Blurred Images}\vspace*{-0.05in}

The next numerical experiment addresses the restoration of a noisy, blurred image using a nonconvex regularization approach. Given a blur operator $A \in \R^{n \times n}$ and an observation vector $b \in \R^n$ representing blurred and noisy images, the goal is to estimate the unknown original image $\bar{x} \in \R^n$ that approximately satisfies $A\bar{x} = b$.

We consider the restoration of a $256 \times 256$  cameraman test image. The degradation process involves applying a Gaussian blur with a kernel of size $9 \times 9$ and standard deviation $4$, followed by the addition of standard Gaussian noise with standard deviation $10^{-3}$.   The restoration is formulated as a nonconvex optimization problem \eqref{l0}. We evaluate the performance under different settings of the smooth regularization parameter $\mu_2 \in \{5 \times 10^{-2}, 5 \times 10^{-3}\}$ and of the nonconvex regularization parameter $\mu_0 \in \{10^{-4}, 10^{-5}\}$. All methods are initialized at $x^0 = b$. We evaluate and compare the performance of our proposed \textit{GCNM algorithm} (Algorithm~\ref{NMcompositeglobal}) against two efficient methods for image restoration: the \textit{proximal gradient method} (PGM), also known as the \textit{forward-backward splitting method}, as developed in~\cite[Section~5.1]{abs13}, and the \textit{ZeroFPR method with BFGS updates} introduced in~\cite{stp2}.  We run GCNM,  PGM, and ZeroFPR until the stopping criterion $\eta_k := \|x^k - \Hat{x}^k\| \leq 10^{-2}$ is satisfied. The original and blurred images are shown in Figure~\ref{fig:blur}. As illustrated in Figure~\ref{fig:recovered2}, all three solvers produce visually satisfactory restorations. Table~\ref{tablerestore} presents the residuals $\eta_k$, and the computational time (in seconds) required to reach the prescribed accuracy. Among the three methods, GCNM and ZeroFPR consistently outperform PGM, achieving the target accuracy in smaller time. GCNM  yields the smallest residual $\eta_k$ within a comparable or even significantly shorter runtime, highlighting its superior efficiency in these scenarios. Figure~\ref{fig:recovered2graph} shows convergence rates of the algorithms in one of the generated
problems.

\begin{figure}[H]
\centering
\includegraphics[scale=0.6]{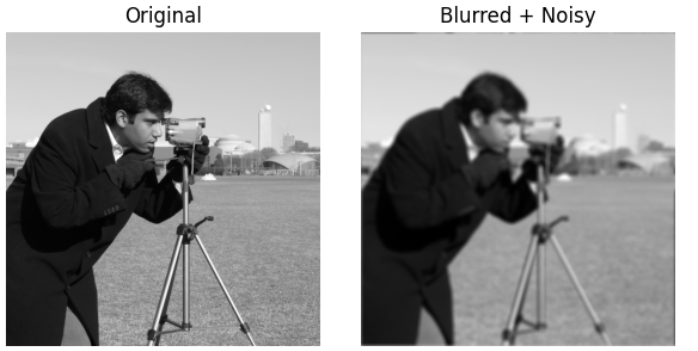}
\caption{The original and blurred cameramen test images}
\label{fig:blur}
\end{figure}
 
\begin{figure}[H]
\centering
\includegraphics[scale=0.6]{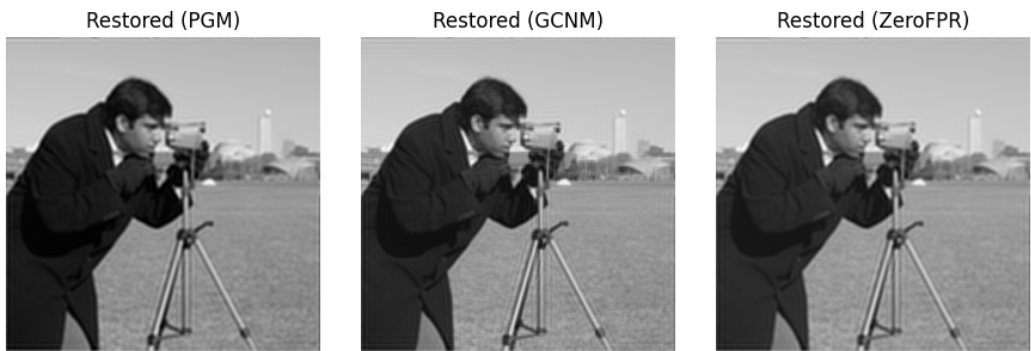}
\caption{Recovered images with the three solvers with $\mu_0 =10^{-4}$, $\mu_2 = 5\times 10^{-3}$} 
\label{fig:recovered2}
\end{figure}\vspace*{-0.2in}
 
\begin{figure}[H]
\centering
\includegraphics[scale=0.45 ]{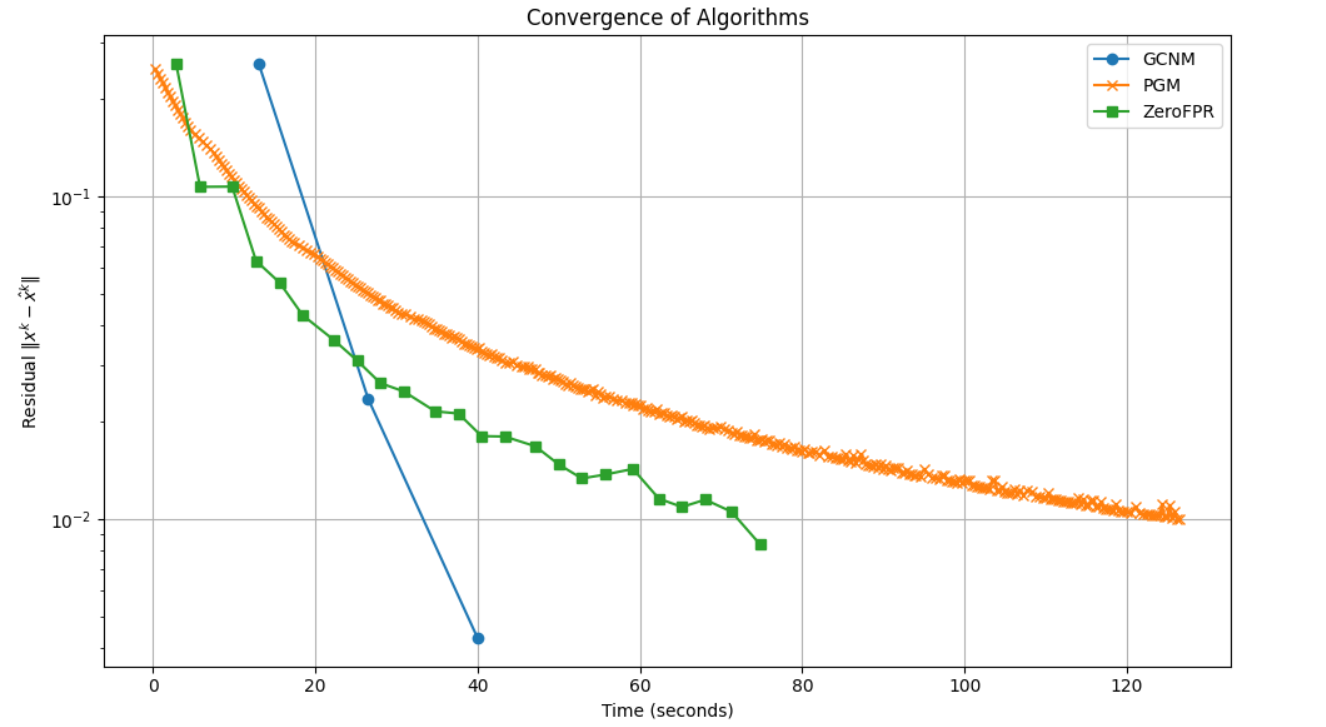}
\caption{Residual and  times of three solvers with $\mu_0 =10^{-4}$, $\mu_2 = 5\times 10^{-3}$} 
\label{fig:recovered2graph}
\end{figure}\vspace*{-0.2in}

\begin{center}
\begin{table}[H]
\centering
\begin{tabular}{|ll|ll|ll|ll|} 
\hline
\multicolumn{2}{|c}{Parameters}  &
\multicolumn{2}{|c|}{GCNM}  & \multicolumn{2}{c|}{PGM}& \multicolumn{2}{c|}{ZeroFPR}
\\ 
\hline
\multicolumn{1}{|c}{$\mu_0$} & 
\multicolumn{1}{c|}{$\mu_2$} &
\multicolumn{1}{c}{$\eta_k$} & \multicolumn{1}{c|}{time}   & \multicolumn{1}{c}{$\eta_k$} & \multicolumn{1}{c|}{time} &
\multicolumn{1}{c}{$\eta_k$} & \multicolumn{1}{c|}{time} 
\\ 
\hline
$10^{-4}$& $5.10^{-2}$ &  3.1E-03& 21.24    &    9.8E-03& 49.02   &    7.3E-03& 33.57   \\
$10^{-4}$& $5.10^{-3}$ &  5.3E-03& 40.27     & 9.9E-03& 125.29   &     9.8E-03& 76.69   \\
$10^{-5}$& $5. 10^{-2}$ &  3.5E-03& 24.51    &    9.9E-03& 49.04 &    8.8E-03& 27.67     \\
$10^{-5}$& $5.10^{-3}$  & 4.4E-03  & 33.15    & 9.9E-03  & 157.64  &    9.9E-03&  74.94   \\
\hline
\end{tabular}
\caption{Numerical comparisons of three solvers on nonconvex image restoration}
\label{tablerestore}
\end{table}
\end{center} \vspace*{-0.5in}

\section{Conclusions and Future Research}\label{sec:conclusion}\vspace*{-0.05in}

In this paper, we propose and develop fast converging local and global generalized Newton methods to solve problems of nonconvex and nonsmooth optimization. The developed algorithms are far-going extensions of the classical Newton method, where the standard Hessian is replaced by its generalized version specifically designed for nonsmooth (of first-order and second-order) functions. This construction is coderivative-generated while giving rise to the names of our generalized Newton methods.  {The results highlight the good performance of the proposed algorithms across a range of applications including  regularized least squares regression, Student’s t-regression models, and image restoration tasks, as demonstrated through comprehensive numerical experiments. In our future research, we plan to extend the spectrum of practical models, which can be solved by using our algorithms and their further developments.}\vspace*{-0.2in}

\section*{Acknowledgements} \vspace*{-0.05in}
The authors are very grateful to two anonymous referees for their helpful remarks and suggestions, which allowed us to significantly improve the original presentation. \vspace*{-0.2in}

\section*{Funding and Conflicts of Interests} 

Research of Pham Duy Khanh is funded by Ho Chi Minh City University of Education Foundation for Science and Technology under grant number CS.2025.19.04TD. Research of Boris S. Mordukhovich was partly supported by the US National Science Foundation under grant DMS-2204519 and by the Australian Research Council under Discovery Project DP250101112. Research of Vo Thanh Phat was partly supported by the Early Career Scholars Program 2026, University of North Dakota under project 43700-2375-UND0031286. The authors declare that the presented results are new, and there is no any conflict of interest.\vspace*{-0.15in}

\small

\end{document}